\documentclass[12pt, reqno]{amsart}
\usepackage[ruled,vlined]{algorithm2e}
\usepackage{fancyhdr,graphicx,amsmath,amssymb,amsthm}
\usepackage[left=1.0in,right=1.0in,bottom=1.0in]{geometry}
\usepackage{tcolorbox, enumitem, verbatim}
\usepackage{yfonts}
\usepackage{xcolor}
\usepackage{hyperref}
\hypersetup{
    colorlinks=true,   
    linkcolor=blue,    
    citecolor=red,     
    urlcolor=magenta,  
    filecolor=cyan     
}
\usepackage{tikz}
\usepackage[utf8]{inputenc}
\usepackage{mathtools}
\usepackage{faktor}
\usepackage{quiver}
\usepackage{tikz-cd}
\usepackage{bbm}
\usepackage{faktor}
\usepackage{mathabx}
\usepackage{thmtools}
\usepackage{xcolor}
\usepackage{mathrsfs}
\usepackage[nameinlink,capitalise,noabbrev]{cleveref}
\usepackage[backend=biber, style=alphabetic]{biblatex}
\usepackage{bm}

\theoremstyle{plain}
\newtheorem{theorem}{Theorem}[section]
\newtheorem{coroll}[theorem]{Corollary}
\newtheorem{lemma}[theorem]{Lemma}
\newtheorem{prop}[theorem]{Proposition}

\theoremstyle{definition}

\newtheorem{definition}[theorem]{Definition}
\newtheorem{defprop}[theorem]{Definition/Proposition}
\newtheorem*{definition*}{Definition}
\newtheorem{example}[theorem]{Example}

\theoremstyle{remark}
\newtheorem*{remark}{Remark}    

\newcommand{\CC}{\mathbb{C}}
\newcommand{\AAA}{\mathbb{A}}
\newcommand{\ZZ}{\mathbb{Z}}

\newcommand{\PP}{\mathbb{P}}

\newcommand{\GG}{\mathbb{G}}

\newcommand{\HH}{\mathbb{H}}

\newcommand{\DR}{\mathrm{DR}}
\newcommand{\Dol}{\mathrm{Dol}}
\newcommand{\tw}{\mathrm{tw}}
\newcommand{\Dh}{\mathrm{DH}}
\newcommand{\nilp}{\mathrm{nilp}}
\newcommand{\unip}{\mathrm{unip}}
\newcommand{\pair}{\mathrm{pair}}
\newcommand{\trip}{\mathrm{trip}}
\newcommand{\univ}{\mathrm{univ}}
\newcommand{\Hod}{\mathrm{Hod}}

\newcommand{\rank}{\operatorname{rank}}
\newcommand{\im}{\operatorname{im}}

\newcommand{\BAR}{\overline}

\newcommand{\sh}[1]{\mathcal{#1}}
\newcommand{\scr}[1]{\mathscr{#1}}
\newcommand{\Rm}[1]{\mathrm{#1}}
\newcommand{\Sf}[1]{\mathsf{#1}}
\newcommand{\op}[1]{\operatorname{#1}}
\newcommand{\Ide}[1]{\mathfrak{#1}}
\newcommand{\der}{\mathrm{d}}

\newcommand{\ul}[1]{\underline{#1}}
\newcommand{\del}{\partial}
\newcommand{\delbar}{\BAR{\partial}}
\newcommand{\shHom}{\mathcal{H}\mathrm{om}}

\newcommand{\Spec}{\operatorname{Spec}}

\newcommand{\coker}{\operatorname{coker}}

\newcommand{\Sum}[1]{\underset{#1}{\sum}}

\newcommand{\tab}{\hspace*{\parindent}}
\DeclareMathOperator{\Hom}{Hom}%

\newcommand{\genlegendre}[4]{%
  \genfrac{(}{)}{}{#1}{#3}{#4}%
  \if\relax\detokenize{#2}\relax\else_{\!#2}\fi
}

\DeclarePairedDelimiter{\set}{\{}{\}}
\DeclarePairedDelimiter{\abs}{\lvert}{\rvert}

\DeclarePairedDelimiter{\brac}{(}{)}
\DeclarePairedDelimiter{\sqbrac}{[}{]}

\DeclareMathAlphabet{\pazocal}{OMS}{zplm}{m}{n}

\newcommand{\Comment}[1]{}

\def\XXint#1#2#3{{\setbox0=\hbox{$#1{#2#3}{\int}$}
     \vcenter{\hbox{$#2#3$}}\kern-.5\wd0}}

\allowdisplaybreaks

\makeatletter
\def\l@subsection{\@tocline{2}{0pt}{2.5pc}{5pc}{}}
\makeatother

\DeclareFieldFormat[article,inbook,incollection,inproceedings]{title}{#1}

\title[NAHC for extensions on quasi-projective curves]{The non-abelian Hodge correspondence for extensions on quasi-projective curves}
\author{Quoc Anh Tran}
\address{Department of Mathematics, University of Chicago, Chicago, Illinois, USA}
\email{quocanh@uchicago.edu}
\urladdr{https://quocanh-tran.github.io/}
\date{\today}

\begin{document}
\begin{abstract}
  Let $\bar{X}$ be a smooth projective connected complex curve, $D \subset \bar X$ be a finite set of reduced points, and $X=\bar X\setminus D$.  We give a moduli-theoretic construction of an exact equivalence between logarithmic flat bundles on $(\bar X, D)$ with nilpotent residues and semistable logarithmic Higgs bundles of degree zero with nilpotent residues. This extends the tame Simpson-Mochizuki correspondence on polystable bundles. 
\end{abstract}
\maketitle
\vspace{-0.5cm}
\tableofcontents
\newpage

\section{Introduction}
\label{sec:intro}
On a smooth projective variety, by equipping the underlying $C^\infty$-vector bundles with a harmonic metric, Simpson gave a correspondence between semisimple flat connections and polystable Higgs bundles with vanishing Chern classes. He also extended this to an equivalence of categories between (not necessarily semisimple) flat connections and semistable Higgs bundles with vanishing Chern classes. This full correspondence was crucial in Cao's \cite{cao2019albanese} proof that the Albanese map is locally trivial for projective manifolds with nef anticanonical bundle. \\
\tab Now let $\bar{X}$ be a smooth projective curve, $D \subset \bar{X}$ a reduced divisor, and put $X = \bar{X}\setminus D$. Write $\Sf{Conn}_{\nilp}(\bar{X}, D)$ for the category of logarithmic flat bundles on $(\bar{X}, D)$ with nilpotent residues, and $\Sf{Higgs}_{\nilp}^{0, \Rm{ss}}(\bar{X}, D)$ the category of semistable degree 0 logarithmic Higgs bundles with nilpotent residues. The tame harmonic theory gave an equivalence of the polystable bundles; in the curve case this was due to Simpson \cite{simpson1990harmonic}, and in general a theorem of Mochizuki \cite{mochizuki2009kobayashi}. Our main result is an equivalence for iterated extensions, i.e., non-polystable bundles:
\begin{theorem}
  There exists a $\CC$-linear exact equivalence of abelian categories 
  \[\Sf{SM}: \Sf{Conn}_{\nilp}(\bar{X}, D) \to \Sf{Higgs}_{\nilp}^{0, \Rm{ss}}(\bar{X}, D)\]
  extending the Simpson-Mochizuki equivalence $\Sf{SM}^{\Rm{ps}}$ on polystable bundles. 
\end{theorem}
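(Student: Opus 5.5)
The plan follows Simpson's strategy in the projective case, adapted to tame logarithmic objects. The equivalence is produced at the level of moduli, by a homeomorphism of framed moduli spaces in families, rather than by harmonic metrics on non-semisimple objects. Fix a point $x\in X$ and rank $n$. Consider the framed moduli spaces: $R_{\DR}$ of logarithmic flat bundles with nilpotent residues together with a frame at $x$, and $R_{\Dol}$ of semistable degree $0$ logarithmic Higgs bundles with nilpotent residues together with a frame at $x$. Both carry a $GL_n$-action whose orbits are isomorphism classes.

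The first step is to show that the Simpson--Mochizuki correspondence $\Sf{SM}^{\Rm{ps}}$ on polystable objects lifts to a $GL_n$-equivariant homeomorphism $R_{\DR}\to R_{\Dol}$. This requires three ingredients.
\begin{enumerate}
\item Following Simpson's argument via the Hodge moduli space $M_{\Hod}\to\AAA^1$, I would show that each point of $R_{\DR}$ has a unique limit under the $\lambda$-connection degeneration. The limit should be computed by Griffiths-transverse filtrations, using Simpson's iterated destabilizing modification adapted to parabolic weights $0$ and nilpotent residues.
\item I would identify the fibres of the good quotients $R_{\DR}\to M_{\DR}$ and $R_{\Dol}\to M_{\Dol}$ over a polystable point $E$. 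Both are governed by the same deformation theory: the logarithmic de Rham complex of $\mathrm{End}(E)$ on the De Rham side, and its Dolbeault counterpart on the Higgs side, both taken with nilpotent-residue (weight-filtered) conditions.
\item I would show these complexes are quasi-isomorphic via the tame harmonic metric. This is the logarithmic/$L^2$ version of formality: Zucker/Mochizuki $L^2$-cohomology together with the $\del\delbar$-lemma for tame harmonic bundles on curves.
\end{enumerate}
Formality then gives isomorphic analytic germs of the Kuranishi spaces, and hence a homeomorphism of the neighbourhoods of the closed orbits, equivariant for the stabilizer $\mathrm{Aut}(E)$. Gluing these local identifications, compatibly with $\Sf{SM}^{\Rm{ps}}$ on closed orbits, gives a global $GL_n$-equivariant homeomorphism. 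Passing to orbits defines $\Sf{SM}$ on objects.

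The second step is functoriality, exactness and $\CC$-linearity. Morphisms are handled as in Simpson: a morphism $f:E\to F$ corresponds to the object $E\oplus F$ together with the graph subobject, or equivalently to the framed moduli of rank-$(n+m)$ objects with a block upper-triangular frame, that is, the extension and sub-object loci. The homeomorphism must be shown to preserve these closed subsets. Granting this, $\Sf{SM}$ sends subobjects to subobjects and short exact sequences to short exact sequences. Additivity comes from compatibility with direct sums. $\CC$-linearity comes from the fact that $\mathrm{Hom}$ spaces are the $H^0$ of the formal complexes, and the quasi-isomorphism is $\CC$-linear on $H^0$ and $H^1$, so extension classes and scalar multiples match. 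Full faithfulness then reduces by dévissage, via the five lemma on the long exact $\mathrm{Ext}$ sequences, to the polystable case together with the $\mathrm{Ext}^1$ comparison $H^1_{\DR}(\mathrm{End})\cong H^1_{\Dol}(\mathrm{End})$ from step one. Essential surjectivity follows by induction on the length of the Jordan--Hölder filtration: every semistable nilpotent-residue Higgs bundle is an iterated extension of stable degree-$0$ ones, and these lift by matching $\mathrm{Ext}^1$ classes.

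I expect the main obstacle to be the formality/comparison step for the logarithmic complexes with nilpotent residues. One must check that the $L^2$ (weight-filtered) complex of the tame harmonic bundle computes exactly the deformations within the nilpotent-residue category on both sides, and not the full logarithmic or meromorphic ones. One must also check that the $\del\delbar$-lemma holds for it, including the $H^2$ obstruction terms. My first attempt would be Mochizuki's description of the parabolic $L^2$-complex for weight-$0$ nilpotent-residue harmonic bundles, combined with the curve case of Zucker's theorem to identify it with the intersection complex. This gives purity of the relevant mixed Hodge structure, and formality follows from purity as in Deligne--Griffiths--Morgan--Sullivan.
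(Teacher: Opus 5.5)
Your plan has a genuine gap, and it is the step you yourself flag as the main obstacle. Everything downstream depends on a product-compatible (formality-type) comparison between the logarithmic de Rham and Dolbeault complexes: the identification of germs, the preservation of graph and subobject loci, the $\CC$-linearity, and the d\'evissage. The route you propose to that comparison does not reach the right complexes.

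Homs and extensions in $\Sf{Conn}_{\nilp}(\bar X,D)$ are computed by $\HH^*(\bar X, C_{\DR}(\bar E,\bar F)) = H^*(X,\ul{\Hom}(E,F))$. The $L^2$ complex of the tame harmonic bundle instead computes, via Zucker/Simpson, the intersection cohomology $H^*(\bar X, j_*\ul{\Hom}(E,F))$. These already differ in degree $1$. On $\PP^1\setminus\{0,1,\infty\}$ with trivial rank-one coefficients, $\mathrm{Ext}^1$ in $\Sf{Conn}_{\nilp}$ is $H^1(X,\CC)\cong\CC^2$, given by the connections $d+\omega$ with $\omega\in H^0(\Omega^1(\log D))$ in the upper-right corner, all with nilpotent residues. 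But $\mathrm{IH}^1=0$.

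Moreover, $H^1(X,\ul{\Hom}(E,F))$ is in general genuinely mixed. It has weight $1$ from $\mathrm{IH}^1$ and weights $\geq 2$ from the local coinvariants at the punctures. So the Deligne--Griffiths--Morgan--Sullivan mechanism ``purity implies formality'' has nothing to act on. This is exactly why the paper says Simpson's K\"ahler formality argument does not give the logarithmic comparison. It instead proves, and only uses, compatibility with identities and composition on $H^0$. That compatibility comes from the V-filtration description of the Deligne lattices and the twistor structure on $\sh{H}^0$, with artinian coefficients.

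Even granting such a comparison, your remaining steps do not yet assemble into a functor.
\begin{itemize}
\item Local identifications of germs at different polystable points come with no compatibility on overlaps. So they do not glue to a global equivariant homeomorphism, and a homeomorphism of moduli would not by itself define a $\CC$-linear functor.
\item Preservation of the graph and subobject loci is assumed rather than proved, and that is precisely where composition lives.
\item The five-lemma d\'evissage and the lifting of extension classes need the maps on $\mathrm{Ext}^1$ induced by the functor to commute with connecting homomorphisms, i.e.\ with the Yoneda product $\Hom\otimes\mathrm{Ext}^1\to\mathrm{Ext}^1$. An abstract isomorphism $H^1_{\DR}\cong H^1_{\Dol}$ is not enough; the paper stresses this point.
\item Your ingredient (1) is off target. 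The $\lambda\to 0$ limit in the Hodge moduli space is the fixed point $\lim_{t\to 0}t\cdot(E,\nabla)$, a system of Hodge bundles, not $\Sf{SM}(E)$.
\end{itemize}

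The paper sidesteps all of this as follows.
\begin{itemize}
\item The MTS structure on completed local rings (Bakker--Brunebarbe--Tsimerman), together with the functorial splitting over $\AAA^1_\lambda$, gives \emph{canonical} $\GG_m$-equivariant formal isomorphisms, needed only at polystable points.
\item A non-semisimple $\bar E$ is encoded as the $\GG_m$-equivariant Rees arc of its socle filtration. This arc is transported and then algebraized, so no gluing is needed.
\item Morphisms are handled through moduli of triples, whose completions acquire MTS structures via twistor modules with artinian coefficients.
\item Full faithfulness is $H^0$ of the algebraized quasi-isomorphism, exactness follows formally, and the quasi-inverse is built symmetrically.
\end{itemize}
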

\subsection{Simpson's approach in the projective case}
The bridge from the polystable correspondence to the one above requires control of extensions and compositions of morphisms, or equivalently cup products. In the projective case, i.e. $D = \emptyset$, Simpson \cite[Lemmas 2.2 and 3.3-3.5]{simpson1992higgs} gave these categories the structure of differential graded categories, where morphisms and extensions are computed using the following resolutions of the De Rham and Dolbeault complexes \eqref{eq:Dol/DR-complexes} (see also the notation of \cref{def:harmonic-decomp1,def:harmonic-decomp2})
\begin{align*}
  \Rm{R}\Gamma(X, C_{\DR}(E, F)) &\simeq \sqbrac*{A^0(\ul{\Hom}(E, F)) \xrightarrow{D} A^1(\ul{\Hom}(E, F)) \xrightarrow{D} \dots} \\
  \Rm{R}\Gamma(X, C_{\Dol}(E, F)) &\simeq \sqbrac*{A^0(\ul{\Hom}(E, F)) \xrightarrow{\bar{\del}_h + \theta} A^1(\ul{\Hom}(E, F)) \xrightarrow{\bar{\del}_h + \theta} \dots}
\end{align*}
by $\ul{\Hom}(E, F)$-valued $C^{\infty}$-differential forms. He then used the analytic inputs of $\del \delbar$-lemma and K\"{a}hler identities to prove a formality result giving an explicit cup-product-compatible quasi-isomorphism of the two complexes of differential forms. This then allows for a correspondence of all extensions by the theory of differential graded categories. It is worth noting that an abstract quasi-isomorphism
\[\Rm{R}\Gamma(X, C_{\DR}(E, F)) \simeq \Rm{R}\Gamma(X, C_{\Dol}(E, F))\]
is not sufficient.

\medskip
In the quasi-projective case, we have similar resolutions by $C^{\infty}$ logarithmic forms. However, Simpson's K\"{a}hler formality argument does not give the desired logarithmic comparison here. In the context of this paper, one would need a much stronger refinement of \cref{thm:relative-DR-Dol-quasi-isom-with-coeff} which is compatible with cup products on the resolutions by logarithmic smooth forms. We sidestep this issue, only requiring compatibility with identity and composition on $H^0$. Instead, our approach is moduli-theoretic, treating a non-polystable bundle as a deformation of its semisimplification with respect to the canonical socle filtration (see \cref{sec:socle-filt-Rees}). 
\subsection{A sketch of the proof}
More concretely, given $\bar{E} \in \Sf{Conn}_{\nilp}(\bar{X}, D)$, pick a frame to get a point in the framed moduli space $R_{\DR}$ (see \cref{sec:framed-DR-Dol-Hod-spaces}). Consider the socle filtration $S_\bullet \bar{E}$, the Rees construction gives a family over $\AAA^1$ with the fiber over $0$ being the semisimplification $\bar{E}^{\Rm{ss}} = \Rm{gr}_S \bar{E}$. Equivalently, we have a $\GG_m$-equivariant arc $\alpha_{\bar{E}}^{\DR}: \AAA^1 \to R_{\DR}$. Completing at 0 we get a formal arc $\widehat{\alpha}_{\bar{E}}^{\DR}: \Rm{Spf}\ \CC[[t]] \to \widehat{R}_{\DR, \bar{E}^{\Rm{ss}}}$. As a consequence of \cite[Lemma 8.11]{bakker2024linear} we get a formal isomorphism 
\[\tau_{E^{\Rm{ss}}}: \widehat{R}_{\DR, \bar{E}^{\Rm{ss}}} \xrightarrow{\simeq} \widehat{R}_{\Dol, \Sf{SM}^{\Rm{ps}}(\bar{E}^{\Rm{ss}})}\]
which we prove is $\GG_m$-equivariant in \cref{thm:Bett-DR-Dol-formal-isom}. Hence we end up with a $\GG_m$-equivariant formal arc $\widehat{\alpha}_{\bar{E}}^{\Dol}: \Rm{Spf}\ \CC[[t]] \to \widehat{R}_{\Dol, \Sf{SM}^{\Rm{ps}}(\bar{E}^{\Rm{ss}})}$, which can be algebraized by \cref{lem:Gm-equiv-formal-arc-algebraization} to an arc $\alpha_{\bar{E}}^{\Dol}: \AAA^1 \to R_{\Dol, \Sf{SM}^{\Rm{ps}}(\bar{E}^{\Rm{ss}})}$. Forgetting the framing, let
\[\Sf{SM}(\bar{E}) = \alpha_{\bar{E}}^{\Dol}(1)\]
which is well-defined by \cref{lem:independence-framing-Rees-cochar}. To define $\Sf{SM}$ for a morphism $f: \bar{E} \to \bar{F}$, the natural idea is to construct a moduli space parametrizing framed triples $(\bar{E}, \bar{F}, f)$ called $\widehat{R}_{\DR}^{\trip}$ (see \cref{sec:moduli-triples}) and run the previous argument for triples. The proof of \cite[Lemma 8.11]{bakker2024linear} cannot be repeated here, as they relied on $R_{B}$ being smooth when $X = \bar{X}\setminus D$ is an affine curve; the moduli of triples $R^{\trip}_{B}$ is not. Instead, we use the theory of twistor modules with artinian coefficients to identify the deformation functors of triples. More concretely, we have a pro-twistor algebra $\bm{\Lambda}_{E^{\Rm{ss}}, F^{\Rm{ss}}}$ where 
\[\Rm{Spf}_{\PP^1}\ \bm{\Lambda}_{E^{\Rm{ss}}, F^{\Rm{ss}}}\vert_{\lambda = 1} \simeq \widehat{R}_{\DR, (\bar{E}^{\Rm{ss}}, \bar{F}^{\Rm{ss}})}, \tab \Rm{Spf}_{\PP^1}\ \bm{\Lambda}_{E^{\Rm{ss}}, F^{\Rm{ss}}}\vert_{\lambda = 0} \simeq \widehat{R}_{\Dol, \Sf{SM}^{\Rm{ps}}(\bar{E}^{\Rm{ss}}, \bar{F}^{\Rm{ss}})}\]
\tab Any such twistor algebra is trivializable away from $\lambda = \infty$ (\cref{lem:affinesplitting}), and the formal isomorphism of pairs
\[\tau_{E^{\Rm{ss}}, F^{\Rm{ss}}}: \widehat{R}_{\DR, (\bar{E}^{\Rm{ss}}, \bar{F}^{\Rm{ss}})} \xrightarrow{\simeq} \widehat{R}_{\Dol, \Sf{SM}^{\Rm{ps}}(\bar{E}^{\Rm{ss}}, \bar{F}^{\Rm{ss}})}\]
is given by this trivialization. Furthermore, there is a universal $\bm{\Lambda}_{E^{\Rm{ss}}, F^{\Rm{ss}}}$-admissible variation of mixed twistor structures $(\widehat{\scr{E}}, \widehat{\scr{F}})$ where the fibers over 1, 0 give universal logarithmic flat pair $(\widehat{\scr{E}}\vert_{\lambda = 1}, \widehat{\scr{F}}\vert_{\lambda = 1})$ and universal logarithmic Higgs pair $(\widehat{\scr{E}}\vert_{\lambda = 0}, \widehat{\scr{F}}\vert_{\lambda = 0})$ over $\widehat{R}_{\DR, (\bar{E}^{\Rm{ss}}, \bar{F}^{\Rm{ss}})}$ and $\widehat{R}_{\Dol, \Sf{SM}^{\Rm{ps}}(\bar{E}^{\Rm{ss}}, \bar{F}^{\Rm{ss}})}$ respectively. \cref{thm:relative-DR-Dol-quasi-isom-with-coeff} then gives a quasi-isomorphism 
\[\Phi: K_{\DR}(\widehat{\scr{E}}\vert_{\lambda = 1}, \widehat{\scr{F}}\vert_{\lambda = 1}) \xrightarrow{\simeq} K_{\Dol}(\widehat{\scr{E}}\vert_{\lambda = 0}, \widehat{\scr{F}}\vert_{\lambda = 0})\]
whose $H^0$ identifies the flat morphisms with the Higgs morphisms. For arbitrary artinian local $A$, we get the corresponding quasi-isomorphism for $A$-deformations by universality and pulling back. Thus, we have a $\GG_m$-equivariant formal isomorphism (\cref{thm:formal-isom-triples-and-composition})
\[\Psi_{E^{\Rm{ss}}, F^{\Rm{ss}}, f^{\Rm{ss}}}: \widehat{R}^{\trip}_{\DR, (\bar{E}^{\Rm{ss}}, \bar{F}^{\Rm{ss}}, f^{\Rm{ss}})} \xrightarrow{\simeq} \widehat{R}^{\trip}_{\Dol, \Sf{SM}^{\Rm{ps}}(\bar{E}^{\Rm{ss}}, \bar{F}^{\Rm{ss}}, f^{\Rm{ss}})}\]
and a similar algebraization argument allows us to define $\Sf{SM}(f)$. This $\Psi_{E^{\Rm{ss}}, F^{\Rm{ss}}, f^{\Rm{ss}}}$ is compatible with composition on universal families, hence $\Sf{SM}$ as defined is actually a functor, see \cref{thm:functor-verification}. On the other hand, an equivariant algebraization result for complexes on $\AAA^1$ gives a quasi-isomorphism \eqref{eq:quasi-isom-pulled-back-A1}
\[\Phi_{\bar{E}, \bar{F}}^{\Sf{SM}}: \Rm{R}\Gamma(\bar{X}, C_{\DR}(\bar{E}, \bar{F})) \xrightarrow{\simeq} \Rm{R}\Gamma(\bar{X}, C_{\Dol}(\Sf{SM}(\bar{E}), \Sf{SM}(\bar{F})))\]
by pulling back the universal logarithmic DR/Dol complexes (\cref{sec:moduli-triples}) along the equivariant arcs. Taking $H^0$ gives 
\[H^0(\Phi_{\bar{E}, \bar{F}}^{\Sf{SM}}): \Hom_{\Sf{Conn}_{\Rm{nilp}}(\bar{X}, D)}(\bar{E}, \bar{F}) \xrightarrow{\simeq} \Hom_{\Sf{Higgs}_{\nilp}^{0, \Rm{ss}}(\bar{X}, D)}(\Sf{SM}(\bar{E}), \Sf{SM}(\bar{F}))\]
and by \cref{lem:morphism-formal-transport-and-equiv-algebraization}, $\Sf{SM}(f) = H^0(\Phi_{\bar{E}, \bar{F}}^{\Sf{SM}})(f)$. Consequently, $\Sf{SM}$ is fully faithful. Then a categorical argument, \cref{lem:exactness-and-socle-preservation} implies that $\Sf{SM}$ is exact, preserves the socle filtration, and the filtration on $\Sf{SM}(\bar{E}) = \alpha_{\bar{E}}^{\Dol}(1)$ associated to the algebraized Rees arc $\alpha_{\bar{E}}^{\Dol}$ is actually the socle filtration on $\Sf{SM}(\bar{E})$. Then, by symmetry, we can do all the constructions in the reverse direction to get a quasi-inverse functor to $\Sf{SM}$. 
\begin{remark}
  The curve assumption is mainly used for the comparison \cref{thm:relative-DR-Dol-quasi-isom-with-coeff}. If this can be generalized to higher dimensions, perhaps using a multi-V-filtration, one should get the full correspondence in all dimensions. 
\end{remark}
\subsection{Acknowledgement}
The author would like to thank his advisor, Benjamin Bakker, for suggesting this project, numerous conversations and suggestions, as well as his enduring support. The author is also grateful to Izzet Coskun for conversations around moduli of vector bundles, and Yohan Brunebarbe for his interest and comments toward a previous version. 

\section{Preliminaries on twistor theory}
\label{sec:background}
We largely follow \cite{bakker2024linear} for the conventions. For the following definitions, our $(\bar{X}, D)$ is a pair of a projective smooth complex connected curve $\bar{X}$ with a set of reduced points $D$. 

\subsection{Logarithmic flat connections and Higgs bundles} Let $\Omega_{\bar{X}}^1(\log D)$ be the line bundle of logarithmic one-forms. 
\begin{definition}
  A logarithmic connection $(E, \nabla)$ is a vector bundle $E$ on $\bar X$ with a $\CC$-linear map $\nabla: E \to E \otimes \Omega_{\bar X}^1(\log D)$ satisfying the product rule $\nabla(fs) = \der f \otimes s + f\nabla s$ for local sections $f \in \sh{O}_{\bar X}$ and $s \in E$. Locally, using a holomorphic frame around $p \in \bar{X}$, one can write 
  \[\nabla = \der + A(z)\frac{\der z}{z}\]
  and the residue is defined to be 
  \[\Rm{Res}_p\nabla \coloneq A(0) \in \Rm{End}(E_p)\]
  which is independent, up to conjugacy, of the choice of coordinate and frame. 
\end{definition}

\begin{definition}
  A logarithmic Higgs bundle $(E, \theta)$ is a vector bundle $E$ on $\bar{X}$ with a $\sh{O}_{\bar X}$-linear map $\theta: E \to E \otimes \Omega_{\bar X}^1(\log D)$. Locally, $\theta$ is a matrix of logarithmic one-forms, and its residue $\Rm{Res}_p\theta$ is defined to be the coefficient of $\der z/ z$, valued at $p$. \\
  \tab We say that $E$ is stable (resp. semistable) if for every nonzero proper subbundle $F \subset E$, with $\theta(F) \subset F \otimes \Omega_{\bar X}^1(\log D)$, we have $\mu(F) < \mu(E)$ (resp. $\mu(F) \leq \mu(E)$) where $\mu = \deg / \rank$.
\end{definition}

Consider $(E, \nabla)$ a flat connection on $X = \bar{X} \backslash D$ with open immersion $j: X \hookrightarrow \bar{X}$. By the Riemann-Hilbert correspondence, this corresponds to a local system on $X$. Suppose that the local monodromy $T_p$ around every $p \in D$ is unipotent, and set 
\[N_p = \frac{-1}{2\pi i} \log T_p = \frac{-1}{2\pi i} \Sum{k \geq 1} \frac{(-1)^{k + 1}}{k}(T_p - I)^k\] 
then $N_p$ is nilpotent, and we have the functorial Deligne's canonical extension $(\bar{E}, \nabla)$, see \cite{deligne2006equations}, which is the logarithmic flat connection on $(\bar{X}, D)$ satisfying
\begin{itemize}
  \item the residue at $p$ is $N_p$ with eigenvalues 0;
  \item $j_*E \simeq \bar{E} \otimes_{\sh{O}_{\bar{X}}} \sh{O}_{\bar{X}}(*D)$, hence we also call $\bar{E}$ a logarithmic lattice for the meromorphic extension.
\end{itemize}
\tab More explicitly, if $s$ is a multivalued flat section on the punctured disk around $p$, then $\exp(N \log z)s$ is single-valued, and we can form a local basis of $\bar{E}$ using these single-valued sections. \\
\tab If $E, F$ have nilpotent residues $N_E, N_F$ at $p \in D$, then 
\[N_{E \otimes F} = N_E \otimes 1 + 1 \otimes N_F, \tab  N_{E^\vee} = -N_E^\vee, \tab N_{\underline{\Rm{Hom}}(E, F)}: \phi \mapsto N_F \circ \phi - \phi \circ N_E\]
and consequently, Deligne's canonical extension commutes with tensor products, duals, and internal $\underline{\Rm{Hom}}$. 

\subsection{Mixed twistor structures and algebras} Mixed twistor structures were introduced by Simpson in \cite{simpson1997mixed} as a generalization of Hodge structures.
\begin{definition}
  A complex mixed twistor structure (MTS) is a pair $(V, W_\bullet)$, where $V$ is a vector bundle on $\PP^1$, and $W_\bullet V$ is a locally split increasing filtration, called the weight filtration, by subbundles such that 
  \[\Rm{gr}^W_k V \simeq \sh{O}_{\PP^1}(k)^{\oplus m_k} \tab k \in \ZZ\]
  We say that $(V, W_\bullet)$ is pure of weight $k$ if $V \simeq \sh{O}_{\PP^1}(k)^{\oplus m}$. A morphism of MTS is an $\sh{O}_{\PP^1}$-linear filtered map.
\end{definition}

These form an abelian, rigid tensor category $\mathsf{MTS}_\CC$ where every morphism is automatically strict with respect to the weight filtration. In $\mathsf{MTS}_\CC$, tensor products add weights, duals reverse weights, and we have an unit 
\[\sh{T}(0) \coloneq \sh{O}_{\PP^1}\]
pure of weight 0. 
\begin{lemma}[{\cite[Lemma 4.15]{bakker2024linear}}]
  \label{lem:delignesplitting}
  Fix a point $\lambda \in \PP^1$. Every mixed twistor structure admits a functorial splitting of its weight filtration when restricted to $\PP^1 \backslash \set{\lambda}$. Furthermore, the splitting is compatible with tensor products and duals. 
\end{lemma}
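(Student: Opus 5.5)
The plan is the Deligne-type splitting: find a grading of $V$ on $U = \PP^1\setminus\{\lambda\}$ by pieces that are trivial bundles, chosen canonically from the point $\lambda$ and the integrality of the weights. By a projective change of coordinate we may assume $\lambda = \infty$, so $U = \AAA^1 = \Spec \CC[u]$. The point is that on $\AAA^1$ every $\sh{O}(k)$ is trivialized canonically up to scalar. We fix the section $s_k = (\text{section of } \sh{O}(1) \text{ vanishing at } \infty)^{\otimes k}$, which is a nowhere-vanishing section of $\sh{O}(k)$ over $U$. These sections are compatible with tensor products, $s_k \otimes s_l = s_{k+l}$, and with duals, $s_k^\vee = s_{-k}$.

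\textbf{Step 1: the candidate.} For an MTS $(V, W_\bullet)$ put $U_k \subset \Gamma(U, W_k V)$ to be the $\CC[u]$-submodule generated by global sections of $W_k V \otimes \sh{O}(-k)$, multiplied by $s_k$. Equivalently, take the image of
\[H^0(\PP^1, W_kV(-k)) \otimes \CC[u] \to \Gamma(U, W_kV).\]
This is the analogue of Deligne's $I^{p,q}$ construction. The weight-$k$ piece of $W_k$ should consist of sections that extend over $\infty$ with the "pure weight $k$" growth.

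\textbf{Step 2: $U_k$ is a complement to $W_{k-1}$.} We induct on the length of the filtration. The obstruction to lifting $H^0(\mathrm{gr}^W_k V(-k)) \cong \CC^{m_k}$ to $H^0(W_kV(-k))$ lives in $H^1(W_{k-1}V(-k))$. By induction, $W_{k-1}V(-k)$ is an iterated extension of $\sh{O}(j-k)$ with $j<k$, so $H^1 \neq 0$ in general, and the naive global construction fails. I therefore expect this step to be the main obstacle.

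The fix is to use the point $\lambda$: work with sections of $W_kV(-k)$ twisted to allow poles at $\infty$ but with controlled order, namely $H^0(W_kV(-k) \otimes \sh{O}(*\infty))$. Then $W_{k-1}V(-k)\vert_U$ has vanishing $H^1$ because $U$ is affine. The lift exists, but it is ambiguous by $H^0(U, W_{k-1}V(-k))$. To kill the ambiguity, I would impose a normalization at $\infty$. This means choosing a formal coordinate, filtering by pole order, and requiring the lift to lie in the piece of minimal pole order. Concretely, on the trivialized graded pieces, sections of $\sh{O}(j-k)$ over $U$ are $\CC[u]$, and the extension class of $V$ gives the lifting problem.

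This is exactly Deligne's splitting for the Hodge-type filtration $F$ determined by the pole-order filtration at $\infty$ together with $W$. Identify the fiber at $\infty$ with the "$\bar F$" data and the $\CC[u]$-structure with $F$. The MTS condition that the $\mathrm{gr}^W_k$ are pure of weight $k$ is exactly Deligne's opposedness condition, so Deligne's $I^{p,q}$ recipe gives a unique splitting. The crucial claim to check is this translation: an MTS restricted to $\AAA^1$ with its behavior at $\infty$ is a filtered $\CC[u]$-module that is a "mixed Hodge-like" pair of opposite filtrations on the fiber.

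\textbf{Step 3: functoriality and tensor compatibility.} Once the splitting is defined intrinsically, as Deligne's $I^{p,q}$ are, in terms of $W$ and the pole-order filtration at $\lambda$ with formulas built only from intersections and sums, the rest follows formally. Morphisms of MTS are strict and preserve both filtrations, so they preserve the splitting. Tensor products and duals preserve both filtrations with additive behavior, and Deligne's $I^{p,q}$ are multiplicative, so compatibility follows. Uniqueness of the splitting under the normalization then gives independence of the choices, in particular of the coordinate at $\lambda$ up to the scaling, which acts compatibly on all $s_k$.
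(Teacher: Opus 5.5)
The paper gives no proof of its own here; it imports the statement from Bakker--Brunebarbe--Tsimerman, so your argument is judged on its merits. There is a genuine gap in Step 2, which is the entire content of the lemma.

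\textbf{What is right.} Your diagnosis of why the Step 1 candidate fails is correct: global lifts are obstructed by $H^1(\PP^1, W_{k-1}V(-k))$. The strategy of lifting $H^0(\PP^1,\mathrm{gr}^W_kV(-k))$ to sections over $U=\PP^1\setminus\set{\lambda}$ with controlled poles at $\lambda$ is the right one.

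\textbf{First problem: the normalization is not unique.} A single ``minimal pole order'' condition kills the ambiguity only when exactly one weight occurs below $k$. Suppose $j_1<j_2<k$ both occur. In general the $\mathrm{gr}^W_{j_1}$-part of the obstruction only dies at pole order $m=k-j_1-1$. At that order the ambiguity $H^0(\PP^1,W_{k-1}V(-k)\otimes\sh{O}(m\lambda))$ is nonzero: its graded pieces $\sh{O}(i-j_1-1)^{m_i}$ all have degree $\ge -1$, hence no $H^1$, and the piece with $i=j_2$ has sections.

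Concretely, take weights $0,1,3$ and $V=\sh{O}(1)\oplus V'$, where $V'$ is an extension of $\sh{O}(3)$ by $\sh{O}$ whose class in $H^1(\sh{O}(-3))$ survives in $H^1(\sh{O}(-3)\otimes\sh{O}(\lambda))$. Lifts of $1\in H^0(\mathrm{gr}^W_3V(-3))$ then need pole order $2$. Their $\sh{O}(1)$-component can be any element of $H^0(\sh{O}(-2)\otimes\sh{O}(2\lambda))\cong\CC$. Functoriality, applied to the projection onto the summand $\sh{O}(1)$, forces that component to be $0$, so minimal pole order does not single it out.

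\textbf{Second problem: Deligne's $I^{p,q}$ lemma does not fill the hole.} An MTS is not a finite-dimensional bifiltered vector space. The fiber at $\lambda$ carries none of the extension data, which lives in $H^1(\PP^1,\sh{O}(j-k))$, and the $\CC[u]$-module structure is not a second filtration. Moreover, Deligne's formula rests on strictness of $\bar F$ with respect to $W$, and that is exactly what fails here. Transcribe it literally, with ``pole order at most $k-i-1$ on $W_iV(-k)$'' playing the role of $\bar F^{q-j}\cap W_{p+q-j-1}$. The correction terms then vanish, since $H^0(\PP^1,W_iV(-k)\otimes\sh{O}((k-i-1)\lambda))=0$ for $i<k$. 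You land back on the failed candidate $H^0(\PP^1,W_kV(-k))$.

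\textbf{The missing idea.} Impose weight-dependent pole bounds on \emph{quotients} rather than subobjects. Call $\sigma\in\Gamma(U,W_kV(-k))$ admissible if, for every $j\le k$, its image in $(W_kV/W_{j-1}V)(-k)$ extends to a global section of $(W_kV/W_{j-1}V)(-k)\otimes\sh{O}(n_j\lambda)$, where $n_j=\max(k-j-1,0)$.

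Argue by descending induction on $j$. We have $n_{j+1}\le n_j$, and for $j<k$ the bundle $\mathrm{gr}^W_jV(-k)\otimes\sh{O}(n_j\lambda)\cong\sh{O}(-1)^{m_j}$ has vanishing $H^0$ and $H^1$. So each admissible section of $(W_kV/W_jV)(-k)$ lifts uniquely to an admissible section of $(W_kV/W_{j-1}V)(-k)$. Hence the admissible sections $\widetilde L_k$ map isomorphically onto $H^0(\PP^1,\mathrm{gr}^W_kV(-k))$. The image $G_k$ of $\widetilde L_k\otimes_{\CC}\sh{O}(k)\vert_U$ in $W_kV\vert_U$ is then a complement to $W_{k-1}V\vert_U$. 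The acyclicity of $\sh{O}(-1)$ is the real substitute for Deligne's opposedness.

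\textbf{Step 3 with this definition.} It goes through, but not for the reason you give; strictness is irrelevant.
\begin{enumerate}
\item Functoriality: a morphism of bundles cannot increase pole order at $\lambda$, so it follows from uniqueness of admissible lifts.
\item Tensor products: in a local splitting of $W$ near $\lambda$, admissibility says exactly that the weight-$i$ component has pole order at most $\max(k-i-1,0)$. The inequality $\max(\alpha-1,0)+\max(\beta-1,0)\le\max(\alpha+\beta-1,0)$ for $\alpha,\beta\ge0$ then shows tensors of admissible sections are admissible.
\item Duals: apply functoriality and tensor compatibility to the evaluation pairing $V^\vee\otimes V\to\sh{O}$.
\end{enumerate}
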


\begin{definition}[$\sh{T}(0)-$MTS-algebra]
  A $\sh{T}(0)$-MTS-algebra is a commutative unital algebra object $\sh{A}$ in $\mathsf{MTS}_\CC$, i.e., $\sh{A} \in \mathsf{MTS}_\CC$ with morphisms of MTS
  \[\sh{T}(0) \xrightarrow{u} \sh{A}, \tab \sh{A} \otimes \sh{A} \xrightarrow{\mu_{\sh{A}}} \sh{A}\]
  satisfying the usual unit, associativity, and commutativity diagrams. Similarly, we can define a $\bm{\Lambda}-$MTS-algebra (and module) from a $\sh{T}(0)-$MTS-algebra $\bm{\Lambda}$. \\
  \tab Now, $\sh{A}$ is also a coherent commutative $\sh{O}_{\PP^1}-$algebra, so we can view it as a relative affine scheme $\Spec_{\PP^1} \sh{A} \to \PP^1$. Then it makes sense to define $\abs{\sh{A}} \coloneq \sh{A}\vert_{\lambda = 1}$, which is a complex algebra. 
\end{definition}

\begin{definition}[Artinian local $\sh{T}(0)-$MTS-algebra]
  An artinian local $\sh{T}(0)$-MTS-algebra is a $\sh{T}(0)-$MTS-algebra $\sh{A}$ for which $\abs{\sh{A}}$ is a local artinian $\CC$-algebra with residue field $\CC$, together with a morphism $\sh{A} \twoheadrightarrow \sh{T}(0)$. \\
  \tab Since $\mathsf{MTS}_\CC$ is abelian, $\Ide{m}_{\sh{A}} \coloneq \ker(\sh{A} \twoheadrightarrow \sh{T}(0))$ is a locally free sheaf on $\PP^1$. We have a splitting coming from the unit morphism $\sh{T}(0) \to \sh{A}$, hence a decomposition $\sh{A} = \sh{T}(0) \oplus \Ide{m}_{\sh{A}}$. Morphisms of artinian local $\sh{T}(0)$-MTS-algebras preserve the unit and residue morphisms, and together these form a category $\mathsf{MTS\text{-}Art}$. 
\end{definition}

We have the following affine splitting, essentially as a consequence of \cite[Lemma 4.35]{bakker2024linear}. 
\begin{lemma}
  \label{lem:affinesplitting}
  Let $\sh{A}$ be an artinian $\sh{T}(0)-$MTS-algebra and $\sh{K} = \brac*{\sh{K}^\bullet, \der^\bullet}$ a bounded complex of $\sh{A}$-MTS-modules. On $\AAA^1_\lambda \coloneq \PP^1 \setminus \set{\infty}$ we have a functorial isomorphism of complexes of $\abs*{\sh{A}} \otimes_\CC \sh{O}_{\AAA^1_\lambda}$-modules
  \[\varphi_{\sh{K}}: \sh{K}\vert_{\AAA^1_\lambda} \simeq \sh{O}_{\AAA^1_\lambda} \otimes_\CC \abs*{\sh{K}}\]
  which is compatible with morphisms, derived tensor products, and artinian $\sh{T}(0)$-MTS-algebra coefficient base change. 
\end{lemma}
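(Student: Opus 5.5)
We derive \cref{lem:affinesplitting} from the functorial weight splitting of \cref{lem:delignesplitting} at the point $\lambda = \infty$, combined with the structure of pure twistor structures on $\AAA^1_\lambda$. The statement is essentially \cite[Lemma 4.35]{bakker2024linear}; we indicate how the pieces fit.

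\textbf{Step 1: a single MTS.} By \cref{lem:delignesplitting} applied with the point $\infty$, every MTS $V$ has a functorial splitting
\[V\vert_{\AAA^1_\lambda} \simeq \bigoplus_k \Rm{gr}^W_k V\vert_{\AAA^1_\lambda},\]
compatible with tensor products and duals. Each $\Rm{gr}^W_k V \simeq \sh{O}(k)^{\oplus m_k}$ is pure. Restricted to $\AAA^1_\lambda$, we trivialize $\sh{O}(k)$ by the canonical section nonvanishing on $\AAA^1_\lambda$, namely $\sh{O}(k) = \sh{O}(k\cdot\infty)$. This gives a functorial isomorphism
\[\Rm{gr}^W_k V\vert_{\AAA^1_\lambda} \simeq \sh{O}_{\AAA^1_\lambda} \otimes_{\CC} H^0\bigl(\PP^1, \Rm{gr}^W_k V(-k)\bigr).\]
Morphisms of pure structures of the same weight are constant, and these identifications are multiplicative in $k$. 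Restricting to $\lambda = 1$ identifies the constant vector space with $\abs{\Rm{gr}^W_k V}$. Composing with the inverse splitting at $\lambda = 1$ gives $\varphi_V: V\vert_{\AAA^1_\lambda} \simeq \sh{O}_{\AAA^1_\lambda} \otimes_{\CC} \abs{V}$, which restricts to the identity at $\lambda = 1$.

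\textbf{Step 2: functoriality and tensor compatibility.} Morphisms of MTS are strict and filtered, so the functorial splitting converts them into graded maps. Graded maps between pure pieces of equal weight are constant matrices, so $\varphi$ is natural in $V$. Tensor compatibility follows from the tensor compatibility in \cref{lem:delignesplitting}, together with $\sh{O}(a)\otimes \sh{O}(b) = \sh{O}(a+b)$ for the chosen trivializing sections.

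\textbf{Step 3: algebras, modules and complexes.} Let $\sh{A}$ be an algebra object and $\sh{K}$ an $\sh{A}$-module. Since $\varphi$ is monoidal and natural, applying it to the multiplication $\sh{A}\otimes\sh{A}\to\sh{A}$ and to the action $\sh{A}\otimes\sh{K}\to\sh{K}$ turns $\varphi_{\sh{A}}$ into an algebra isomorphism onto $\sh{O}_{\AAA^1_\lambda}\otimes\abs{\sh{A}}$, and $\varphi_{\sh{K}}$ into a semilinear module isomorphism. The differentials $\der^\bullet$ are MTS morphisms, so naturality makes $\varphi$ a map of complexes. For base change along $\sh{A}\to\sh{B}$, note that $\sh{B}\otimes_{\sh{A}}\sh{K}$ is a cokernel of MTS morphisms, and $\varphi$ is exact, being natural on an abelian category of strict maps. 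Derived tensor products are handled by bar resolutions built from tensor products in $\mathsf{MTS}_\CC$, where compatibility again reduces to monoidality and exactness. The artinian hypothesis guarantees that these resolutions can be truncated or handled termwise with finite rank objects.

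\textbf{Main obstacle.} The delicate point is Step 1: checking that the trivialization of each $\sh{O}(k)$ on $\AAA^1_\lambda$ is canonical and multiplicative, so that monoidality genuinely holds and the splitting does not merely exist. The derived tensor compatibility is the next most technical step. For that, we would cite \cite[Lemma 4.35]{bakker2024linear} rather than redo the construction.
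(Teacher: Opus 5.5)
Your construction of $\varphi$ is the paper's. It uses the Deligne splitting of \cref{lem:delignesplitting} at $\infty$, evaluation on each pure piece $\Rm{gr}^W_k$, and a trivialization of $\sh{O}(k)$ on $\AAA^1_\lambda$ normalized to be the identity at $\lambda = 1$. It then composes with the inverse splitting at $\lambda=1$, and applies naturality and monoidality to the unit, multiplication, action maps and differentials.

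The real divergence is in derived tensor products and base change. The paper never builds resolutions inside $\mathsf{MTS}_\CC$. It has $\varphi_{\sh{A}}$ as an algebra isomorphism and $\varphi_{\sh{K}},\varphi_{\sh{M}}$ as compatible termwise isomorphisms of complexes. From there it uses only two facts: restriction to $\AAA^1_\lambda$ is exact, and $\sh{O}_{\AAA^1_\lambda}\otimes_\CC\abs{\sh{A}}$ is flat over $\abs{\sh{A}}$. These give $(\sh{K}\otimes^{\Rm{L}}_{\sh{A}}\sh{M})\vert_{\AAA^1_\lambda}\simeq\sh{O}_{\AAA^1_\lambda}\otimes_\CC(\abs{\sh{K}}\otimes^{\Rm{L}}_{\abs{\sh{A}}}\abs{\sh{M}})$, and base change is just the case $\sh{M}=\sh{A}'$.

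Your two-sided bar complex also works. Its terms $\sh{K}^i\otimes\sh{A}^{\otimes n}\otimes\sh{M}^j$ are finite-rank MTS, and the resolving terms are $\sh{A}$-flat because everything is locally free over $\sh{O}_{\PP^1}$. The differentials are MTS morphisms, so $\varphi$ applies termwise. Its fiber at $\lambda=1$ is the bar complex over $\CC$ computing $\abs{\sh{K}}\otimes^{\Rm{L}}_{\abs{\sh{A}}}\abs{\sh{M}}$. This buys a chain-level, visibly monoidal identification of explicit models, at the cost of more bookkeeping; the paper's flatness argument is shorter and needs no resolutions.

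Your cokernel argument for underived base change is fine once stated precisely. By strictness, $V\mapsto V\vert_{\AAA^1_\lambda}$ and $V\mapsto \sh{O}_{\AAA^1_\lambda}\otimes_\CC\abs{V}$ are both exact on $\mathsf{MTS}_\CC$. Since $\varphi$ is a natural isomorphism between them, it identifies the cokernels defining $\sh{A}'\otimes_{\sh{A}}\sh{K}$.

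One correction: the artinian hypothesis does not let you truncate the bar resolution. For $\abs{\sh{A}}=\CC[\epsilon]/(\epsilon^2)$, for instance, the residue field has infinite flat dimension. You do not need truncation, though. For bounded $\sh{K},\sh{M}$ each total degree of the bar complex involves only finitely many finite-rank terms, so that step does not use artinianity at all.
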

\begin{proof}
  The functorial splitting of \cref{lem:delignesplitting} gives 
  \[\phi_{\sh{A}}:  \sh{A}\vert_{\AAA^1_\lambda} \xrightarrow{\simeq} \brac*{\Rm{gr}^W \sh{A}}\vert_{\AAA^1_\lambda} \]
  \tab Since $\Rm{gr}^W_k \sh{A}$ is pure of weight $k$, evaluation gives an isomorphism 
  \[\op{ev}_k: H^0\brac*{\PP^1, \Rm{gr}^W_k \sh{A} \otimes \sh{O}_{\PP^1}(-k)} \otimes_\CC \sh{O}_{\PP^1}(k) \xrightarrow{\simeq} \Rm{gr}^W_k \sh{A}\]
  \tab Now, notice that we also have an isomorphism coming from evaluation at $\lambda = 1$:
  \[H^0\brac*{\PP^1, \Rm{gr}^W_k \sh{A} \otimes \sh{O}_{\PP^1}(-k)} \xrightarrow{\simeq} \Rm{gr}^W_k \sh{A}\vert_{\lambda = 1} = \abs*{\Rm{gr}^W_k \sh{A}}\]
  thus we can trivialize $\sh{O}_{\PP^1}(k)\vert_{\AAA^1_\lambda}$ to get $\tau_{\sh{A}}: \Rm{gr}^W_k \sh{A}\vert_{\AAA^1_\lambda} \xrightarrow{\simeq} \sh{O}_{\AAA^1_\lambda} \otimes_\CC \abs{\Rm{gr}^W_k \sh{A}}$ which is also functorial (since evaluations are functorial) and just the identity at $\lambda = 1$. Collecting these isomorphisms, we end up with 
  \[\varphi_{\sh{A}}\coloneq (1 \otimes \phi_{\sh{A},1}^{-1}) \circ \tau_{\sh{A}} \circ \phi_{\sh{A}}: \sh{A}\vert_{\AAA^1_\lambda} \xrightarrow{\simeq} \sh{O}_{\AAA^1_\lambda} \otimes_\CC \abs{\sh{A}} \] 
  which is functorial, and compatible with tensor products. Then functoriality with respect to the multiplication $\sh{A} \otimes \sh{A} \to \sh{A}$ and unit $\sh{T}(0) \to \sh{A}$ implies that $\varphi_{\sh{A}}$ is an isomorphism of algebras. Let $\sh{M}$ be a $\sh{A}-$MTS-module, then we have the same isomorphism $\varphi_{\sh{M}}$, and functoriality with respect to the module map $\sh{A} \otimes \sh{M} \to \sh{M}$ gives that $\varphi_{\sh{M}}$ is an isomorphism of $\sh{O}_{\AAA^1_\lambda} \otimes_\CC \abs*{\sh{A}}$-modules. \\
  \\
  \tab Moving on to a complex $\sh{K} = \brac*{\sh{K}^\bullet, \der^\bullet}$, for each $i$ we have 
  \[\varphi_{\sh{K}^i}: \sh{K}^i\vert_{\AAA^1_\lambda} \xrightarrow{\simeq} \sh{O}_{\AAA^1_\lambda} \otimes \abs{\sh{K}^i}\]
  and since each $\der^i$ is a morphism of $\sh{A}-$MTS-modules, functoriality assembles these $\varphi_{\sh{K}^i}$ into a isomorphism $\varphi_{\sh{K}}$of complexes. Clearly $\varphi_{\sh{K}}$ is functorial. Now suppose we have another complex $\sh{M} = \brac*{\sh{M}^\bullet, \der^\bullet}$, then individual tensor compatibility and flatness of $\sh{O}_{\AAA^1_\lambda}$ over $\CC$ implies 
  \begin{align*}
    \brac*{\sh{K} \otimes^{\Rm{L}}_\sh{A} \sh{M}} \vert_{\AAA^1_\lambda} &\simeq \sh{K}\vert_{\AAA^1_\lambda} \otimes^{\Rm{L}}_{\sh{A}\vert_{\AAA^1_\lambda}} \sh{M}\vert_{\AAA^1_\lambda} &\text{since open restriction is exact}\\
    &\simeq \brac*{\sh{O}_{\AAA^1_\lambda} \otimes_\CC \abs{\sh{K}}} \otimes^{\Rm{L}}_{\sh{O}_{\AAA^1_\lambda} \otimes_\CC \abs{\sh{A}}} \brac*{\sh{O}_{\AAA^1_\lambda} \otimes_\CC \abs{\sh{M}}}\\
    &\simeq \sh{O}_{\AAA^1_\lambda} \otimes_\CC \brac*{\abs{\sh{K}} \otimes^{\Rm{L}}_{\abs{\sh{A}}} \abs{\sh{M}}} &\text{since $\sh{O}_{\AAA^1_\lambda} \otimes_\CC \abs{\sh{A}}$ is flat over $\abs{\sh{A}}$} 
  \end{align*}
  \tab Finally, for $f: \sh{A} \to \sh{A}'$ a morphism of artinian $\sh{T}(0)-$MTS-algebras, 
  \[\brac*{\sh{A}' \otimes^{\Rm{L}}_\sh{A} \sh{K}} \vert_{\AAA^1_\lambda} \simeq \sh{O}_{\AAA^1_\lambda} \otimes_\CC \brac*{\abs{\sh{A}'} \otimes^{\Rm{L}}_{\abs{\sh{A}}} \abs{\sh{K}}}\] as complexes of $\abs{\sh{A}'} \otimes_\CC \sh{O}_{\AAA^1_\lambda}-$modules, giving compatibility with artinian base change. 
\end{proof}

\begin{definition}[Pro-MTS and pro-$\sh{T}(0)$-MTS-algebra]
  \leavevmode
  \begin{enumerate}
    \item A pro-MTS is a pro-object in $\mathsf{MTS}_\CC$. 
    \item A pro-$\sh{T}(0)$-MTS-algebra is a $\sh{T}(0)$-algebra object in the category of pro-MTS. Equivalently, it's an inverse system of $\sh{T}(0)-$MTS algebras.
    \item A pro-$\sh{T}(0)$-MTS-algebra $\sh{A}$ is complete, noetherian, artinian, or local when $\abs{\sh{A}}$ is as a $\CC$-algebra. 
    \item If $\bm{\Lambda}$ is a pro-$\sh{T}(0)$-MTS-algebra, we define a pro-$\bm{\Lambda}$-MTS-module to be a pro-$\bm{\Lambda}$-module object in $\mathsf{MTS}_\CC$. 
  \end{enumerate}
\end{definition}
\begin{remark}
  In this paper, every pro-$\sh{T}(0)$-MTS-algebra $\bm{\Lambda}$ is complete noetherian with 
  \[\bm{\Lambda} = \varprojlim_n \bm{\Lambda}_n, \tab \bm{\Lambda}_n \coloneq \bm{\Lambda}/\Ide{m}_{\bm{\Lambda}}^{n + 1}\]
  where each $\bm{\Lambda}_n$ is an artinian local $\sh{T}(0)-$MTS-algebra, with quotients $\bm{\Lambda}_{n + 1} \to \bm{\Lambda}_n$ being morphisms in $\mathsf{MTS}_\CC$. In this situation, a pro-$\bm{\Lambda}$-MTS-module can be defined levelwise. We can also define a formal scheme $\op{Spf} \abs{\bm{\Lambda}}$. 
\end{remark}

\subsection{Harmonic bundles and variations of twistor structures}
We first review harmonic bundles, following \cite[Section 7.3]{bakker2024linear}. This theory was developed by Corlette in \cite{corlette1988flat}, Simpson in \cite{simpson1990harmonic, simpson1992higgs}, and by Mochizuki in \cite{mochizuki2006kobayashi, mochizuki2009kobayashi}. 
\begin{definition}
  \label{def:harmonic-decomp1}
  Let $\sh{E}$ be a complex $\sh{C}^\infty$-vector bundle on a complex manifold $X$ equipped with a flat $C^\infty$-connection $D$. For $h$ a hermitian metric on $\sh{E}$, we have a decomposition
  \[D = D_h + \Psi\]
  where $D_h$ is unitary with respect to $h$, and $\Psi$ is self-adjoint for $h$. Take a further type decomposition into $(1, 0)$ and $(0, 1)$ components
  \[D_h = \partial_h + \bar{\partial}_h, \tab \Psi = \theta + \theta^*\]
  \tab We say that $h$ is pluriharmonic if $(\theta + \bar{\partial}_h)^2 = 0$. A harmonic bundle $(\sh{E}, D, h)$, equivalently $(\sh{E}, \bar{\partial}_h, \theta, h)$, is then a smooth complex vector bundle $\sh{E}$ with a flat smooth connection $D$ and a pluriharmonic metric $h$. 
\end{definition}

\begin{remark}
  A harmonic bundle $(\sh{E}, D, h)$ can be recovered from the data $(\sh{E}, \bar{\partial}_h, \theta, h)$ since $D_h$ is the unique Chern connection on $(E, \bar{\partial}_h)$ compatible with $h$, and $\theta^*$ is just the adjoint of $\theta$. 
\end{remark}

\begin{definition}
  \label{def:harmonic-decomp2}
  Given a harmonic bundle $(\sh{E}, \bar{\partial}_h, \theta, h)$ with the above decomposition, the associated holomorphic flat connection is 
  \[\brac*{E^{\Rm{DR}}, \nabla} = \brac*{(\sh{E}, D^{0, 1} = \bar{\partial}_h + \theta^*), D^{1, 0} = \partial_h + \theta}\]
  i.e., the holomorphic structure of $E^{\Rm{DR}}$ is given by $\bar{\partial}_h + \theta^*$, and the holomorphic connection is given by $\partial_h + \theta$. The associated Higgs bundle is 
  \[\brac*{E^{\Rm{Dol}}, \theta} = \brac*{(\sh{E}, \bar{\partial}_h), \theta}\]
  i.e., the holomorphic structure on $E^{\Rm{Dol}}$ is given by $\bar{\partial}_h$ and the Higgs field is $\theta$. 
\end{definition}

\begin{definition}
  Let $(\bar{X}, D)$ be a smooth projective complex variety $\bar{X}$ with a simple normal crossing divisor $D$. A harmonic bundle $(\sh{E}, \bar{\partial}_h, \theta, h)$ on $X = \bar{X} \setminus D$ is tame if $(E^{\Rm{Dol}}, \theta)$ is the restriction of a logarithmic Higgs bundle on $(\bar{X}, D)$. \\
  \tab A tame harmonic bundle $(\sh{E}, \bar{\partial}_h, \theta, h)$ is purely imaginary (resp. nilpotent) if the eigenvalues of the residues of $\theta$ in a logarithmic extension $\brac*{\bar{E}^{\Rm{Dol}}, \theta}$ are purely imaginary (resp. zero). We say that $(\sh{E}, \bar{\partial}_h, \theta, h)$ has unipotent local monodromy if the local system corresponding to $\brac*{E^{\Rm{DR}}, \nabla}$ has unipotent local monodromy. 
\end{definition}
\begin{remark}
  These are independent of the choice of log smooth compactification $(\bar{X}, D)$ of $X$, and of the logarithmic extension $\brac*{\bar{E}^{\Rm{Dol}}, \theta}$. 
\end{remark}
\begin{theorem}[{\cite[Theorem 1.1]{mochizuki2009kobayashi}}]
  \label{thm:mochizuki-correspondence}
  Let $\brac*{\bar{X}, D}$ be a log smooth curve. There is an equivalence of categories, via purely imaginary tame harmonic bundles with unipotent local monodromy, 
  \[\mathsf{SM}^{\Rm{ps}}: \mathsf{Conn}^{\Rm{ss}}_{\Rm{nilp}}(\bar{X}, D) \xrightarrow{\sim} \mathsf{Higgs}_{\Rm{nilp}}^{0, \Rm{poly}}(\bar{X}, D)\]
  between semisimple logarithmic flat vector bundle with nilpotent residues, and degree 0 polystable logarithmic Higgs bundles with nilpotent residues. 
\end{theorem}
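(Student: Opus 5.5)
The plan is to route both sides through purely imaginary tame harmonic bundles on $X$ with unipotent local monodromy, and to compare the logarithmic extensions at $D$ by growth of the harmonic metric. This is Simpson's argument in \cite{simpson1990harmonic}, specialised to weight zero and nilpotent residues.

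\emph{From flat bundles to Higgs bundles.} Start with $(\bar{E},\nabla) \in \mathsf{Conn}^{\Rm{ss}}_{\Rm{nilp}}(\bar{X}, D)$. Restricting to $X$ gives a semisimple local system whose local monodromy $T_p$ is unipotent, since $T_p = \exp(-2\pi i\,\Rm{Res}_p\nabla)$ and the residue is nilpotent. Conversely, $\bar{E}$ is recovered from this local system as Deligne's canonical extension.

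\begin{enumerate}
\item For each irreducible summand, invoke the existence of a tame pluriharmonic metric $h$, unique up to scaling (Corlette--Jost--Zuo--Simpson in the curve case). The decomposition of \cref{def:harmonic-decomp1,def:harmonic-decomp2} then produces a Higgs bundle $(E^{\Dol},\theta)$ on $X$.
\item Extend $E^{\Dol}$ over $D$ by holomorphic sections of moderate growth. Simpson's local analysis near a puncture gives a filtered (parabolic) bundle. The eigenvalues of $\Rm{Res}\,\theta$ and the parabolic weights are determined by the eigenvalues of $T_p$ and the weights of the canonical extension.
\item Unipotent monodromy with the weight-$0$ extension forces all parabolic weights to be $0$ and $\Rm{Res}_p\theta$ to be nilpotent. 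In particular the result is purely imaginary.
\item Define $\bar{E}^{\Dol}$ as the weight-$0$ piece of this filtration.
\item Show it is degree $0$ and polystable: the Chern–Weil formula for the curvature of $h$ gives parabolic degree $0$, and parabolic degree equals the ordinary degree when all weights vanish. Polystability then follows from the usual second-fundamental-form argument applied to $\theta$-invariant subsheaves, which is the curvature-decreasing property.
\end{enumerate}

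\emph{From Higgs bundles to flat bundles.} Start with a degree-$0$ polystable logarithmic Higgs bundle with nilpotent residues and view it as a parabolic Higgs bundle with trivial weights.

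\begin{enumerate}
\item Apply Simpson's Kobayashi–Hitchin theorem for tame filtered Higgs bundles on curves, which is \cite{simpson1990harmonic} in this dimension. It produces a tame harmonic metric with the prescribed weights.
\item Read the same table backwards: nilpotent residues with weight $0$ give unipotent monodromy and nilpotent residues on the canonical extension of $(E^{\DR},\nabla)$.
\item Semisimplicity of the resulting flat bundle follows from the harmonic metric.
\end{enumerate}

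\emph{Functoriality and full faithfulness.} For morphisms I will show that if $h_E,h_F$ are tame harmonic metrics, then a section $f$ of $\ul{\Hom}(E,F)$ is $D$-flat if and only if it is holomorphic and $\theta$-commuting. Given $D$-flat (resp.\ Higgs-holomorphic) $f$, compute $\Delta|f|^2 \geq 0$ via the Bochner formula, using the induced tame harmonic metric on $\ul{\Hom}$. Tameness gives at most logarithmic growth of $|f|$ near $D$, so $|f|^2$ is subharmonic and bounded by $\epsilon\log$-terms, and is therefore constant. This yields $\partial_h f = \bar\partial_h f = \theta f = \theta^* f = 0$, so $f$ is simultaneously flat and a Higgs morphism. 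The morphism extends across $D$ because both extensions are characterised by the same growth condition.

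Together with uniqueness of the harmonic metric up to automorphisms, this makes $\mathsf{SM}^{\Rm{ps}}$ a well-defined functor and an equivalence, with essential surjectivity coming from the second construction.

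\emph{Main obstacle.} I expect the hardest step to be the local comparison at the punctures. One must show that the weight-$0$ moderate-growth extension of $E^{\Dol}$ has nilpotent residue, and that the corresponding extension of $E^{\DR}$ is exactly Deligne's canonical extension. This needs Simpson's asymptotic estimates for tame harmonic bundles on a punctured disc, namely comparison with the model metrics $|z|^{2\alpha}|\log|z||^{k}$ built from the $\mathfrak{sl}_2$-data of the nilpotent residue. I would first handle rank-one and model $\mathfrak{sl}_2$-orbit cases explicitly, and then use the norm estimates to reduce the general case to these models.
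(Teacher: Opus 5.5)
Your route is Simpson's curve argument from \cite{simpson1990harmonic}. The paper does not reprove the statement; it only cites it, together with Mochizuki. Your Higgs-to-flat direction is sound, because there the Higgs-side weights $0$ are prescribed. There is, however, a genuine gap at step 1 of the flat-to-Higgs direction. On a non-compact curve, a tame harmonic metric on an irreducible local system is \emph{not} unique up to scaling, and the choice changes the output.

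For a counterexample, take $\bar X=\PP^1$, $D=\set{0,\infty}$, the trivial rank-one local system with flat frame $e$, and $h(e,e)=|z|^{2a}$ with $a\in\mathbb{R}$. Then $D_h e=\frac{a}{2}\bigl(\frac{\der z}{z}+\frac{\der\bar z}{\bar z}\bigr)e$ and $\theta=-\frac{a}{2}\frac{\der z}{z}$. This is a tame harmonic bundle for every $a$, and its weight-$0$ Higgs extension is $\bigl(\sh{O}_{\PP^1},-\frac{a}{2}\frac{\der z}{z}\bigr)$. For $a\neq 0$ its residue is neither nilpotent nor purely imaginary. Meanwhile $e$ has norm $|z|^{a}$, so the canonical extension acquires a nonzero parabolic weight. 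Unipotent monodromy therefore does not by itself force zero weights and nilpotent residues. In Simpson's table it only gives $c=0$ and $\alpha\in\ZZ$; the real part $b$ is governed by the Betti weights $-2b$, which are data you must prescribe. So ``purely imaginary'' cannot be a conclusion of step 3.

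The same example breaks your morphism step. The map $\Rm{id}\colon(\sh{O},h=1)\to(\sh{O},h=|z|^{2a})$ is flat, and $|\Rm{id}|^2=|z|^{2a}$ is subharmonic but not constant. Yet $\Rm{id}$ is not a Higgs morphism. The logarithmic growth you invoke is a property of suitably normalized metrics, not of tameness alone.

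The fix is to make the normalization an input, which is exactly what ``via purely imaginary tame harmonic bundles'' in the statement encodes. Regard the local system as a filtered local system with all Betti weights $0$. Parabolic degree $0$ is then automatic, and stability reduces to irreducibility, so you can apply Simpson's existence and uniqueness theorem within that class. This gives three things:
\begin{itemize}
\item $b=c=0$ and $\alpha=0$ in the table, hence nilpotent Higgs residues and weight $0$ on all three sides.
\item A metric unique up to automorphisms of the local system, which is what makes $\Sf{SM}^{\Rm{ps}}$ well defined on objects.
\item Flat sections of $\ul{\Hom}$ with norms $O(|z|^{-\epsilon})$ for every $\epsilon>0$, which is the growth your subharmonic-extension and Bochner argument actually requires.
\end{itemize}
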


Let $\scr{A}_X \coloneq \sh{C}^\infty_X \boxtimes \sh{O}_{\PP^1}$ be the sheaf of $C^\infty$-functions on $X_{\PP^1} \coloneq X \times \PP^1$ which are holomorphic in the $\PP^1$-direction, and choose generating sections $x_0, x_\infty \in H^0(\PP^1, \sh{O}_{\PP^1}(1))$ vanishing at $0$ and $\infty$ respectively. Then we get a derivative 
\[x_0 \partial_{X_{\PP^1}/\PP^1} + x_\infty \bar{\partial}_{X_{\PP^1}/\PP^1}: \scr{A}_X \to \scr{A}_X \otimes \Omega^1_{X_{\PP^1}/\PP^1}(1)\]
with $\Omega^1_{X_{\PP^1}/\PP^1}(1) \coloneq \Omega_X^1 \boxtimes \sh{O}_{\PP^1}(1)$, where $\Omega_X^1$ is the sheaf of $C^\infty$-one-forms on $X$. Let $\brac*{\sh{E}, \bar{\partial}_h, \theta, h}$ be a tame harmonic bundle on $X$, and define $\sh{E}_{\PP^1} \coloneq \sh{E} \boxtimes \sh{O}_{\PP^1}$. Then we have a $x_0 \partial_{X_{\PP^1}/\PP^1} + x_\infty \bar{\partial}_{X_{\PP^1}/\PP^1}$-connection 
\[\sh{D} \coloneq x_0(\partial_h + \theta^*) + x_\infty(\theta + \bar{\partial}_h): \sh{E}_{\PP^1} \to \sh{E}_{\PP^1} \otimes \Omega^1_{X_{\PP^1}/\PP^1}(1), \tab \sh{D}^2 = 0\]
where we abuse notation so that $\partial_h + \theta^*, \theta + \bar{\partial}_h$ really mean their natural extensions to $\sh{E}_{\PP^1}$. 
\begin{definition}
  The resulting $\brac*{\sh{E}_{\PP^1}, \sh{D}}$ is the variation of pure twistor structures ($\CC-$VTS) of weight 0 associated to the tame harmonic bundle $\brac*{\sh{E}, \bar{\partial}_h, \theta, h}$. We get the weight $k$ $\CC$-VTS associated to $\brac*{\sh{E}, \bar{\partial}_h, \theta, h}$ by instead taking $\sh{E}_{\PP^1, k} \coloneq \sh{E} \boxtimes \sh{O}_{\PP^1}(k)$. 
\end{definition}
\begin{remark}
  Every semisimple complex local system on $X$ underlies a purely imaginary tame harmonic bundle, hence a variation of pure twistor structures. 
\end{remark}

\begin{definition}
  \label{def:C-VMTS-def}
  A graded-polarized variation of mixed twistor structures ($\CC$-VMTS) on $X$ is a triple $\brac*{\scr{V}, W_\bullet, \sh{D}}$ such that 
  \begin{itemize}
    \item $\scr{V}$ is an $\mathscr{A}_X$-module on $X_{\PP^1}$;
    \item $\sh{D}: \scr{V} \to \scr{V} \otimes \Omega^1_{X_{\PP^1}/\PP^1}(1)$ is a $x_0 \partial_{X_{\PP^1}/\PP^1} + x_\infty \bar{\partial}_{X_{\PP^1}/\PP^1}$-connection with $\sh{D}^2 = 0$;
    \item $W_\bullet \scr{V}$ is a $\sh{D}-$flat increasing weight filtration of $\sh{E}_{\PP^1}$;
    \item each $\brac*{\Rm{gr}^W_k \scr{V}, \Rm{gr}_k^W \sh{D}}$ has a pluriharmonic hermitian metric $h_k$ making it a $\CC-$VTS of weight $k$ associated to a purely imaginary tame harmonic bundle. 
  \end{itemize}
  Morphisms are $\scr{A}_X$-linear morphisms which preserve $W_\bullet$ and commutes with $\sh{D}$. 
\end{definition}
\begin{remark}
  The data of a $\CC$-VMTS gives a family of holomorphic $\lambda$-connections, as follows. Look at the chart $\PP^1\setminus \set{\infty}$, with $x_\infty$ being the trivializing section for $\sh{O}_{\PP^1}(1)$. The restriction of $(\scr{V}, \sh{D})$ to a fiber $\lambda = x_0/x_\infty$ gives a $C^\infty$-connection 
  \[D_{\lambda} \coloneq x_\infty^{-1} \sh{D}\vert_{X \times \set{\lambda}}: \sh{V}_\lambda \to \sh{V}_\lambda \otimes \Omega_X^1\]
  \tab Taking the type decomposition $D_\lambda = D_\lambda^{1, 0} + D_\lambda^{0, 1}$, since $D_\lambda$ is a $\lambda \partial + \bar{\partial}$-connection, 
  \[D_\lambda^{0, 1}(fs) = fD_\lambda^{0, 1}s + \bar{\partial}f \otimes s, \tab D_\lambda^{1, 0}(fs) = fD_\lambda^{1, 0}s + \lambda \partial f \otimes s\]
  and if $f$ is holomorphic, $D_\lambda^{1, 0}(fs) = fD_\lambda^{1, 0}s + \lambda \der f \otimes s$. Then $D^{0, 1}_\lambda$ gives holomorphic structure on $\sh{V}_\lambda$, and $D_\lambda^{1, 0} D_\lambda^{0, 1} + D_\lambda^{0, 1} D_\lambda^{1, 0} = (D_\lambda^2)^{1, 1} = 0$ implies that $D^{1, 0}_{\lambda}$ is a holomorphic $\lambda$-connection with respect to $(\sh{V}_\lambda, D_\lambda^{0, 1})$.
\end{remark}

\subsection{Twistor modules and admissible variations with coefficients}
We treat the theory of twistor modules as a blackbox. For concrete details and proofs, see \cite{mochizuki2015mixed,sabbah2005polarizable}. \\
\tab Let $\sh{D}_{Y}$ be the sheaf of differential operators on a smooth complex variety $Y$, with its order filtration $F_\bullet$. We then define $\scr{R}_{Y} \coloneq \bigoplus_{k \geq 0} F_k \sh{D}_{Y} \lambda^k$. Locally this is generated over $\sh{O}_{Y \times \AAA^1_\lambda} = \sh{O}_Y[\lambda]$ by $\set*{\lambda \partial_{y_i}}_{i = 1}^{\dim Y}$. An $\scr{R}_{Y}-$module structure should be thought of as the data of a $\lambda-$connection, so a $\scr{R}_{Y}-$module which is locally free over $\sh{O}_Y[\lambda]$ gives a flat connection and a Higgs bundle by taking the fibers $\lambda = $1 and 0 respectively.

\begin{definition}
  An $\scr{R}_{Y}$-triple $\brac*{\scr{M}', \scr{M}'', C}$ consists of two coherent $\scr{R}_{Y}$-modules and a sesquilinear pairing along $\abs{\lambda} = 1$, see \cite[Section 2.1]{mochizuki2015mixed}. Mochizuki's category $\mathsf{MTM}(Y)$ of graded-polarized mixed twistor $\sh{D}$-modules is a full subcategory of the category of filtered $\scr{R}_{Y}-$triples, defined by imposing certain technical conditions on the nearby and vanishing cycles, see \cite[Section 7.1 and 7.2]{mochizuki2015mixed}. 
\end{definition}

\begin{example}
  A polarized $\CC-$VTS (resp. graded-polarized $\CC-$VMTS) on $Y$ gives a smooth polarized pure (resp. graded-polarized mixed) twistor $\sh{D}$-module on $Y$. For the pure case, see \cite[Theorem 1.2]{mochizuki2003asymptotic}, and for the mixed case, see \cite[Proposition 10.2.11 and 10.3.1]{mochizuki2015mixed} applied to empty boundary. 
\end{example}

\begin{remark}
  The category $\mathsf{MTM}$ is abelian \cite[Proposition 7.2.5]{mochizuki2015mixed} and enjoys nice functorialities. In particular, there are formalisms of derived pushforward, tensor products, and internal Homs \cite[Section 14.3]{mochizuki2015mixed}.
\end{remark}

\begin{definition}
  Let $(\bar{X}, D)$ be a projective log smooth curve with $X = \bar{X} \setminus D$, an admissible variation of mixed twistor structure ($\CC$-AVMTS) on $X$ is a mixed twistor $\sh{D}-$module on $\bar{X}$ whose restriction to $X$ underlies a tame purely imaginary graded-polarizable $\CC-$VMTS. 
\end{definition}

\begin{example}
  A $\CC-$VTS, associated to a tame purely imaginary harmonic bundle, has a canonical admissible extension \cite[Theorem 19.2]{mochizuki2003asymptotic}. 
\end{example}

\begin{definition}
  For a $\CC$-algebra $A$, an $A$-local system is a locally constant sheaf of finitely generated $A$-modules. We say that a $A$-local system is free if its stalks are finite free $A$-modules. 
\end{definition}
In the following, we will abuse notation to let $\sh{A}$ be both a $\sh{T}(0)$-MTS-algebra, which is a vector bundle over $\PP^1$, as well as the constant variation on $X$. 
\begin{definition}
  Let $\sh{A}$ be an artinian local $\sh{T}(0)$-MTS-algebra. An $\sh{A}$-AVMTS is an AVMTS $\scr{T}$ on $\bar{X}$ with a $\CC$-VMTS $\scr{V} = j^*\scr{T}$ and an action morphism $\sh{A} \otimes_{\sh{T}(0)} \scr{V} \to \scr{V}$ which satisfies the unit and associativity diagrams. The underlying local system of $\scr{V}$ is a $\abs{\sh{A}}-$local system. It's free if the underlying $\abs{\sh{A}}-$local system is. \\
  \tab If $\bm{\Lambda}$ is a pro-$\sh{T}(0)-$MTS-algebra, then a pro-$\bm{\Lambda}-$AVMTS is of the form 
  \[\scr{V} = \varprojlim_n \scr{V}_n, \tab \scr{V}_n = \scr{V}/\Ide{m}_{\bm{\Lambda}}^{n + 1}\scr{V}\]
  where $\scr{V}$ has an underlying $\abs{\bm{\Lambda}}-$local system, with each $\scr{V}_n$ a $\CC$-AVMTS for which the quotient map $\scr{V}_{n + 1} \to \scr{V}_n$ is a morphism of $\CC$-AVMTS, and for which $\bm{\Lambda}/\Ide{m}_{\bm{\Lambda}}^{n + 1} \subset \Rm{End}(\scr{V}_n)$ is a sub $\CC-$MTS. We have a similar notion of freeness here. 
\end{definition}
\begin{definition}[artinian $\sh{T}(0)$-MTS-algebra coefficient base change]
  \label{lem:artinian-coeff-base-change}
  It suffices to make sense of the tensor product $\sh{A}' \otimes_{\sh{A}}$ for $\sh{A} \xrightarrow{\phi} \sh{A}'$ a morphism in $\Sf{MTS\text{-}Art}$. For an $\sh{A}$-AVMTS $\scr{V}$, define 
  \[\scr{V}_{\sh{A}'} = \sh{A}' \otimes_{\sh{A}} \scr{V} = \coker\brac*{\sh{A}' \otimes \sh{A} \otimes \scr{V} \xrightarrow{\alpha - \beta} \sh{A}' \otimes \scr{V}}\]
  where $\alpha: \sh{A}' \otimes \sh{A} \otimes \scr{V} \xrightarrow{1 \otimes \phi \otimes 1} \sh{A}' \otimes \sh{A}' \otimes \scr{V} \xrightarrow{\mu_{\sh{A}'} \otimes 1} \sh{A}' \otimes \scr{V}$, and $\beta: \sh{A}' \otimes (\sh{A} \otimes \scr{V}) \to \sh{A}' \otimes \scr{V}$. The resulting $\sh{A}' \otimes_{\sh{A}} \scr{V}$ is still an AVMTS since the category of $\CC$-AVMTS is abelian. Fiberwise this is just the regular tensor product. 
\end{definition}

Let $\scr{T}$ be a $\CC-$AVMTS on projective log smooth curve $(\bar{X}, D)$ with $j: X \hookrightarrow \bar{X}$, and $\scr{V} = j^*\scr{T}$ be the $\CC$-VMTS on $X$ with unipotent local monodromy. Then we have the direct image $j_*\scr{V} \in \mathsf{MTM}(\bar{X})$,  with $\scr{M}[*D]$ the underlying $\scr{R}_{\bar{X}}-$module on the affine patch $\AAA^1_\lambda = \PP^1 \setminus \set{\infty}$ (for its construction, see \cite[Lemmas 3.1.1-3.1.4]{mochizuki2015mixed}). For every $p \in D$ with local equation $(z = 0)$, there is a canonical Kashiwara-Malgrange V-filtration $V_\bullet \scr{M}[*D]$ such that we have  
\[\lambda \partial_z: \Rm{gr}^V_b \scr{M}[*D] \xrightarrow{\simeq} \Rm{gr}^V_{b + 1} \scr{M}[*D] \tab b > -1, \tab z: V_0\scr{M}[*D] \xrightarrow{\simeq} V_{-1}\scr{M}[*D]\tag{2.1}\label{eq:V-filt-isoms}\]
where the two isomorphisms comes from \cite[Remark 3.3.9(5) and Section 3.4.a]{sabbah2005polarizable}. 
\begin{definition}
  \label{def:AVMTS-extension}
  Suppose that $\scr{T}$ is a free $\sh{A}$-AVMTS with unipotent local monodromy, then one has a notion of functorial Deligne's extension $(\scr{V}, W_\bullet \scr{V}, \bar{\scr{V}}, \bar{\scr{V}}', \sh{D})$ for $(\scr{V}, W_\bullet, \sh{D}) = j^*\scr{T}$ as well, see \cite[Lemma 8.7 and Corollary 8.8]{bakker2024linear}. We can view $(\scr{V}, \sh{D})$ as a family of $\lambda-$connections (see the remark following \cref{def:C-VMTS-def}), and $\bar{\scr{V}}$ and $\bar{\scr{V}}'$ are logarithmic extensions on the charts $\AAA^1_\lambda, \AAA^1_{\lambda^{-1}}$, respectively. The $\sh{A}$-action on $\scr{V}$ extends to $\abs{\sh{A}} \otimes \sh{O}_{\AAA^1_\lambda}$ and $\abs{\sh{A}} \otimes \sh{O}_{\AAA^1_{\lambda^{-1}}}$-actions on $\bar{\scr{V}}$ and $\bar{\scr{V}}'$. 
\end{definition}
\begin{remark}
  Over $\lambda \neq 0,  \infty$, these are just the normal Deligne's canonical extensions (we can scale each of these by $\lambda^{-1}$ to get a flat connection). Since $D \times \set{\infty}$ and $D \times \set{0}$ are codimension 2 in $\bar{X} \times \AAA^1_\lambda$ and $\bar{X} \times \AAA^1_{\lambda^{-1}}$, these extensions $\bar{\scr{V}}, \bar{\scr{V}}'$ can be classified as the unique locally free logarithmic extensions which agree with the Deligne's canonical extensions over $\lambda \neq 0, \infty$.
\end{remark}

\begin{lemma}
  \label{lem:boundary-lattices}
  Let $\sh{A}$ be an artinian local $\sh{T}(0)-$MTS-algebra, and $\scr{V} = j^*\scr{T}$ a free $\sh{A}$-AVMTS with unipotent local monodromy. Let $\scr{M}[*D]$ be the underlying $\scr{R}_{\bar{X}}$-module of $j_*\scr{V}$. Then, for every $p \in D$ with local equation $(z = 0)$, then we have identifications
  \[V_{-1}\scr{M}[*D] = \bar{\scr{V}}, \tab V_0\scr{M}[*D] = z^{-1}\bar{\scr{V}}\]
  which are functorial in $\scr{V}$ and commute with artinian $\sh{T}(0)$-MTS-algebra coefficient base change. 
\end{lemma}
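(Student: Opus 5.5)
The plan is to reduce to a local computation at $p$ and use the characterizing properties of the V-filtration. Restrict to a small disk $\Delta$ around $p$ with coordinate $z$ and work on the chart $\AAA^1_\lambda$. By \cref{def:AVMTS-extension}, $\bar{\scr{V}}$ is a locally free $\sh{O}_{\Delta\times\AAA^1_\lambda}$-module with a logarithmic $\lambda$-connection, i.e.\ stable under $\lambda z\partial_z$. Its residue $\lambda z\partial_z$ on $\bar{\scr{V}}/z\bar{\scr{V}}$ is nilpotent; this holds first for $\lambda\neq 0$ because of unipotent monodromy (the residue of the $\lambda$-connection is $\lambda N_p$), and then for all $\lambda$ by continuity and codimension two, exactly as in the remark after \cref{def:AVMTS-extension}. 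Since $\bar{\scr{V}}\otimes\sh{O}(*D)=\scr{M}[*D]$, the lattice $\bar{\scr{V}}$ sits inside $\scr{M}[*D]$. I would then define a candidate filtration $U_{-1+k}\coloneq z^{-k}\bar{\scr{V}}$ for $k\le 0$, extended to $k>0$ by $U_{-1+k}\coloneq\sum_{j\le k}(\lambda\partial_z)^j U_{-1}$, and set $U_b = U_{\lfloor b\rfloor}$ for non-integer $b$. The aim is to show that $U$ satisfies the defining axioms of the Kashiwara–Malgrange V-filtration for $\scr{R}$-modules as in \cite{sabbah2005polarizable}, so that uniqueness forces $V=U$. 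The axioms to check are: each $U_b$ is coherent over $V_0\scr{R}$; $zU_b\subset U_{b-1}$ and $\lambda\partial_z U_b\subset U_{b+1}$, with equality $zU_b=U_{b-1}$ for $b<0$; and $\lambda z\partial_z + b\lambda$ (in Sabbah's normalization, up to sign) acts nilpotently on $\mathrm{gr}^U_b$, with the correct strictness.

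For the nilpotence axiom, note that $\mathrm{gr}^U_{-1}=\bar{\scr{V}}/z\bar{\scr{V}}$. Here $\lambda z\partial_z$ acts as the nilpotent residue, and shifting by $z^{-k}$ turns this into $\lambda z\partial_z+k\lambda$. This gives the indexing $V_{-1}$ with jumps only at integers, which is where unipotence enters; a non-unipotent monodromy would produce non-integer jumps. Strict specializability, meaning that $\mathrm{gr}^U_b$ is $\lambda$-torsion free, is automatic for $b\le -1$ because $\bar{\scr{V}}$ is locally free over $\sh{O}_{\Delta}[\lambda]$ and $\bar{\scr{V}}/z\bar{\scr{V}}$ is locally free over $\CC[\lambda]$. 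Once $V_{-1}=\bar{\scr{V}}$ is established, the second identity follows from \eqref{eq:V-filt-isoms}: $z\colon V_0\xrightarrow{\simeq}V_{-1}$, hence $V_0=z^{-1}V_{-1}=z^{-1}\bar{\scr{V}}$ inside $\scr{M}[*D]$, where $z$ is invertible. In fact I would only need the range $b\le 0$, which avoids computing $U_b$ for $b>0$. To do so, I would instead argue from the known $V$: because $V_{-1}$ is a $V_0\scr{R}$-lattice of $\scr{M}[*D]$ on which $\lambda z\partial_z$ acts with nilpotent graded residue and $\lambda$-torsion-free quotient, its restriction to each $\lambda\neq 0$ is a logarithmic lattice with nilpotent residue, and therefore equals the Deligne lattice. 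Two locally free (in the reflexive sense) subsheaves agreeing outside codimension two then coincide.

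The main obstacle is this last step, namely knowing that $V_{-1}\scr{M}[*D]$ is locally free over $\sh{O}_{\Delta\times\AAA^1_\lambda}$, or at least reflexive, so that the Hartogs-type argument applies across $\{p\}\times\{0\}$. I would take this from Mochizuki's description of $j_*\scr{V}$ for tame harmonic bundles, and for the mixed case proceed by induction on the weight filtration: $V$ is exact on strict morphisms of mixed twistor modules, and extensions of locally free sheaves are locally free. The $\sh{A}$-structure needs no separate treatment, since freeness of $\scr{T}$ and $\sh{A}$-linearity of $V$ (the V-filtration is preserved by every $\scr{R}$-linear endomorphism) make both sides $\abs{\sh{A}}$-submodules. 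Functoriality follows because morphisms of $\scr{R}$-modules are strictly compatible with $V$ and Deligne extensions are functorial. Compatibility with coefficient base change follows by writing $\sh{A}'\otimes_{\sh{A}}\scr{V}$ as a cokernel, as in \cref{lem:artinian-coeff-base-change}, and using exactness of $V_{-1}$ and of the Deligne extension on free $\sh{A}$-modules, together with \cref{lem:affinesplitting} to identify everything over $\AAA^1_\lambda$.
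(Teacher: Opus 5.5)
Your operative argument (the second route) has the same skeleton as the paper's proof:
\begin{itemize}
\item show that $V_{-1}\scr{M}[*D]$ is a locally free logarithmic lattice with nilpotent residue;
\item deduce that it equals the Deligne extension for $\lambda\neq 0$;
\item conclude by the codimension-two argument across $\{p\}\times\{0\}$, and get $V_0$ from $z\colon V_0\simeq V_{-1}$.
\end{itemize}
The $\abs{\sh{A}}[\lambda]$-linearity is handled the same way too.

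The real difference is where local freeness, and the absence of non-integral jumps, come from. The paper gets both at once from Mochizuki's construction. The injective morphism (5.15) of \cite{mochizuki2015mixed} identifies $V^{(\lambda_0)}_{-1}$ with $Q^{(\lambda_0)}_0\scr{M}$. That lattice is locally free by construction and has KMS datum $(0,0)$, so it has nilpotent residues, and this holds directly in the mixed case. You instead take the pure case from the asymptotics of tame harmonic bundles and induct on the weight filtration, using strictness with respect to $V$. That works, but it is longer and also requires exactness of $j_*$ on $0\to W_{k-1}\to W_k\to\mathrm{gr}^W_k\to 0$. For base change the routes also differ: the paper argues on the Deligne side ($T_{p,A'}=1\otimes T_{p,A}$, then codimension two), while you use right exactness of $V_{-1}$ on the cokernel presentation. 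Both are fine.

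Several statements in your write-up are wrong and should be fixed or removed.

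\emph{First}, $\bar{\scr{V}}\otimes\sh{O}(*D)$ is the $\scr{R}(*D)$-module $\scr{M}$, not $\scr{M}[*D]$. By \eqref{eq:V-filt-isoms}, $\scr{M}[*D]$ is generated over $\scr{R}_{\bar{X}}$ by $V_0$, and it is strictly smaller along $\lambda=0$. For the trivial rank-one object it is $\sum_k \lambda^k z^{-1-k}\sh{O}$, so $z^{-2}\notin\scr{M}[*D]$. The comparison of $V_{-1}$ with $\bar{\scr{V}}$ must therefore take place inside $\scr{M}=\bar{\scr{V}}(*D)$.

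\emph{Second}, your candidate $U$ is wrong for $b\geq 0$. Generating from $U_{-1}$ by $\lambda\partial_z$ gives the V-filtration of $\scr{R}\cdot\bar{\scr{V}}$ (the minimal extension), not of the localized module. In the trivial rank-one case it gives $U_b=\sh{O}$ for all $b\geq -1$, contradicting $V_0=z^{-1}\bar{\scr{V}}$. The correct candidate is $U_{-1+k}=z^{-k}\bar{\scr{V}}$ for $k\leq 1$, with $\lambda\partial_z$-generation starting only from $U_0$. You abandon this route, so it does not sink the proof, but as written it is false.

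\emph{Third}, in the second route, nilpotence of $\lambda z\partial_z$ on $\mathrm{gr}^V_{-1}$ does not by itself make the residue on $V_{-1}/zV_{-1}=V_{-1}/V_{-2}$ nilpotent. You also need that $V_\bullet$ has no jumps in $(-2,-1)$, and this is exactly where unipotence enters. The paper gets it from the KMS datum $(0,0)$. For $\lambda_0\neq 0$ you could instead use the classical Kashiwara--Malgrange filtration of a regular connection with unipotent monodromy, but only after checking that $V_\bullet$ specializes to it at $\lambda_0$ with the same indexing.
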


\begin{proof}
  For $\lambda_0 \in \AAA^1_\lambda$, the superscript $^{(\lambda_0)}$ will denote taking a smaller family of $\lambda-$connections with $\lambda$ in a small neighborhood of $\lambda_0$. Let $\scr{M} = \scr{R}_{\bar{X}}(*D) \otimes_{\scr{R}_{\bar{X}}} \scr{M}[*D]$ then \cite[Morphism (5.15)]{mochizuki2015mixed}, with $I = \set{p}, J = K = \emptyset$, simplifies to
  \[V_0\scr{R}_{\bar{X}} \otimes_{V_0\scr{R}_{\bar{X}}} \brac*{V_0\scr{R}_{\bar{X}} \cdot Q_{b + 1}^{(\lambda_0)} \scr{M}} \to \scr{M}^{(\lambda_0)}[*D] \tag{2.3}\label{eq:map-def-V-filt}\]
  with the image defined to be $V_b^{(\lambda_0)} \scr{M}^{(\lambda_0)}[*D]$. Notice that $Q_{b}^{(\lambda_0)} \scr{M}$ is a logarithmic lattice \cite[Equation (5.2)]{mochizuki2015mixed}, and $V_0\scr{R}_{\bar{X}}$ is generated by $z\lambda\partial_z$ which preserves logarithmic lattices. Furthermore, by \cite[Lemma 5.3.8]{mochizuki2015mixed} the morphism \eqref{eq:map-def-V-filt} is injective, and we get 
  \[V_{-1}^{(\lambda_0)} \scr{M}^{(\lambda_0)}[*D] = Q_0^{(\lambda_0)}\scr{M}\]
  \tab Finally, $Q_0^{(\lambda_0)}\scr{M}$ has nilpotent residues due to the fact that the relevant KMS datum is $(0, 0)$ and \cite[Section 5.1.2]{mochizuki2015mixed} , so it agrees with the Deligne extension for $\lambda \neq 0$. After gluing we get $V_{-1}\scr{M}[*D] = Q_0\scr{M}$ where $Q_0\scr{M}$ is a locally free extension of $\scr{V}$ which agrees with $\bar{\scr{V}}$ over $\lambda \neq 0$, i.e., they agree except for a codimension 2 locus. As a result they must actually agree, and we get the first equality; the second follows from \eqref{eq:V-filt-isoms}.\\
  \tab By functoriality of the V-filtration and the Deligne extension $\bar{\scr{V}}$, both are $A[\lambda]$-submodules of $\scr{M}[*D]$. So the first equality is that of $A[\lambda]-$submodules. Since this is an equality, to prove that the identification commutes with artinian $\sh{T}(0)-$MTS-algebra base change, it suffices to show that is true for the Deligne extension $\bar{\scr{V}}$, i.e., for $\sh{A} \to \sh{A}'$ in $\Sf{MTS\text{-}Art}$ with $A' = \abs*{\sh{A}'}$ we need $\bar{\scr{V}}_{\sh{A}'} = \bar{\scr{V}} \otimes_{A[\lambda]} A'[\lambda]$. Over $\lambda = 1$, the local system $\scr{V}_{\sh{A}'}\vert_{\lambda = 1}$ is by construction (\cref{lem:artinian-coeff-base-change}) just $A' \otimes \scr{V}\vert_{\lambda = 1}$. Hence the new local monodromy operator around $p \in D$ is $T_{p, A'} = 1 \otimes T_{p, A}$. The same is true of $N_{p, A'} = \frac{-1}{2\pi i} \log T_{p, A'}$ since it's a finite polynomial of $T_{p, A'}$, and thus the exponential lattice also commutes with $A \to A'$. Over $\lambda \neq 0$, the Deligne extension is defined via scaling by $\lambda^{-1}$, so we get that $\bar{\scr{V}}_{\sh{A}'} = \bar{\scr{V}} \otimes_{A[\lambda]} A'[\lambda]$ over $\bar{X} \times \AAA^1_\lambda - D \times \set{0}$. Again, these are locally free and $D \times \set*{0}$ is codimension 2 in $\bar{X} \times \AAA^1_\lambda$, so we get a global isomorphism. Functoriality of the Deligne extension over $\bar{X} \times \AAA^1_\lambda$ gives agreement as $A'[\lambda]-$modules. 
\end{proof}

\section{Constructions of moduli spaces and deformation theory}
For the rest of this section, we work with $(\bar{X}, D)$ consisting of a smooth projective curve $\bar{X}$, and a finite set of reduced points $D$; $X \coloneq \bar{X} \setminus D$. 
\subsection{The framed Betti character variety and universal deformation algebra}
\begin{definition}
  For a rank $r \geq 1$ and a base point $x \in X(\CC)$, we define the framed character variety to be the affine scheme
  \[R_B(X, x, r) \coloneq \Hom\brac*{\pi_1(X^{\Rm{an}}, x), \Rm{GL}_r}\]
\end{definition}
\begin{theorem}[{\cite[Theorem 4.2]{bakker2024linear}}]
  \label{thm:Betti-MTS-algebra}
  Let $(V, \phi)$ be a framed rank $r$ local system underlying a $\CC$-AVMTS $\scr{V}$, then the completed local ring $\hat{\sh{O}}_{R_B(X, x, r), (V, \phi)}$ has a pro$-\sh{T}(0)-$MTS-algebra structure $\bm{\Lambda}_V$ with 
  \[\abs{\bm{\Lambda}_V} = \hat{\sh{O}}_{R_B(X, x, r), (V, \phi)}\] 
  and the universal formal local system $\hat{V}$ underlies a free pro-$\bm{\Lambda}_V$-AVMTS $\hat{\scr{V}}$. The universal framing $\hat{\phi}: \hat{V}_x \to \hat{\sh{O}}_{R_B(X, x, r), (V, \phi)} \otimes_\CC \CC^r$ is the fiber over 1 of a morphism of pro-$\bm{\Lambda}_V$-MTS-modules $\hat{\phi}: \hat{\scr{V}}_x \to \bm{\Lambda}_V \otimes_{\sh{T}(0)} \sh{T}(0)^{\oplus r}$.\\
  \tab These structures are uniquely determined by the following universal property: For any artinian local $\sh{T}(0)-$MTS-algebra $\sh{A}$ and any $\sh{A}$-AVMTS $\scr{U}$ with a framing $\psi: \scr{U}_x \to \sh{A} \otimes_{\sh{A}} \sh{T}(0)^{\oplus r}$ which restricts to $(\scr{V}, \phi)$ mod $\Ide{m}_{\sh{A}}$, there is an unique morphism $\bm{\Lambda}_V \to \sh{A}$ of local pro-$\sh{T}(0)$-MTS-algebras such that 
  \[\brac*{\sh{A}, (\scr{U}, \psi)} \simeq \sh{A} \otimes_{\bm{\Lambda}_V} \brac*{\bm{\Lambda}_V, (\hat{\scr{V}}, \hat{\phi})}\]
  or, in other words, $\bm{\Lambda}_V$ represents the functor 
  \[\sh{A} \mapsto \{\text{framed free $\sh{A}$-AVMTS deformations of $(\scr{V}, \phi)$}\}/\simeq\] 
\end{theorem}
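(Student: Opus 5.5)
The plan is to build $\bm{\Lambda}_V$ from the deformation theory of $V$ as an object of a Tannakian-type category of AVMTS, using the fact that the deformation functor of a framed local system is controlled by a dg Lie algebra that itself carries a mixed twistor structure. Concretely, the universal property says that $\bm{\Lambda}_V$ pro-represents framed free AVMTS deformations over artinian MTS-algebras. So the first step is to construct a candidate pro-MTS-algebra structure on $\hat{\sh{O}}_{R_B,(V,\phi)}$, and then to verify representability.

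\textbf{Step 1: the controlling complex carries a mixed twistor structure.} The framed deformation functor of $(V,\phi)$ is governed by the complex
\[
L = \operatorname{cone}\brac*{\Rm{R}\Gamma(X,\underline{\Rm{End}}(V)) \to \Rm{End}(V_x)}[-1].
\]
Since $X$ is an affine curve (as $D \neq \emptyset$), $H^2 = 0$, the framed deformations are unobstructed, and $R_B$ is smooth at $(V,\phi)$. Hence $\hat{\sh{O}}_{R_B,(V,\phi)}$ is the completed symmetric algebra on $(H^1(L))^\vee$. Because $\underline{\Rm{End}}(\scr{V})$ is an AVMTS, Mochizuki's formalism of pushforward in $\mathsf{MTM}$ endows $H^1(X,\underline{\Rm{End}}(\scr V))$ with a mixed twistor structure. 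The stalk $\Rm{End}(\scr V_x)$ is an MTS, and the map between them is a morphism of MTS. Therefore $H^1(L)$ is an MTS, and so is its dual $\Ide{m}/\Ide{m}^2$.

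\textbf{Step 2: lift to an algebra structure.} I would define $\bm{\Lambda}_V$ inductively on the truncations $\bm{\Lambda}_n$, using smoothness. I expect the cleanest route is to put an MTS on the completed symmetric algebra and then check that this is the correct one by constructing the universal deformation order by order. Suppose a free $\bm{\Lambda}_n$-AVMTS $\hat{\scr V}_n$ with framing is given. Small extensions $0 \to I \to \sh{A}' \to \sh{A} \to 0$ in $\Sf{MTS\text{-}Art}$ then have obstructions living in $H^2(L)\otimes I = 0$, and lifts form a torsor under $H^1(L)\otimes I$. All of these identifications are morphisms of MTS, by functoriality of the twistor $\sh{D}$-module operations with coefficients. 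The extension class of $\hat{\scr V}_n$ is a canonical element of $\Rm{Ext}^1$ in the category of AVMTS, so this torsor admits a canonical MTS-compatible lift. This produces $\bm{\Lambda}_{n+1}$ together with $\hat{\scr V}_{n+1}$. On fibers over $\lambda = 1$, the construction recovers the usual hull, which gives $\abs{\bm{\Lambda}_V} = \hat{\sh{O}}_{R_B,(V,\phi)}$. The framing is a morphism of MTS-modules because evaluation at $x$ is a morphism in $\mathsf{MTS}_\CC$.

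\textbf{Step 3: universality and uniqueness.} Given $(\scr U,\psi)$ over $\sh A$, forgetting twistor data gives a unique $\CC$-algebra map $\abs{\bm{\Lambda}_V}\to\abs{\sh A}$. We must show it underlies an MTS morphism. Since MTS morphisms are determined by their $\lambda=1$ fibers among maps that are strict for $W$, it suffices to check compatibility by induction along a filtration of $\sh A$ by small extensions in $\Sf{MTS\text{-}Art}$. At each stage, the difference of two lifts is an MTS-morphism from $H^1(L)^\vee$ to $I$. Uniqueness of the structures then follows formally from the universal property.

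\textbf{Main obstacle.} The hardest step is Step 2: showing that the order-by-order lifting can be made compatible with the twistor structures, i.e.\ that the Kodaira--Spencer map and the torsor actions are genuinely morphisms of MTS, uniformly in artinian MTS coefficients. For this I would rely on Mochizuki's theory of mixed twistor $\sh{D}$-modules with the affine splitting \cref{lem:affinesplitting}, reducing the statements to $\lambda=1$ fibers.
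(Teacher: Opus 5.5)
The paper gives no proof of this statement. It imports it from \cite[Theorem 4.2]{bakker2024linear}, so your sketch has to stand on its own, and it has a genuine gap in Step 2.

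\textbf{Step 1 is fine but only first-order.} The framing gives $H^0(L)=0$, and $H^2(L)=0$ because $X$ is an affine curve, which is the case the paper needs. But this only yields the MTS on the cotangent space $N=\Ide{m}/\Ide{m}^2=H^1(L)^\vee$. Nothing in your argument propagates it to higher order.

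\textbf{The MTS on $\Lambda_{n+1}$ is not determined by lower-order data.} An MTS-algebra structure on $\Lambda_{n+1}=\hat{\sh{O}}/\Ide{m}^{n+2}$ restricting to $\bm{\Lambda}_n$ is not fixed by $\bm{\Lambda}_n$ and $N$. How $N$ is glued to $\Ide{m}^{n+1}/\Ide{m}^{n+2}$ inside $\Ide{m}_{\Lambda_{n+1}}$ is extra data, involving $\mathrm{Ext}^1_{\mathsf{MTS}}(N,\Ide{m}^{n+1}/\Ide{m}^{n+2})$. Since $\mathrm{Ext}^1_{\mathsf{MTS}}(\mathcal{O}(a),\mathcal{O}(b))=H^1(\PP^1,\mathcal{O}(b-a))$, this group is nonzero once weights drop by $2$ or more. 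That happens here: for $\scr{V}$ pure of weight $0$, $N$ has weights $0,-1,-2$, with weight $0$ coming from $\mathrm{End}(V_x)/H^0$ and weight $-2$ from the punctures.

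Consequently, transporting the MTS of $\widehat{\mathrm{Sym}}(N)$ through an abstract isomorphism $\hat{\sh{O}}_{R_B,(V,\phi)}\cong\widehat{\mathrm{Sym}}(N)$ gives a structure that depends on the chosen coordinates. You give no reason why $\hat{V}$ should underlie a pro-AVMTS for any such choice, and a priori the correct $\bm{\Lambda}_V$ need not be split in this way. \cref{lem:affinesplitting} cannot rescue this. It trivializes an MTS that is already given over $\AAA^1_\lambda$, and it forgets exactly the gluing at $\lambda=\infty$ that distinguishes MTS with the same fiber at $\lambda=1$.

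\textbf{The lifting argument is circular and conflates two deformation problems.} To speak of MTS-compatible lifts of $\hat{\scr{V}}_n$ to $\bm{\Lambda}_{n+1}$, you already need the MTS on $\bm{\Lambda}_{n+1}$, which is what you are building. For a small extension $0\to I\to\sh{A}'\to\sh{A}\to0$ in $\Sf{MTS\text{-}Art}$, the vanishing of $H^2(L)\otimes I$ only makes the underlying local system unobstructed. The AVMTS lifting problem acquires obstructions in $\mathrm{Ext}^1_{\mathsf{MTS}}(\sh{T}(0),H^1(L)\otimes I)$. When lifts exist, they form a torsor under $\Hom_{\mathsf{MTS}}(\sh{T}(0),H^1(L)\otimes I)$, not under $H^1(L)\otimes I$. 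This is unavoidable: for any $\bm{\Lambda}$ with nonzero cotangent space, $\Hom(\bm{\Lambda},-)$ fails to lift along suitable non-split square-zero extensions in $\Sf{MTS\text{-}Art}$.

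So ``the torsor admits a canonical MTS-compatible lift'' is a non sequitur. A torsor under an MTS has an MTS-point only if a class in $\mathrm{Ext}^1_{\mathsf{MTS}}$ vanishes, and the existence of a canonical extension class of $\hat{\scr{V}}_n$ does not supply one.

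\textbf{Step 3 inherits the gap.} Faithfulness of the fiber functor at $\lambda=1$ gives uniqueness of an MTS lift of $\abs{\bm{\Lambda}_V}\to\abs{\sh{A}}$, not its existence. You would also need that the $\sh{A}$-AVMTS structure on a given framed deformation of $(\scr{V},\phi)$ is unique up to isomorphism, which you do not address.

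\textbf{What is missing} is a construction that is functorial at all orders. One option is to realize the framed controlling DGLA as a Lie algebra object in mixed twistor complexes, so that the Goldman--Millson hull and its universal Maurer--Cartan element inherit the structure (as Eyssidieux--Simpson do in the Hodge setting). Another is an honest pro-representability theorem for the deformation functor on $\Sf{MTS\text{-}Art}$. That is the actual content of the cited theorem.
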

For each $p \in D$, choose a boundary loop $\gamma_p$ around $p$. $R_B(X, x, r)$ carries a universal representation 
\[\rho^{\Rm{univ}}: \pi_1(X^{\Rm{an}}, x) \to \Rm{GL}_r(\sh{O}(R_B))\]
and we have a universal local monodromy $T^{\Rm{univ}}_p \coloneq \rho^{\Rm{univ}}(\gamma_p) \in \Rm{GL}_r(\sh{O}(R_B))$.
\begin{definition}
  The unipotent locus is defined to be the closed subscheme
  \[R_B^{\Rm{unip}}(X, x, r) \coloneq V(\det(u - T_p^{\Rm{univ}}) - (u - 1)^r = 0)\]
\end{definition}

\subsection{The framed De Rham, Dolbeault, and Hodge moduli spaces}
\label{sec:framed-DR-Dol-Hod-spaces}
Fix $x \in X(\CC)$ again. For a $\CC-$scheme $T$, denote geometric points to be $\bar{t} \to T$. Let $\bar{X}_T \coloneq \bar{X}\times T, D_T \coloneq D \times T$ with $\pi_T: \bar{X}_T \to T$, and consider the moduli functors 
\begin{align*}
  \mathsf{R}_{\Rm{Dol}}(r)(T) &= \left\{
    \begin{array}{c|l}
      (E_T, \theta_T, \phi) & E_T \text{ locally free of rank $r$ on }\bar{X}_T,\\
      & \sh{O}_{\bar{X}_T}-\text{linear }\theta_T: E_T \to E_T \otimes \Omega^1_{\bar{X}_T/T}(\log D_T),\\
      & \theta_T \wedge \theta_T = 0,\\
      & (E_{\bar{t}}, \theta_{\bar{t}}) \text{ semistable of degree }0 \text{ on } \bar{X}\times \set*{\bar{t}},\\
      &\phi: E_T\vert_{x \times T} \simeq \sh{O}_T^r
    \end{array}
  \right\}\Big/\simeq \\
  \mathsf{R}_{\Rm{DR}}(r)(T) &= \left\{
    \begin{array}{c|l}
      (E_T, \nabla_T, \phi) & E_T \text{ locally free of rank $r$ on }\bar{X}_T,\\
      & \pi_T^{-1}\sh{O}_T-\text{linear }\nabla_T: E_T \to E_T \otimes \Omega^1_{\bar{X}_T/T}(\log D_T),\\
      & \nabla_T^2 = 0, \tab \nabla_T(fs) = \der_{\bar{X}_T/T} f \otimes s + f \nabla_T s,\\
      & (E_{\bar{t}}, \nabla_{\bar{t}}) \text{ semistable of degree }0\text{ on } \bar{X} \times \set{\bar{t}},\\
      &\phi: E_T\vert_{x \times T} \simeq \sh{O}_T^r
    \end{array}
  \right\}\Big/\simeq \\
  \mathsf{R}_{\Rm{Hod}}(r)(T) &= \left\{
    \begin{array}{c|l}
      (\lambda, E_{T}, \nabla_{\lambda,T}, \phi) & E_T \text{ locally free of rank $r$ on }\bar{X}_T, \tab \lambda \in \Gamma(T, \sh{O}_T),\\
      & \pi_T^{-1}\sh{O}_T-\text{linear }\nabla_{\lambda, T}: E_T \to E_T \otimes \Omega^1_{\bar{X}_T/T}(\log D_T),\\
      & \nabla_{\lambda, T}^2 = 0, \tab \nabla_{\lambda, T}(fs) = \lambda \der_{\bar{X}_T/T} f \otimes s + f \nabla_{\lambda, T} s\\
      &\phi: E_T\vert_{x \times T} \simeq \sh{O}_T^r
    \end{array}
  \right\}\Big/\simeq 
\end{align*}
\begin{defprop}
  \leavevmode
  \begin{enumerate}
    \item $\mathsf{R}_{\Rm{Dol}}(r)$ is representable by a quasiprojective scheme $R_{\Rm{Dol}}(\bar{X}, D, x, r)$ parametrizing degree 0 semistable logarithmic Higgs bundles with a framing. 
    \item $\mathsf{R}_{\Rm{DR}}(r)$ is representable by a quasiprojective scheme $R_{DR}(\bar{X}, D, x, r)$ parametrizing degree 0 semistable logarithmic connections with a framing. 
    \item $\mathsf{R}_{\Rm{Hod}}(r)$ is representable by a separated, locally of finite type over $\AAA^1_\lambda$, algebraic space $R_{\Rm{Hod}}(\bar{X}, D, x, r) \xrightarrow{\lambda} \AAA^1_\lambda$ parametrizing logarithmic $\lambda-$connections with a framing. 
  \end{enumerate}
  With no risk of confusion, we will simplify notation $R_{\Box}(r) \coloneq R_{\Box}(\bar{X}, D, x, r)$ for $\Box \in \set*{\Rm{Dol}, \Rm{Hod}, \Rm{DR}}$. 
\end{defprop}
\begin{proof}
  For (1), see \cite[Theorem 7.2]{bakker2024linear} and \cite[Theorem 4.10]{simpson1994moduli}. For (2), see \cite[Theorem 3.3]{nitsure1993moduli}. Finally, for (3), see \cite[Proposition 2.1]{simpson2022twistor}
\end{proof}
\begin{remark}
  The restriction to $\lambda = 1, 0$ of $R_{\Rm{Hod}}(\bar{X}, D, x, r)$ gives all logarithmic connections and all logarithmic Higgs bundles, respectively. These are bigger than $R_\DR(r)$ and $R_\Dol(r)$. However, we do have 
  \[\brac*{R_\Hod(r)\vert_{\lambda = 0}}^{\Rm{semistable}, \deg = 0} = R_\Dol(r), \tab \brac*{R_\Hod(r)\vert_{\lambda = 1}}^{\Rm{semistable}, \deg = 0} = R_\DR(r) \tag{3.2}\label{eq:restriction-of-Hod}\]
\end{remark}
\begin{remark}
  Notice that we have an algebraic $\Rm{GL}_r$ action on all these moduli spaces. For a $T$-flat bundle $(E_T, \nabla_T, \phi)$, $g \in \Rm{GL}_r(T)$ acts by $g \cdot (E_T, \nabla_T, \phi) \coloneq (E_T, \nabla_T, g \circ \phi)$. The action Dolbeault and Hodge spaces are similarly defined, and the action on the Betti space is via conjugation.
\end{remark}

\begin{definition}
  Similar to the Betti case, we can define the unipotent loci as closed sub-schemes/algebraic space
  \[R^{\Rm{nilp}}_\Box(r) \subset R_{\Box}(\bar{X}, D, x, r)\]
  for $\Box \in \set*{\Rm{Dol}, \Rm{Hod}, \Rm{DR}}$. Furthermore, we have restrictions
  \[R^{\Rm{nilp}}_{\Rm{Hod}}(r)\vert_{\lambda = 1} = R^{\Rm{nilp}}_{\Rm{DR}}(r), \tab \brac*{R^\nilp_\Hod(r)\vert_{\lambda = 0}}^{\text{semistable}, \deg = 0} = R^\nilp_\Dol(r)\]
\end{definition}

\subsection{The nilpotent Deligne-Hitchin germ and preferred sections}
Let $R^{\nilp, \Rm{loc}}_{\Hod}(\bar{X}, D, x, r)$ be the germ of an open neighborhood of $R^\nilp_\Hod(\bar{X}, D, x, r)^{\Rm{an}}$ in $R_\Hod(\bar{X}, D, x, r)^{\Rm{an}}$. Consider the complex conjugate variety $(\bar{X}^c, D^c)$, we can similarly define $R^{\nilp, \Rm{loc}}_{\Hod}(\bar{X}^c, D^c, x, r)$ and we have a holomorphic isomorphism 
\[R^{\nilp, \Rm{loc}}_{\Hod}(\bar{X}, D, x, r) \vert_{\GG_m} \simeq R^{\nilp, \Rm{loc}}_{\Hod}(\bar{X}^c, D^c, x, r) \vert_{\GG_m}\]
and there is a gluing procedure to get the nilpotent Deligne-Hitchin germ $R^{\nilp, \Rm{loc}}_\Dh(r) \to \PP^1$, which is a countably finite-type complex analytic space. There is a functor of points description, and $R^{\nilp, \Rm{loc}}_\Dh(r)\to \PP^1$ parametrizes exactly the extensions in \cref{def:AVMTS-extension}, after forgetting the weight $W_\bullet$ and adding a framing. Thus a $\CC-$AVMTS $\scr{V} = j^*\scr{T}$ with unipotent local monodromy gives an analytic morphism $\PP^1 \to R^{\nilp, \Rm{loc}}_\Dh(\bar{X}, D, x, r)$. \\
\tab If we add an artinian local $\sh{A}$-action on $\scr{V}$, then the extended actions on $\bar{\scr{V}}, \bar{\scr{V}}'$ turns them into vector bundles on $\bar{X} \times \AAA^1_\lambda \times \Spec \abs{\sh{A}}$ and $\bar{X} \times \AAA^1_{\lambda^{-1}} \times \Spec \abs{\sh{A}}$, respectively. These assembles into a $\Spec_{\PP^1}\sh{A}$-point of $R^{\nilp, \Rm{loc}}_\Dh(r) \to \PP^1$, hence a $\PP^1$-morphism $\Spec_{\PP^1}\sh{A} \to R^{\nilp, \Rm{loc}}_\Dh(r)$. For more details, see \cite[Section 8.2 and Corollary 8.8]{bakker2024linear}. 

\begin{definition}
  If $\scr{V} = j^*\scr{T}$ is a $\CC$-AVMTS, and $\scr{V}$ is $\CC-$VTS on $X$, then the resulting map $\PP^1 \to R^{\nilp, \Rm{loc}}_\Dh(\bar{X}, D, x, r)$ is called a preferred section. 
\end{definition}
More concretely, if $(V, \phi) \in R^\unip_B(X, x, r)^{\Rm{ss}}$ is a semisimple local system with unipotent local monodromy, then it underlies a $\CC$-VTS with unipotent local monodromy, giving a preferred section 
\[s_{(V, \phi)}: \PP^1 \to R^{\nilp, \Rm{loc}}_\Dh(\bar{X}, D, x, r)\]
with $s_1 \coloneq s_{(V, \phi)}(1)$ being the framed Deligne's canonical extension of $(V, \phi)$ to a logarithmic flat bundle, and $s_0 \coloneq s_{(V, \phi)}(0) = \mathsf{SM}^{\Rm{ps}}(s_1)$, i.e., the logarithmic polystable Higgs bundle which we get via the Mochizuki's correspondence in \cref{thm:mochizuki-correspondence}. 
\begin{prop}
  \label{thm:Bett-DR-Dol-formal-isom}
  Recall the pro-$\sh{T}(0)-$MTS-algebra $\bm{\Lambda}_V$ in \cref{thm:Betti-MTS-algebra}. 
  \begin{enumerate}
    \item \cite[Lemma 8.11]{bakker2024linear} We have a classifying map for the canonical logarithmic extension of the universal pro-$\bm{\Lambda}_V$-AVMTS $\hat{\scr{V}}$:
    \[\hat{s}_{(V, \phi)}: \Rm{Spf}_{\PP^1} \bm{\Lambda}_V \xrightarrow{\simeq} \widehat{R^{\nilp, \Rm{loc}}_\Dh(r)}_{s_{(V, \phi)}}\]
    which is an isomorphism onto the completion of $R^{\nilp, \Rm{loc}}_\Dh(r)$ along the preferred section. 
    \item Specializing at $\lambda = 1, 0$ we get 
    \[\Rm{Spf}_{\PP^1} \bm{\Lambda}_V \vert_{\lambda = 1} \xrightarrow{\simeq} \widehat{R_\DR(r)}_{s_1}, \tab \Rm{Spf}_{\PP^1} \bm{\Lambda}_V \vert_{\lambda = 0} \xrightarrow{\simeq} \widehat{R_\Dol(r)}_{s_0}\]
    where the targets are completions at $s_1$ and $s_0$ respectively. As a consequence of \cref{lem:affinesplitting}, we get a trivialization $\varphi_{\bm{\Lambda}_V}$ of $\bm{\Lambda}_V$, and  
    \[\tau_V \coloneq (\hat{s}\vert_{\lambda = 0})\circ \varphi_{\bm{\Lambda}_V, 0} \circ \varphi_{\bm{\Lambda}_V, 1}^{-1}\circ (\hat{s}\vert_{\lambda = 1})^{-1}: \widehat{R_\DR(r)}_{s_1} \xrightarrow{\simeq} \widehat{R_\Dol(r)}_{s_0} \tag{3.3}\label{eq:formal-DR-Dol-immersion}\]
    \item Let $H \subset \Rm{GL}_r$ be an algebraic group fixing $[(V, \phi)] \in R_B(X, x, r)$. Then $H$ also fixes $[s_0] \in R_{\Dol}(r), [s_1] \in R_\DR(r)$, and induces $H^\DR$ and $H^\Dol$ actions on $\widehat{R_\DR(r)}_{s_1}$ and $\widehat{R_\Dol(r)}_{s_0}$ respectively. The isomorphism $\tau_V$ \eqref{eq:formal-DR-Dol-immersion}, as well as its inverse $\tau_V^{-1}$ are equivariant. 
  \end{enumerate}
\end{prop}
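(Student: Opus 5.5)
Part (1) is quoted directly from \cite[Lemma 8.11]{bakker2024linear}, so the work is in (2) and (3). For (2), I restrict the isomorphism $\hat{s}_{(V,\phi)}$ of (1) to the fibers $\lambda = 1$ and $\lambda = 0$ of $\PP^1$. The fiber of $\widehat{R^{\nilp,\Rm{loc}}_\Dh(r)}_{s_{(V,\phi)}}$ over $\lambda=1$ is the completion of $R^\nilp_\Hod(r)\vert_{\lambda=1}=R^\nilp_\DR(r)$ at $s_1$. Over $\lambda = 0$ it is the completion of $R^\nilp_\Hod(r)\vert_{\lambda=0}$ at $s_0$.

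The point $s_0$ is semistable of degree $0$. Semistability is an open condition and the degree is locally constant in flat families, so the formal neighborhood of $s_0$ lies in the semistable degree $0$ locus. By \eqref{eq:restriction-of-Hod}, this identifies the completion with $\widehat{R_\Dol(r)}_{s_0}$, and likewise for $\DR$ at $s_1$. One caveat is that the nilpotent locus versus the full moduli space must be handled consistently with the notation of the statement. I would check that $\hat{s}$ lands in the completion of $R^\nilp$, and I would read the targets in the statement as those completions.

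Next I apply \cref{lem:affinesplitting} levelwise to the artinian algebras $\bm{\Lambda}_{V,n}$. This gives compatible isomorphisms $\varphi_{\bm{\Lambda}_{V,n}}:\bm{\Lambda}_{V,n}\vert_{\AAA^1_\lambda}\simeq \sh{O}_{\AAA^1_\lambda}\otimes\abs{\bm{\Lambda}_{V,n}}$ of algebras, with compatibility coming from functoriality under the quotient maps. Passing to the limit gives $\varphi_{\bm{\Lambda}_V}$. Its specializations $\varphi_{\bm{\Lambda}_V,1}$ and $\varphi_{\bm{\Lambda}_V,0}$ identify both fibers of $\Rm{Spf}_{\PP^1}\bm{\Lambda}_V$ with $\Rm{Spf}\abs{\bm{\Lambda}_V}$, and $\varphi_{\bm{\Lambda}_V,1}$ is the identity. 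Composing yields $\tau_V$ as in \eqref{eq:formal-DR-Dol-immersion}.

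For (3), take $h\in H$, so $h$ fixes $(V,\phi)$ and hence gives an automorphism of $V$ compatible with the framing up to $h$. By functoriality of the harmonic metric, the associated $\CC$-VTS and its canonical admissible extension are preserved. Thus $h$ fixes the whole preferred section, and in particular fixes $s_0$ and $s_1$.

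The main step is equivariance. I would show that $H$ acts on $\bm{\Lambda}_V$ by automorphisms of pro-$\sh{T}(0)$-MTS-algebras, and the universal property of \cref{thm:Betti-MTS-algebra} should give this. The action $h$ sends a framed $\sh{A}$-AVMTS deformation $(\scr{U},\psi)$ of $(\scr{V},\phi)$ to $(\scr{U},h\circ\psi)$. Since $h\cdot\phi=\phi$ up to the automorphism of $\scr{V}$ induced by $h$, this is again a deformation of $(\scr{V},\phi)$. Hence it defines a natural automorphism of the deformation functor, and by representability an automorphism $h^*$ of $\bm{\Lambda}_V$ in the pro-MTS-algebra category. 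Here "deformation of $(\scr{V},\phi)$" must be read up to isomorphism of the central fiber, which needs a little care.

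The classifying map $\hat{s}$ of (1) sends the universal object to its Deligne extension. The Deligne extension is functorial in the AVMTS and commutes with the framing change, so $\hat{s}$ intertwines $h^*$ with the $\GG$-action on $R^{\nilp,\Rm{loc}}_\Dh$, and this restricts to the $H^\DR$ and $H^\Dol$ actions at $\lambda=1,0$. Finally, $h^*$ is a morphism of MTS-algebras, so the functoriality clause of \cref{lem:affinesplitting} gives $\varphi_{\bm{\Lambda}_V}\circ h^* = (1\otimes\abs{h^*})\circ\varphi_{\bm{\Lambda}_V}$. Composing the four equivariant maps shows that $\tau_V$ is $H$-equivariant, and so is its inverse. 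I expect the main obstacle to be making precise that $h^*$ is a morphism of pro-MTS-algebras and that $\hat{s}$ intertwines the actions.
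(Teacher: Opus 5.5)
Your argument follows the paper's proof. For (2), you restrict $\hat s$ over $\lambda=1,0$ and use \eqref{eq:restriction-of-Hod} together with openness of semistability and degree $0$. For (3), you show $H$ fixes $s_0,s_1$, turn the change of framing into an automorphism of the deformation functor and hence of $\bm{\Lambda}_V$, and intertwine it with $\hat s$ because the logarithmic extension commutes with change of framing. You then conclude with the functoriality clause of \cref{lem:affinesplitting}.

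The one step that would fail is your caveat about nilpotent loci in (2). $R^{\nilp,\Rm{loc}}_\Hod(r)$ is the germ of an \emph{open neighborhood} of $R^\nilp_\Hod(r)^{\Rm{an}}$ inside the full $R_\Hod(r)^{\Rm{an}}$. Its completion along the preferred section therefore has as fibers the completions of the full spaces $R_\Hod(r)\vert_{\lambda=1}$ and $R_\Hod(r)\vert_{\lambda=0}$ at $s_1$ and $s_0$, not completions of $R^\nilp_\DR(r)$ and $R^\nilp_\Hod(r)\vert_{\lambda=0}$. It is to these full spaces that \eqref{eq:restriction-of-Hod} applies, giving the targets $\widehat{R_\DR(r)}_{s_1}$ and $\widehat{R_\Dol(r)}_{s_0}$ exactly as stated.

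Your proposed check that $\hat s$ lands in the completion of $R^\nilp$ is false, and reading the targets as nilpotent completions would make (2) false. The reason is that $\abs{\bm{\Lambda}_V}=\hat{\sh{O}}_{R_B(X,x,r),(V,\phi)}$ is the completion of the \emph{full} Betti space. Its artinian points include deformations with $T_p-1$ nilpotent but $\det(u-T_p)\neq(u-1)^r$. For example, take rank one and $T_p=1+\epsilon$ over $\CC[\epsilon]/(\epsilon^2)$; the Deligne extension then has residue $-\epsilon/(2\pi i)$, which is not a point of $R^\nilp_\DR$. In rank one with $\abs{D}\geq 2$ the two completions even have different dimensions, $2g+\abs{D}-1$ versus $2g$. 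Drop the caveat and apply \eqref{eq:restriction-of-Hod} to the full Hodge space, as your very next sentence already does; with that correction, (2) and (3) go through exactly as in the paper.
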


\begin{proof}
  Recall that 
  \[\brac*{R_\Hod(r)\vert_{\lambda = 0}}^{\Rm{semistable}, \deg = 0} = R_\Dol(r), \tab \brac*{R_\Hod(r)\vert_{\lambda = 1}}^{\Rm{semistable}, \deg = 0} = R_\DR(r)\]
  and $s_0, s_1$ are polystable bundles of degree 0. Since semistability and degree 0 are open conditions, we get the following identifications of completions
  \[\widehat{R^{\nilp, \Rm{loc}}_\Dh(r)}_{s_{(V, \phi)}} \vert_{\lambda = 1} \simeq \widehat{R_\DR(r)}_{s_1}, \tab \widehat{R^{\nilp, \Rm{loc}}_\Dh(r)}_{s_{(V, \phi)}} \vert_{\lambda = 0} \simeq \widehat{R_\Dol(r)}_{s_0}\]
  giving (2). \\
  \tab For (3), let's first prove that $H$ fixes $[s_0]$ and $[s_1]$. Let $(E, \nabla, \phi)$ be the framed flat bundle on $X$ associated to $(V, \phi)$, $V$ is the sheaf of flat sections of $(E, \nabla)$ and we can canonically identify the fibers $V_x \simeq E_x$ thus justifying the same framing $\phi$. Furthermore if $(\bar{E}, \nabla)$ is the Deligne extension then the framing at $x \notin D$ is the same $\phi$ (so $s_1 = (\bar{E}, \nabla, \phi)$). Suppose that $g \in H$ fixes $[(V, \phi)]$, then $g_x \coloneq \phi^{-1} \circ g \circ \phi: V_x \to V_x$ commutes with all monodromy operators thus extends to an automorphism $a_g: V \xrightarrow{\simeq} V$ satisfying $\phi \circ a_g = g \circ \phi$. Now, $g_x$ commutes with local monodromy hence it preserves the Deligne's lattice, thus $a_g$ extends to an automorphism $\bar{a}_g: \bar{E} \xrightarrow{} \bar{E}$. And since the framing is the same, $\bar{a}_g: (\bar{E}, \nabla, \phi) \xrightarrow{\simeq} (\bar{E}, \nabla, g \circ \phi)$, i.e., $g$ fixes $[s_1]$. \\
  \tab Via \cref{thm:mochizuki-correspondence}, we get an automorphism $\mathsf{SM}^{\Rm{ps}}(\bar{a}_g)$ of $\mathsf{SM}^{\Rm{ps}}(\bar{E}, \nabla)$. The functor is defined via a tame nilpotent harmonic bundle $\sh{E}$ so we have $\bar{E}_x = \sh{E}_x = \mathsf{SM}^{\Rm{ps}}(\bar{E}, \nabla)_x$. It follows that $g$ fixes $[s_0] = [(\mathsf{SM}^{\Rm{ps}}(\bar{E}, \nabla), \phi)]$. \\
  \tab Since $(V, \phi)$ is semisimple, we get an automorphism $a_g$ of the weight 0 $\CC$-VTS $\scr{V}$ as well, thus the change of framing $(\scr{U}, \psi) \mapsto (\scr{U}, g \circ \psi)$ gives an automorphism of the functor of framed free AVMTS deformations of $(\scr{V}, \phi)$. By \cref{thm:Betti-MTS-algebra}, we get an automorphism $\tilde{a}_g: \bm{\Lambda}_V \simeq \bm{\Lambda}_V$ of pro-$\sh{T}(0)$-MTS-algebras. We have a commutative diagram 
  \[\begin{tikzcd}
	{\Rm{Spf}_{\PP^1}\bm{\Lambda}_V} && {\Rm{Spf}_{\PP^1}\bm{\Lambda}_V} \\
	\\
	{R_\Dh^{\nilp, \Rm{loc}}(r)} && {R_\Dh^{\nilp, \Rm{loc}}(r)}
	\arrow["{c_g}", from=1-1, to=1-3]
	\arrow["{\hat{s}}"', from=1-1, to=3-1]
	\arrow["{\hat{s}}", from=1-3, to=3-3]
	\arrow["g", from=3-1, to=3-3]
  \end{tikzcd}\]
  since change of framing commutes with logarithmic extension. By functoriality of \cref{lem:affinesplitting}, we have 
  \[\begin{tikzcd}
	{\Rm{Spf}_{\AAA^1}\brac*{\bm{\Lambda}_V \vert_{\AAA^1_\lambda}}} && {\Rm{Spf}_{\AAA^1}\brac*{\bm{\Lambda}_V \vert_{\AAA^1_\lambda}}} \\
	\\
	{\Rm{Spf}_{\AAA^1} \brac*{\hat{O}_{R_B, (V, \phi)} \otimes \sh{O}_{\AAA^1_\lambda}}} && {\Rm{Spf}_{\AAA^1} \brac*{\hat{O}_{R_B, (V, \phi)} \otimes \sh{O}_{\AAA^1_\lambda}}}
	\arrow["{c_g}", from=1-1, to=1-3]
	\arrow["{\varphi_{\bm{\Lambda}_V}}", from=3-1, to=1-1]
	\arrow["g", from=3-1, to=3-3]
	\arrow["{\varphi_{\bm{\Lambda}_V}}"', from=3-3, to=1-3]
\end{tikzcd}\]
  and together they give the desired equivariance of $\tau_V$ and $\tau_V^{-1}$. 
\end{proof}

\subsection{Moduli spaces of triples}
\label{sec:moduli-triples}
We give a slightly generalized exposition of the ideas in \cite[Theorem 3.3-3.5]{nitsure2005constructionhilbertquotschemes}. Let $B$ be a $\CC$-scheme.
\begin{definition}
  A complex $K^\bullet \in D^{\geq 0}(\sh{O}_B)$ is perfect if, affine-locally on $U \subseteq B$, $K^\bullet$ is quasi-isomorphic to a bounded complex of finite rank locally free $\sh{O}_U$-modules. We denote $\sh{H}^0(K^\bullet) \in \Rm{QCoh}(B)$ to be the zeroth cohomology object of $K^\bullet$. 
\end{definition}
\begin{definition}
  For $K^\bullet \in D^{\geq 0}(\sh{O}_B)$ a perfect complex, denote 
  \[M^0(K^\bullet) \coloneq \Spec_B \Rm{Sym}_{\sh{O}_B}(\sh{H}^0((K^\bullet)^\vee))\] 
  where $(K^\bullet)^\vee \coloneq \Rm{R}\shHom_{\sh{O}_B}(K^\bullet, \sh{O}_B)$. 
\end{definition}

\begin{lemma}[{\cite[\href{https://stacks.math.columbia.edu/tag/08JX}{Tag 08JX}]{stacks-project}$+\epsilon$}]
  \label{lem:relative-hom-scheme-lemma}
  Let $f: T \to B$ be a morphism of schemes and $K^\bullet \in D^{\geq 0}(\sh{O}_B)$ a perfect complex, then we have functorial bijections
  \[\Hom_B(T, M^0(K^\bullet)) \simeq \HH^0(T, \Rm{L}f^*K^\bullet)\]
\end{lemma}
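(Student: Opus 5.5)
The plan is to reduce to the affine case and glue. Both sides are Zariski sheaves on $T$, so affine-locality and functoriality in $T$ reduce us to $B=\Spec R$ and $T=\Spec S$ affine, with $K^\bullet$ represented by a bounded complex of finite projective $R$-modules $P^\bullet$ in degrees $\geq 0$. Here "$+\epsilon$" over the Stacks tag should mean two things: allowing $T$ to be an arbitrary $B$-scheme rather than an affine one, and phrasing the result via $\sh{H}^0$ of the derived dual. Functoriality of the bijections in $T$ and in $K^\bullet$ then lets the local bijections glue.

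On the left side, a $B$-morphism $T\to M^0(K^\bullet)$ is the same as an $\sh{O}_B$-algebra map $\Rm{Sym}_{\sh{O}_B}\sh{H}^0((K^\bullet)^\vee)\to f_*\sh{O}_T$. By the universal property of $\Rm{Sym}$ and adjunction, this is an $\sh{O}_T$-linear map $f^*\sh{H}^0((K^\bullet)^\vee)\to\sh{O}_T$.

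The key computation is with the dual. Since $P^\bullet$ lives in degrees $\geq 0$, the dual $(P^\bullet)^\vee$ consists of the finite projectives $(P^{-i})^\vee$ and is concentrated in degrees $\leq 0$. Its top cohomology is therefore a cokernel:
\[\sh{H}^0((K^\bullet)^\vee)=\coker\brac*{(P^1)^\vee\to(P^0)^\vee}.\]
Pullback is right exact, so $f^*$ commutes with this cokernel. Applying $\Hom_S(-,S)$ turns the cokernel into a kernel:
\[\ker\brac*{P^0\otimes_R S\to P^1\otimes_R S}.\]
This kernel is $H^0$ of $P^\bullet\otimes_R S$, which represents $\Rm{L}f^*K^\bullet$ because the terms are flat. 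Since the complex sits in degrees $\geq 0$, $H^0$ of the complex equals $\HH^0$, the hypercohomology on affine $T$.

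It remains to globalize in $T$. For non-affine $T$, both $\Hom_B(-,M^0)$ and $\HH^0(-,\Rm{L}f^*K^\bullet)$ are Zariski sheaves. For the second, this holds because $\Rm{L}f^*K^\bullet\in D^{\geq 0}$, so $\HH^0=\Gamma(T,\sh{H}^0(\Rm{L}f^*K^\bullet))$ by the spectral sequence with no negative terms. Local bijections that are compatible on overlaps then glue.

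The main obstacle is checking that the affine-local identifications do not depend on the chosen resolution $P^\bullet$ and are natural in $T$, so that they glue. The plan is to write both sides intrinsically, as $\Hom(f^*\sh{H}^0(K^\vee),\sh{O}_T)$ and $\Hom_{D(T)}(\sh{O}_T,\Rm{L}f^*K)$. Using the truncation $\tau_{\geq 0}$ of the dual and the adjunction $\Rm{L}f^*(K^\vee)\simeq(\Rm{L}f^*K)^\vee$ for perfect $K$, the map between them becomes canonical. Only the right-exactness step uses the explicit resolution.
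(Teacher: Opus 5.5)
Your argument is essentially the paper's. You shrink $B$ to $\Spec R$ with a representative $P^\bullet$ of finite projectives in nonnegative degrees, write $\sh{H}^0((K^\bullet)^\vee)=\coker(P^{1,\vee}\to P^{0,\vee})$, pass through $\Rm{Sym}$/adjunction and right exactness of $f^*$, and dualize to $\ker(f^*P^0\to f^*P^1)$. There are two small differences. The paper keeps $T$ arbitrary: it uses that both sides are fppf sheaves in $T$ and computes with sheaves on $T$, whereas you also make $T$ affine and glue. Your intrinsic chain $\HH^0(T,\Rm{L}f^*K^\bullet)=\Hom_{D(T)}(\sh{O}_T,\Rm{L}f^*K^\bullet)\simeq\Hom_{D(T)}\brac*{\Rm{L}f^*\brac*{(K^\bullet)^\vee},\sh{O}_T}\simeq\Hom_{\sh{O}_T}\brac*{f^*\sh{H}^0((K^\bullet)^\vee),\sh{O}_T}$ makes automatic the naturality and independence of $P^\bullet$ that the paper takes for granted.

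There is, however, a genuine gap, and the paper's proof has the same one. A local representative in degrees $\geq 0$ does not follow from $K^\bullet$ being perfect with cohomology in degrees $\geq 0$, and the statement is false without it. Take $B=\Spec\ZZ$ and $K^\bullet=\ZZ/2$ in degree $0$. It is perfect (resolved by $\ZZ\xrightarrow{2}\ZZ$ in degrees $-1,0$) and lies in $D^{\geq 0}$. Then $(K^\bullet)^\vee\simeq\ZZ/2[-1]$, so $\sh{H}^0((K^\bullet)^\vee)=0$ and $M^0(K^\bullet)=B$. The left side is therefore a singleton for every $T$, whereas $\HH^0(B,K^\bullet)=\ZZ/2$. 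In this example several of your claims fail at once:
\begin{enumerate}
\item no complex of finite projectives in degrees $\geq 0$ represents $K^\bullet$, since its $H^0$ would be torsion-free;
\item $(K^\bullet)^\vee\notin D^{\leq 0}$;
\item $\Rm{L}f^*K^\bullet\notin D^{\geq 0}$ for $f:\Spec\mathbb{F}_2\to B$.
\end{enumerate}
So your cokernel formula, your Zariski-sheaf argument for $\HH^0$, and your truncation argument all break at the same point.

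The missing hypothesis is that $K^\bullet$ has tor amplitude in $[0,b]$, i.e.\ is locally quasi-isomorphic to finite locally free modules in nonnegative degrees. With it, $(K^\bullet)^\vee$ has tor amplitude in $[-b,0]$ and $\Rm{L}f^*K^\bullet\in D^{\geq 0}$, and your proof goes through as written. This hypothesis does hold where the lemma is applied. There $K_\Box=\Rm{R}\pi_{B,*}C_\Box$, where $C_\Box$ has $B$-flat terms in degrees $0,1$ on a relative curve, so $K_\Box$ has tor amplitude in $[0,2]$.
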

\begin{proof}
  By the proof of \cite[\href{https://stacks.math.columbia.edu/tag/08JX}{Tag 08JX}]{stacks-project}, $(T \xrightarrow{f} B) \mapsto \HH^0(T, \Rm{L}f^*K^\bullet)$ is a fppf sheaf. On the other hand, $(T \xrightarrow{f} B) \mapsto \Hom_B(T, M^0(K))$ is clearly a fpqc sheaf (representable by $M^0(K)$), so by \cite[\href{https://stacks.math.columbia.edu/tag/04U0}{Tag 04U0}]{stacks-project}, we can reduce to the case where $B = \Spec R$ and $K^\bullet$ is quasi-isomorphic to $(P^\bullet, \der^\bullet)$ a complex of projective $R-$modules with no negative term. Then 
  \[\sh{H}^0((K^\bullet)^\vee) = \coker\brac*{P^{1, \vee} \xrightarrow{\der^{0, \vee}} P^{0, \vee}}\]
  and 
  \begin{align*}
    \Hom_B(T, M^0(K^\bullet)) &\simeq \Hom_{\sh{O}_B-\Rm{alg}}\brac*{\Rm{Sym}_{\sh{O}_B} \brac*{\coker\brac*{P^{1, \vee} \xrightarrow{\der^{0, \vee}} P^{0, \vee}}}, f_*\sh{O}_T}\\
    &\simeq \Hom_{\sh{O}_T-\Rm{alg}}\brac*{f^*\Rm{Sym}_{\sh{O}_B} \brac*{\coker\brac*{P^{1, \vee} \xrightarrow{\der^{0, \vee}} P^{0, \vee}}}, \sh{O}_T}\\
    &\simeq \Hom_{\sh{O}_T-\Rm{alg}}\brac*{\Rm{Sym}_{\sh{O}_B} \brac*{f^*\coker\brac*{P^{1, \vee} \xrightarrow{\der^{0, \vee}} P^{0, \vee}}}, \sh{O}_T}\\
    &\simeq \Hom_{\sh{O}_T} \brac*{f^*\coker\brac*{P^{1, \vee} \xrightarrow{\der^{0, \vee}} P^{0, \vee}}, \sh{O}_T}\\
    &\simeq \Hom_{\sh{O}_T} \brac*{\coker\brac*{f^*P^{1, \vee} \xrightarrow{\der^{0, \vee}} f^*P^{0, \vee}}, \sh{O}_T}\\
    &\simeq H^0\brac*{T, \ker\brac*{f^*P^0 \xrightarrow{f^*\der^0} f^*P^1}}\\
    &\simeq \ker\brac*{H^0(T, f^*P^0) \xrightarrow{H^0(f^*\der^0)} H^0(T, f^*P^1)} \simeq \HH^0(T, \Rm{L}f^* K^\bullet)
  \end{align*}
  where all these isomorphisms are functorial in $T \to B$. 
\end{proof}
\begin{definition}
  For $\pi: M^0_B(H^\bullet) \to B$, the universal section is $u \in \HH^0(M^0_B(K^\bullet), \Rm{L}\pi^*K^\bullet)$ corresponding to the identity in $\Hom_B(M^0_B(K^\bullet), M^0_B(K^\bullet))$.
\end{definition}
\begin{coroll}
  \label{lem:relative-hom-and-restriction}
  For $\phi: B' \to B$, there is a canonical isomorphism 
  \[M^0_B(K^\bullet) \times_B B' \xrightarrow{\simeq} M^0_{B'}(\Rm{L}\phi^* K^\bullet)\]
  which is compatible with universal sections, i.e., for $\pi': M^0_{B'}(\Rm{L}\phi^* K^\bullet) \to B', \pi: M^0_B(K^\bullet) \to B$, and universal sections $u' \in \HH^0\brac*{M^0_{B'}(\Rm{L}\phi^* K^\bullet), \Rm{L}(\phi \circ \pi)^* K^\bullet}, u \in \HH^0\brac*{M^0_B(K^\bullet), \Rm{L}\pi^*K^\bullet}$, we have $u' = \Rm{L}\phi^*(u)$. 
\end{coroll}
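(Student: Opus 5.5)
The plan is to deduce the corollary from \cref{lem:relative-hom-scheme-lemma} by a Yoneda argument, comparing the functors of points of both sides on $B'$-schemes. Let $g: T \to B'$ be a $B'$-scheme. A $B'$-morphism $T \to M^0_B(K^\bullet) \times_B B'$ is the same as a $B$-morphism $T \to M^0_B(K^\bullet)$, where $T$ is viewed over $B$ via $\phi \circ g$. By \cref{lem:relative-hom-scheme-lemma} this set is $\HH^0(T, \Rm{L}(\phi\circ g)^*K^\bullet)$. On the other side, $\Rm{L}\phi^*K^\bullet$ is again perfect and lies in $D^{\geq 0}(\sh{O}_{B'})$ after replacing it locally by $\phi^*P^\bullet$ with $P^\bullet$ a complex of projectives with no negative terms. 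So \cref{lem:relative-hom-scheme-lemma} applies and gives $\Hom_{B'}(T, M^0_{B'}(\Rm{L}\phi^*K^\bullet)) \simeq \HH^0(T, \Rm{L}g^*\Rm{L}\phi^*K^\bullet)$.

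The canonical isomorphism $\Rm{L}g^*\Rm{L}\phi^* \simeq \Rm{L}(\phi\circ g)^*$ identifies the two right-hand sides. Because the bijections of \cref{lem:relative-hom-scheme-lemma} are functorial in $T$, this is a natural isomorphism of functors on $B'$-schemes, and Yoneda produces the isomorphism $M^0_B(K^\bullet)\times_B B' \xrightarrow{\simeq} M^0_{B'}(\Rm{L}\phi^*K^\bullet)$. One could also argue directly: locally $\sh{H}^0$ of the dual is a cokernel of a map of finite projectives, cokernels commute with pullback, and $\Rm{Sym}$ commutes with base change. I would mention this only as a sanity check.

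For the universal sections, take $T = M^0_{B'}(\Rm{L}\phi^*K^\bullet)$ with $g = \pi'$ and the identity morphism. This corresponds to $u'$ under the second bijection. Transporting it across the isomorphism, it corresponds to the projection $M^0_B(K^\bullet)\times_B B' \to M^0_B(K^\bullet)$. Under \cref{lem:relative-hom-scheme-lemma}, that projection corresponds to the pullback of $u$, by naturality of the bijection applied to the morphism $T \to M^0_B(K^\bullet)$ (the identity pulled back). Hence $u' = \Rm{L}\phi^*(u)$, where "$\Rm{L}\phi^*$" means pullback along the induced map $M^0_{B'}(\Rm{L}\phi^* K^\bullet) \simeq M^0_B(K^\bullet)\times_B B' \to M^0_B(K^\bullet)$.

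The main point needing care is this naturality statement. The bijection of \cref{lem:relative-hom-scheme-lemma} must be compatible with pulling back hypercohomology classes along morphisms of $B$-schemes $T_1 \to T_2$, not just with isomorphisms. This is visible from the explicit local chain of isomorphisms in its proof, each step of which commutes with pullback. It then globalizes by the sheaf property already used in that proof. Everything else is formal.
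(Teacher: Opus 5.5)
Your proposal is correct and follows essentially the same route as the paper. For a $B'$-scheme $g\colon T\to B'$, you identify both functors of points with $\mathbb{H}^0(T, \mathrm{L}(\phi\circ g)^*K^\bullet)$, using the representability lemma together with $\mathrm{L}g^*\mathrm{L}\phi^*\simeq \mathrm{L}(\phi\circ g)^*$, and conclude by Yoneda. Your extra checks simply make explicit what the paper compresses into ``by Yoneda we get both statements'': that $\mathrm{L}\phi^*K^\bullet$ again satisfies the lemma's hypotheses, and the naturality argument that carries the identity of $M^0_{B'}(\mathrm{L}\phi^*K^\bullet)$ to $u'=\mathrm{L}\phi^*(u)$.
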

\begin{proof}
  For $T \xrightarrow{f'} B'$, denote $f: T \xrightarrow{f'} B' \xrightarrow{\phi} B$. We have 
  \[\Hom_{B'}(T, M^0_B(K^\bullet) \times_B B') \simeq \Hom_B(T, M_B^0(K^\bullet)) \simeq \HH^0(T, \Rm{L}f^*K^\bullet)\]
  \tab On the other hand, 
  \[\Hom_{B'}(T, M^0_{B'}(\Rm{L}\phi^* K^\bullet)) \simeq \HH^0\brac*{T, \Rm{L}(f')^*(\Rm{L}\phi^* K^\bullet)} = \HH^0\brac*{T, \Rm{L}f^*K^\bullet}\]
  so by Yoneda we get both statements. 
\end{proof}
\begin{coroll}
  \label{lem:MTS-enhancement-relative-hom-representability}
  Let $\sh{A}$ be an artinian $\sh{T}(0)$-MTS-algebra, and $\sh{K} \in D^{\geq 0}(\sh{A}\Rm{-MTS-module})$ is perfect. Write $\sh{K}^\vee \coloneq \Rm{R}\ul{\Hom}_{\sh{A}}(\sh{K}, \sh{A})$. Then for every morphism of artinian local $\sh{T}(0)$-MTS-algebras $\sh{A} \to \sh{B}$ we have a bijection 
  \[\Hom_{\sh{A}\Rm{-alg}}(\Rm{Sym}_{\sh{A}} \sh{H}^0(\sh{K}^\vee), \sh{B}) \simeq \Hom_{D(\sh{B}\Rm{-MTS-module})}(\sh{B}, \sh{B} \otimes_{\sh{A}}^{\Rm{L}} \sh{K})\]
  which is functorial in $\sh{B}$. 
\end{coroll}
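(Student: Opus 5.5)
We will mimic the proof of \cref{lem:relative-hom-scheme-lemma}, now working in the abelian category of $\sh{A}$-MTS-modules rather than with $\sh{O}_B$-modules. The first step is to reduce to an explicit model. Since $\sh{K}$ is perfect and lies in $D^{\geq 0}$, we replace it by a bounded complex $(P^\bullet, \der^\bullet)$ of finite free $\sh{A}$-MTS-modules concentrated in nonnegative degrees. Here ``free'' means sums of $\sh{A} \otimes \sh{T}(k)$, so that duals and tensor products are computed termwise. With this model,
\[
\sh{K}^\vee = \Rm{R}\ul{\Hom}_{\sh{A}}(\sh{K}, \sh{A}) \simeq \ul{\Hom}_{\sh{A}}(P^\bullet, \sh{A})
\]
sits in degrees $\leq 0$, and therefore
\[
\sh{H}^0(\sh{K}^\vee) = \coker\brac*{P^{1,\vee} \xrightarrow{\der^{0,\vee}} P^{0,\vee}},
\]
exactly as in the affine case above. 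Likewise $\sh{B} \otimes^{\Rm{L}}_{\sh{A}} \sh{K}$ is computed by $\sh{B} \otimes_{\sh{A}} P^\bullet$. For this we need that tensoring a free $\sh{A}$-MTS-module with $\sh{B}$ is given by the cokernel construction of \cref{lem:artinian-coeff-base-change}, which holds because $\mathsf{MTS}_\CC$ is abelian.

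Next we compute both sides as MTS-linear Hom groups. By the universal property of $\Rm{Sym}$ in the tensor category of $\sh{A}$-MTS-modules, $\sh{A}$-algebra maps $\Rm{Sym}_{\sh{A}} \sh{H}^0(\sh{K}^\vee) \to \sh{B}$ are the same as $\sh{A}$-MTS-module maps $\coker(\der^{0,\vee}) \to \sh{B}$. Extending scalars, these are $\sh{B}$-module maps $\coker(\sh{B}\otimes\der^{0,\vee}) \to \sh{B}$. Using reflexivity of finite free modules, such maps are identified with elements of
\[
\ker\brac*{\Hom_{\sh{B}}(\sh{B}, \sh{B}\otimes P^0) \to \Hom_{\sh{B}}(\sh{B}, \sh{B}\otimes P^1)}.
\]
On the other side, $\Hom_{D(\sh{B}\text{-MTS-module})}(\sh{B}, \sh{B}\otimes_{\sh{A}} P^\bullet)$ should be computed by the naive Hom complex in degree $0$. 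This is justified since $\sh{B}$ is free (hence projective-like for these Homs) and the target sits in degrees $\geq 0$, so no homotopies in degree $-1$ occur. The result is the same kernel. Every identification is natural in $\sh{B}$, which gives the functoriality, just as in the functorial chain in \cref{lem:relative-hom-scheme-lemma}.

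The main obstacle is the homological algebra in the category of $\sh{A}$-MTS-modules. Two points need care. First, we must ensure that the perfect complex $\sh{K}$ admits a free resolution within the MTS category. Second, we must show that $\sh{A}$ is projective there, meaning that $\Hom(\sh{A}, -)$ is exact on MTS morphisms. The latter fails in general for $\mathsf{MTS}_\CC$ because of nontrivial $\Rm{Ext}^1$ between twists. We therefore intend to interpret ``perfect'' as already providing a quasi-isomorphism to such a bounded complex $P^\bullet$ in nonnegative degrees. With that in hand, morphisms from $\sh{B}$ into a complex in degrees $\geq 0$ are computed by the $H^0$ of the termwise Hom complex, via the truncation triangle. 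This holds because any map $\sh{B} \to \tau_{\geq 0}$ factors through the kernel of $\der^0$ in degree $0$, with no Ext contributions from lower degrees.
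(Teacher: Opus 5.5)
Your proposal is correct and is essentially the paper's argument at chain level. The paper uses the same ingredients (universal property of $\Rm{Sym}$, extension of scalars, duality for perfect complexes, and a degree/truncation argument), but phrases them as abstract adjunctions in the derived category and truncates $\sh{K}^\vee \in D^{\leq 0}$ rather than $\sh{B} \otimes^{\Rm{L}}_{\sh{A}} \sh{K}$; it also tacitly relies on the same reading of ``perfect'' as a free model in degrees $\geq 0$. Your worry about projectivity is unnecessary: for any $C \in D^{\geq 0}$ in an abelian category, the truncation triangle alone gives $\Hom_{D}(\sh{B}, C) \simeq \Hom(\sh{B}, \sh{H}^0(C))$, so the remark about homotopies in degree $-1$ can be dropped.
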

\begin{proof}
  First,
  \[\Hom_{\sh{A}\Rm{-alg}}(\Rm{Sym}_{\sh{A}} \sh{H}^0(\sh{K}^\vee), \sh{B}) \simeq \Hom_{\sh{A}\Rm{-MTS-module}}(\sh{H}^0(\sh{K}^\vee), \sh{B})\]
  and since $\sh{K}^\vee \in D^{\leq 0}$, we have by degree reason
  \[\Hom_{D(\sh{A}\Rm{-MTS-module})}(\sh{K}^\vee, \sh{B}) \simeq \Hom_{\sh{A}\Rm{-MTS-module}}(\sh{H}^0(\sh{K}^\vee), \sh{B})\]
  \tab Finally, by push-pull adjunction and duality we get 
  \begin{align*}
    \Hom_{\sh{A}\Rm{-alg}}(\Rm{Sym}_{\sh{A}} \sh{H}^0(\sh{K}^\vee), \sh{B}) &\simeq \Hom_{D(\sh{A}\Rm{-MTS-module})}(\sh{K}^\vee, \sh{B})\\
    &\simeq \Hom_{D(\sh{B}\Rm{-MTS-module})}(\sh{B} \otimes_{\sh{A}}^{\Rm{L}}\sh{K}^\vee, \sh{B})\\
    &\simeq \Hom_{D(\sh{B}\Rm{-MTS-module})}((\sh{B} \otimes_{\sh{A}}^{\Rm{L}}\sh{K})^\vee, \sh{B})\\
    &\simeq \Hom_{D(\sh{B}\Rm{-MTS-module})}(\sh{B}, \sh{B} \otimes_{\sh{A}}^{\Rm{L}} \sh{K})
  \end{align*}
  as desired. 
\end{proof}
Let $b: \Spec \CC \to B$ be a closed point, then a $\CC$-point of $M^0(K^\bullet)$ lying over $b$ is the same as a $\CC-$linear map 
\[\sh{H}^0((K^\bullet)^\vee) \otimes_{\sh{O}_B} \CC_b \to \CC\]
which, since $(K^\bullet)^\vee \in D^{\leq 0}$, is an element of 
\begin{align*}
  H^0((K^\bullet)^\vee \otimes^{\Rm{L}}_{\sh{O}_B} \CC_b)^\vee &\simeq H^0((K^\bullet \otimes^{\Rm{L}}_{\sh{O}_B} \CC_b)^\vee)^\vee \\
  &\simeq H^0(K^\bullet \otimes_{\sh{O}_B}^{\Rm{L}} \CC_b) \simeq \Hom_{D(\CC)}(\CC, K^\bullet \otimes_{\sh{O}_B}^{\Rm{L}} \CC_b)
\end{align*}
where the first isomorphism comes from \cite[\href{https://stacks.math.columbia.edu/tag/0A6A}{Tag 0A6A}]{stacks-project}. 

\begin{definition}
  Suppose that $\bm{\Lambda} = \varprojlim \bm{\Lambda}_n$ is a pro-$\sh{T}(0)$-MTS-algebra, with $\Lambda = \abs*{\bm{\Lambda}}$. Let $B = \Rm{Spf}\ \Lambda$, $\sh{K} = \set*{\sh{K}_n} \in D^{\geq 0}(\bm{\Lambda}_n\text{-MTS-module})$. Then, for $K \coloneq \abs*{\sh{K}}$, one can form a formal scheme $M^0_B(K) \to B$. By the previous discussion, a $\CC$-point of $M^0_B(K)$ lying over $\Spec \CC \xrightarrow{b} B$ is the same as a morphism $\CC \to K \otimes^{\Rm{L}}_{\Lambda} \CC$. We say that this is a Tate point if it underlies a morphism in $D(\CC-\text{MTS})$
  \[\sh{T}(0) \to \sh{K} \otimes^{\Rm{L}}_{\bm{\Lambda}} \sh{T}(0)\]
\end{definition}
\begin{prop}
  \label{lem:MTS-structure-relative-Hom-scheme}
  In the setting above, if $s$ is a Tate point of $M^0_B(K) \to B = \Rm{Spf} \Lambda$, then the completed local ring $\hat{\sh{O}}_{M_B^0(K), s}$ underlies a pro-$\bm{\Lambda}$-MTS-algebra $\bm{\Gamma}$. This construction commutes with pro-$\sh{T}(0)$-MTS-algebra coefficient base change, and fiber products over a pro-$\sh{T}(0)$-MTS-algebra base. 
\end{prop}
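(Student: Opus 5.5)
The plan is to describe the completed local ring $\hat{\sh{O}}_{M^0_B(K), s}$ as the pro-representing object of a deformation functor on $\Sf{MTS\text{-}Art}$, and then use \cref{lem:MTS-enhancement-relative-hom-representability} to see that this functor is already representable levelwise in MTS. The first step is to fix the data. We have $\bm{\Lambda} = \varprojlim \bm{\Lambda}_n$ and, for each $n$, the symmetric algebra
\[\bm{S}_n \coloneq \Rm{Sym}_{\bm{\Lambda}_n} \sh{H}^0(\sh{K}_n^\vee).\]
Since $\sh{K}_n$ is perfect in $D^{\geq 0}$, the dual satisfies $\sh{K}_n^\vee \in D^{\leq 0}$. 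Hence $\sh{H}^0(\sh{K}_n^\vee)$ is a $\bm{\Lambda}_n$-MTS-module and $\bm{S}_n$ is an (ind-)$\bm{\Lambda}_n$-MTS-algebra. We also have $\abs{\bm{S}_n} = \Rm{Sym}_{\Lambda_n}\sh{H}^0(K_n^\vee)$, because $\abs{-}$ is exact: it is restriction to $\lambda = 1$, which is compatible with Sym, duals and cokernels.

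The second step treats the point $s$. The Tate condition gives a morphism of MTS $\sh{T}(0) \to \sh{K}\otimes^{\Rm{L}}_{\bm{\Lambda}}\sh{T}(0)$. Applying \cref{lem:MTS-enhancement-relative-hom-representability} with $\sh{B} = \sh{T}(0)$, this is an MTS-algebra map $\epsilon: \bm{S}_n \to \sh{T}(0)$. Its kernel $\Ide{m}_n$ is then an MTS ideal, because $\Sf{MTS}_\CC$ is abelian. I will then define $\bm{\Gamma}_{n,k} \coloneq \bm{S}_n/\Ide{m}_n^{k+1}$. Each of these is an artinian local $\sh{T}(0)$-MTS-algebra: $\Ide{m}_n^{k+1}$ is the image of $\Ide{m}_n^{\otimes(k+1)}$, and $\abs{\bm{S}_n}$ is finitely generated, so the quotient is finite dimensional. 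Its realization is $\abs{\bm{S}_n}/\Ide{m}_{s}^{k+1}$. Taking the limit over $n$ and $k$, or equivalently the diagonal $n = k$, gives a pro-$\bm{\Lambda}$-MTS-algebra $\bm{\Gamma}$. By \cref{lem:relative-hom-and-restriction} we have $M^0_B(K)\times_B \Spec\Lambda_n = M^0(K_n)$, so $\abs{\bm{\Gamma}} = \hat{\sh{O}}_{M^0_B(K),s}$.

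The third step, which I expect to be the main obstacle, is to check that the pro-system is well behaved: that the transition maps $\bm{\Gamma}_{n+1,k} \to \bm{\Gamma}_{n,k}$ are MTS morphisms and that the inverse system is Mittag-Leffler/complete in the sense of the remark. For this I would use the compatibility $\bm{\Lambda}_n \otimes^{\Rm{L}}_{\bm{\Lambda}_{n+1}} \sh{K}_{n+1} \simeq \sh{K}_n$ together with right exactness of $\sh{H}^0$ on $D^{\leq 0}$. This gives $\bm{\Lambda}_n\otimes \sh{H}^0(\sh{K}_{n+1}^\vee) \simeq \sh{H}^0(\sh{K}_n^\vee)$, an isomorphism of MTS whose realization is the scheme-theoretic one. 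If a subtlety appears about $\abs{-}$ commuting with $\Ide{m}^{k+1}$ or with duals, I would resolve it with \cref{lem:affinesplitting}. That lemma trivializes everything over $\AAA^1_\lambda$ compatibly with derived tensor products and base change, which reduces these identities to the corresponding ones for $\CC$-modules.

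The last step is the compatibilities. Coefficient base change $\bm{\Lambda} \to \bm{\Lambda}'$ follows from
\[\Rm{Sym}_{\bm{\Lambda}'}\sh{H}^0\brac*{(\bm{\Lambda}'\otimes^{\Rm{L}}\sh{K})^\vee} = \bm{\Lambda}'\otimes_{\bm{\Lambda}}\Rm{Sym}\,\sh{H}^0(\sh{K}^\vee),\]
which again uses right exactness of $\sh{H}^0$ and commutation of duals with base change for perfect complexes, followed by completion. Fiber products follow from
\[\Rm{Sym}\,\sh{H}^0\brac*{(\sh{K}_1\oplus\sh{K}_2)^\vee} = \Rm{Sym}\,\sh{H}^0(\sh{K}_1^\vee)\otimes_{\bm{\Lambda}}\Rm{Sym}\,\sh{H}^0(\sh{K}_2^\vee).\]
Both identifications are functorial in MTS and realize to the identifications of \cref{lem:relative-hom-and-restriction}.
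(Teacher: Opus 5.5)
Your proposal is correct and follows essentially the same route as the paper. Levelwise, $\Rm{Sym}_{\bm{\Lambda}_n}\sh{H}^0(\sh{K}_n^\vee)$ is a $\bm{\Lambda}_n$-MTS-algebra, and the Tate point becomes, via \cref{lem:MTS-enhancement-relative-hom-representability}, an MTS-algebra map to $\sh{T}(0)$ whose kernel is an MTS ideal; base change and fiber products then follow from right exactness of $\sh{H}^0$ on $D^{\leq 0}$ and additivity of $\sh{H}^0$ on direct sums. Your extra details (the symmetric algebra being only an ind-object, quotienting by powers of the ideal to get honest artinian MTS-algebras, and the transition maps) are consistent with the paper's argument and fill in points it leaves implicit.
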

\begin{proof}
  We can work levelwise, and notice that the coordinate algebra of $M^0_{\Spec \Lambda_n}(K_n)$, which is $\Rm{Sym}_{\Lambda_n} \sh{H}^0(K_n^\vee)$, underlies $\Rm{Sym}_{\bm{\Lambda}_n} \sh{H}^0(\sh{K}_n^\vee)$ which is a $\bm{\Lambda}_n$-MTS-algebra (the differentials of $\sh{K}^\vee$ are MTS-morphisms, and the category is abelian). A Tate point gives a morphism of $\bm{\Lambda}_n-$MTS-algebras $\Rm{Sym}_{\bm{\Lambda}_n} \sh{H}^0(\sh{K}_n^\vee) \to \sh{T}(0)$, hence is defined by a MTS-ideal (equivalently, the quotient still has a MTS structure). \\
  \tab Coefficient base change follows from being bounded above by 0. Indeed, by the argument in \cref{lem:relative-hom-scheme-lemma}, we can pick a locally free quasi-isomorphic $P^\bullet$, where 
  \[\sh{H}^0(\sh{K}_n^\vee) = \coker(P^{1, \vee} \to P^{0, \vee})\]
  hence given $\bm{\Lambda}_n \to \sh{A}$, using right exactness of tensor product we get
  \[\sh{H}^0(\sh{K}_n^\vee) \otimes_{\bm{\Lambda}_n} \sh{A} \simeq \sh{H}^0((\sh{K}_n \otimes_{\bm{\Lambda}_n}^{\Rm{L}} \sh{A}))\]
  
  Finally, notice that due to being bounded above by 0, $\sh{H}^0(\sh{K}_n^\vee \oplus \sh{L}_n^\vee) \simeq \sh{H}^0(\sh{K}_n^\vee) \oplus \sh{H}^0(\sh{L}_n^\vee)$ and we have
  \[M^0(K_n) \times_{\Spec \Lambda_n} M^0(L_n) \simeq M^0(K_n \oplus L_n)\]
  so compatibility with fiber products follows. 
\end{proof}
\begin{remark}
  In this setting where $\sh{K}_n \in D^{\geq 0}(\bm{\Lambda}_n-\text{MTS-module})$, restriction is exact, so 
  \[M_{\Lambda_n}^0(K_n) = M^0_{\bm{\Lambda}_n}(\sh{K}_n)\vert_{\lambda = 1}\]
  by \cref{lem:relative-hom-and-restriction}. The universal section on $M_{\Lambda_n}^0(K_n)$ is the restriction of the universal section on $M^0_{\bm{\Lambda}_n}(\sh{K}_n)$. 
\end{remark}
For $\Box \in \set*{\DR, \Dol}$, denote $B = R_\Box^\pair(r, s) \coloneq R_\Box(r) \times R_\Box(s)$ the moduli of pairs. Let $\bar{X}_B \coloneq \bar{X} \times B$, $D_B \coloneq D \times B$ with projection $\pi_B: \bar{X}_B \to B$, and $E^\univ_\Box, F^\univ_\Box$ be the universal bundles. We can form the complexes
\begin{align*}
  C_{\DR}(E^\univ_\DR, F^\univ_\DR) &\coloneq \sqbrac*{\shHom(E^\univ_\DR, F^\univ_\DR) \xrightarrow{\nabla} \shHom(E^\univ_\DR, F^\univ_\DR) \otimes \Omega^1_{\bar{X} \times B/B}(\log D_B)}\\
  C_{\Dol}(E^\univ_\Dol, F^\univ_\Dol) &\coloneq \sqbrac*{\shHom(E^\univ_\Dol, F^\univ_\Dol) \xrightarrow{\theta} \shHom(E^\univ_\Dol, F^\univ_\Dol) \otimes \Omega^1_{\bar{X} \times B/B}(\log D_B)}\\
  K_\Box(E^\univ_\Box, F^\univ_\Box) &\coloneq \Rm{R}\pi_{B, *} C_\Box(E^\univ_\Box, F^\univ_\Box) \tag{3.4}\label{eq:Dol/DR-complexes}
\end{align*}
\begin{prop}
  \label{lem:base-change-perfectness-Dol-DR-complexes}
  $K_\Box(E^\univ_\Box, F^\univ_\Box)$ is perfect in $D(\sh{O}_B)$ with no negative cohomology sheaf. For any $f: T \to B$, we have corresponding families $E_{\Box, T}, F_{\Box, T}$ on $\bar{X} \times T$ and we can form similar complexes. Then the formulation of $K_\Box$ commutes with base change, i.e., 
  \[K_{\Box, T}(E_{\Box, T}, F_{\Box, T}) = \Rm{L}f^* K_\Box(E^\univ_\Box, F^\univ_\Box)\]
  As a consequence, $K_{\Box, T}(E_{\Box, T}, F_{\Box, T})$ is perfect in $D(\sh{O}_T)$ and base change works for $T' \to T$ over $B$ as well. 
\end{prop}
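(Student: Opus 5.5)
The plan is to treat $C_\Box(E^\univ_\Box, F^\univ_\Box)$ as a two-term complex of $\pi_B^{-1}\sh{O}_B$-modules whose terms are locally free coherent $\sh{O}_{\bar{X}_B}$-modules, flat over $B$. The differential is $\sh{O}_{\bar{X}_B}$-linear in the Dolbeault case and a relative differential operator in the de Rham case. The key tool will be the standard cohomology-and-base-change theorem for complexes of $B$-flat coherent sheaves with $\pi_B^{-1}\sh{O}_B$-linear differentials, applied along the proper flat morphism $\pi_B$ (EGA III 6.10, or the Stacks Project section on perfectness of $\Rm{R}\pi_*$ for flat coherent terms). That theorem holds for differential operators because locally on $B$ one can compute $\Rm{R}\pi_{B,*}$ by a finite Čech complex of an affine cover of $\bar{X}$. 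This produces a bounded complex of flat, quasi-coherent $\sh{O}_B$-modules whose differentials are $\sh{O}_B$-linear.

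\textbf{Step 1: perfectness.} Fix an affine open $\Spec R\subset B$ and a finite affine cover $\{U_i\}$ of $\bar{X}$. Form the total complex of the Čech double complex $\check{C}^p(\{U_i\times \Spec R\}, C^q_\Box)$. Each term is $R$-flat because $C^q_\Box$ is $B$-flat. The total complex lives in degrees $[0,2]$, and more precisely in $[0,\dim \bar{X}+1]$. By properness its cohomology is coherent, and the same holds after any base change $R\to R'$, since the Čech terms commute with $\otimes_R R'$. The standard argument (EGA III 6.10.5 or Mumford's lemma) then replaces this complex by a bounded complex of finite projective $R$-modules, so $K_\Box$ is perfect. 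It has no negative cohomology sheaf because the complex lives in degrees $\geq 0$. By truncating, one may even choose the projective representative to vanish in negative degrees.

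\textbf{Step 2: base change.} For $f:T\to B$ the families $E_{\Box,T},F_{\Box,T}$ are the pullbacks of the universal ones, so $C_{\Box}(E_{\Box,T},F_{\Box,T})=(\mathrm{id}\times f)^*C_\Box(E^\univ,F^\univ)$ termwise. Here one uses that $\shHom$ of locally free sheaves and $\Omega^1_{\bar{X}\times B/B}(\log D_B)$ commute with pullback, and that the pulled-back connection or Higgs field is the induced one. Over affine pieces the Čech complex of the pullback is $K\otimes_R R'$, and this tensor product equals $K\otimes^{\Rm{L}}_R R'$ because the terms are $R$-flat. This gives $K_{\Box,T}=\Rm{L}f^*K_\Box$, canonically and compatibly on overlaps. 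Independence of the cover gives the gluing. For general non-affine $T$ one works affine-locally. The consequences then follow formally: $\Rm{L}f^*$ preserves perfectness and lies in $D^{\geq 0}$ after choosing the projective representative above, and for $T'\to T\to B$ we have $\Rm{L}g^*\Rm{L}f^*=\Rm{L}(fg)^*$.

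\textbf{Main obstacle.} The main subtlety is that the de Rham differential $\nabla$ is not $\sh{O}$-linear, so one cannot quote base change for a complex of coherent sheaves with linear differentials verbatim. I would handle this by noting that the required arguments use only the $\pi_B^{-1}\sh{O}_B$-linearity of the differential, coherence and flatness of the individual terms, and properness. Alternatively, one can pass to the Dolbeault-type resolution, or filter by the stupid filtration, which reduces to the coherent terms $C^0,C^1$ separately. Perfectness is a property detected via cones, and base change holds for each term and is compatible with the connecting map.
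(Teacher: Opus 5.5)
Your proposal is correct, but it is organized differently from the paper's proof. The paper argues purely by citation. For $\Box=\Dol$, the complex $C_{\Dol}$ has locally free terms and an $\sh{O}$-linear differential, so it is a perfect object on $\bar{X}_B$. Since $\pi_B$ is flat, proper and of finite presentation, the cited Stacks result on derived pushforwards of perfect complexes (Tag 0A1G) gives perfectness and arbitrary base change at once, and base change of $\Omega^1_{\bar{X}_B/B}(\log D_B)$ identifies the pulled-back complex. For $\Box=\DR$, the paper reruns the proof of Tag 0FM0, which is a d\'evissage along the stupid filtration $C^1[-1]\to C_{\DR}\to C^0\xrightarrow{+1}$: the linear result is applied to each coherent term, and two-out-of-three handles both perfectness and the base change map.

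Your main route instead computes $\Rm{R}\pi_{B,*}$ over an affine $\Spec R\subset B$ by the \v{C}ech total complex. You note that this is a bounded complex of flat $R$-modules with $R$-linear differentials and finitely generated cohomology; for $\nabla$ this uses the long exact sequence with the $R$-linear maps $R^i\pi_*C^0\to R^i\pi_*C^1$. Hence it is pseudo-coherent of finite Tor amplitude, so perfect. Base change follows because the \v{C}ech complex of the pullback is literally $\check{C}\otimes_R R'=\check{C}\otimes^{\Rm{L}}_R R'$.

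Your approach treats both cases uniformly, using only the $\pi_B^{-1}\sh{O}_B$-linearity of the differential. It also produces explicitly the representative by finite projective modules in degrees $\geq 0$, which the paper later uses in the proof of \cref{lem:relative-hom-scheme-lemma}. The paper's route is shorter given the cited lemmas, and your stupid-filtration fallback is exactly its de Rham argument.

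Two minor points. First, instead of gluing the local identifications by ``independence of the cover'', it is cleaner to use the global base change morphism $\Rm{L}f^*\Rm{R}\pi_{B,*}C_\Box\to\Rm{R}\pi_{T,*}C_{\Box,T}$ (defined by adjunction for complexes of $\pi_B^{-1}\sh{O}_B$-modules) and check locally that it is an isomorphism. Second, drop the ``Dolbeault-type resolution'' alternative: smooth forms give no resolution compatible with algebraic base change over $B$.
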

\begin{proof}
  For the Dolbeault complex, this follows from \cite[\href{https://stacks.math.columbia.edu/tag/0A1G}{Tag 0A1G}]{stacks-project} and that relative logarithmic Kahler differentials commutes with base change. For the De Rham complex, the same proof as in \cite[\href{https://stacks.math.columbia.edu/tag/0FM0}{Tag 0FM0}]{stacks-project} works. 
\end{proof}

\begin{prop}
  \label{lem:moduli-of-triples-and-deformation}
  The schemes 
  \[R^\trip_\Box(r, s) \coloneq M^0(K_\Box^\univ) \to R_\Box^\pair(r, s)\]
  represents the functor of framed triples $(E, F, f)$ where $E, F$ are semistable degree 0 logarithmic flat connections (resp. logarithmic Higgs bundle), and $f$ is a flat/Higgs morphism. If $(E, F, f)$ is a closed point, and $A$ is artinian local, then we have the framed deformation functor
  \[\Rm{Def}^\trip_{\Box, (E, F, f)} = \left\{
    \begin{array}{c|l}
      (E_A, F_A, f_A) & (E_A, F_A) \in \Rm{Def}^\pair_{\Box, (E, F)}(A) \\
      & f_A \in \HH^0(K_{\Box, A}(E_A, F_A)), \tab f_A \ \Rm{mod}\ \Ide{m}_A = f
    \end{array}
  \right\}\Big/\simeq \]
\end{prop}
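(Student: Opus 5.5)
The plan is to combine \cref{lem:relative-hom-scheme-lemma} with the base change statement \cref{lem:base-change-perfectness-Dol-DR-complexes}, and then read off the deformation functor by restricting to artinian points.

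\textbf{Representability.} By \cref{lem:base-change-perfectness-Dol-DR-complexes}, $K_\Box^\univ \coloneq K_\Box(E^\univ_\Box, F^\univ_\Box)$ is perfect in $D^{\geq 0}(\sh{O}_B)$ for $B = R^\pair_\Box(r,s)$, so $M^0(K^\univ_\Box)$ is defined. Fix a $\CC$-scheme $T$.

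\begin{enumerate}
  \item A morphism $T \to M^0(K^\univ_\Box)$ is the same as a morphism $f: T \to B$ together with an element of $\Hom_B(T, M^0(K^\univ_\Box))$. By \cref{lem:relative-hom-scheme-lemma}, the latter is $\HH^0(T, \Rm{L}f^*K^\univ_\Box)$.
  \item The map $f$ is a framed pair $(E_T, F_T)$ of semistable degree $0$ objects, namely the pullback of the universal pair. By the base change statement, $\Rm{L}f^*K^\univ_\Box \simeq K_{\Box,T}(E_T, F_T) = \Rm{R}\pi_{T,*}C_\Box(E_T, F_T)$.
  \item Hence $\HH^0(T, \Rm{L}f^*K^\univ_\Box) = \HH^0(\bar{X}_T, C_\Box(E_T,F_T))$. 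Since $C_\Box$ is concentrated in degrees $0,1$, this is $\ker\brac*{\Hom(E_T,F_T) \to H^0(\shHom(E_T,F_T)\otimes\Omega^1_{\bar{X}_T/T}(\log D_T))}$. That kernel consists exactly of the $\sh{O}$-linear maps commuting with $\nabla$ (respectively with $\theta$). The differential on $\shHom$ is $\phi \mapsto \nabla_F\phi - \phi\nabla_E$, resp.\ $\theta_F\phi - \phi\theta_E$, which is $\sh{O}$-linear in the flat case precisely because the Leibniz terms cancel.
  \item Functoriality in $T$ follows because both \cref{lem:relative-hom-scheme-lemma} and the base change isomorphism are functorial.
  \item Isomorphisms of framed triples: the framings rigidify $E_T$ and $F_T$, since automorphisms preserving the framing are trivial in the moduli functor. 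Framed triples up to isomorphism therefore correspond to such data with $(E_T,F_T)$ the pulled-back universal pair.
\end{enumerate}

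This gives the claimed representability.

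\textbf{Deformation functor.} Take $T = \Spec A$ with $A$ artinian local, and consider maps $\Spec A \to R^\trip_\Box$ restricting to the closed point $(E,F,f)$.

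\begin{enumerate}
  \item The composite to $B$ is an element $(E_A,F_A)$ of $\Rm{Def}^\pair_{\Box,(E,F)}(A)$.
  \item The lift is an element $f_A \in \HH^0(K_{\Box,A}(E_A,F_A))$.
  \item The condition that the closed point is $(E,F,f)$ means that $f_A$ reduces to $f$ modulo $\Ide{m}_A$. Here reduction is the base change map $\HH^0(K_{\Box,A}) \to \HH^0(K_{\Box,\CC})$, again by functoriality in \cref{lem:relative-hom-scheme-lemma}.
\end{enumerate}

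This is exactly the set-theoretic description displayed in the statement.

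The main subtlety I expect is the De Rham case, where $C_\DR$ is not a complex of $\sh{O}$-modules, only of $\pi^{-1}\sh{O}_B$-modules. Both the identification $\Rm{L}f^*\Rm{R}\pi_* C_\DR \simeq \Rm{R}\pi_{T,*} C_{\DR,T}$ and the reading of $\HH^0$ as flat morphisms must be checked in that setting. I would handle this by filtering $C_\DR$ by its stupid truncation. The graded pieces are quasi-coherent, flat over $B$ and coherent, so base change for each term holds by \cite[Tag 0A1G]{stacks-project}. This is already encoded in \cref{lem:base-change-perfectness-Dol-DR-complexes}, so here it only needs to be invoked.
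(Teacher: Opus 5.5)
Your proposal is correct and follows the paper's proof. You compose the bijection of \cref{lem:relative-hom-scheme-lemma} with the base change isomorphism of \cref{lem:base-change-perfectness-Dol-DR-complexes}, identify $\HH^0$ of the two-term complex $C_\Box$ with flat (resp.\ Higgs) morphisms, and your remarks on rigidity of framings and on specializing to $T = \Spec A$ only make explicit what the paper leaves implicit.

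One wording nit: in the flat case, what is $\sh{O}$-linear (for fixed $\phi$) is the map $s \mapsto \nabla_F(\phi(s)) - (\phi \otimes 1)(\nabla_E s)$. The differential $\phi \mapsto \nabla_F\phi - \phi\nabla_E$ of $C_\DR$ itself is not $\sh{O}$-linear; it satisfies the Leibniz rule in $\phi$.
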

\begin{proof}
  Let $B = R_\Box^\pair(r, s)$. By \cref{lem:relative-hom-scheme-lemma}, for $T \xrightarrow{f} B$,  
  \begin{align*}
    \Hom_B(T, R^\trip_\Box(r, s)) &\simeq \HH^0(\Rm{L}f^* K_\Box^\univ)\\
    &\simeq \HH^0\brac*{K_{\Box, T}(E_{\Box, T}, F_{\Box, T})}\\
    &\simeq \HH^0\brac*{C_{\Box, T}(E_{\Box, T}, F_{\Box, T})}
  \end{align*}
  which gives the flat/Higgs morphisms between $E_{\Box, T}$ and $F_{\Box, T}$. 
\end{proof}
We also have the Betti moduli space of triples $R^{\trip}_{\Rm{B}}(r, s)$, where instead of a universal logarithmic DR/Dol complex, we work with $K^\bullet$ being just the locally constant sheaf $\ul{\Hom}(E^{\univ}, F^{\univ})$ with $E^{\univ}, F^{\univ}$ being the universal framed local systems pulled back to $X \times R_{\Rm{B}}(r) \times R_{\Rm{B}}(s)$.  

\begin{prop}
  \label{thm:universal-MTS-framed-triple}
  Let $(E, \phi_E), (F, \phi_F)$ be framed semisimple complex local systems of rank $r, s$ on $X$, with unipotent local monodromy, and $f: E \to F$ a morphism of local systems. Fix the canonical weight 0 $\CC$-VTS $\scr{E}, \scr{F}$; $f$ then extends to a morphism of $\CC$-VTS. Denote 
  \[\Gamma_{E, F, f} \coloneq \widehat{\sh{O}}_{R^{\trip}_{\Rm{B}}(r, s), (E, F, f)},\tab \Lambda_{E, F} \coloneq \widehat{\sh{O}}_{R_{\Rm{B}}^{\pair}(r, s), (E, F)} \simeq \widehat{\sh{O}}_{R_{\Rm{B}}(r), E} \widehat{\otimes} \widehat{\sh{O}}_{R_{\Rm{B}}(s), F}\]
  then we have a pro-$\sh{T}(0)$-MTS-algebra $\bm{\Lambda}_{E, F}$ on $\Lambda_{E, F}$ (\cref{thm:Betti-MTS-algebra}) and the following are true: 
  \begin{enumerate}
    \item There is a pro-$\sh{T}(0)$-MTS-algebra $\bm{\Gamma}_{E, F, f}$ with $\abs*{\bm{\Gamma}_{E, F, f}} = \Gamma_{E, F, f}$, and $\bm{\Lambda}_{E, F} \to \bm{\Gamma}_{E, F, f}$ is a morphism of pro-$\sh{T}(0)-$MTS-algebras. 
    \item There is a universal triple $(\widehat{\scr{E}}, \widehat{\scr{F}}, \widehat{f})$ where $\widehat{\scr{E}}, \widehat{\scr{F}}$ are free pro-$\bm{\Gamma}_{E, F, f}$-AVMTS and $\widehat{f}$ a morphism of $\bm{\Gamma}_{E, F, f}$-AVMTS, such that for every artinian local $\sh{T}(0)$-MTS-algebra $\sh{A}$, pullback gives a functorial bijection
    \[\Hom_{\sh{T}(0)-\Rm{alg}}(\bm{\Gamma}_{E, F, f}, \sh{A}) \xrightarrow{\simeq} \left\{ 
      \begin{array}{c}
        \text{framed free }\sh{A}-\text{AVMTS triples }(\scr{E}_{\sh{A}}, \scr{F}_{\sh{A}}, f_{\sh{A}}) \\
        \text{with }f_{\sh{A}} \text{ a morphism of }\sh{A}-\text{AVMTS}\\
        (\scr{E}, \scr{F}, f) \simeq (\scr{E}_{\sh{A}}, \scr{F}_{\sh{A}}, f_{\sh{A}})\ \Rm{mod}\ \Ide{m}_{\sh{A}}
      \end{array}
    \right\}\Big/ \simeq\]
    \item Let $\bar{E}, \bar{F}$ be the Deligne extensions of $E, F$. Then we have identifications 
    \[\bm{\Gamma}_{E, F, f}\vert_{\lambda = 1} \simeq \widehat{\sh{O}}_{R^{\trip}_{\DR}, (\bar{E}, \bar{F}, f)}, \tab \bm{\Gamma}_{E, F, f}\vert_{\lambda = 0} \simeq \widehat{\sh{O}}_{R^{\trip}_{\Dol}, \Sf{SM}^{\Rm{ps}}(\bar{E}, \bar{F}, f)}\]
    and restrictions of $(\widehat{\scr{E}}, \widehat{\scr{F}}, \widehat{f})$ give corresponding DR/Dol universal families. 
  \end{enumerate}
\end{prop}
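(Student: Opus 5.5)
The plan is to build $\bm{\Gamma}_{E, F, f}$ as a relative Hom-scheme over the pair algebra $\bm{\Lambda}_{E, F}$, using \cref{lem:MTS-structure-relative-Hom-scheme}. The pair algebra is the completed tensor product of $\bm{\Lambda}_E$ and $\bm{\Lambda}_F$ from \cref{thm:Betti-MTS-algebra}. Over it we have the universal free pro-$\bm{\Lambda}_{E, F}$-AVMTS $\widehat{\scr{E}}_0, \widehat{\scr{F}}_0$, pulled back from the two factors. Levelwise we form the internal Hom $\ul{\Hom}(\widehat{\scr{E}}_{0,n}, \widehat{\scr{F}}_{0,n})$, a free $\bm{\Lambda}_n$-AVMTS. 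Its derived global sections, computed in $\Sf{MTM}$ by pushforward of $j_*$ to a point, give a perfect complex $\sh{K}_n \in D^{\geq 0}(\bm{\Lambda}_n\text{-MTS-module})$. Its realization $\abs{\sh{K}_n}$ is $\Rm{R}\Gamma(X, \ul{\Hom}(E_n, F_n))$ for the universal Betti local systems. This is exactly the complex defining $R^{\trip}_{\Rm{B}}$, so by \cref{lem:relative-hom-and-restriction}, $M^0_{\Lambda_n}(\abs{\sh{K}_n})$ is the level-$n$ truncation of $R^{\trip}_{\Rm B}$ over the pair space.

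The point $f$ is a Tate point. Indeed, $f$ is a morphism of weight $0$ $\CC$-VTS, hence a morphism $\sh{T}(0) \to \Rm{R}\Gamma\,\ul{\Hom}(\scr{E}, \scr{F})$ in $D(\CC\text{-MTS})$. Therefore \cref{lem:MTS-structure-relative-Hom-scheme} gives a pro-$\bm{\Lambda}_{E, F}$-MTS-algebra $\bm{\Gamma}_{E, F, f}$ with $\abs{\bm{\Gamma}_{E, F, f}} = \Gamma_{E, F, f}$, which proves (1). The universal $\widehat f$ is the universal section from the Remark following \cref{lem:MTS-structure-relative-Hom-scheme}. By construction it is an MTS-compatible class in $\HH^0$, i.e. a morphism $\widehat{\scr{E}} \to \widehat{\scr{F}}$ of pro-$\bm{\Gamma}$-AVMTS after base change along $\bm{\Lambda}_{E, F} \to \bm{\Gamma}_{E, F, f}$, using \cref{lem:artinian-coeff-base-change}.

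For (2), a morphism $\bm{\Gamma} \to \sh{A}$ is the same as two pieces of data. The first is a morphism $\bm{\Lambda}_{E, F} \to \sh{A}$, which is a framed pair $(\scr{E}_{\sh{A}}, \scr{F}_{\sh{A}})$ by \cref{thm:Betti-MTS-algebra} applied to each factor. The second is a compatible $\sh{A}$-algebra map $\Rm{Sym}\,\sh{H}^0(\sh{K}^\vee)\otimes\sh{A} \to \sh{A}$ lifting the Tate point. By \cref{lem:MTS-enhancement-relative-hom-representability} together with base change, this second piece is a morphism $\sh{A} \to \sh{A} \otimes^{\Rm L} \sh{K}$. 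Hom-sheaves of AVMTS are AVMTS, and global sections of a morphism are computed in the abelian category $\Sf{MTM}$. So such a morphism is exactly an $\sh{A}$-AVMTS morphism $f_{\sh{A}}: \scr{E}_{\sh{A}} \to \scr{F}_{\sh{A}}$ reducing to $f$. Identifying the two requires the global sections of $\ul{\Hom}$ in $\Sf{MTM}$ to equal $\Hom$ in the category of $\sh{A}$-AVMTS, a fully faithful statement.

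For (3) we restrict to $\lambda = 1, 0$ and use \cref{thm:Bett-DR-Dol-formal-isom}(2). This gives $\bm{\Lambda}_{E, F}\vert_{\lambda=1} \simeq \widehat{\sh{O}}_{R^{\pair}_\DR}$ and $\bm{\Lambda}_{E, F}\vert_{\lambda=0} \simeq \widehat{\sh{O}}_{R^{\pair}_\Dol}$, with the universal AVMTS restricting, via the Deligne-type extensions of \cref{def:AVMTS-extension}, to universal logarithmic flat and Higgs pairs. It then suffices to identify $\sh{K}_n\vert_{\lambda}$, for $\lambda = 1, 0$, with $K_\DR$ and $K_\Dol$ of the restricted universal pairs, compatibly with the base change of \cref{lem:base-change-perfectness-Dol-DR-complexes}. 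Given that, \cref{lem:relative-hom-and-restriction} and \cref{lem:moduli-of-triples-and-deformation} identify the fibers with $\widehat{\sh{O}}_{R^\trip_\Box}$ at the relevant points.

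Carrying out that identification is where I expect the main difficulty. The plan is to compute the de Rham complex of the $\scr{R}$-module $\scr{M}[*D]$ underlying $j_*\ul{\Hom}$ using its $V$-filtration. By \cref{lem:boundary-lattices}, $V_{-1} = \bar{\scr{V}}$ is the Deligne lattice, functorially and compatibly with artinian base change. Quasi-isomorphism of the logarithmic complex on $V_{-1}$ with the full meromorphic de Rham complex should follow from the isomorphisms \eqref{eq:V-filt-isoms}: $\lambda\partial_z$ is bijective on $\Rm{gr}^V_b$ for $b > -1$, so those graded pieces are acyclic. This is essentially the theorem \cref{thm:relative-DR-Dol-quasi-isom-with-coeff} mentioned in the introduction. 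At $\lambda = 0$ the same filtration argument must be run for Higgs fields, where $\lambda\partial_z$ degenerates, and I would handle this uniformly over $\AAA^1_\lambda$ before specializing. Finally, the point at $\lambda = 0$ is $\Sf{SM}^{\Rm{ps}}(\bar E, \bar F, f)$ because the preferred section and Mochizuki's functor are both defined through the same harmonic bundle.
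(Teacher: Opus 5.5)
Your proposal is correct and follows essentially the same route as the paper: $\bm{\Gamma}_{E, F, f}$ is obtained as the completion at the Tate point $f$ of $M^0_{\bm{\Lambda}_{E, F}}(\sh{K})$ with $\sh{K}_n = (a_X)_*\ul{\Hom}_{\bm{\Lambda}_n}(\scr{E}_n, \scr{F}_n)[-1]$, and part (2) comes from \cref{lem:MTS-enhancement-relative-hom-representability} plus the identification you flag. The paper proves that identification as $\Hom_{\sh{A}\text{-AVMTS}}(\scr{E}, \scr{F}) \simeq \Hom_{D(\sh{A}\text{-MTS-mod})}(\sh{A}, (a_X)_*\ul{\Hom}_{\sh{A}}(\scr{E}, \scr{F})[-1])$, using tensor--hom adjunction, the fully faithful embedding of $\sh{A}$-AVMTS into $\sh{A}$-$\Sf{MTM}$, and push--pull adjunction, while part (3) comes, as you plan, from the $V$-filtration comparison \cref{thm:relative-DR-Dol-quasi-isom-with-coeff} (done uniformly over $\AAA^1_\lambda$, with the de Rham comparison at $\lambda = 1$) together with \cref{lem:relative-hom-and-restriction}.
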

\begin{proof}
  Let $a_X: X \to \Rm{pt}$ and $q: \bar{X} \times \AAA^1_\lambda \to \AAA^1_\lambda$ be the projection. We will repeatedly make use of the following lemma 
  \begin{lemma}
    Let $\sh{A}$ be an artinian local $\sh{T}(0)$-MTS-algebra, and $\scr{E}, \scr{F}$ free $\sh{A}-$AVMTS. Then 
    \[\Hom_{\sh{A}\Rm{-AVMTS}}(\scr{E}, \scr{F}) \simeq \Hom_{D(\sh{A}\Rm{-MTS-mod})}(\sh{A}, (a_X)_*\ul{\Hom}_{\sh{A}}(\scr{E}, \scr{F})[-1]) \tag{3.5}\label{eq:adjunction-AVMTS-hom}\]
  \end{lemma}
  \begin{proof}
    Let $\scr{A} \coloneq (a_X)^*\sh{A}[1]$. Since $\scr{E}, \scr{F}$ are free we have $\ul{\Hom}_{\sh{A}}(\scr{E}, \scr{F}) \simeq \scr{E}^\vee \otimes_{\sh{A}} \scr{F}$. By tensor-hom adjunction,
    \[\Hom_{\sh{A}\Rm{-AVMTS}}(\scr{E}, \scr{F}) \simeq \Hom_{\sh{A}\Rm{-AVMTS}}(\scr{A}, \ul{\Hom}_{\sh{A}}(\scr{E}, \scr{F}))\]
    \tab We have a fully faithful embedding $\sh{A}$-AVMTS $\hookrightarrow \sh{A}\text{-}\Sf{MTM}$, and by push-pull adjunction in $\sh{A}\text{-}\Sf{MTM}$ we get 
    \begin{align*}
      \Hom_{\sh{A}\Rm{-AVMTS}}(\scr{A}, \ul{\Hom}_{\sh{A}}(\scr{E}, \scr{F})) &\simeq \Hom_{\sh{A}\text{-}\Sf{MTM}}((a_X)^*\sh{A}[1], \ul{\Hom}_{\sh{A}}(\scr{E}, \scr{F}))\\
      &\simeq \Hom_{D(\sh{A}\Rm{-MTS-mod})}(\sh{A}, (a_X)_*\ul{\Hom}_{\sh{A}}(\scr{E}, \scr{F})[-1])
    \end{align*}
    as desired. 
  \end{proof}
  Let $\bm{\Lambda}_{n} \coloneq \bm{\Lambda}_{E, F}/\Ide{m}_{\bm{\Lambda}}^{n + 1}$, and $\Lambda_n \coloneq \abs*{\bm{\Lambda}_{n}}$. From \cref{thm:Betti-MTS-algebra}, we have universal pair $(\widehat{\scr{E}}, \widehat{\scr{F}})$ on $\bm{\Lambda}_{E, F}$. Pulling back gives universal pair $(\scr{E}_n, \scr{F}_n)$ of $\bm{\Lambda}_n-$AVMTS. Let (see \cref{sec:The-comparison} for the relevant definitions)
  \[\sh{K}_n \coloneq (a_X)_* \ul{\Hom}_{\bm{\Lambda}_n}(\scr{E}_n, \scr{F}_n)[-1], \tab K_n^{\Rm{tw}} = \Rm{R}q_* \Rm{DR}_{\scr{R}}(\scr{M}_{\ul{\Hom}_{\bm{\Lambda}_n}(\scr{E}_n, \scr{F}_n)}[*D])\]
  then $\sh{K}_n \in D^{\geq 0}(\bm{\Lambda}_n-\text{MTS-module})$, with $K^{\Rm{tw}}_n$ being the restriction of $\sh{K}_n$ to the chart $\AAA^1_\lambda$. As a result, $\Rm{L}\iota_c \sh{K}_n \simeq \Rm{L}\iota_c K^{tw}_n$ for $c \in \AAA^1_\lambda$. Let $\sh{K} = \set*{\sh{K}_n}$ then we can form 
  \[M^0_{\bm{\Lambda}_{E, F}}(\sh{K})_{f} = \Rm{Spf}_{\PP^1}\ \bm{\Gamma}_{E, F, f}\]
  where $f$ is a morphism of $\CC-$VTS hence a morphism of $\CC$-AVMTS, and by \eqref{eq:adjunction-AVMTS-hom},
  \[\Hom_{\CC\Rm{-AVMTS}}(\scr{E}, \scr{F}) \simeq \Hom_{D(\CC\Rm{-MTS})}(\sh{T}(0), (a_X)_*\ul{\Hom}_{\sh{T}(0)}(\scr{E}, \scr{F})) \]
  $f$ is a Tate point. We then have that $\bm{\Gamma}_{E, F, f}$ is a pro$-\bm{\Lambda}_{E, F}$-MTS-algebra by \cref{lem:MTS-structure-relative-Hom-scheme}, giving (1). Next denote 
  \[(\widehat{\scr{E}}_{\bm{\Gamma}}, \widehat{\scr{F}}_{\bm{\Gamma}}) \coloneq \bm{\Gamma}_{E, F, f} \otimes_{\bm{\Lambda}_{E, F}} (\widehat{\scr{E}}, \widehat{\scr{F}})\]
  \tab By \cref{lem:MTS-enhancement-relative-hom-representability}, the universal section $\widehat{f} \in \Hom_{\bm{\Lambda}_{E, F}\Rm{-alg}}(\Rm{Sym}_{\bm{\Lambda}_{E, F}} \sh{H}^0(\sh{K}^\vee), \bm{\Gamma}_{E, F, f})$, given by composing the universal section of $M^0_{\bm{\Lambda}_{E, F}}(\sh{K})$ with the completion at $f$, corresponds to a morphism in $D(\bm{\Gamma}_{E, F, f}\Rm{-MTS-module})$
  \[\bm{\Gamma}_{E, F, f} \to \bm{\Gamma}_{E, F, f} \otimes^{\Rm{L}}_{\bm{\Lambda}_{E, F}} \sh{K} \simeq (a_X)_* \ul{\Hom}_{\Gamma_{E, F, f}}(\widehat{\scr{E}}_{\bm{\Gamma}}, \widehat{\scr{F}}_{\bm{\Gamma}})[-1]\]
  where the last equality comes from $(\widehat{\scr{E}}, \widehat{\scr{F}})$ being free (hence $\ul{\Hom}$ commutes with tensor product). By \eqref{eq:adjunction-AVMTS-hom}, we get a corresponding morphism $\widehat{f}: \widehat{\scr{E}}_{\bm{\Gamma}} \to \widehat{\scr{F}}_{\bm{\Gamma}}$ of $\bm{\Gamma}_{E, F, f}$-AVMTS. Now let $(\scr{E}_{\sh{A}}, \scr{F}_{\sh{A}}, f_{\sh{A}})$ be a framed free $\sh{A}$-AVMTS deformation of $(\scr{E}, \scr{F}, f)$. By \cref{thm:Betti-MTS-algebra}, there is a unique $\bm{\Lambda}_{E, F} \to \sh{A}$ such that 
  \[(\scr{E}_{\sh{A}}, \scr{F}_{\sh{A}}) \simeq \sh{A} \otimes_{\bm{\Lambda}_{E, F}} (\widehat{\scr{E}}, \widehat{\scr{F}})\]
  and since $f_{\sh{A}}$ is a morphism of $\sh{A}\Rm{-AVMTS}$, \eqref{eq:adjunction-AVMTS-hom} gives a corresponding 
  \[\sh{A} \to (a_X)_*\ul{\Hom}_{\sh{A}}(\scr{E}_{\sh{A}}, \scr{F}_{\sh{A}})[-1] \simeq \sh{A} \otimes_{\bm{\Lambda}_{E, F}}^{\Rm{L}} \sh{K}\]
  which, by \cref{lem:MTS-enhancement-relative-hom-representability}, is the same as an $\bm{\Lambda}_{E, F}$-algebra morphism $\Rm{Sym}_{\bm{\Lambda}_{E, F}}\sh{H}^0(\sh{K}^\vee) \to \sh{A}$. Since $\sh{A}$ is artinian, hence complete, we get a factorization $\bm{\Gamma}_{E, F, f} \to \sh{A}$, finishing (2). \\
  \tab Next, by \cref{thm:relative-DR-Dol-quasi-isom-with-coeff} and the De Rham comparison, we have 
  \[\Rm{L}\iota_1 \sh{K}_n \simeq \Rm{R}\Gamma(C_{\DR}(\scr{E}_{n, 1}, \scr{F}_{n, 1})) \simeq \Rm{R}\Gamma(\ul{\Hom}_{\Lambda_n}(E_n, F_n))\]
  where $E_n, F_n$ are the universal local systems over $\Lambda_n$. Thus \cref{lem:relative-hom-and-restriction} gives that the fiber over 1 of $\bm{\Gamma}_{E, F, f}$ is isomorphic to $\widehat{\sh{O}}_{R_B^{\trip}, (E, F, f)}$. The other two remaining identifications in (3) follow from a similar argument, where by \cref{thm:Betti-MTS-algebra},
  \[\bm{\Lambda}_{E, F} \vert_{\lambda = 1} \simeq \widehat{\sh{O}}_{R_{\DR}^{\pair}, (\bar{E}, \bar{F})}, \tab \bm{\Lambda}_{E, F} \vert_{\lambda = 0} \simeq \widehat{\sh{O}}_{R_{\Dol}^{\pair}, \Sf{SM}^{\Rm{ps}}(\bar{E}, \bar{F})}\]
  by \cref{thm:relative-DR-Dol-quasi-isom-with-coeff},
  \[\Rm{L}\iota_1 \sh{K} \simeq \Rm{R}\Gamma(C_{\DR}(\widehat{\scr{E}}_1, \widehat{\scr{F}}_1)), \tab \Rm{L}\iota_0 \sh{K} \simeq \Rm{R}\Gamma(C_{\Dol}(\widehat{\scr{E}}_0, \widehat{\scr{F}}_0))\]
  with $(\widehat{\scr{E}}_1, \widehat{\scr{F}}_1))$ universal over $\widehat{\sh{O}}_{R_{\DR}^{\pair}, (\bar{E}, \bar{F})}$ and $(\widehat{\scr{E}}_0, \widehat{\scr{F}}_0)$ universal over $ \widehat{\sh{O}}_{R_{\Dol}^{\pair}, \Sf{SM}^{\Rm{ps}}(\bar{E}, \bar{F})}$, and finally that the universal section commutes with restriction due to \cref{lem:relative-hom-and-restriction}.
\end{proof}

\section{Equivariant formal De Rham - Dolbeault correspondence for triples}
\subsection{The socle filtration and Rees cocharacter} 
\label{sec:socle-filt-Rees}
The categories $\mathsf{Conn}_{\Rm{nilp}}(\bar{X}, D)$ and $\mathsf{Higgs}_{\Rm{nilp}}^{0, ss}(\bar{X}, D)$ are $\CC$-linear abelian of finite length, and extension-closed in the bigger categories of $\sh{D}_{\bar{X}}(\log D)$ and $\Rm{Sym} T_X(-\log T)$-modules. Their simple objects are, respectively, irreducible logarithmic flat connections and stable degree 0 logarithmic Higgs bundles.

\begin{definition}[{\cite[Definition 1.13.1]{etingof2015tensor}}]
  Let $\mathsf{C}$ be either of the above categories. For $E \in \mathsf{C}$, denote $\Rm{soc}(E)$ the sum of all simple subobjects of $E$, i.e., the unique maximal semisimple subobject. We have the socle filtration $S_\bullet E$
  \[S_0E = 0, \tab S_1E = \Rm{soc}(E), \tab S_{i + 1}E/S_iE = \Rm{soc}(E/S_iE)\]
  satisfying, for $f: E \to F$,
  \begin{enumerate}
    \item $f(S_iE) \subseteq S_iF$;
    \item $S_i(E \oplus F) = S_iE \oplus S_iF$;
    \item $\Rm{gr}_S E$ is semistable;
    \item $(\Rm{gr}_S g) \circ (\Rm{gr}_S f) = \Rm{gr}_S(g \circ f)$. 
  \end{enumerate}
\end{definition}

\begin{definition}
  Let $\pi: \bar{X} \times \AAA^1_\lambda \to \bar{X}$ be the projection. Define the Rees family associated to $S_\bullet E$ to be 
  \[\Rm{Rees}_S(E) \coloneq \Sum{i \in \ZZ} t^i \pi^*(S_i E) \subset \pi^*E \otimes_{\sh{O}_{\bar{X}}[t]} \sh{O}_{\bar{X}}[t, t^{-1}]\]
  which is locally free on $\bar{X} \times \AAA^1_t$, and 
  \begin{enumerate}
    \item $\Rm{Rees}_S(E)$ has a flat connection (resp. Higgs field) from restricting $\pi^*\nabla$ (resp. $\pi^*\theta$);
    \item $\Rm{Rees}_S(E) \vert_{t = 1} \simeq E$ and $\Rm{Rees}_S(E) \vert_{t = 0} \simeq \Rm{gr}_S E$, equipped with either $\Rm{gr}_S \nabla$ or $\Rm{gr}_S \theta$;
    \item for $f: E \to F$ we have $\Rm{Rees}_S(f): \Rm{Rees}_S(E) \to \Rm{Rees}_S(F)$ from restricting $\pi^*f$, and 
    \[\Rm{Rees}_S(f) \vert_{t = 1} \simeq f, \tab \Rm{Rees}_S(f) \vert_{t = 0} \simeq \Rm{gr}_S f, \tab \Rm{Rees}_S(g \circ f) = \Rm{Rees}_S(g) \circ \Rm{Rees}_S(f)\]
  \end{enumerate}
\end{definition}
Taking framing into account, $\Rm{Rees}_S(E)$ corresponds to a morphism $\AAA_t^1 \to R_\Box(r)$ which agrees (we can adapt the proof of \cite[Lemma 4.4.3]{huybrechts2010geometry}) with the following arc coming from a $\GG_m$-action on $R_\Box(r)$. Let the framed fiber be $\phi: E_x \simeq \CC^r$ and choose an adapted splitting
\[E_x = \bigoplus_i G_i, \tab S_i E_x = \bigoplus_{j \leq i} G_j\]
and $\gamma': \GG_m(\CC) \to \Rm{GL}(E_x)$ where $\gamma'(t)\vert_{G_i} = t^{-i}\Rm{id}_{G_i}$. Define $\gamma_E \coloneq \phi \circ \gamma' \circ \phi: \GG_m(\CC) \to \Rm{GL}_r(\CC)$. In fact this can be defined as an algebraic action, and for artinian local $A$, the action $\gamma_E: \GG_m(A) \to \Rm{GL}_r(A)$ can be defined in exactly the same way. The arc is defined via the change-of-framing action: 
\[\alpha_E: \AAA^1_t \to R_\Box(r), \tab \alpha_E(t \neq 0) \coloneq \gamma_E(t) \cdot (E, \phi), \tab \alpha_E(0) = \brac*{\Rm{gr}_S E, \Rm{gr}_S \phi}\]
\tab By definition, $\alpha_E$ is equivariant with respect to $\gamma_E$. One can also check that $[\alpha_E(0)]$ is fixed by $\gamma_E$, thus we have an induced action on $\sh{O}_{R_\Box(r), \alpha_E(0)}$. 

\subsection{A logarithmic De Rham - Dolbeault cohomology comparison with coefficients} 
We strengthen a result of Sabbah \cite[Lemma 8.12]{bakker2024linear} by adding artinian coefficients, and composition-compatibility.
\subsubsection{Internal Homs with coefficients}
Let $\sh{A}$ be a local artinian $\sh{T}(0)$-MTS-algebra, and $A = \abs{\sh{A}}$. Let $\scr{H}$ be a free $\sh{A}$-AVMTS, and $\scr{M}_{\scr{H}}^A[*D]$ the underlying $\scr{R}_{\bar{X}}-$module of $j_*\scr{H}$ on the chart $\AAA^1_\lambda$; this has an $A[\lambda]$ action coming from functoriality of the prolongation, hence the $^A$ superscript. Also denote $\bar{X}_\lambda = \bar{X} \times \AAA^1_\lambda, D_\lambda = D \times \AAA^1_\lambda$, and put $\scr{R}_{\bar{X}, A[\lambda]} \coloneq A[\lambda] \otimes_{\CC[\lambda]} \scr{R}_{\bar{X}}$. Locally, $\scr{R}_{\bar{X}, A[\lambda]}$ is generated over $A[\lambda] \otimes_{\CC[\lambda]}\sh{O}_{\bar{X}_\lambda}$ by $\lambda \partial_z$; the same is true for $\scr{R}_{\bar{X}, A[\lambda]}(*D)$ over $A[\lambda] \otimes_{\CC[\lambda]}\sh{O}_{\bar{X}_\lambda}(*D)$. Then $\scr{M}_{\scr{H}}^A[*D]$ is a $\scr{R}_{\bar{X}, A[\lambda]}-$module. Let $\scr{E}, \scr{F}$ be free $\sh{A}$-AVMTS, we can define 
\[\ul{\Hom}_{\sh{A}}(\scr{E}, \scr{F}) \coloneq \ker\brac*{\ul{\Hom}_{\sh{T}(0)}(\scr{E}, \scr{F}) \to \ul{\Hom}_{\sh{T}(0)}(\sh{A} \otimes_{\sh{T}(0)}\scr{E}, \scr{F})}\]
to be the internal Hom in the category of $\sh{A}$-AVMTS. This is a $\CC$-AVMTS because the category of $\CC$-AVMTS is abelian, and the $\sh{A}$-action comes from definition. For the underlying local system this is just $\ul{\Hom}_{A}$ of $A$-local systems. Next define, for meromorphic extensions which also carry $A[\lambda]$-action,
\begin{align*}
  \scr{M}_{\scr{E}, \scr{F}}^\CC(*D) &\coloneq \shHom_{\sh{O}_{\bar{X}_\lambda}(*D)}(\scr{M}_{\scr{E}}(*D), \scr{M}_{\scr{F}}(*D)) = \scr{M}_{\ul{\Hom}_{\sh{T}(0)}(\scr{E}, \scr{F})}(*D)\\ 
  \scr{M}_{\scr{E}, \scr{F}}^{A}(*D) &\coloneq \shHom_{A[\lambda] \otimes \sh{O}_{\bar{X}_\lambda}(*D)}(\scr{M}_{\scr{E}}(*D), \scr{M}_{\scr{F}}(*D))
\end{align*}

\begin{prop}
  Define a morphism 
  \[\Delta_{\scr{E},\scr{F}}(*D): \scr{M}_{\scr{E}, \scr{F}}^\CC(*D) \to \scr{M}_{\sh{A} \otimes \scr{E}, \scr{F}}^\CC(*D), \tab \phi \mapsto \brac*{a \otimes e \mapsto \phi(a \cdot e) - a\cdot \phi(e)}\]
  then $\Delta_{\scr{E},\scr{F}}(*D)$ commutes with $A[\lambda]-$multiplication and $\lambda \partial_z$, hence a $\scr{R}_{\bar{X}, A[\lambda]}(*D)$-linear. We have
  \[\scr{M}_{\ul{\Hom}_{\sh{A}}(\scr{E}, \scr{F})}(*D) = \ker \Delta_{\scr{E}, \scr{F}}(*D) = \scr{M}_{\scr{E}, \scr{F}}^{A}(*D)\]
  and a corresponding $\scr{R}_{\bar{X}, A[\lambda]}-$linear map $\Delta_{\scr{E}, \scr{F}}[*D]$ for which 
  \[\scr{M}^A_{\scr{E}, \scr{F}}[*D] \coloneq \scr{M}_{\ul{\Hom}_{\sh{A}}(\scr{E}, \scr{F})}[*D] = \ker \brac*{\scr{M}_{\ul{\Hom}_{\sh{T}(0)}(\scr{E}, \scr{F})}[*D] \xrightarrow{\Delta_{\scr{E}, \scr{F}}[*D]} \scr{M}_{\ul{\Hom}_{\sh{T}(0)}(\sh{A} \otimes \scr{E}, \scr{F})}[*D]}\]
  \tab Furthermore, this is compatible with the V-filtration, 
  \[V_b\scr{M}^A_{\scr{E}, \scr{F}}[*D] = \ker \brac*{V_b\scr{M}_{\ul{\Hom}_{\sh{T}(0)}(\scr{E}, \scr{F})}[*D] \xrightarrow{V_b\Delta_{\scr{E}, \scr{F}}[*D]} V_b\scr{M}_{\ul{\Hom}_{\sh{T}(0)}(\sh{A} \otimes \scr{E}, \scr{F})}[*D]}\]
\end{prop}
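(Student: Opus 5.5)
Our plan is to verify the statement in three layers: first the meromorphic statement, which is pure linear algebra over $A[\lambda] \otimes \sh{O}_{\bar{X}_\lambda}(*D)$; then the $[*D]$ statement via the functoriality of the prolongation $j_*$ in the abelian category $\Sf{MTM}(\bar{X})$; and finally the V-filtration statement via strictness of morphisms of mixed twistor modules with respect to $V$.

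\emph{Meromorphic part.} Since $\scr{M}_{\scr{E}}(*D)$ and $\scr{M}_{\scr{F}}(*D)$ are $\scr{R}_{\bar{X}, A[\lambda]}(*D)$-modules, each $a \in A$ acts by a $\lambda\partial_z$-horizontal endomorphism. A direct check then shows $\Delta_{\scr{E},\scr{F}}(*D)$ is $A[\lambda]$-linear, and it commutes with $\lambda\partial_z$ as well. For the latter, write $(\lambda\partial_z\phi)(e) = \lambda\partial_z(\phi(e)) - \phi(\lambda\partial_z e)$. Then the $a$-actions commute with $\lambda\partial_z$ on $\scr{E}$, on $\scr{F}$, and on $\sh{A}\otimes\scr{E}$, where $\sh{A}$ is constant and hence has trivial connection. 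The equality $\ker\Delta(*D) = \scr{M}^A_{\scr{E},\scr{F}}(*D)$ is then tautological: $\phi$ lies in the kernel iff $\phi(a e) = a\phi(e)$ for all $a$, i.e. iff $\phi$ is $A[\lambda]\otimes\sh{O}(*D)$-linear. For the equality with $\scr{M}_{\ul{\Hom}_{\sh{A}}(\scr{E},\scr{F})}(*D)$, we use that $\ul{\Hom}_{\sh{A}}$ is defined as a kernel in $\CC$-AVMTS. The functor $\scr{H}\mapsto \scr{M}_{\scr{H}}(*D)$ is exact, being the localization of an exact functor from an abelian category, and it sends $\ul{\Hom}_{\sh{T}(0)}$ to $\shHom_{\sh{O}(*D)}$, as recorded in the definition of $\scr{M}^\CC_{\scr{E},\scr{F}}(*D)$. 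It therefore sends the defining morphism to $\Delta(*D)$: on $X$ both are the same map of local systems, and both sides are meromorphic flat extensions, which are determined by their restriction to $X$.

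\emph{The $[*D]$ part.} Let $\delta : \ul{\Hom}_{\sh{T}(0)}(\scr{E},\scr{F}) \to \ul{\Hom}_{\sh{T}(0)}(\sh{A}\otimes\scr{E},\scr{F})$ be the morphism of $\CC$-AVMTS, and set $\Delta[*D] := \scr{M}(j_*\delta)$. Because $j_*$ of variations on $X$ into $\Sf{MTM}(\bar{X})$ is exact in our setting, and the underlying $\scr{R}$-module functor is exact (morphisms in $\Sf{MTM}$ are strict), we get $\scr{M}_{\ker\delta}[*D] = \ker\Delta[*D]$. $A[\lambda]$-linearity follows from functoriality in the $A$-action. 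Equivalently, $\Delta[*D]$ is the restriction of $\Delta(*D)$, since $\scr{M}[*D]\subset\scr{M}(*D)$ functorially.

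\emph{V-filtration part.} This is the expected main obstacle. The inclusion $V_b\ker \subseteq \ker V_b\Delta$ is formal. For the reverse inclusion we need strict compatibility of morphisms in $\Sf{MTM}$ with the Kashiwara--Malgrange filtration, i.e. $V_b(\ker\Delta) = \ker\Delta\cap V_b$. We would deduce this from strict specializability of mixed twistor modules: the kernel is again in $\Sf{MTM}$, and morphisms are strictly compatible with $V$. This is \cite{mochizuki2015mixed}, Section 7, or \cite{sabbah2005polarizable}. As a concrete backup, we could use \cref{lem:boundary-lattices}. For the free AVMTS $\ul{\Hom}_{\sh{A}}$ (freeness holds since $\scr{E},\scr{F}$ are free) we have $V_{-1} = $ Deligne lattice, and the Deligne lattice of the $A$-linear Hom is the subsheaf of $A$-linear maps inside the Deligne lattice of $\ul{\Hom}_\CC$. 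Here we use that the residue $N\phi-\phi N$ is nilpotent, and that the $A$-linear maps form a direct summand compatible with the exponential description. This gives the claim for $b=-1$. It then propagates to all $b$ by \eqref{eq:V-filt-isoms}, via the isomorphisms $z$ and $\lambda\partial_z$ on graded pieces, which commute with $\Delta$.
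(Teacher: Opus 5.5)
Your argument follows essentially the paper's route: the meromorphic identification comes from uniqueness of meromorphic extensions, $\Delta_{\scr{E},\scr{F}}[*D]$ is the map functorially induced on $[*D]$-parts with kernels preserved (the paper cites \cite[Lemmas 3.1.19--3.1.21]{mochizuki2015mixed} where you invoke exactness of $j_*$ and of the forgetful functor from $\Sf{MTM}$), and the $V$-statement comes from strict compatibility of morphisms of mixed twistor modules with the $V$-filtration (\cite[Proposition 9.1.8]{mochizuki2015mixed}), so the main line is fine. Do not rely on your ``concrete backup'', though: to pass from $b=-1$ to $b\geq 1$ through $\lambda\partial_z\colon \Rm{gr}^V_{b-1}\xrightarrow{\simeq}\Rm{gr}^V_b$ you must write $x\in\ker\Delta\cap V_b$ as $u+\lambda\partial_z v$ with $u,v\in\ker\Delta\cap V_{b-1}$, and correcting $v$ into $\ker\Delta$ requires $\Delta(\scr{M})\cap V_{<b-1}=\Delta(V_{<b-1}\scr{M})$, which is exactly the strictness the backup was meant to avoid (failing that, you would need a separate uniqueness argument for the $V$-filtration induced on the kernel).
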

\begin{proof}
  That $\Delta_{\scr{E}, \scr{F}}(*D)$ is $\scr{R}_{\bar{X}, A[\lambda]}(*D)$-linear can be checked locally. We get the identification of $\scr{M}_{\ul{\Hom}_{\sh{A}}(\scr{E}, \scr{F})}(*D)$ essentially by uniqueness of meromorphic extension. The existence of $\Delta_{\scr{E}, \scr{F}}[*D]$ comes from \cite[Lemma 3.1.19]{mochizuki2015mixed} which also gives $A[\lambda]$-linearity. By \cite[Proposition 9.1.8]{mochizuki2015mixed}, $\Delta_{\scr{E}, \scr{F}}[*D]$ is strict with respect to KMS-structure, hence the V-filtration. Finally, \cite[Lemma 3.1.20 and 3.1.21]{mochizuki2015mixed} gives the remaining identifications. 
\end{proof}

\begin{coroll}
  \label{lem:boundary-Hom-lattices}
  We have functorial and artinian-base-change-compatible identifications 
  \begin{align*}
    V_{-1}\scr{M}_{\ul{\Hom}_{\sh{A}}(\scr{E}, \scr{F})}[*D] &= \shHom_{A[\lambda] \otimes \sh{O}_{\bar{X}_\lambda}}\brac*{\bar{\scr{E}}, \bar{\scr{F}}}\\
    V_0\scr{M}_{\ul{\Hom}_{\sh{A}}(\scr{E}, \scr{F})}[*D] &= z^{-1}\shHom_{A[\lambda] \otimes \sh{O}_{\bar{X}_\lambda}}\brac*{\bar{\scr{E}}, \bar{\scr{F}}}
  \end{align*}
\end{coroll}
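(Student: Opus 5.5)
We combine \cref{lem:boundary-lattices} with the kernel description of the preceding proposition. First apply \cref{lem:boundary-lattices} over the coefficient algebra $\sh{T}(0)$ (so $A=\CC$) to the free $\CC$-AVMTS $\ul{\Hom}_{\sh{T}(0)}(\scr{E},\scr{F})$ and $\ul{\Hom}_{\sh{T}(0)}(\sh{A}\otimes\scr{E},\scr{F})$. Both have unipotent local monodromy: $N_{\ul{\Hom}}$ is $\phi\mapsto N_F\phi-\phi N_E$, so $T_{\ul{\Hom}}$ is unipotent. We obtain
\[V_{-1}\scr{M}_{\ul{\Hom}_{\sh{T}(0)}(\scr{E},\scr{F})}[*D]=\overline{\ul{\Hom}_{\sh{T}(0)}(\scr{E},\scr{F})},\]
the Deligne-type extension of \cref{def:AVMTS-extension}, and similarly for $\sh{A}\otimes\scr{E}$. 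Since Deligne's canonical extension commutes with internal $\ul{\Hom}$ (fiberwise in $\lambda\neq 0$ by the residue formula recalled in \S2.1), we get $\overline{\ul{\Hom}_{\sh{T}(0)}(\scr{E},\scr{F})}=\shHom_{\sh{O}_{\bar X_\lambda}}(\bar{\scr{E}},\bar{\scr{F}})$ on $\bar X\times(\AAA^1_\lambda\setminus 0)$. Both sides are locally free, so the codimension-two argument of the remark after \cref{def:AVMTS-extension} extends this equality across $D\times\{0\}$. The same holds for $\shHom(\sh{A}\otimes\bar{\scr{E}},\bar{\scr{F}})=\shHom(A\otimes\bar{\scr{E}},\bar{\scr{F}})$.

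Next take $V_{-1}$ of the kernel description in the preceding proposition:
\[V_{-1}\scr{M}_{\ul{\Hom}_{\sh{A}}(\scr{E},\scr{F})}[*D]=\ker\bigl(\shHom(\bar{\scr{E}},\bar{\scr{F}})\xrightarrow{V_{-1}\Delta}\shHom(A\otimes\bar{\scr{E}},\bar{\scr{F}})\bigr).\]
The map $V_{-1}\Delta$ is the restriction of $\phi\mapsto(a\otimes e\mapsto\phi(ae)-a\phi(e))$, where the $A$-action on $\bar{\scr{E}}$ is the extended one from \cref{def:AVMTS-extension}. Its kernel is therefore exactly the $A[\lambda]\otimes\sh{O}_{\bar X_\lambda}$-linear homomorphisms, which gives the first identity. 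The second identity follows from the first by the isomorphism $z:V_0\xrightarrow{\simeq}V_{-1}$ in \eqref{eq:V-filt-isoms}. One could alternatively apply \cref{lem:boundary-lattices} directly to the $\sh{A}$-AVMTS $\ul{\Hom}_{\sh{A}}(\scr{E},\scr{F})$, which is free because $\scr{E},\scr{F}$ are free and $A$ is artinian (so $\Hom_A(A^r,A^s)$ is free). Then one only needs to identify $\overline{\ul{\Hom}_{\sh{A}}(\scr{E},\scr{F})}$ with $\shHom_A(\bar{\scr{E}},\bar{\scr{F}})$, which again is a statement away from $\lambda=0$ plus the codimension-two extension.

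Functoriality follows from functoriality of the $V$-filtration, of the Deligne extension, and of $\Delta$ in $(\scr{E},\scr{F})$. For artinian base change along $\sh{A}\to\sh{A}'$, \cref{lem:boundary-lattices} gives $\bar{\scr{E}}_{\sh{A}'}=A'[\lambda]\otimes_{A[\lambda]}\bar{\scr{E}}$, and likewise for $\scr{F}$. Since $\bar{\scr{E}}$ is locally free over $A[\lambda]\otimes\sh{O}$, we have
\[\shHom_{A'}(\bar{\scr{E}}_{A'},\bar{\scr{F}}_{A'})=A'\otimes_A\shHom_A(\bar{\scr{E}},\bar{\scr{F}}).\]
This matches the base change of $\ul{\Hom}_{\sh{A}}$, because free Hom commutes with coefficient base change.

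The main obstacle is justifying that $\ul{\Hom}$ commutes with the Deligne extension uniformly in $\lambda$, including near $\lambda=0$. I will handle it with the codimension-two reflexivity argument. That argument needs both sides to be locally free on $\bar X\times\AAA^1_\lambda$, and the left side is locally free because it is a $V_{-1}$-lattice of a free AVMTS, by \cref{lem:boundary-lattices}.
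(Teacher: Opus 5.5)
Your proposal is correct and is essentially the paper's argument: the paper also combines \cref{lem:boundary-lattices} with the kernel and $V$-filtration description from the preceding proposition, and reduces the claim to Deligne's extension commuting with $\shHom$, which it treats as a local monodromy calculation. You fill in that last step, using the residue formula for $\lambda\neq 0$ and then the codimension-two extension across $D\times\{0\}$. That extension also uses the fact that both sides already agree on $X\times\AAA^1_\lambda$, since each extends the same $\shHom$ there; you should state this explicitly, because $\bar X\times\{0\}$ itself has codimension one.
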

\begin{proof}
  By \cref{lem:boundary-lattices} and the previous proposition, it remains to show that Deligne's extension commutes with taking $\shHom$, but this is straightforward from a local monodromy calculation. 
\end{proof}

\subsubsection{The comparison}
\label{sec:The-comparison}
From now on, assume that the $\sh{A}$-AVMTS we work with always have unipotent local monodromy. We have a De Rham complex 
\[\Rm{DR}_{\scr{R}}(\scr{H}) \coloneq \Rm{DR}_{\scr{R}}(\scr{M}_{\scr{H}}^A[*D]) = \sqbrac*{\scr{M}_{\scr{H}}^A[*D] \xrightarrow{\nabla_\lambda} \scr{M}_{\scr{H}}^A[*D] \otimes \Omega^{1}_{\bar{X}_\lambda/\AAA^1_\lambda}}\]
\tab Let $q: \bar{X}_\lambda \to \AAA^1_\lambda$ be the projection. Set $K^{\tw}_A(\scr{H}) \coloneq \Rm{R}q_* \Rm{DR}_{\scr{R}}(\scr{M}_{\scr{H}}^A[*D]) \in D^b(A[\lambda])$. Since $q$ is a projective morphism, this underlies a twistor object in $D^b(\sh{A}-\text{MTS-module})$. As a result, we get a splitting from \cref{lem:affinesplitting}, and a fiber identification
\[\varphi_{\scr{H}}: \Rm{L}\iota_0^* K^\tw_A(\scr{H}) \xrightarrow{\simeq} \Rm{L}\iota_1^* K^\tw_A(\scr{H})\]
On the other hand, we also have the functorial logarithmic extension $\bar{\scr{H}}_A$, also equipped with an $A[\lambda]-$action, and a logarithmic De Rham complex 
\[C^{\log}_A(\scr{H}) \coloneq \sqbrac*{\bar{\scr{H}}_A \xrightarrow{\nabla_\lambda} \bar{\scr{H}}_A \otimes \Omega^1_{\bar{X}_\lambda/\AAA^1_\lambda}(\log D_\lambda)}\]
with $K^{\log}_A(\scr{H}) \coloneq \Rm{R}q_*C^{\log}_A(\scr{H}) \in D^b(A[\lambda])$. For free $\sh{A}$-AVMTS $\scr{E}, \scr{F}$, define
\[C^{\log}_A(\scr{E}, \scr{F}) \coloneq C^{\log}_A\brac*{\ul{\Hom}_{\sh{A}}(\scr{E}, \scr{F})}, \tab \Rm{DR}_{\scr{R}}(\scr{E}, \scr{F}) \coloneq \Rm{DR}_{\scr{R}}\brac*{\ul{\Hom}_{\sh{A}}(\scr{E}, \scr{F})}\]
and similarly $K^{\log/\Rm{tw}}_A(\scr{E}, \scr{F})$. Let $\iota_c: \Spec A \to \Spec A[\lambda]$ be the $\lambda-$restriction, but remembering the $A-$action. 
\begin{prop}
  \label{thm:relative-DR-Dol-quasi-isom-with-coeff}
  There is a canonical quasi-isomorphism in $D^b(A[\lambda])$ 
  \[\Phi_{\scr{H}}^A: K^\tw_A(\scr{H}) \xrightarrow{\simeq} K^{\log}_A(\scr{H})\tag{4.1}\label{eq:twistor-log-quasi-isom}\]
  which commutes with artinian $\sh{T}(0)-$MTS-algebra coefficient base change, and restricts to quasi-isomorphisms 
  \begin{align*}
    \Rm{L}\iota_0^* K^\tw_A(\scr{H}) &\simeq R\Gamma\brac*{\bar{X}, \sqbrac*{\bar{\scr{H}}_{A, 0} \xrightarrow{\theta} \bar{\scr{H}}_{A, 0} \otimes \Omega^1_{\bar{X}}(\log D)}} \eqcolon K_{A, 0}^{\log}(\scr{H})\\ 
    \Rm{L}\iota_1^* K^\tw_A(\scr{H}) &\simeq R\Gamma\brac*{\bar{X}, \sqbrac*{\bar{\scr{H}}_{A, 1} \xrightarrow{\nabla} \bar{\scr{H}}_{A, 1} \otimes \Omega^1_{\bar{X}}(\log D)}} \eqcolon K_{A, 1}^{\log}(\scr{H})\tag{4.2} \label{eq:log-Dol-DR-derived-fibers}
  \end{align*}
  in $D^b(A)$, over $\lambda = 0, 1$. For $\sh{A}$-AVMTS $\scr{E}, \scr{F}$, let 
  \begin{align*}
    \phi^0_{\scr{E}, \scr{F}}: &\HH^0\brac*{\bar{X}, \sqbrac*{\overline{\ul{\Hom}_{\sh{A}}(\scr{E}, \scr{F})}_{A, 1} \xrightarrow{\nabla} \overline{\ul{\Hom}_{\sh{A}}(\scr{E}, \scr{F})}_{A, 1} \otimes \Omega^1_{\bar{X}}(\log D)}}\\
    &\xrightarrow{\HH^0(\Rm{L}\iota_0^*\Phi)H^0(\varphi)\HH^0(\Rm{L}\iota_1^*\Phi)^{-1}}\HH^0\brac*{\bar{X}, \sqbrac*{\overline{\ul{\Hom}_{\sh{A}}(\scr{E}, \scr{F})}_{A, 0} \xrightarrow{\theta} \overline{\ul{\Hom}_{\sh{A}}(\scr{E}, \scr{F})}_{A, 0} \otimes \Omega^1_{\bar{X}}(\log D)}}
  \end{align*}
  be the $A$-linear $\HH^0$-isomorphism. Then it preserves identity and composition, i.e., for $\scr{E}, \scr{F}, \scr{G}$, 
  \[\phi^0_{\scr{E}, \scr{E}}(\Rm{id}_{\scr{E}_1}), \tab \phi^0_{\scr{E}, \scr{G}}(v \circ u) = \phi^0_{\scr{F}, \scr{G}}(v) \circ \phi^0_{\scr{E}, \scr{F}}(u)\]
  as well as zero, addition, and $A$-scalars. Equivalently, we have a commutative diagram of $A$-modules 
  \[\begin{tikzcd}
	{H^0\brac*{K^{\log}_{A, 1}(\scr{F}, \scr{G})} \otimes_A H^0\brac*{K^{\log}_{A, 1}(\scr{E}, \scr{F})}} && {H^0\brac*{K^{\log}_{A, 1}(\scr{E}, \scr{G})}} \\
	\\
	{H^0\brac*{K^{\log}_{A, 0}(\scr{F}, \scr{G})} \otimes_A H^0\brac*{K^{\log}_{A, 0}(\scr{E}, \scr{F})}} && {H^0\brac*{K^{\log}_{A, 0}(\scr{E}, \scr{G})}}
	\arrow["\circ", from=1-1, to=1-3]
	\arrow["{\phi^0_{\scr{F}, \scr{G}} \otimes\phi^0_{\scr{E}, \scr{F}}}"', from=1-1, to=3-1]
	\arrow["{\phi^0_{\scr{E}, \scr{G}}}", from=1-3, to=3-3]
	\arrow["\circ", from=3-1, to=3-3]
\end{tikzcd}\tag{4.3} \label{eq:H0-composition-compatibility}\]
  These $H^0$-compatibilities also commute with artinian $\sh{T}(0)-$MTS-algebra coefficient base change. 
\end{prop}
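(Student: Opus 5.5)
The plan is to build $\Phi^A_{\scr{H}}$ as a zig-zag through the V-filtration lattice, following the proof of \cite[Lemma 8.12]{bakker2024linear} but keeping track of the $A[\lambda]$-action throughout.

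\textbf{Step 1: the lattice inclusion.} Consider the subcomplex
\[\Rm{DR}_{\scr{R}}^{V}(\scr{H}) \coloneq \sqbrac*{V_{-1}\scr{M}^A_{\scr{H}}[*D] \xrightarrow{\nabla_\lambda} V_0\scr{M}^A_{\scr{H}}[*D] \otimes \Omega^1_{\bar{X}_\lambda/\AAA^1_\lambda}}\]
of $\Rm{DR}_{\scr{R}}(\scr{M}^A_{\scr{H}}[*D])$. By \cref{lem:boundary-lattices} it is identified with $C^{\log}_A(\scr{H})$: locally $\der z = z\cdot \der z/z$, so $V_0\otimes\Omega^1 = z^{-1}\bar{\scr{H}}_A\otimes \Omega^1 = \bar{\scr{H}}_A\otimes\Omega^1(\log D)$. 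This identification is $A[\lambda]$-linear, functorial, and compatible with base change.

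\textbf{Step 2: the inclusion is a quasi-isomorphism.} The question is local at each $p\in D$, where it reduces to acyclicity of the quotient complex. Filter the quotient by $V_b$ for $b\ge 0$, with $\lambda\partial_z: V_b/V_{b-1}\to V_{b+1}/V_b$. By \eqref{eq:V-filt-isoms}, $\lambda\partial_z:\Rm{gr}^V_b\to\Rm{gr}^V_{b+1}$ is an isomorphism for $b>-1$. So every graded piece of the quotient is acyclic, and the quotient complex is exact. All of these maps are $A[\lambda]$-linear, because the V-filtration is functorial for the $A$-action. Applying $\Rm{R}q_*$ gives $\Phi^A_{\scr{H}}$.

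\textbf{Step 3: base change and fibers.} Base change compatibility follows from \cref{lem:boundary-lattices} together with flatness of the lattices over $A[\lambda]$. For the fibers \eqref{eq:log-Dol-DR-derived-fibers}: $C^{\log}_A(\scr{H})$ is a complex of $\lambda$-flat locally free sheaves and $q$ is projective, so $\Rm{L}\iota_c^*\Rm{R}q_* = \Rm{R}\Gamma\circ\Rm{L}\iota_c^*$. The fiber of the $\lambda$-connection at $\lambda=0$ is $\theta$ and at $\lambda=1$ is $\nabla$.

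\textbf{Step 4: compatibility with identity and composition.} This is the main obstacle. Composition $\ul{\Hom}_{\sh{A}}(\scr{F},\scr{G})\otimes_{\sh{A}}\ul{\Hom}_{\sh{A}}(\scr{E},\scr{F})\to\ul{\Hom}_{\sh{A}}(\scr{E},\scr{G})$ is a morphism of $\sh{A}$-AVMTS. The identity is a morphism $\sh{T}(0)\to\ul{\Hom}_{\sh{A}}(\scr{E},\scr{E})$. I would work at the level of $\HH^0$, i.e.\ global flat sections. Both
\[\HH^0(K^{\tw}) = \Hom\brac*{\sh{O}\text{-twistor unit},\, j_*\ul{\Hom}_{\sh{A}}}\]
and $\HH^0(C^{\log})$ embed into flat sections on $X_\lambda$, and $\Phi$ restricted to $H^0$ is simply the identity on sections over $X\times\AAA^1_\lambda$. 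Composition of flat sections commutes with restriction to $X$, and both the log-lattice description and the $\scr{R}$-module description contain the same sections over $X$. Hence $\HH^0(\Rm{L}\iota_c^*\Phi)$ respects composition, because composition of morphisms $\bar{\scr{E}}_c\to\bar{\scr{F}}_c$ is determined on $X$.

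For the splitting $\varphi$, I use functoriality and tensor compatibility from \cref{lem:affinesplitting}. Via \eqref{eq:adjunction-AVMTS-hom}, $H^0$ of $\sh{K}$ is the MTS-module $\Hom_{\sh{A}\text{-AVMTS}}$-type object. Since $H^0$ is left exact, composition is a morphism of $\sh{A}$-MTS-modules $H^0\otimes H^0\to H^0$, and functorial compatibility of $\varphi$ with tensor products and morphisms gives the commutative square \eqref{eq:H0-composition-compatibility}.

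The subtle point is checking that the $\HH^0$ of the twistor pushforward really carries composition as an MTS-morphism. To do this I would realize composition via the cup product on $(a_X)_*$ in $\Sf{MTM}$, and check that it agrees with naive composition of sections on the $\lambda=0,1$ fibers. Identity preservation then follows from the case $\sh{T}(0)\to\ul{\Hom}$. Additivity and $A$-linearity are automatic, since every map involved is $A$-linear. Base change of all these compatibilities follows from that of $\Phi$ and $\varphi$.
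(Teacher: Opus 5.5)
Your proposal follows essentially the same route as the paper. It uses the zig-zag $C^{\log}_A(\scr{H}) \simeq [V_{-1} \to V_0 \otimes \Omega^1] \hookrightarrow \Rm{DR}_{\scr{R}}(\scr{H})$, made a quasi-isomorphism by \eqref{eq:V-filt-isoms}, with base change coming from \cref{lem:boundary-lattices}. The $H^0$-compatibility comes from observing that the zig-zag is the identity on degree-zero (flat $V_{-1}$) sections and that composition on $\sh{H}^0((a_X)_*\ul{\Hom}_{\sh{A}}(-,-)[-1])$ is a morphism of $\sh{A}$-MTS-modules, so that functoriality and tensor-compatibility of the splitting in \cref{lem:affinesplitting} give \eqref{eq:H0-composition-compatibility}.

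The step you flag as subtle is realizing composition as an MTS morphism via the cup product on $(a_X)_*$ and matching it with naive composition on the fibers. That is exactly where the paper simply asserts that composition on $X$ is an $\sh{A}$-AVMTS morphism, so your plan is if anything more explicit there. Your ``$H^0$ is left exact'' remark, however, does not by itself justify that step.
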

\begin{proof}
  Work locally near $p \in D$ with equation $(z = 0)$. First we check that the isomorphisms in {\eqref{eq:V-filt-isoms}}, 
  \[\lambda \partial_z: \Rm{gr}^V_b \scr{M}^{A}_{\scr{H}}[*D] \xrightarrow{\simeq} \Rm{gr}^V_{b + 1} \scr{M}^{A}_{\scr{H}}[*D] \tab b > -1, \tab z: V_0\scr{M}^{A}_{\scr{H}}[*D] \xrightarrow{\simeq} V_{-1}\scr{M}^{A}_{\scr{H}}[*D]\]
  are compatible with the $A[\lambda]-$action. Functoriality of V-filtration gives that each $V_b\scr{M}^{A}_{\scr{H}}[*D]$ has a $A[\lambda]-$action. Functoriality of the prolongation gives that the $A[\lambda]-$action on $\scr{M}^{A}_{\scr{H}}[*D]$ is $\scr{R}_{\bar{X}}$-linear, hence must commute $\lambda \partial_z$ and multiplication by $\sh{O}_{\bar{X}_\lambda}$ (in particular $z$).\\
  \tab Next, we have quasi-isomorphisms (by the previous isomorphisms involving V-filtration):
  \[\begin{tikzcd}
	{V_{-1}\scr{M}^{A}_{\scr{H}}[*D]} && {V_{-1}\scr{M}^{A}_{\scr{H}}[*D] \otimes \Omega^{1}_{\bar{X}_\lambda/\AAA^1_\lambda}(\log D_\lambda)} \\
	\\
	{V_{-1}\scr{M}^{A}_{\scr{H}}[*D]} && {V_{0}\scr{M}^{A}_{\scr{H}}[*D] \otimes \Omega^{1}_{\bar{X}_\lambda/\AAA^1_\lambda}} \\
	\\
	{\scr{M}^{A}_{\scr{H}}[*D]} && {\scr{M}^{A}_{\scr{H}}[*D] \otimes \Omega^{1}_{\bar{X}_\lambda/\AAA^1_\lambda}}
	\arrow[from=1-1, to=1-3]
	\arrow["{=}", from=3-1, to=1-1]
	\arrow[from=3-1, to=3-3]
	\arrow[hook, from=3-1, to=5-1]
	\arrow["{z \otimes z^{-1}}"', from=3-3, to=1-3]
	\arrow[hook, from=3-3, to=5-3]
	\arrow[from=5-1, to=5-3]
  \end{tikzcd} \tag{4.4}\label{eq:BBT-8.12-quasi-isoms}\]
  where the top square is $A[\lambda]$-linear since multiplying with $z$ is (and the $A[\lambda]$-action is on the left). The bottom square is $A[\lambda]-$linear because inclusions $V_b\scr{M}^{A}_{\scr{H}}[*D] \hookrightarrow \scr{M}^{A}_{\scr{H}}[*D]$ is. Now using \cref{lem:boundary-lattices}, we can identify the top row with $C^{\log}_A$, and apply $Rq_*$ to get $\Phi_A: K^\tw_A \xrightarrow{\simeq} K^{\log}_A$, which is $A[\lambda]-$linear, as desired. Now, $K^{\log}_A = \Rm{R}q_* C^{\log}_A$ satisfies base change (same proof as in \cite[\href{https://stacks.math.columbia.edu/tag/0FM0}{Tag 0FM0}]{stacks-project}) with restrictions $\iota_0, \iota_1$ thus giving \eqref{eq:log-Dol-DR-derived-fibers}. \\
  \\
  \textit{Compatibility with artinian $\sh{T}(0)$-MTS-algebra coefficient base change:} 
  This follows from the identification of top row with $C^{\log}_A$, given by \cref{lem:boundary-lattices}, being compatible, and base changes 
  \[A'[\lambda] \otimes_{A[\lambda]}^{\Rm{L}} K^{\log}_{A} \xrightarrow{\simeq} K^{\log}_{A'}, \tab A'[\lambda] \otimes_{A[\lambda]}^{\Rm{L}} K^{\tw}_{A} \xrightarrow{\simeq} K^{\tw}_{A'}\]
  \textit{$H^0$-compatibility with composition:} For $\scr{E}, \scr{F}$, denote 
  \[C^V_A(\scr{E}, \scr{F}) \coloneq \brac*{V_{-1}\scr{M}_{\scr{E}, \scr{F}}^A[*D] \xrightarrow{\nabla_\lambda} V_{0}\scr{M}_{\scr{E}, \scr{F}}^A[*D] \otimes \Omega^{1}_{\bar{X}_\lambda/\AAA^1_\lambda}}\]
  and $K^V_A(\scr{E}, \scr{F}) \coloneq \Rm{R}q_* C^{\log}_A(\scr{E}, \scr{F})$. We will use $\HH^0$ interchangeably with $H^0\Gamma$, which is allowed due to the hypercohomology spectral sequence. In particular, for $c \in \set*{0, 1}$,
  \[\HH^0\brac*{\bar{X}, \Rm{L}\iota_c^*C^{\log/V}_A} = H^0(\Rm{L}\iota_c^*K^{\log/V}_A), \tab \HH^0\brac*{\bar{X}, \Rm{L}\iota_c^*\Rm{DR}_{\scr{R}}} = H^0(\Rm{L}\iota_c^* K^{\Rm{tw}}_A)\]
  \tab Let the quasi-isomorphism coming from \eqref{eq:BBT-8.12-quasi-isoms} and applying $\Rm{R}q_*$ be named 
  \[K^{\log}_A(\scr{E}, \scr{F}) \xleftarrow[\simeq]{\nu_{\scr{E}, \scr{F}}} K^V_A(\scr{E}, \scr{F}) \xrightarrow[\simeq]{\tau_{\scr{E}, \scr{F}}} K^{\Rm{tw}}_A(\scr{E}, \scr{F})\]
  which induces $A$-isomorphisms 
  \[H^0(\Rm{L}\iota_c^*K^{\log}_A(\scr{E}, \scr{F})) \xleftarrow[\simeq]{\nu_{\scr{E}, \scr{F}}^c} H^0(\Rm{L}\iota_c^*K^V_A(\scr{E}, \scr{F})) \xrightarrow[\simeq]{\tau_{\scr{E}, \scr{F}}^c} H^0(\Rm{L}\iota_c^*K^{\Rm{tw}}_A(\scr{E}, \scr{F}))\]
  \tab Restriction is exact for $C^{\log/V}_A$ and $K^{\log/V}_A$ since the terms are $\lambda$-flat, hence first two terms are just 
  \[H^0(K^{\log}_{A, c}(\scr{E}, \scr{F})) = H^0\Gamma(\bar{X}, C^{\log}_{A, c}(\scr{E}, \scr{F})), \tab H^0(K^{V}_{A, c}(\scr{E}, \scr{F})) = H^0\Gamma(\bar{X}, C^{V}_{A, c}(\scr{E}, \scr{F}))\]
  \tab Notice then that $H^0(K^{\log}_{A, c}(\scr{E}, \scr{F}))$ gives the space of horizontal log-flat sections at $c = 1$ and log Higgs sections at $c = 0$. We have compositions for these sections. Since $\nu^c_{\scr{E}, \scr{F}}$ is identity on degree zero, and one can check that composition sends two flat $V_{-1}$-sections into a flat $V_{-1}$-section (with respect to the identification \cref{lem:boundary-Hom-lattices}), so composition makes sense for $H^0(K^{V}_{A, c}(\scr{E}, \scr{F}))$ as well. 
  Define a \textit{composition law} on $H^0(\Rm{L}\iota_c^*K^{\Rm{tw}}_A(-, -))$ 
  \[\mu^c_{\Rm{tw}}(v, u) \coloneq \tau^c_{\scr{E}, \scr{G}}\brac*{(\tau_{\scr{F}, \scr{G}}^{c})^{-1}v \circ \tau_{\scr{E}, \scr{F}}^{c}u}\]
  then we have a commutative diagram
  \[\begin{tikzcd}
	{H^0(\Rm{L}\iota_c^*K^{\Rm{tw}}_A(\scr{F}, \scr{G})) \otimes H^0(\Rm{L}\iota_c^*K^{\Rm{tw}}_A(\scr{E}, \scr{F}))} && {H^0(\Rm{L}\iota_c^*K^{\Rm{tw}}_A(\scr{E}, \scr{G}))} \\
	\\
	{H^0(K^{\log}_{A, c}(\scr{F}, \scr{G})) \otimes H^0(K^{\log}_{A, c}(\scr{E}, \scr{F}))} && {H^0(K^{\log}_{A, c}(\scr{E}, \scr{G}))}
	\arrow["{\mu_{\Rm{tw}}^c}", from=1-1, to=1-3]
	\arrow["{\psi_{\scr{F}, \scr{G}}^c \otimes \psi_{\scr{E}, \scr{F}}^c}"', from=1-1, to=3-1]
	\arrow["{\psi^c_{\scr{E},\scr{G}}}", from=1-3, to=3-3]
	\arrow["\circ", from=3-1, to=3-3]\tag{4.5}\label{eq:c-tw-composition-diagram}
\end{tikzcd}\]
  by defining $\psi^c_{\scr{E}, \scr{F}} \coloneq (\tau^c_{\scr{E}, \scr{F}})^{-1} \circ \nu_{\scr{E}, \scr{F}}^c: H^0(\Rm{L}\iota_c^*K^{\Rm{tw}}_A(\scr{E}, \scr{F})) \xrightarrow{\simeq} H^0(K^{\log}_{A, c}(\scr{E}, \scr{F}))$. \\
  \tab Next, let $a_X: X \to \Rm{pt}$, and define $\sh{K}_{\scr{E}, \scr{F}} \coloneq (a_X)_*\ul{\Hom}_{\sh{A}}(\scr{E}, \scr{F})[-1] \in D^b(\sh{A}-\text{MTS-module})$. Its $\scr{R}_{\bar{X}}-$module, on the $\AAA^1_\lambda$-chart, is $K^{\Rm{tw}}_A(\scr{E}, \scr{F})$ (because non-proper pushforward is defined via proper pushforward on the compactification, see \cite[Section 14.3]{mochizuki2015mixed}). So each cohomology $\sh{H}^j(\sh{K}_{\scr{E}, \scr{F}})$ is a twistor structure, hence locally free over $\PP^1$, and for $c \in \set*{0, 1}$,
  \[\sh{H}^0(\sh{K}_{\scr{E}, \scr{F}}) \vert_{\lambda = c} \simeq H^0(\Rm{L}\iota_a^*K^{\Rm{tw}}_A(\scr{E}, \scr{F})) \tag{4.6} \label{eq:H0-twistor-strict-restriction}\]
  \tab Now, on $\AAA_\lambda^1$-chart, we identified $\sh{H}^0(\sh{K}_{\scr{E}, \scr{F}})$ with relative $\lambda-$flat $V_{-1}$-sections, and defined a composition law $\mu_{\Rm{tw}}$. We also identified $V_{-1}$ piece with the logarithmic extension of the family of $\lambda$-connection coming from a AVMTS, so it must agree with composition on $X$. On the other hand, composition on $X$ is a morphism of $\sh{A}-$AVMTS (use \cref{def:C-VMTS-def} and the subsequent remark; composition is $D_\lambda$-flat and $A[\lambda]$-linear). So $\mu_{\Rm{tw}}$ is the restriction of a morphism of $\sh{A}$-MTS-modules 
  \[\mu_{\Rm{tw}}: \sh{H}^0(\sh{K}_{\scr{F}, \scr{G}}) \otimes \sh{H}^0(\sh{K}_{\scr{E}, \scr{F}}) \to \sh{H}^0(\sh{K}_{\scr{E}, \scr{G}})\]
  with fiber over $\lambda = c$ as defined before. Functoriality of the trivialization in \cref{lem:affinesplitting} then gives 
  \[\begin{tikzcd}
	{\sh{H}^0(\sh{K}_{\scr{F}, \scr{G}}) \vert_{\lambda = 1} \otimes_A \sh{H}^0(\sh{K}_{\scr{E}, \scr{F}}) \vert_{\lambda = 1}} && {\sh{H}^0(\sh{K}_{\scr{E}, \scr{G}}) \vert_{\lambda = 1}} \\
	\\
	{\sh{H}^0(\sh{K}_{\scr{F}, \scr{G}}) \vert_{\lambda = 0} \otimes_A \sh{H}^0(\sh{K}_{\scr{E}, \scr{F}}) \vert_{\lambda = 0}} && {\sh{H}^0(\sh{K}_{\scr{E}, \scr{G}}) \vert_{\lambda = 0}}
	\arrow["{\mu^1_{\Rm{tw}}}", from=1-1, to=1-3]
	\arrow["\varphi"', from=1-1, to=3-1]
	\arrow["\varphi", from=1-3, to=3-3]
	\arrow["{\mu^0_{\Rm{tw}}}", from=3-1, to=3-3]
\end{tikzcd}\]
  which, combining with \eqref{eq:c-tw-composition-diagram} and \eqref{eq:H0-twistor-strict-restriction}, gives \eqref{eq:H0-composition-compatibility}. Identity extends to identity on the logarithmic extension, so lies in $V_{-1}$, hence is fixed by the quasi-isomorphisms in \eqref{eq:BBT-8.12-quasi-isoms}. Thus identity is fixed throughout. Every diagram so far has been $A$-linear, hence we get all desired compatibilities. 
\end{proof}

\subsection{The formal isomorphism of deformation functors for triples}
Every bundle below comes equipped with a framing. Consider $\bar{E}, \bar{F}$ polystable logarithmic flat connections with nilpotent residues. By \cref{thm:Betti-MTS-algebra}, and abusing notation to denote $E, F$ the corresponding local systems on $X = \bar{X} \setminus D$, we have a pro$-\sh{T}(0)-$MTS deformation algebra of pairs (picking the $\CC$-AVMTS on $E, F$ to be the canonical weight 0 $\CC$-VTS) 
\[\bm{B}_{E, F} \coloneq \Rm{Spf}_{\PP^1} \bm{\Lambda}_E \times_{\PP^1} \Rm{Spf}_{\PP^1} \bm{\Lambda}_F \eqcolon \Rm{Spf}_{\PP^1} \bm{\Lambda}_{E, F}\]
whose fibers over 0, 1 are, by \cref{thm:Bett-DR-Dol-formal-isom}, 
\[\bm{B}_{E, F}\vert_{\lambda = 1} \simeq \widehat{R}^{\pair}_{\DR, (\bar E, \bar F)}, \tab \bm{B}_{E, F}\vert_{\lambda = 0} \simeq \widehat{R}^{\pair}_{\Dol, \Sf{SM}^{\Rm{ps}}(\bar E, \bar F)} \tag{4.7}\label{eq:pair-MTS-fiber-identification}\]
and the splitting of \cref{lem:affinesplitting} identifies the coefficient rings with $\Lambda_{E, F} = \abs*{\bm{\Lambda}_{E, F}}$. We have a universal $\bm{\Lambda}_{E, F}-$AVMTS pair $(\widehat{\scr{E}}, \widehat{\scr{F}})$ which restricts to the universal family of logarithmic flat pairs $(\widehat{E}_1, \widehat{F}_1)$ (resp. universal family of logarithmic Higgs pairs $(\widehat{E}_0, \widehat{F}_0)$) over $\lambda = 1$ (resp. $\lambda = 0$). For each $\bm{\Lambda}_{E, F} \to \bm{\Lambda}_{E, F, n} \coloneq \bm{\Lambda}_{E, F}/\Ide{m}_{\bm{\Lambda}_{E, F}}^{n + 1}$, pulling back gives $\bm{\Lambda}_{E, F, n}$-universal $(\scr{E}_n, \scr{F}_n)$. Its special mod $\Ide{m}_{\bm{\Lambda}}$ fiber has underlying local systems $(E, F)$ with unipotent local monodromy, hence $(\scr{E}_n, \scr{F}_n)$ also has unipotent local monodromy (since $\Ide{m}_{\bm{\Lambda}}$ is nilpotent). The fibers over $\lambda = 1, 0$ give logarithmic flat pair $(\bar E_{n, 1}, \bar F_{n, 1})$ and logarithmic Higgs pair $(\bar E_{n, 0}, \bar F_{n, 0})$. Denote
\[\scr{H}_n \coloneq \ul{\Hom}_{\bm{\Lambda}_{E, F, n}}(\scr{E}_n, \scr{F}_n)\]
then \cref{thm:relative-DR-Dol-quasi-isom-with-coeff} gives a quasi-isomorphism (see the definitions in \cref{lem:base-change-perfectness-Dol-DR-complexes})
\[\Phi_n: K_{\DR, \Lambda_{E, F, n}}(\bar{E}_{n, 1}, \bar{F}_{n, 1}) \xrightarrow{\simeq} K_{\Dol, \Lambda_{E, F, n}}(\bar{E}_{n, 0}, \bar{F}_{n, 0}) \tag{4.8}\label{eq:nth-level-DR-Dol-cohomology-comparison}\]
which commutes with arbitrary artinian base change. If we have $f: \bar{E} \to \bar{F}$ a flat morphism, then it extends to a morphism of the weight 0 $\CC$-VTS, and restricts to $\Sf{SM}^{\Rm{ps}}(f)$ on $\lambda = 0$. Thus $H^0(\Phi_0)(f) = \Sf{SM}^{\Rm{ps}}(f)$. 
\begin{remark}
  Compatibility of the quasi-isomorphism with arbitrary artinian base change, not just those coming from morphisms of $\sh{T}(0)$-MTS-algebras, is due to base change preserving quasi-isomorphism. However, $H^0$-compatibility with composition does not commute with arbitrary base change, because $H^0$ does not commute with base change. 
\end{remark}
For ease of notation, for $\Box \in \set*{\DR, \Dol}$, let $\hat{B}_{\Box, \bar{E}, \bar{F}} \coloneq \widehat{R}^\pair_{\Box, \brac*{\bar{E}, \bar{F}}}$ with $\hat{B}_{\Box, (\bar{E}, \bar{F}, \bar{G})}$ being the completion of the obvious moduli of triples of framed bundles. Notice that we have a similar trivialization as in \eqref{eq:pair-MTS-fiber-identification} for $\hat{B}_{\Box, (\bar{E}, \bar{F}, \bar{G})}$. Denote, for $f: \bar{E} \to \bar{F}, g: \bar{F} \to \bar{G}$, 
\[\hat{Z}_{\Box, f, g} \coloneq \widehat{R}^{\trip}_{\Box, \bar E, \bar F, f} \times_{\hat{B}_{\Box, (\bar{E}, \bar{F}, \bar{G})}} \widehat{R}^{\trip}_{\Box, \bar F, \bar G, g}\] 
where $\widehat{R}^{\trip}_{\Box, \bar E, \bar F, f}$ is the completion of the moduli of (pair + morphism) at $(\bar{E}, \bar{F}, f)$; this is also $M^0_{\Lambda}(K_{\DR, \Lambda}(\hat{E}_1, \hat{F}_1))_{f}$ for $\Box = \DR$ (and $M^0_{\Lambda}(K_{\Dol, \Lambda}(\hat{E}_0, \hat{F}_0))_{f}$ for $\Box = \Dol$). Finally, let $\Ide{U}_{\Box, \bar E, \bar F, f}$ and $\Ide{Z}_{\Box, f, g}$ be the universal families over $\widehat{R}^{\trip}_{\Box, \bar E, \bar F, f}$ and $\hat{Z}_{\Box, f, g}$.  

\begin{theorem}
  \label{thm:formal-isom-triples-and-composition}
  For $(\bar{E}, \bar{F}, f)$ with $\bar{E}, \bar{F}$ polystable logarithmic flat connections and $f: \bar{E} \to \bar{F}$, we have an isomorphism 
  \[\Psi_{\bar E, \bar F, f}: \widehat{R}^{\trip}_{\DR, \bar{E}, \bar{F}, f} \xrightarrow{\simeq} \widehat{R}^{\trip}_{\Dol, \Sf{SM}^{\Rm{ps}}(\bar{E}, \bar{F}, f)}\]
  Furthermore, if we let $\mu_{\DR}, \mu_{\Dol}$ be morphisms defined by composition on universal families, then there is a commutative diagram
  \[\begin{tikzcd}
	{\widehat{Z}_{\DR, f, g}} && {\widehat{Z}_{\Dol, \Sf{SM}^{\Rm{ps}}(f, g)}} \\
	\\
	{\widehat{R}^{\trip}_{\DR, \bar E, \bar G, g \circ f}} && {\widehat{R}^{\trip}_{\Dol, \Sf{SM}^{\Rm{ps}}(\bar E, \bar G, g \circ f)}}
	\arrow["{\Psi_{\bar{E}, \bar{F}, f} \times \Psi_{\bar{F}, \bar{G}, g}}", from=1-1, to=1-3]
	\arrow["\mu_{\DR}"', from=1-1, to=3-1]
	\arrow["\mu_{\Dol}", from=1-3, to=3-3]
	\arrow["{\Psi_{\bar{E}, \bar{G}, g \circ f}}", from=3-1, to=3-3]\tag{4.9} \label{eq:universal-composition-compatibility-diagram}
  \end{tikzcd}\]
  and analogous diagrams for direct sums, addition and scalar multiplication of universal maps. 
\end{theorem}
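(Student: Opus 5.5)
The plan is to build $\Psi_{\bar E,\bar F,f}$ from two inputs. The first is the trivialization of \cref{lem:affinesplitting} applied to the pro-$\sh{T}(0)$-MTS-algebra $\bm{\Gamma}_{E,F,f}$ of \cref{thm:universal-MTS-framed-triple}. By part (3) of that proposition,
\[\bm{\Gamma}_{E,F,f}\vert_{\lambda=1}\simeq \widehat{\sh{O}}_{R^{\trip}_{\DR},(\bar E,\bar F,f)}, \qquad \bm{\Gamma}_{E,F,f}\vert_{\lambda=0}\simeq \widehat{\sh{O}}_{R^{\trip}_{\Dol},\Sf{SM}^{\Rm{ps}}(\bar E,\bar F,f)}.\]
Working levelwise on the artinian quotients $\bm{\Gamma}_n$, \cref{lem:affinesplitting} gives compatible algebra isomorphisms $\varphi_{\bm{\Gamma}_n}:\bm{\Gamma}_n\vert_{\AAA^1_\lambda}\simeq \sh{O}_{\AAA^1_\lambda}\otimes\abs{\bm{\Gamma}_n}$. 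Evaluating at $\lambda=0,1$ and passing to the limit, I define $\Psi_{\bar E,\bar F,f}$ exactly as $\tau_V$ was defined in \eqref{eq:formal-DR-Dol-immersion}, namely as $\varphi_{0}\circ\varphi_1^{-1}$ conjugated by the identifications above.

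I would also check that $\Psi$ lies over the pair isomorphism $\tau_{E}\times\tau_F$. This follows from functoriality of the splitting along the MTS-algebra morphism $\bm{\Lambda}_{E,F}\to\bm{\Gamma}_{E,F,f}$. Then I would give the concrete description via the comparison. Under the isomorphism $\Rm{Spf}\,\Gamma\simeq M^0_{\Lambda}(K_{\DR,\Lambda}(\hat E_1,\hat F_1))_f$, the map $\Psi$ should be induced by the quasi-isomorphism $\Phi_n$ of \eqref{eq:nth-level-DR-Dol-cohomology-comparison}, using \cref{lem:relative-hom-and-restriction} and \cref{lem:MTS-enhancement-relative-hom-representability}. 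The reason is that $\Phi_n$ is built from the same splitting $\varphi$ on $K^{\tw}$ together with the canonical quasi-isomorphisms $\Phi^A$. In this form, a point $(E_A,F_A,f_A)$ is sent to $(\tau(E_A,F_A),\, H^0(\Phi)(f_A))$. At the closed point this gives $H^0(\Phi_0)(f)=\Sf{SM}^{\Rm{ps}}(f)$, so the closed point goes to the right place.

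For the composition diagram \eqref{eq:universal-composition-compatibility-diagram}, I would test on $A$-points, where $A$ runs over the quotients of the universal ring of $\hat Z_{\DR,f,g}$, and then pass to the limit. Such a point is a triple of deformations $(\bar E_A,\bar F_A,\bar G_A)$ with $u\in H^0(K^{\log}_{A,1}(\scr E,\scr F))$ and $v\in H^0(K^{\log}_{A,1}(\scr F,\scr G))$. The key step is to realize this point as a base change of the universal triple-of-bundles AVMTS over the MTS-algebra of triples, $\bm{\Lambda}_{E,F,G,n}$. That base change is along a morphism of MTS-algebras, and it carries the $\bm{\Gamma}$-universal maps, so that the $H^0$-composition compatibility \eqref{eq:H0-composition-compatibility} of \cref{thm:relative-DR-Dol-quasi-isom-with-coeff} applies. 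Concretely, I would work with the universal MTS-algebra $\bm{\Gamma}_{f}\widehat{\otimes}_{\bm{\Lambda}_{E,F,G}}\bm{\Gamma}_{g}$, which is an MTS-algebra by the fiber-product compatibility in \cref{lem:MTS-structure-relative-Hom-scheme}. Over it, the universal maps $\hat f,\hat g$ are morphisms of AVMTS. I then apply \eqref{eq:H0-composition-compatibility} at each level $n$, where the coefficients are genuine artinian $\sh{T}(0)$-MTS-algebras. This gives $\phi^0(\hat g\circ\hat f)=\phi^0(\hat g)\circ\phi^0(\hat f)$ on universal families, and this is exactly the identity $\mu_{\Dol}\circ(\Psi\times\Psi)=\Psi\circ\mu_{\DR}$.

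The same argument, using compatibility with zero, addition, $A$-scalars and direct sums (the latter via $\ul{\Hom}$ of sums and fiber products), gives the analogous diagrams for direct sums, addition and scalar multiplication.

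The main obstacle is the one flagged in the Remark: $H^0$-compatibility with composition does not commute with arbitrary base change. So I must be careful to check the diagram only on the universal family over MTS-algebras, and then deduce it for all points by the universal property, since a morphism of formal schemes is determined by the universal point. Here I would use that $\Psi$ is defined through the MTS-structure, so that universal points correspond under $\Psi$. A secondary technical point is verifying that the description of $\Psi$ via $H^0(\Phi)$ agrees with the one via $\varphi_{\bm{\Gamma}}$. I would deduce this from the functoriality of \cref{lem:affinesplitting} applied to the universal section morphism $\bm{\Gamma}\to\bm{\Gamma}\otimes^{\Rm{L}}\sh{K}$, which is a morphism of MTS-modules.
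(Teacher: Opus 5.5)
Your proposal is correct. For the composition diagram it is essentially the paper's argument. The paper also equips the ring of $\widehat{Z}_{\DR,f,g}$ with a pro-$\bm{\Lambda}_{E,F,G}$-MTS-algebra structure and pulls the universal families back to the quotients $\bm{\Gamma}_n$, where they are genuine $\bm{\Gamma}_n$-AVMTS. It then applies \eqref{eq:H0-composition-compatibility} levelwise and passes to the limit, for exactly the base-change reason you flag.

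Where you differ is the construction of $\Psi_{\bar E,\bar F,f}$ itself. The paper never trivializes $\bm{\Gamma}_{E,F,f}$. It defines $\Psi$ on artinian points by $((\bar E_A,\bar F_A),f_A)\mapsto(\tau_{E,F}(\bar E_A,\bar F_A),\,H^0(\Phi_A)(f_A))$. Here $\Phi_A$ is the pullback of \eqref{eq:nth-level-DR-Dol-cohomology-comparison} along the classifying map $\Lambda_{E,F}\to A$, which is independent of $n$ by base change. An inverse is given by $\tau_{E,F}^{-1}$ and $\Phi_A^{-1}$, and formal Yoneda then produces $\Psi$.

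Your route defines $\Psi$ as $\varphi_{\bm{\Gamma},0}\circ\varphi_{\bm{\Gamma},1}^{-1}$, in parallel with $\tau_V$. This has real advantages: $\Psi$ is an isomorphism of formal schemes by construction, and the fact that it lies over $\tau_{E,F}$ follows from functoriality in one line. The cost is that you need both fiber identifications of \cref{thm:universal-MTS-framed-triple}(3), and the $\lambda=0$ one is itself obtained from the same $\Phi$. You also still have to prove your ``secondary technical point'', because the composition step needs the formula $f_A\mapsto H^0(\Phi_A)(f_A)$ on universal points.

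That check does go through. The simplest way is to observe that the splitting of $\mathrm{Sym}_{\bm{\Lambda}_n}\sh{H}^0(\sh{K}_n^\vee)$ is induced from the splitting of $\sh{K}_n$, since the splitting is natural and compatible with duals, tensor products and cokernels. Functoriality of $M^0$ in the complex (\cref{lem:relative-hom-scheme-lemma}) then identifies the induced map with $H^0$ of $\Rm{L}\iota_0^*\Phi\circ\varphi\circ(\Rm{L}\iota_1^*\Phi)^{-1}=\Phi_n$. This avoids applying \cref{lem:affinesplitting}, which is stated for actual complexes, to the universal section as a morphism in the derived category.

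The paper's route is shorter because the $H^0(\Phi_A)$ description is the definition, and it is also the form used later in \cref{lem:Rees-equivariance-formal-triples} and \cref{lem:morphism-formal-transport-and-equiv-algebraization}.
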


\begin{proof}
  \textit{The isomorphism $\Psi$:} Let $A$ be artinian local. Denote the trivialization of \eqref{eq:pair-MTS-fiber-identification} 
  \[\tau_{E, F}: \widehat{R}^{\pair}_{\DR, \bar{E}, \bar{F}} \xrightarrow{\simeq} \widehat{R}^{\pair}_{\Dol, \Sf{SM}^{\Rm{ps}}(\bar{E}, \bar{F})}\]
  \tab By \cref{lem:moduli-of-triples-and-deformation}, an $A$ point of $\widehat{R}^{\trip}_{\DR, \bar E, \bar F, f}$ is 
  \[(\bar{E}_A, \bar{F}_A, f_A), \tab f_A \in \HH^0(K_{\DR, A}(\bar{E}_A, \bar{F}_A)), \tab f_A \mod \Ide{m}_A = f\]
  since $(\bar{E}_A, \bar{F}_A)$ is a deformation of $(\bar{E}, \bar{F})$, we have a classifying morphism $\Lambda_{E, F} \to A$. This must factor through some quotient $\Lambda_n$, hence we can pullback \eqref{eq:nth-level-DR-Dol-cohomology-comparison} to get a quasi-isomorphism
  \[\Phi_A: K_{\DR, A}(\bar{E}_A, \bar{F}_A) \xrightarrow{\simeq} K_{\Dol, A}(\tau_{E, F}(\bar{E}_A, \bar{F}_A))\]
  which is independent of $n$ by base change compatibility. Then 
  \[((\bar{E}_A, \bar{F}_A), f_A) \mapsto \brac*{\tau_{E, F}(\bar{E}_A, \bar{F}_A), H^0(\Phi_A)(f_A)}\]
  with inverse constructed using $\tau^{-1}_{E, F}$ and $\Phi_A^{-1}$, gives a functorial bijection on Artinian points. By formal Yoneda \cite{schlessinger1968functors}, we get the isomorphism $\Psi_{\bar{E}, \bar{F}, f}$. Compatibility with universal families follows from Yoneda. \\
  \\
  \textit{Compatibility with composition, direct sum, addition, scalar multiplication:} Now let $E, F, G$ be the underlying local system for logarithmic flat connections $\bar{E}, \bar{F}, \bar{G}$, equipped with the canonical weight 0 $\CC$-VTS. We then have a pro-$\sh{T}(0)$-MTS deformation algebra $\bm{\Lambda}_{E, F, G}$ whose fibers over $\lambda = 1, 0$ give $\hat{B}_{\DR, \bar{E}, \bar{F}, \bar{G}}$ and $\hat{B}_{\Dol, \Sf{SM}^{\Rm{ps}}(\bar{E}, \bar{F}, \bar{G})}$. Similar statement for universal families $\brac*{\hat{\scr{E}}, \hat{\scr{F}}, \hat{\scr{G}}}$. Let 
  \[\hat{Z}_{\DR} \eqcolon \Rm{Spf}\ \Gamma\]
  then by \cref{thm:universal-MTS-framed-triple}, we have a pro-$\bm{\Lambda}_{E, F, G}$-MTS-algebra $\bm{\Gamma}$ with $\abs*{\bm{\Gamma}} = \Gamma$. Denote 
  \[\bm{\Gamma}_n \coloneq \bm{\Gamma}/\Ide{m}_{\bm{\Gamma}}^{n + 1}, \tab \Gamma_n = \abs*{\bm{\Gamma}_n}\]
  \tab The morphism $\bm{\Lambda}_{E, F, G} \to \bm{\Gamma} \to \bm{\Gamma}_n$ is a morphism of $\sh{T}(0)$-MTS-algebras, hence pulling back universal families give $\hat{\scr{E}}_n, \hat{\scr{F}}_n, \hat{\scr{G}}_n$ all $\bm{\Gamma}_n$-AVMTS. Again let their $\lambda-$fiber be $\bar{E}_{n, \lambda}, \bar{F}_{n, \lambda}, \bar{G}_{n, \lambda}$. Since we are over $\bm{\Gamma}$ which also parametrizes deformations of $f, g$, there are universal morphisms 
  \[f_n \in H^0\brac*{K_{\DR, \Gamma_n}(\bar{E}_{n, 1}, \bar{F}_{n, 1})}, \tab g_n \in H^0\brac*{K_{\DR, \Gamma_n}(\bar{F}_{n, 1}, \bar{G}_{n, 1})}\]
  which are just the restriction of the universal morphisms $\hat{f}, \hat{g}$ along $\Gamma \to \Gamma_n$. Now by \cref{thm:relative-DR-Dol-quasi-isom-with-coeff} applied to $\bm{\Gamma}_n$, $H^0(\Phi_n)$ commutes with composition. Thus the restriction of the diagram \eqref{eq:universal-composition-compatibility-diagram} to each $\Gamma_n$ commutes. Since $\Gamma = \varprojlim \Gamma_n$, we get the desired diagram. Compatibility with addition and scalar multiplication follows from the same argument, along with the fact that $H^0(\Phi_n)$ is $\Gamma_n$-linear. Direct sum is analogous. 
\end{proof}

\subsection{Rees equivariance}
Consider $(\bar{E}, \bar{F}, f)$ with $\bar E, \bar F$ (not necessarily polystable) logarithmic flat connections with framings $\phi_{\bar{E}}, \phi_{\bar F}$, and $f: \bar{E} \to \bar{F}$ a flat morphism. Consider the socle filtrations $S_\bullet$ on these bundles, and let
\[\bar{E}_x = \oplus_j V^{\bar{E}}_j, \tab \bar{F}_x = \oplus_j V^{\bar{F}}_j\]
be two splittings adapted to $S_\bullet$. Also denote $\brac*{\bar{E}^{\Rm{ss}}, \bar{F}^{\Rm{ss}}, f^{\Rm{ss}}} \coloneq \brac*{\Rm{gr}_S \bar{E}, \Rm{gr}_S \bar{F}, \Rm{gr}_S f}$. Then we have change-of-framing actions $\gamma_{\bar{E}}^{\DR}, \gamma_{\bar{F}}^{\DR}$:
\[(\phi^{-1}_{\bar E} \circ \gamma_{\bar{E}}^{\DR} \circ \phi_{\bar E})\vert_{V^{\bar{E}}_j} = t^{-j}\Rm{id}_{V^{\bar{E}}_j}, \tab (\phi^{-1}_{\bar F} \circ \gamma_{\bar{F}}^{\DR} \circ \phi_{\bar F})\vert_{V^{\bar{F}}_j} = t^{-j}\Rm{id}_{V^{\bar{F}}_j}\]
\tab Let $\gamma^{\DR} \coloneq (\gamma_{\bar{E}}^{\DR}, \gamma_{\bar{F}}^{\DR})$ acting on $R_{\DR}^{\pair}$. If we view $f$ as a matrix, with respect to the framings, then $\gamma^{\DR}$ acts by $f \mapsto \gamma_{\bar{F}}^{\DR} \circ f \circ (\gamma_{\bar{E}}^{\DR})^{-1}$. Notice that since $f(S_i\bar{E}) \subseteq S_i \bar{F}$, $f^{\Rm{ss}}$, in its matrix form, has to be block diagonal, hence it's fixed by this $\gamma^{\DR}$-action. Thus $\gamma^{\DR}$ acts on $\widehat{R}^{\trip}_{\DR, \bar{E}^{\Rm{ss}}, \bar{F}^{\Rm{ss}}, f^{\Rm{ss}}}$. \\
\tab On the other hand, since this is just a cocharacter of $\Rm{GL}_r \times \Rm{GL}_s$, it also acts, by $\gamma^{\Rm{Dol}}$, on $\widehat{R}^{\trip}_{\Dol}$. The Mochizuki's correspondence $\Sf{SM}^{\Rm{ps}}$ fixes the fiber, i.e. $\bar{E}_x = \Sf{SM}^{\Rm{ps}}(\bar{E})_x$, and it preserves the socle filtration, so $\gamma^{\Rm{Dol}}$ fixes $\Sf{SM}^{\Rm{ps}}(\bar{E}^{\Rm{ss}}, \bar{F}^{\Rm{ss}}, f^{\Rm{ss}})$. Thus it acts on $\widehat{R}^{\trip}_{\Dol, \Sf{SM}^{\Rm{ps}}(\bar{E}^{\Rm{ss}}, \bar{F}^{\Rm{ss}}, f^{\Rm{ss}})}$. 

\begin{prop}
  \label{lem:Rees-equivariance-formal-triples}
  The formal isomorphism $\Psi_{\bar{E}^{\Rm{ss}}, \bar{F}^{\Rm{ss}}, f^{\Rm{ss}}}$ in \cref{thm:formal-isom-triples-and-composition} is equivariant. 
\end{prop}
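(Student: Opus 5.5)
The plan is to check equivariance on $A$-points for every artinian local $A$ and then pass to the formal schemes by formal Yoneda, as in the construction of $\Psi$ in \cref{thm:formal-isom-triples-and-composition}. Write $\Psi \coloneq \Psi_{\bar{E}^{\Rm{ss}}, \bar{F}^{\Rm{ss}}, f^{\Rm{ss}}}$. By construction, $\Psi$ sends an $A$-point $((\bar{E}_A, \bar{F}_A), f_A)$ to
\[\brac*{\tau_{E, F}(\bar{E}_A, \bar{F}_A),\ H^0(\Phi_A)(f_A)}.\]
Here $\tau_{E,F}$ is the trivialization of pairs and $\Phi_A$ is obtained by pulling back \eqref{eq:nth-level-DR-Dol-cohomology-comparison} along the classifying map $\Lambda_{E, F} \to A$. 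Fix $g \in \GG_m(A)$, acting through $\gamma = (\gamma_{\bar{E}}, \gamma_{\bar{F}})$ in $\Rm{GL}_r \times \Rm{GL}_s$. On triples the action only changes the framings, and it acts on the morphism by $f_A \mapsto f_A$ viewed in the new framings; as a matrix this is $\gamma_{\bar F} f_A \gamma_{\bar E}^{-1}$. The proof therefore splits into two statements:
\begin{enumerate}
  \item $\tau_{E,F}$ is $\gamma$-equivariant;
  \item $H^0(\Phi_A)$ intertwines the two actions on morphisms.
\end{enumerate}

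For the first statement, apply \cref{thm:Bett-DR-Dol-formal-isom}(3) to each factor with $H = \gamma(\GG_m)$. This $H$ fixes the semisimple framed local systems $E^{\Rm{ss}}, F^{\Rm{ss}}$, because $\gamma$ acts by scalars on the summands $\Rm{gr}_S$ and these are subobjects. Since the pair space is a product, $\tau_{E,F} = \tau_E \times \tau_F$ is equivariant.

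For the second statement, I will use the proof of \cref{thm:Bett-DR-Dol-formal-isom}(3) in more detail. Each $g$ lifts to an automorphism $a_g$ of the weight $0$ $\CC$-VTS pair $(\scr{E}, \scr{F})$. This induces an automorphism $\tilde a_g$ of $\bm{\Lambda}_{E,F}$ as pro-$\sh{T}(0)$-MTS-algebras, and the universal $\bm{\Lambda}_{E,F}$-AVMTS pair $(\widehat{\scr{E}}, \widehat{\scr{F}})$ acquires an isomorphism $\tilde a_g^*(\widehat{\scr{E}}, \widehat{\scr{F}}) \simeq (\widehat{\scr{E}}, \widehat{\scr{F}})$ that changes the framing by $g$. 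Because this is an isomorphism of AVMTS compatible with $\tilde a_g$ on coefficients, it gives an isomorphism of $\scr{H}_n = \ul{\Hom}(\scr{E}_n, \scr{F}_n)$ over the base change $\tilde a_g$. Here $\tilde a_g$ is a morphism of artinian $\sh{T}(0)$-MTS-algebras, so it is a base change that \cref{thm:relative-DR-Dol-quasi-isom-with-coeff} and \cref{lem:affinesplitting} are compatible with.

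Functoriality of $\Phi$ and of $\varphi$ in $\scr{H}$, together with their compatibility with this base change, then shows that $H^0(\Phi_n)$ commutes with conjugation by $a_g$ on both fibers. At $\lambda = 1$ and $\lambda = 0$ this conjugation is exactly the action $f \mapsto \gamma_{\bar F} f \gamma_{\bar E}^{-1}$, expressed in the framings; this uses that $\Sf{SM}^{\Rm{ps}}$ is the identity on the fiber at $x$. Pulling back along $\Lambda_n \to A$ then gives the statement for $\Phi_A$. Finally, Yoneda turns the $A$-point equivariance into equivariance of $\Psi$, since both $\GG_m$-actions are algebraic and are compatible with the completions.

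The main obstacle is making the second statement precise. The action $\tilde a_g$ on $\bm{\Lambda}_{E,F}$ must be a genuine morphism of MTS-algebras, and the induced action on universal families must be by AVMTS isomorphisms and not merely by isomorphisms of the flat or Higgs fibers; otherwise the functoriality in \cref{thm:relative-DR-Dol-quasi-isom-with-coeff} does not apply. I would get both from the universal property in \cref{thm:Betti-MTS-algebra}: the reframed universal object is again a framed free AVMTS deformation, so uniqueness produces $\tilde a_g$ together with the comparison isomorphism.
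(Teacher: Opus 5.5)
Your proposal is correct and follows the paper's proof. The paper also argues on $A$-points, combining equivariance of $\tau_{E^{\Rm{ss}}, F^{\Rm{ss}}}$ from \cref{thm:Bett-DR-Dol-formal-isom}(3) with functoriality of $\Phi_A$ under the conjugation action on the logarithmic De Rham and Dolbeault complexes. Your lifting of $g$ to $\tilde a_g$ and to an isomorphism of the universal AVMTS spells out what the paper compresses into ``functoriality of $\Phi_A$''.

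Two small adjustments are needed. First, state the commutation for $\Phi_n$ as a commutative square of complexes, and take $H^0$ only after pulling back to $A$, since $H^0$ does not commute with the base change $\Lambda_n \to A$. Second, take $g \in \CC^*$ rather than $g \in \GG_m(A)$: this is what the lift $a_g$ to the $\CC$-VTS requires, and it suffices for equivariance of a map of formal schemes.
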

\begin{proof}
  For artinian local ring $A$, we have by definition
  \[\Psi_{\bar{E}^{\Rm{ss}}, \bar{F}^{\Rm{ss}}, f^{\Rm{ss}}}(\bar{E}^{\Rm{ss}}_A, \bar{F}^{\Rm{ss}}_A, f_A) = (\tau_{E^{\Rm{ss}}, F^{\Rm{ss}}}(\bar{E}^{\Rm{ss}}_A, \bar{F}^{\Rm{ss}}_A), H^0(\Phi_A)(f_A))\]
  and $\tau_{E^{\Rm{ss}}, F^{\Rm{ss}}}$ is equivariant by \cref{thm:Bett-DR-Dol-formal-isom}. Notice that the $\gamma^{\DR}$ acts by conjugation on $\ul{\Hom}_A(\bar{E}^{\Rm{ss}}_A, \bar{F}^{\Rm{ss}}_A)$, hence by conjugation on the logarithmic De Rham complex. Equivariance of $\tau_{E^{\Rm{ss}}, F^{\Rm{ss}}}$ and functoriality of $\Phi_A$ then gives a commutative diagram 
  \[\begin{tikzcd}
	{K_{\DR, A}(\bar{E}^{\Rm{ss}}_A, \bar{F}^{\Rm{ss}}_A)} && {K_{\Dol, A}(\tau_{E^{\Rm{ss}}, F^{\Rm{ss}}}(\bar{E}^{\Rm{ss}}_A, \bar{F}^{\Rm{ss}}_A))} \\
	\\
	{\gamma^{\DR}(t) \cdot K_{\DR, A}(\bar{E}^{\Rm{ss}}_A, \bar{F}^{\Rm{ss}}_A)} && {\gamma^{\Dol}(t) \cdot K_{\Dol, A}(\tau_{E^{\Rm{ss}}, F^{\Rm{ss}}}(\bar{E}^{\Rm{ss}}_A, \bar{F}^{\Rm{ss}}_A))}
	\arrow["{\Phi_A}", from=1-1, to=1-3]
	\arrow["{\gamma_{\bar{F}^{\Rm{ss}}}^{\DR}(t) \circ (-) \circ \gamma_{\bar{E}^{\Rm{ss}}}^{\DR}(t)^{-1} }"', from=1-1, to=3-1]
	\arrow["{\gamma_{\bar{F}^{\Rm{ss}}}^{\Dol}(t) \circ (-) \circ \gamma_{\bar{E}^{\Rm{ss}}}^{\Dol}(t)^{-1}}", from=1-3, to=3-3]
	\arrow["{\Phi_A}", from=3-1, to=3-3]
\end{tikzcd}\]
  hence $H^0(\Phi_A)$ is equivariant. 
\end{proof}

\section{Construction of the equivalence functor and its properties}
\subsection{Equivariant complex and algebraization}
Let $(G, m)$ be an algebraic group over $\CC$, and $B$ be a finite type scheme over $\CC$ with a $G$-action: 
\[a: G \times B \to B, \tab \pi: G \times B \to B\]
where $a$ specifies the action, and $\pi$ is the projection. 
\begin{definition}[{\cite[\href{https://stacks.math.columbia.edu/tag/03LF}{Tag 03LF}]{stacks-project}}]
  A $G$-equivariant quasicoherent $\sh{O}_B$-module on $B$ is a pair $(\sh{F}, \alpha)$ where $\sh{F} \in \Rm{QCoh}(\sh{O}_B)$ and $\alpha$ is a $\sh{O}_{G \times B}$-linear isomorphism $\alpha: a^*\sh{F} \xrightarrow{\simeq} \pi^*\sh{F}$ such that the diagram
  \[\begin{tikzcd}
	{(\Rm{id}_G \times a)^*\pi^*\sh{F}} && {\pi_2^*\sh{F}} \\
	\\
	{(\Rm{id}_G \times a)^* a^*\sh{F}} && {(m \times \Rm{id}_B)^*a^*\sh{F}}
	\arrow["{\pi_{12}^*\alpha}", from=1-1, to=1-3]
	\arrow["{(\Rm{id}_G \times a)^* \alpha}", from=3-1, to=1-1]
	\arrow[no head, from=3-1, to=3-3]
	\arrow[shift left, no head, from=3-1, to=3-3]
	\arrow["{(m \times \Rm{id}_B)^* \alpha}"', from=3-3, to=1-3]
\end{tikzcd}\]
commutes in the category of $\sh{O}_{G \times G \times B}$-modules, where $\pi_2: G \times G \times B \to B, \pi_{12}: G \times G \times B \to G \times B$ are projections. 
\end{definition}
\begin{definition}
  \leavevmode
  \begin{enumerate}
    \item A $G$-equivariant complex of $\sh{O}_B$-modules is a cochain complex $(K^\bullet, \der^\bullet)$ where each $K^i$ is a $G$-equivariant quasicoherent $\sh{O}_B$-module with $\alpha_i: a^* K^i \xrightarrow{\simeq} \pi^* K^i$, and each $\der^i$ is $G$-equivariant
    \[\alpha_{i + 1} \circ (a^* \der^i) = (\pi^* \der^i) \circ \alpha_i\]
    \item Let $Z$ be a $\CC$-scheme with a trivial $G$-action, and $\Rm{pr}_1: Z \times B \to B$ be the projection. Denote
    \begin{align*}
      \tilde{a} &\coloneq \Rm{id}_Z \times a: G \times Z \times B \to Z \times B\\
      \tilde{\pi} &\coloneq \Rm{id}_Z \times \pi: G \times Z \times B \to Z \times B
    \end{align*}
    the action and projection on $Z \times B$. A $G$-equivariant complex of $\Rm{pr}^{-1}_1\sh{O}_B$-modules is a cochain complex $(K^\bullet, \der^\bullet)$ where each $K^i$ is a $G$-equivariant $\sh{O}_{Z \times B}$-module, and 
    \[\alpha_{i + 1} \circ \tilde{a}_{B}^* \der^i = (\tilde{\pi}_B^*\der^i) \circ \alpha_i\]
    where $\tilde{a}_B^*\sh{F} \coloneq \Rm{pr}_{02}^{-1}\sh{O}_{G \times B} \otimes_{\tilde{a}^{-1} \Rm{pr}_1^{-1}\sh{O}_B}\tilde{a}^{-1} \sh{F}$, with $\tilde{\pi}_B^*$ similarly defined. 
  \end{enumerate}
\end{definition}

\begin{lemma}
  \label{lem:equivariant-pullback}
  \leavevmode
  \begin{enumerate}
    \item Let $Y$ be a $\CC$-scheme with a $G$-action. Let $f: Y \to B$ be a $G$-equivariant morphism, and $(K^\bullet, \der^i)$ a $G$-equivariant complex of $\sh{O}_Y$-modules (meaning each $\der^i$ is $\sh{O}_B$-linear). Then $f^* K^\bullet$ is a $G$-equivariant complex of $\sh{O}_Y$-modules. 
    \item Let $T$ be another $\CC$-scheme with a $G$-action, and $g: T \to B$ a $G$-equivariant morphism. Form the fibred diagram 
    \[\begin{tikzcd}
	{Z \times T} && Z \times B \\
	\\
	T && B
	\arrow["{\Rm{id}_Z \times g}", from=1-1, to=1-3]
	\arrow["{\Rm{pr}_1}"', from=1-1, to=3-1]
	\arrow["{\Rm{pr}_1}", from=1-3, to=3-3]
	\arrow["g", from=3-1, to=3-3]
\end{tikzcd}\]
  Suppose that $(K^\bullet, \der^i)$ is a $G$-equivariant complex of $\Rm{pr}_1^{-1}\sh{O}_B$-modules on $Z \times B$. Then 
  \[g_B^* K^\bullet \coloneq \Rm{pr}_1^{-1}\sh{O}_T \otimes_{(\Rm{id}_Z \times g)^{-1}\Rm{pr}_1^{-1}\sh{O}_B} (\Rm{id}_Z \times g)^{-1} K^\bullet\]
  is a $G$-equivariant complex of $\Rm{pr}_1^{-1}\sh{O}_T$-modules. 
  \end{enumerate}
\end{lemma}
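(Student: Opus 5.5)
The plan is to write down the equivariant structure on the pulled-back complex explicitly and then check the required identities by functoriality of pullback. There are two parts, and the second reduces to the first.

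\emph{Part (1).} Let $f: Y \to B$ be $G$-equivariant, with action maps $a_Y, \pi_Y$ on $G \times Y$. Equivariance of $f$ means
\[
f \circ a_Y = a \circ (\Rm{id}_G \times f), \qquad f \circ \pi_Y = \pi \circ (\Rm{id}_G \times f).
\]
For each $i$ I define
\[
\alpha^Y_i : a_Y^* f^* K^i \simeq (\Rm{id}_G \times f)^* a^* K^i \xrightarrow{(\Rm{id}_G \times f)^*\alpha_i} (\Rm{id}_G \times f)^* \pi^* K^i \simeq \pi_Y^* f^* K^i,
\]
where the outer isomorphisms are the canonical identifications coming from the two equalities of composites. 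The checks then go as follows.
\begin{itemize}
\item The cocycle diagram for $\alpha^Y_i$ is the pullback of the cocycle diagram for $\alpha_i$ along $\Rm{id}_{G \times G} \times f$. This uses that the maps $\Rm{id}_G \times a$, $m \times \Rm{id}_B$, $\pi_{12}$, $\pi_2$ on $G \times G \times B$ are intertwined with their $Y$-counterparts by $\Rm{id}_{G \times G} \times f$, and that canonical pullback isomorphisms are compatible with composition (the pseudofunctor coherence of $(-)^*$).
\item Equivariance of the differentials, $\alpha^Y_{i+1} \circ a_Y^* f^*\der^i = \pi_Y^* f^*\der^i \circ \alpha^Y_i$, follows by applying $(\Rm{id}_G \times f)^*$ to $\alpha_{i+1} \circ a^*\der^i = \pi^*\der^i \circ \alpha_i$. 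The canonical isomorphisms are natural transformations, so they commute with $f^*\der^i$.
\end{itemize}
Here it matters that the $\der^i$ are $\sh{O}_B$-linear, so that $f^*\der^i$ makes sense.

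\emph{Part (2).} Here $K^\bullet$ lives on $Z \times B$ and is only $\Rm{pr}_1^{-1}\sh{O}_B$-linear, as happens for differentials of de Rham type along $Z$. I use the same recipe, but with the partial pullback $g_B^*$ in place of $f^*$. The key points are:
\begin{itemize}
\item Since $G$ acts trivially on $Z$, the relations $\tilde{a}_T$ versus $\tilde{a}$ and $\tilde{\pi}_T$ versus $\tilde{\pi}$ are just $\Rm{id}_Z \times (\text{equivariance relations of } g)$.
\item The partial pullbacks $\tilde{a}_B^*$, $\tilde{\pi}_B^*$, $g_B^*$ are all of the form $\Rm{pr}^{-1}\sh{O}_{\text{base}} \otimes_{\Rm{pr}^{-1}\sh{O}_B} (\text{map})^{-1}(-)$.
\end{itemize}
This gives canonical isomorphisms
\[
\tilde{a}_T^{\,*}\, g_B^* K^i \simeq (\Rm{id}_{G \times Z} \times g)_B^*\, \tilde{a}_B^* K^i ,
\]
using that $(-)^{-1}$ is a pseudofunctor and that the tensor products associate: $\sh{O}_{G\times T} \otimes_{\sh{O}_{G\times B}} \sh{O}_{G\times B} \otimes_{\sh{O}_B} (-) \simeq \sh{O}_{G\times T} \otimes_{\sh{O}_B} (-)$, with inverse images commuting with tensor products. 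I then define $\alpha^T_i$ by conjugating $(\Rm{id}_{G\times Z}\times g)_B^*\alpha_i$ through these isomorphisms, and repeat the checks of Part (1). Because $g_B^*$ is a functor on $\Rm{pr}_1^{-1}\sh{O}_B$-modules, $g_B^*\der^i$ is a well-defined $\Rm{pr}_1^{-1}\sh{O}_T$-linear differential.

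\emph{Main obstacle.} The only real work is verifying that these canonical partial-pullback isomorphisms satisfy the coherence needed for the cocycle over $G \times G \times Z \times T$. In other words, I need relative pullback $(-)_B^*$ to form a pseudofunctor compatible with composites of maps of the form $\Rm{id}_Z \times h$. I would prove this once: identify $h_B^*\sh{F}$ with $\Rm{pr}^{-1}\sh{O}_{\text{target}} \otimes \dots$, reduce to the analogous statement for ordinary sheaf inverse image and ring extension of scalars, and deduce everything else formally.
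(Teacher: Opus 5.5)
Your proposal is correct and is essentially the paper's argument: the paper just cites Stacks Project Tag 03LG, which is the construction $(\mathcal{F},\alpha)\mapsto (f^*\mathcal{F}, (\Rm{id}_G\times f)^*\alpha)$, with the cocycle condition and the compatibility with the differentials checked by pseudofunctoriality of pullback, exactly as you describe. Your explicit treatment of part (2) through the partial pullbacks $(-)_B^*$ supplies what the citation leaves implicit, since Tag 03LG literally covers only quasi-coherent modules and full pullbacks.
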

\begin{proof}
  See \cite[\href{https://stacks.math.columbia.edu/tag/03LG}{Tag 03LG}]{stacks-project}.
\end{proof}
Let $\Box \in \set*{\DR, \Dol}$. Recall that we have the moduli of pairs $R_{\Box}^{\pair}(r, s)$. On $\bar{X} \times R_{\Box}^{\pair}(r, s)$ we have universal bundles $\bar{E}^{\univ}_{\Box}, \bar{F}^{\univ}_{\Box}$, with the logarithmic complex $C_{\Box}(\bar{E}^{\univ}_{\Box}, \bar{F}^{\univ}_{\Box})$. We have a $G \coloneq \Rm{GL}_r \times \Rm{GL}_s$-action on $\bar{X} \times R_{\Box}^{\pair}(r, s)$ which acts trivially on $\bar{X}$ and by change of framings on $R_{\Box}^{\pair}(r, s)$. Let $\Rm{pr}_1: \bar{X} \times R_{\Box}^{\pair}(r, s)$ be the projection.

\begin{prop}
  $C_{\Dol}(\bar{E}_{\Dol}^{\univ}, \bar{F}_{\Dol}^{\univ})$ is a $G$-equivariant complex of $\sh{O}_{\bar{X} \times R_{\Dol}^{\pair}(r, s)}-$modules. $C_{\DR}(\bar{E}_{\DR}^{\univ}, \bar{F}_{\DR}^{\univ})$ is a $G-$equivariant complex of $\Rm{pr}_1^{-1}\sh{O}_{R_{\DR}^{\pair}(r, s)}$-modules. $K_{\Box}(\bar{E}^{\univ}_{\Box}, \bar{F}^{\univ}_{\Box}) = \Rm{R}(\Rm{pr}_1)_* C_{\Box}(\bar{E}^{\univ}_{\Box}, \bar{F}^{\univ}_{\Box})$ is a $G$-equivariant complex of $\sh{O}_{R_{\Box}^{\pair}(r, s)}$-modules. 
\end{prop}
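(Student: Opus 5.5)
The plan is to build the equivariant structures from the universal property of the framed moduli spaces, then push forward. The key input is the universal framing. On $\bar X\times R_\Box(r)$ the universal bundle $\bar E^{\univ}_\Box$ comes with $\phi^{\univ}:\bar E^{\univ}_\Box|_{x\times R_\Box(r)}\simeq\sh{O}^r$. The action $a$ sends a framed triple $(E,\nabla,\phi)$ to $(E,\nabla,g\circ\phi)$. So $a^*\bar E^{\univ}_\Box$ and $\pi^*\bar E^{\univ}_\Box$ over $\bar X\times G\times R_\Box(r)$ have the same underlying logarithmic connection (resp.\ Higgs bundle). Their framings are $g\circ\pi^*\phi^{\univ}$ and $\pi^*\phi^{\univ}$.

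\emph{Step 1.} Representability of $\mathsf{R}_\Box(r)$ as a fine moduli functor of framed objects gives a unique isomorphism of framed objects $(a^*\bar E^{\univ},g\circ\phi)\simeq(\pi^*\bar E^{\univ},g\circ\phi)$. Forgetting the framing, this is an isomorphism $\alpha_E:a^*\bar E^{\univ}\xrightarrow{\simeq}\pi^*\bar E^{\univ}$ compatible with $\nabla$ (resp.\ $\theta$). We take $\alpha_E$ to be the isomorphism that is the identity on underlying objects. Concretely, $a^*\bar E^{\univ}$ classifies $(\pi^*\bar E^{\univ},g\circ\pi^*\phi^{\univ})$. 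The cocycle condition over $G\times G\times R$ then holds automatically by the uniqueness part of representability: both composites are isomorphisms of framed objects between the same two framed families. Framed objects have no nontrivial automorphisms, since an automorphism fixing the frame at $x$ of a semistable degree $0$ object is trivial; for the Betti side this is the usual rigidity. Doing the same for $F$ and taking products gives the $G=\Rm{GL}_r\times\Rm{GL}_s$-linearizations on both universal bundles.

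\emph{Step 2.} Transfer the linearizations to the complexes. $\shHom(\bar E,\bar F)$ and $\shHom(\bar E,\bar F)\otimes\Omega^1_{\bar X\times B/B}(\log D_B)$ inherit linearizations $\alpha_0,\alpha_1$ by conjugation $\phi\mapsto\alpha_F\circ\phi\circ\alpha_E^{-1}$. The relative log forms are pulled back from $\bar X$, so they carry the trivial equivariant structure. Equivariance of the differential, $\alpha_1\circ a^*\der=\pi^*\der\circ\alpha_0$, holds because $\alpha_E,\alpha_F$ intertwine the pulled-back connections (resp.\ Higgs fields). In the Higgs case everything is $\sh{O}$-linear, giving a complex of $\sh{O}_{\bar X\times B}$-modules. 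In the de Rham case $\nabla$ is only $\Rm{pr}_1^{-1}\sh{O}_B$-linear. This is why we use the modified pullbacks $\tilde a_B^*,\tilde\pi_B^*$, with $Z=\bar X$ and trivial action. We check that the relative connection pulls back along the base via these $\Rm{pr}_1^{-1}$-twisted pullbacks, and that $\alpha$ commutes with them. This is the step I expect to need the most care: we must verify that $\tilde a_B^*\nabla$ is well defined as a relative connection over $G\times B$ and agrees with the connection of $a^*\bar E^{\univ}$. That follows because relative log differentials commute with base change along $B$.

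\emph{Step 3.} Push forward. Since $\Rm{pr}_1$ is proper and flat, and $G$ acts trivially on $\bar X$, flat base change along $a$ and $\pi$ identifies $a^*\Rm{R}\Rm{pr}_{1*}C$ and $\pi^*\Rm{R}\Rm{pr}_{1*}C$ with the pushforwards of $\tilde a_B^*C$ and $\tilde\pi_B^*C$. This uses \cref{lem:base-change-perfectness-Dol-DR-complexes}, and the flatness of $a$ and $\pi$, which are isomorphic to projections. So $\Rm{R}\Rm{pr}_{1*}(\alpha)$ gives the linearization. To obtain an honest complex rather than an object of the derived category, compute $\Rm{R}\Rm{pr}_{1*}$ by a Čech complex for a fixed affine cover of $\bar X$ pulled back to $\bar X\times B$. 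This is functorial and commutes with flat base change on $B$. The termwise linearizations then give isomorphisms of Čech terms that commute with the Čech and connection differentials. The cocycle condition is inherited from Step 1, giving the $G$-equivariant complex of $\sh{O}_{R^{\pair}_\Box}$-modules.
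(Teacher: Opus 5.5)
Your proposal is correct and follows the paper's proof. Both arguments obtain canonical isomorphisms $\alpha_{\bar E}\colon \tilde a^*\bar E^{\univ}\to\tilde\pi^*\bar E^{\univ}$ (and likewise for $\bar F$) from the fact that the $G$-action only changes framings, transport them to $\shHom$ and its log-forms twist by conjugation while checking that the differential is equivariant, and then use base change along $a,\pi$ for $\Rm{R}(\Rm{pr}_1)_*$. Your rigidity argument for the cocycle condition, and your \v{C}ech model for $\Rm{R}(\Rm{pr}_1)_*$ (which gives an honest equivariant complex rather than only an isomorphism $a^*K_\Box\simeq\pi^*K_\Box$ in the derived category), make explicit what the paper covers with ``uniqueness of universal bundle'' and ``functoriality preserves the cocycle diagram.''
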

\begin{proof}
  Let's prove the first statement; the second one uses the same argument just with different linear structures. Let $a: G \times R_{\Dol}^{\pair}(r, s) \to R_{\Dol}^{\pair}(r, s), \pi: G \times R_{\Dol}^{\pair}(r, s) \to R_{\Dol}^{\pair}(r, s)$ be the action and projection on $R_{\Dol}^{\pair}(r, s)$. Denote $\tilde{a} = \Rm{id}_{\bar{X}} \times a, \tilde{\pi} = \Rm{id}_{\bar{X}} \times \pi$ the action (trivial on $\bar{X}$) and projection on $\bar{X} \times R_{\Dol}^{\pair}(r, s)$. Notice that the $G$-action only changes the framings, not the underlying bundles, so if we forget about framing (which is fine since the complexes don't care about framing), uniqueness of universal bundle gives canonical isomorphisms of logarithmic Higgs bundles 
  \[\alpha_{\bar{E}}: \tilde{a}^*\bar{E}^{\univ}_{\Dol} \xrightarrow{\simeq} \tilde{\pi}^*\bar{E}^{\univ}_{\Dol}, \tab \alpha_{\bar{F}}: \tilde{a}^*\bar{F}^{\univ}_{\Dol} \xrightarrow{\simeq} \tilde{\pi}^*\bar{F}^{\univ}_{\Dol}\]
  which satisfies the cocycle condition because change-of-framing on $R_{\Dol}^{\pair}(r, s)$ just induces a change-of-framing on the universal bundles. This induces, by functoriality, an isomorphism
  \[\alpha_{\Hom}: \tilde{a}^* \shHom(\bar{E}^{\univ}_{\Dol}, \bar{F}^{\univ}_{\Dol}) \xrightarrow{\simeq} \tilde{\pi}^*\shHom(\bar{E}^{\univ}_{\Dol}, \bar{F}^{\univ}_{\Dol}), \tab \phi \mapsto \alpha_{\bar{F}} \circ \phi \circ \alpha_{\bar E}^{-1}\]
  which satisfies the cocycle condition because $\alpha_{\bar{E}}, \alpha_{\bar{F}}$ do. Thus $\shHom(\bar{E}^{\univ}_{\Dol}, \bar{F}^{\univ}_{\Dol})$ is $G$-equivariant via $\alpha_{\Hom}$, and the remaining term of the complex
  \[\shHom(\bar{E}^{\univ}_{\Dol}, \bar{F}^{\univ}_{\Dol}) \otimes \Omega^1_{\bar{X} \times R_{\Dol}^{\pair}(r, s)/R_{\Dol}^{\pair}(r, s)}(\log D \times R_{\Dol}^{\pair}(r, s))\] 
  is equivariant via $\alpha_{\Hom} \otimes 1$. Since $\alpha_{\bar{E}}$ and $\alpha_{\bar{F}}$ are isomorphisms of Higgs bundles, they commute with the Higgs fields:
  \[(\alpha_{\bar E} \otimes 1) \circ (\tilde{a}^*\theta_{\bar{E}}) = (\tilde{\pi}^*\theta_{\bar{E}}) \circ \alpha_{\bar E}, \tab (\alpha_{\bar F} \otimes 1) \circ (\tilde{a}^*\theta_{\bar{F}}) = (\tilde{\pi}^*\theta_{\bar{F}}) \circ \alpha_{\bar F}\]
  \tab We also have that the induced Higgs field on $\shHom(\bar{E}^{\univ}_{\Dol}, \bar{F}^{\univ}_{\Dol})$ is 
  \[\theta_{\Hom}(\phi) = \theta_{\bar{F}} \circ \phi - (\phi \otimes 1) \circ \theta_{\bar{E}}\]
  and, by plugging in, we have that $(\alpha_{\Hom} \otimes 1) \circ \tilde{a}^*\theta_{\Hom} = \tilde{\pi}^*\theta_{\Hom} \circ \alpha_{\Hom}$. Thus $\theta_{\Hom}$ is $G$-equivariant. \\
  \tab For the last statement, we can use base change 
  \[a^*K_{\Box} = a^* \Rm{R}(\Rm{pr}_1)_*C_{\Box} \simeq \Rm{R}(\Rm{pr}_{12})_* \tilde{a}^* C_{\Box} \simeq \Rm{R}(\Rm{pr}_{12})_*\tilde{\pi}^*C_{\Box} \simeq \pi^* \Rm{R}(\Rm{pr}_1)_*C_{\Box} \simeq \pi^* K_{\Box}\]
  and functoriality preserves the cocycle diagram. 
\end{proof}
From now on, consider the scaling $\GG_m$-action on $\AAA^1_t$. 
\begin{lemma}
  \label{lem:Gm-equiv-complex-A1-graded-free}
  Let $K^\bullet$ be a $\GG_m$-equivariant complex of $\sh{O}_{\AAA^1_t}$-modules. Suppose that $K^\bullet$ is perfect. Then $K^\bullet$ is equivariantly quasi-isomorphic to a bounded complex of finite rank graded free $\CC[t]$-modules 
  \[K^i \simeq \bigoplus_j \CC[[t]](w_{ij})\]
  with degree 0 differentials $\der^i$.
\end{lemma}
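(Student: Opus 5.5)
The plan is to translate everything into graded commutative algebra and then run the standard minimal-free-resolution argument in the graded category. A $\GG_m$-equivariant quasicoherent module on $\AAA^1_t = \Spec \CC[t]$ is the same thing as a $\ZZ$-graded $\CC[t]$-module, with $t$ of degree $1$. An equivariant complex is then a complex of graded modules whose differentials have degree $0$. So I will regard $K^\bullet$ as a complex $M^\bullet$ of graded $\CC[t]$-modules. Perfectness means $M^\bullet$ is quasi-isomorphic, non-equivariantly, to a bounded complex of finite free modules. In particular every cohomology module $H^i(M^\bullet)$ is finitely generated, and only finitely many of them are nonzero. Each $H^i$ is also a graded $\CC[t]$-module, since the differentials are homogeneous. (I read $\CC[[t]](w_{ij})$ in the statement as the graded shift $\CC[t](w_{ij})$.)

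Next I will build a graded free resolution. Because $\CC[t]$ is a graded PID, every finitely generated graded module has a graded presentation $0 \to \bigoplus \CC[t](a_k) \to \bigoplus \CC[t](b_l) \to H \to 0$ of length at most one. The construction proceeds by descending induction on the top nonvanishing degree, as in the usual "Cartan–Eilenberg / killing cohomology" construction of a K-projective replacement. I choose homogeneous cycles in the top degree that lift generators of $H^{n}$, which gives a degree-0 map from a graded free module. I then kill the relations by adding graded free terms one degree lower, mapping to homogeneous elements. Continuing downward produces a bounded-above complex $P^\bullet$ of finite rank graded free modules with an equivariant quasi-isomorphism $P^\bullet \to M^\bullet$. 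The lifts exist because the homogeneous components of $M^\bullet$ are stable under the differential.

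The resulting $P^\bullet$ must then be truncated to a bounded complex. Since $\CC[t]$ has global dimension $1$ and $M^\bullet$ has bounded cohomology, I take the top-down resolution and stop at degree $m-1$, where $m$ is the lowest degree with nonzero cohomology. The kernel $Z^{m-1}$ of $d: P^{m-1} \to P^m$, or more precisely the brutal truncation replaced by $\ker$, is a graded submodule of a finite free graded module. It is therefore graded free of finite rank, since graded submodules of graded free modules over a graded PID are graded free (equivalently, finitely generated graded projective over $\CC[t]$ means graded free). Replacing the lower part of $P^\bullet$ by $Z^{m-1}$ in the appropriate degree gives a bounded complex of finite rank graded free modules, still equivariantly quasi-isomorphic to $M^\bullet$. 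Since $P^\bullet$ is K-projective, the map to $M^\bullet$ is an honest equivariant chain map.

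The main obstacle is the hypothesis: the statement only gives perfectness non-equivariantly, so I must check that this still yields finite generation and boundedness of the graded cohomology. That is immediate, since cohomology does not see the grading. The remaining subtlety is that $M^\bullet$ itself may consist of non-finitely-generated or unbounded quasicoherent equivariant modules. That causes no problem, because the construction only maps from free graded modules into $M^\bullet$, and it uses only homogeneous lifts of cohomology classes. Writing each finite rank graded free $P^i$ as $\bigoplus_j \CC[t](w_{ij})$ then gives the stated form.
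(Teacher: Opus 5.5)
Your proposal is correct and follows essentially the paper's route: you identify $\GG_m$-equivariant quasicoherent sheaves on $\AAA^1_t$ with graded $\CC[t]$-modules, take a graded free resolution, and use that graded projective modules over $\CC[t]$ (equivalently, graded submodules of finite graded free modules) are graded free. The one addition is that you justify a step the paper only asserts, namely why a non-equivariant perfectness hypothesis gives a \emph{bounded} graded resolution: finitely generated, bounded cohomology lets you build a bounded-above graded free resolution, and the hereditary property of $\CC[t]$ lets you truncate it (your $Z^{m-1}$ sits in degree $m-2$, i.e.\ this is the good truncation $\tau_{\geq m-2}P^\bullet$).
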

\begin{proof}
  $\GG_m$-equivariant quasicoherent $\sh{O}_{\AAA^1_t}$-modules are just graded $\CC[t]$-modules. The category of graded $\CC[t]-$modules have enough projectives, and perfectness gives bounded graded projective resolution. Finally, $\CC[t]$ is graded-local, so by \cite[Proposition 1.5.15(d)]{bruns1998cohen}, graded projective $\CC[t]$-modules are free. 
\end{proof}
\begin{lemma}
  \label{lem:Gm-equiv-formal-arc-algebraization}
  Let $Y$ be a finite type $\CC$-scheme with a $\GG_m$-action. Let $y_0 \in Y$ be a fixed point which has an invariant affine neighborhood. Then we have a bijection
  \[\set*{\GG_m-\text{equivariant }(\AAA^1_t, 0) \to (Y, y_0)} \xrightarrow{\simeq} \set*{\GG_m-\text{equivariant }(\Rm{Spf}\ \CC[[t]], 0) \to (\widehat{Y}_{y_0}, y_0)}\]
\end{lemma}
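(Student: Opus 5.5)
The plan is to reduce to the affine case and then compare graded ring homomorphisms. Let $U = \Spec R$ be a $\GG_m$-invariant affine neighborhood of $y_0$. Any equivariant morphism $(\AAA^1_t, 0) \to (Y, y_0)$ lands in $U$. Indeed, the preimage of $U$ is a $\GG_m$-invariant open subset of $\AAA^1_t$ containing $0$. The only invariant opens of $\AAA^1_t$ are $\emptyset$, $\GG_m$ and $\AAA^1_t$, so this preimage must be all of $\AAA^1_t$. Likewise, the formal completion $\widehat{Y}_{y_0}$ equals $\widehat{U}_{y_0}$. So we may assume $Y = \Spec R$.

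The $\GG_m$-action is then a $\ZZ$-grading $R = \bigoplus_{n} R_n$, and $R$ is finitely generated. Since $y_0$ is fixed, its maximal ideal $\Ide{m}$ is homogeneous. I will use the convention that the scaling action on $\AAA^1_t$ gives $t$ degree $1$ (or $-1$, depending on conventions; the argument is symmetric).

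With these reductions, the two sides of the bijection are as follows.
\begin{itemize}
  \item An equivariant morphism $(\AAA^1_t, 0) \to (Y, y_0)$ is a graded $\CC$-algebra map $\varphi: R \to \CC[t]$ with $\varphi(\Ide{m}) \subseteq (t)$.
  \item An equivariant formal arc is a continuous local map $\hat{\varphi}: \widehat{R}_{\Ide{m}} \to \CC[[t]]$ compatible with the induced actions. Continuity together with $\hat{\varphi}(\Ide{m}) \subseteq (t)$ means $\hat{\varphi}$ is the same as a map $\psi: R \to \CC[[t]]$ sending $\Ide{m}$ into $(t)$.
\end{itemize}
The forward map is completion, i.e. composing $\varphi$ with the inclusion $\CC[t] \subset \CC[[t]]$. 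It is injective because $\CC[t] \to \CC[[t]]$ is injective.

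The main point is surjectivity: an equivariant $\psi: R \to \CC[[t]]$ automatically lands in $\CC[t]$ and is graded. I would make equivariance concrete at the level of points. For each $\lambda \in \CC^*$, equivariance says $\psi(\lambda\cdot r)(t) = \psi(r)(\lambda t)$. Apply this to a homogeneous $r \in R_n$, so that $\lambda \cdot r = \lambda^n r$, and write $\psi(r) = \sum_k c_k t^k$. Comparing coefficients gives $\lambda^n c_k = \lambda^k c_k$ for every $\lambda \in \CC^*$. Hence $c_k = 0$ unless $k = n$, and so $\psi(r) = c_n t^n$, which is $0$ if $n < 0$.

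This is the step I expect to need care. The action of $\GG_m$ on a formal scheme is best phrased functorially: equivariance should mean compatibility under all $A$-points of $\GG_m$ for artinian $A$, or equivalently compatibility of the coaction $\widehat{R} \to \widehat{R}\,\widehat{\otimes}\,\CC[\lambda^{\pm 1}]$ after completion. I would take the coaction version as the definition. Then the computation above holds coefficientwise, using $\CC$-points of $\GG_m$, which is enough because $\CC^*$ is Zariski dense.

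With that in hand, $\psi(R) \subseteq \CC[t]$ and $\psi$ is graded, so $\psi$ comes from a graded $\varphi$, and $\varphi$ sends $\Ide{m}$ into $(t)$ because $\psi$ does. Hence $\varphi$ defines an equivariant arc $\AAA^1_t \to U \subseteq Y$ with $0 \mapsto y_0$, and its completion at $0$ is the given formal arc. This proves surjectivity, and the two constructions are mutually inverse.

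Two small checks remain. First, the result is independent of the choice of $U$, since arcs land in any invariant affine neighborhood of $y_0$ and morphisms glue. Second, the grading sign convention should match the one used for the Rees arc $\alpha_E$; if it is reversed, one replaces $n$ by $-n$ throughout.
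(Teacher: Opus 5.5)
Your proposal is correct and follows the paper's argument. Both restrict to an invariant affine neighborhood $\Spec B$ and use the grading induced by the $\mathbb{G}_m$-action. Equivariance then forces each homogeneous $b \in B_m$ to map to a monomial in $\CC[[t]]_m$, so the formal arc restricted to $B$ already lands in $\CC[t]$. You also justify that any equivariant arc lands in the chosen invariant affine, using the invariant opens of $\AAA^1_t$; the paper leaves that step implicit.
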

\begin{remark}
  We will apply this lemma only to $Y = R_{\DR/\Dol}, R_{\DR/\Dol}^{\trip}$ with $y_0$ either a polystable bundle or a triple $(\bar{E}, \bar{F}, f)$ with $\bar{E}, \bar{F}$ polystable. In the first case, by GIT theory we know that the existence of a $\Rm{GL}_r$-invariant affine neighborhood is guaranteed. For the second case, notice that $R_{\DR/\Dol}^{\trip} \to R_{\DR/\Dol}^{\pair}$ is affine, so we can pull back the $\Rm{GL}_r \times \Rm{GL}_s$-invariant affine neighborhood around $(\bar{E}, \bar{F}) \in R_{\DR/\Dol}^{\pair}$.   
\end{remark}
\begin{proof}
  Let $U = \Spec B$ be a $\GG_m$-invariant affine neighborhood containing $y_0$, and $\Ide{m}_0 \subset B$ the maximal ideal corresponding to $y_0$. Let the $\GG_m$-action on $U$ be denoted by $\gamma: B \to B \otimes_{\CC} \CC[u, u^{-1}]$, which is equivalent to a grading $B = \oplus_{m \in \ZZ} B_m$ where if $\gamma(b) = \sum b_n \otimes u^n$ then $b_n \in B_n$. This actions completes to $\hat{\gamma}: \widehat{B}_{\Ide{m}_0} \to \widehat{B}_{\Ide{m}_0} \widehat{\otimes}_{\CC} \CC[u, u^{-1}]$. Let $\widehat{\alpha}: \widehat{B}_{\Ide{m}_0} \to \CC[[t]]$ be a $\hat{\gamma}$-equivariant arc, i.e.,
  \[\begin{tikzcd}
	{\widehat{B}_{\Ide{m}_0}} && {\widehat{B}_{\Ide{m}_0} \widehat{\otimes}_{\CC} \CC[u, u^{-1}]} \\
	\\
	{\CC[[t]]} && {\CC[[t]] \widehat{\otimes} \CC[u, u^{-1}]}
	\arrow["{\hat{\gamma}}", from=1-1, to=1-3]
	\arrow["{\widehat{\alpha}}"', from=1-1, to=3-1]
	\arrow["{\widehat{\alpha} \otimes \Rm{id}}", from=1-3, to=3-3]
	\arrow["{t \mapsto t\otimes u}", from=3-1, to=3-3]
  \end{tikzcd}\]
  \tab Denote $\iota: B \to \widehat{B}_{\Ide{m}_0}$. For $b \in B_m$, $\hat{\gamma}(\iota(b)) = \iota(b) \otimes u^m$, so $\widehat{\alpha}(\iota(b)) \in \CC[[t]]_m$. It follows that $\alpha \coloneq \widehat{\alpha} \circ \iota: B \to \CC[[t]]$ has image contained in $\CC[t]$, hence giving an algebraic arc. Uniqueness of $\alpha$ trivial. 
\end{proof}

\begin{lemma}
  \label{lem:Gm-equiv-formal-complex-algebraization}
  Let $K^\bullet, L^\bullet$ be perfect $\GG_m$-equivariant complexes on $\AAA^1_t$. We can assume that $K^\bullet, L^\bullet$ are both graded free by \cref{lem:Gm-equiv-complex-A1-graded-free}, and denote 
  \[\hat{K}^\bullet \coloneq K^\bullet \otimes_{\CC[t]} \CC[[t]], \tab \hat{L}^\bullet \coloneq L^\bullet \otimes_{\CC[t]} \CC[[t]]\]
  be the restriction to the formal neighborhood of $\AAA^1_t$ at 0. Then every continuous $\GG_m$-equivariant morphism $\hat{f}: \hat{K}^\bullet \to \hat{L}^\bullet$ algebraizes uniquely to a $\GG_m$-equivariant $f: K^\bullet \to L^\bullet$. Furthermore, if $\hat{f}$ is a quasi-isomorphism, so is $f$. In particular $f\vert_{t = 1}: K^\bullet\vert_{t = 1} \to L^\bullet\vert_{t = 1}$ is a quasi-isomorphism. 
\end{lemma}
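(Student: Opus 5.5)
The argument reduces to a degreewise statement about graded modules, then upgrades the algebraized map to a quasi-isomorphism by homogeneous localization. By \cref{lem:Gm-equiv-complex-A1-graded-free}, we may take $K^i = \bigoplus_j \CC[t](w_{ij})$ and $L^i = \bigoplus_k \CC[t](v_{ik})$. These are finite rank graded free modules with homogeneous differentials of degree $0$.

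\emph{Step 1: algebraizing $\hat{f}$.} In each degree $i$, $\hat{f}^i$ is determined by a finite matrix of entries $\hat{f}^i_{kj} \in \CC[[t]]$, namely the images of the homogeneous basis vectors $e_{ij}$ written in the basis $e'_{ik}$. Equivariance of $\hat{f}$ means it commutes with the completed coaction $t \mapsto t \otimes u$. Hence each entry satisfies $\hat{f}^i_{kj}(tu) = u^{w_{ij}-v_{ik}}\,\hat{f}^i_{kj}(t)$, up to the sign convention for twists. So each entry is a scalar multiple of a single monomial $t^{n}$, and is zero if the required exponent is negative. This is exactly the argument of \cref{lem:Gm-equiv-formal-arc-algebraization}.

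Consequently $\hat{f}^i$ has entries in $\CC[t]$ and defines a graded map $f^i: K^i \to L^i$ with $f^i \otimes \CC[[t]] = \hat{f}^i$. The identity $\der_L f = f \der_K$ holds after tensoring with $\CC[[t]]$. Since $\CC[t] \to \CC[[t]]$ is injective and the modules are free, it holds already over $\CC[t]$, so $f$ is a map of complexes. Uniqueness follows from the same injectivity. I would also note that continuity plus finite rank ensure $\hat{f}$ is determined by its values on the basis.

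\emph{Step 2: quasi-isomorphism.} This is the main point. Let $C = \Rm{Cone}(f)$; it is a bounded complex of finite graded free $\CC[t]$-modules, and $\hat{C} = C \otimes \CC[[t]]$ is acyclic. Since $\CC[[t]]$ is flat over $\CC[t]$, we have $H^i(C) \otimes_{\CC[t]} \CC[[t]] = H^i(\hat{C}) = 0$. Each $H^i(C)$ is a finitely generated graded $\CC[t]$-module.

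I then need to show such a module $M$ with $M \otimes \CC[[t]] = 0$ vanishes. Completion is faithfully flat over the local ring $\CC[t]_{(t)}$, so $M_{(t)} = 0$. For a graded module, the annihilator of a homogeneous element is a homogeneous ideal, hence of the form $(t^n)$ or $0$. If some element were nonzero, its annihilator would be contained in $(t)$, contradicting $M_{(t)} = 0$. Equivalently, a homogeneous element killed by some polynomial with nonzero constant term is killed by $1$, since its lowest-degree component is. Hence $H^i(C) = 0$ and $f$ is a quasi-isomorphism.

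\emph{Step 3: the fiber at $t = 1$.} The complexes $K, L$ are bounded and degreewise free, so the underived restriction computes $\Rm{L}\iota_1^*$. Since $\Rm{L}\iota_1^*$ preserves quasi-isomorphisms, $f\vert_{t=1}$ is a quasi-isomorphism.

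The one subtle point is Step 2: the homogeneity argument that equivariant modules over $\AAA^1$ are detected at the origin. Everything else is bookkeeping with the weights.
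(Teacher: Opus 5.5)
Your proposal is correct and follows essentially the paper's argument. You algebraize entrywise, using that equivariant maps between graded free modules have homogeneous entries and $\CC[t]_m = \CC[[t]]_m$. You then show the cone is acyclic via faithful flatness of $\CC[t]_{(t)} \to \CC[[t]]$ plus homogeneity; the only cosmetic difference is that you argue through annihilators of homogeneous elements, whereas the paper notes that the support of $H^j(C^\bullet)$ is a closed $\GG_m$-invariant subset of $\AAA^1_t$, hence contains $0$ if nonempty.
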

\begin{proof}
  Notice that a $\GG_m$-equivariant map is just a degree 0 map between graded free modules here. For two graded free summands $\CC[[t]](w)$ and $\CC[[t]](l)$, a degree 0 map $\CC[[t]](w) \to \CC[[t]](l)$ is equivalent to multiplication by a homogeneous element in $\CC[[t]]_{l - w}$. The same is true for $\CC[t]$, and notice that $\CC[t]_{m} = \CC[[t]]_{m}$ for all $m \in \ZZ$, so completion induces an isomorphisms of degree 0 parts
  \[\Hom_{\CC[t]}^\bullet(K^\bullet, L^\bullet)_0 \xrightarrow{\simeq} \Hom_{\CC[[t]], \Rm{cont}}^\bullet(\hat{K}^\bullet, \hat{L}^\bullet)_0\]
  and taking $H^0$ gives the algebraization statement. \\
  \tab Now suppose that $\hat{f}$ is a quasi-isomorphism. Put $C^\bullet \coloneq \Rm{Cone}(f)$, then $\hat{C}^\bullet = \Rm{Cone}(\hat{f})$ is acyclic. The support of $H^j(C^\bullet)$, which is a graded $\CC[t]$-module, is a closed $\GG_m$-invariant subset of $\AAA^1_t$, so it's either 0 or $\AAA^1$ or $H^j(C^\bullet) = 0$. Since $\CC[t]_{(t)} \to \CC[[t]]$ is faithfully flat, and $H^j(C^\bullet)_{(t)} \otimes_{\CC[t]_{(t)}} \CC[[t]] \simeq H^j(\hat{C}^\bullet) = 0$, we get that $H^j(C^\bullet)_{(t)} = 0$ for all $j$. Thus 0 is not in the support of $H^j(C^\bullet)$, hence $H^j(C^\bullet) = 0$ for all $j$. We get that $C^\bullet$ has to be acyclic; equivalently, $f$ is a quasi-isomorphism. 
\end{proof}

\subsection{Construction on objects}
Fix $x \in X = \bar{X} \setminus D$. We will first define the functor 
\[\Sf{SM}: \Sf{Conn}_{\Rm{nilp}}(\bar{X}, D) \to \Sf{Higgs}_{\nilp}^{0, \Rm{ss}}(\bar{X}, D)\]
extending $\Sf{SM}^{\Rm{ps}}$ on objects. Let $\bar{E} \in \Sf{Conn}_{\Rm{nilp}}(\bar{X}, D)$, pick a framing $\phi: \bar{E}_x \to \CC^r$ and a splitting of $\bar{E}_x$ adapted to the socle filtration $S_\bullet \bar{E}$. Let $\bar{E}^{\Rm{ss}} \coloneq \Rm{gr}_{S} \bar{E}$. We then get a Rees cocharacter $\gamma_{\bar E}: \GG_m \to \Rm{GL}_r$ which induces a $\GG_m$-action $\gamma_{\bar E}^{\DR}$ on $R_{\DR}(r)$; this $\GG_m$-action fixes $\bar{E}^{\Rm{ss}}$ hence induces an action on $\widehat{R}_{\DR, \bar{E}^{\Rm{ss}}}$. The same $\gamma_{\bar{E}}$ acts on $R_{\Dol}(r)$ and $\widehat{R}_{\Dol, \Sf{SM}^{\Rm{ps}}(\bar{E}^{\Rm{ss}})}$ by $\gamma_{\bar{E}}^{\Dol}$. We also have a $\GG_m$-equivariant arc 
\[\alpha_{\bar{E}}^{\DR}: \AAA^1_t \to R_{\DR}(r), \tab \alpha_{\bar{E}}^{\DR}(1) = (\bar{E}, \phi), \tab \alpha_{\bar E}^{\DR} = (\bar{E}^{\Rm{ss}}, \Rm{gr}_S \phi)\]
\tab Complete at $t = 0$ to get an equivariant formal arc $\widehat{\alpha}_{\bar E}^{\DR}: \Rm{Spf}\ \CC[[t]] \to \widehat{R}_{\DR, \bar{E}^{\Rm{ss}}}$, and then compose with the Rees equivariant isomorphism \cref{thm:Bett-DR-Dol-formal-isom},
\[\tau_{E}: \widehat{R}_{\DR, \bar{E}^{\Rm{ss}}} \xrightarrow{\simeq} \widehat{R}_{\Dol, \Sf{SM}^{\Rm{ps}}(\bar{E}^{\Rm{ss}})}\]
\tab By \cref{lem:Gm-equiv-formal-arc-algebraization}, we can algebraize $\tau_E \circ \widehat{\alpha}_{\bar{E}}^{\DR}$ to get an algebraic arc $\alpha_{\bar{E}}^{\Dol}: \AAA^1_t \to R_{\Dol}(r)$. Define $\Sf{SM}(\bar{E}) = \alpha_{\bar{E}}^{\Dol}(1)$ forgetting the framing. Pulling back the universal family along $\alpha_{\bar{E}}^{\Dol}$ gives a $\GG_m$-equivariant framed Higgs bundle on $\bar{X} \times \AAA^1$, which is equivalent to a Rees family associated to a filtration $F_{\bar{E}}$ by Higgs subbundles with $\Rm{gr}^{F_{\bar E}}\Sf{SM}(\bar{E}) = \alpha_{\bar{E}}^{\Dol}(0) = \Sf{SM}^{\Rm{ps}}(\bar{E}^{\Rm{ss}})$. By the following proposition, $\alpha_{\bar{E}}^{\Dol}(1) \in \Sf{Higgs}_{\nilp}^{0, \Rm{ss}}(\bar{X}, D)$. 
\begin{prop}
  $\Sf{SM}(\bar{E})$ has nilpotent residue. 
\end{prop}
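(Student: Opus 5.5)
The plan is to use that nilpotency of residues is a closed condition, cut out by the closed subscheme $R^{\nilp}_{\Dol}(r) \subset R_{\Dol}(r)$. It is also invariant under the change-of-framing $\GG_m$-action, since that action does not change the underlying Higgs bundle. So it suffices to show that the arc $\alpha_{\bar{E}}^{\Dol}: \AAA^1_t \to R_{\Dol}(r)$ factors through $R^{\nilp}_{\Dol}(r)$. Then $\alpha^{\Dol}_{\bar E}(1)$ has nilpotent residues, and by construction it is semistable of degree $0$.

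Since $\AAA^1_t$ is reduced and irreducible, it suffices to show that the ideal of $R^{\nilp}_{\Dol}(r)$ pulls back to zero in $\CC[t]$. We check this after completion at $t=0$, using the injectivity of $\CC[t] \to \CC[[t]]$. Thus it is enough to show that the formal arc $\tau_E \circ \widehat{\alpha}^{\DR}_{\bar E}$ lands in the completion of $R^{\nilp}_{\Dol}(r)$ at $\Sf{SM}^{\Rm{ps}}(\bar{E}^{\Rm{ss}})$.

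On the De Rham side, $\widehat{\alpha}^{\DR}_{\bar{E}}$ lands in $\widehat{R^{\nilp}_{\DR}(r)}_{\bar{E}^{\Rm{ss}}}$. Indeed, every member of the Rees family has nilpotent residues: for $t\ne 0$ it is $\bar E$ with a changed framing, and at $t=0$ the residue of $\Rm{gr}_S\bar E$ is the graded piece of a nilpotent endomorphism. Closedness then gives the factorization of the whole arc.

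The key step is to show that $\tau_E$ carries $\widehat{R^{\nilp}_{\DR}}$ onto $\widehat{R^{\nilp}_{\Dol}}$. For this I would use \cref{thm:Bett-DR-Dol-formal-isom}(1). There $\widehat{R}_{\DR,\bar E^{\Rm{ss}}}$ is identified with the fiber at $\lambda=1$ of $\Rm{Spf}_{\PP^1}\bm{\Lambda}_V$, and $\bm{\Lambda}_V$ parametrizes Deligne-type extensions of free AVMTS with unipotent local monodromy, landing in the nilpotent Deligne--Hitchin germ $R^{\nilp,\Rm{loc}}_{\Dh}(r)$. More precisely, $\bm{\Lambda}_V$ should be thought of as the completion at $V$ of the unipotent locus $R_B^{\unip}$, since the construction of $R^{\nilp,\Rm{loc}}_\Dh$ only involves unipotent monodromy. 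Two things then follow. First, for every artinian $A$-point of $\Rm{Spf}\,\bm{\Lambda}_V$, the fibers over $\lambda=0$ and $\lambda=1$ of the extended family have nilpotent residues. At $\lambda=1$ this is the Deligne canonical extension. At $\lambda=0$ it is \cref{lem:boundary-lattices}: the $V_{-1}$-lattice has residue acting nilpotently on $\Rm{gr}^V_{-1}$, because the KMS spectrum is $(0,0)$. Second, $\tau_E$ is defined by the trivialization $\varphi_{\bm{\Lambda}_V}$ on $\AAA^1_\lambda$, which identifies the coefficient rings at $0$ and $1$. Therefore a formal point in the nilpotent locus at $\lambda=1$ corresponds to the same $\Lambda$-point, whose $\lambda=0$ fiber is again nilpotent.

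The main obstacle is this identification of the nilpotent loci. In particular, one needs the image of $\Rm{Spf}\,\bm{\Lambda}_V|_{\lambda=c}$ to lie in the nilpotent locus for both $c=0$ and $c=1$. If $\bm{\Lambda}_V$ is really the completion of all of $R_B$ rather than of $R_B^{\unip}$, I would first restrict to the closed formal subscheme cut out by unipotency. This subscheme is an MTS-ideal, because it is defined by the characteristic polynomial of the MTS-morphism $T_p$. I would then argue as above with this quotient pro-$\sh{T}(0)$-MTS-algebra, using functoriality of \cref{lem:affinesplitting} for quotients. Granting this, the formal arc lands in the nilpotent locus, hence so does its algebraization, and $\Sf{SM}(\bar{E})$ has nilpotent residues.
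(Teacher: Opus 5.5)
There is a genuine gap at your key step: that $\tau_E$ carries the completed nilpotent locus of $R_{\DR}(r)$ into that of $R_{\Dol}(r)$. Your earlier reductions are fine, but this step is not established, and the reasons you give for it do not hold.

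\begin{itemize}
\item By \cref{thm:Betti-MTS-algebra}, $\abs{\bm{\Lambda}_V}$ is the completion of all of $R_B(X,x,r)$, not of $R_B^{\unip}$. Accordingly, \cref{thm:Bett-DR-Dol-formal-isom} identifies its fibers with the full completions $\widehat{R_{\DR}(r)}_{s_1}$ and $\widehat{R_{\Dol}(r)}_{s_0}$. These contain deformations whose residue eigenvalues move infinitesimally.
\item Your fallback needs the unipotent ideal to be an MTS-ideal, and you justify this by calling $T_p$ an MTS-morphism. But the local monodromy of a VMTS is not in general a morphism of the twistor structure on the fiber at $x$. After rescaling by $\lambda^{-1}$, the $\lambda$-connections give flat connections whose monodromy depends on $\lambda$ and degenerates as $\lambda\to 0,\infty$. \Cref{thm:Betti-MTS-algebra} only makes the universal framing an MTS-morphism.
\item Even granting such an ideal, you would still need its $\lambda=0$ fiber to be the ideal cut out by $\det(u-\Rm{Res}_p\theta)=u^r$. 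The KMS argument via \cref{lem:boundary-lattices} only shows that residues act nilpotently over artinian coefficients. That holds for \emph{every} deformation of a nilpotent point, because $\Ide{m}_A$ is nilpotent; for example, in rank one a residue $\epsilon$ over $\CC[\epsilon]/(\epsilon^2)$ is nilpotent but has characteristic polynomial $u-\epsilon\neq u$. So the KMS argument cannot detect this ideal. It also only applies to points coming from $\sh{A}$-AVMTS.
\end{itemize}
Since you close with ``granting this'', the proof is incomplete exactly at its main step.

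The detour is unnecessary: $\GG_m$-equivariance of the algebraized arc already settles the question, and only the single point $t=1$ matters. The paper argues as follows. Pulling back the universal family along $\alpha^{\Dol}_{\bar E}$ gives the Rees family of a filtration $F_{\bar E}$ of $\Sf{SM}(\bar E)$ by Higgs subbundles, with $\Rm{gr}^{F_{\bar E}}\Sf{SM}(\bar E)=\alpha^{\Dol}_{\bar E}(0)=\Sf{SM}^{\Rm{ps}}(\bar E^{\Rm{ss}})$, which has nilpotent residues by \cref{thm:mochizuki-correspondence}. Since $\theta$ preserves $F_{\bar E}$, each residue is block upper triangular in an adapted basis with nilpotent diagonal blocks, hence nilpotent.

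The same conclusion is available in your own framework. The coefficients of $\det(u-\Rm{Res}_p\theta)$ are $\Rm{GL}_r$-invariant regular functions on $R_{\Dol}(r)$. Their pullbacks along the equivariant arc $\alpha^{\Dol}_{\bar E}$ are therefore $\GG_m$-invariant elements of $\CC[t]$, i.e.\ constants, equal to their values at $t=0$. What makes this work is the invariance of the \emph{defining functions}. Closedness and invariance of the locus $R^{\nilp}_{\Dol}(r)$ alone would not suffice, since a closed invariant set can contain the limit of an orbit without containing the orbit.
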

\begin{proof}
  Since $\Rm{gr}^{F_{\bar E}}\Sf{SM}(\bar{E}) = \Sf{SM}^{\Rm{ps}}(\bar{E}^{\Rm{ss}})$, every graded summand of $F_{\bar{E}}$ has nilpotent residues. $F_{\bar{E}}$ is a filtration by sub Higgs bundles, i.e., $\theta(F_i) \subset F_i \otimes \Omega^1_{\bar{X}}(\log D)$, so any residue $N_p$ around $p \in D$ must also preserve $F_{\bar{E}}$. Pick a basis at $p$ adapted to $F_{\bar{E}}$, then $N_p$ is a block upper triangular matrix, with diagonal blocks nilpotent, so $N_p$ must be nilpotent as well. 
\end{proof}

\begin{prop}
  \label{lem:independence-framing-Rees-cochar}
  With the framing fixed, two different splittings of the socle filtration give the same framed bundle $\alpha_{\bar{E}}^{\Dol}(1)$. Changing the framing induces a change-of-framing action on $\alpha_{\bar{E}}^{\Dol}(1)$. After forgetting the framing, the resulting bundle is well defined up to a canonical isomorphism. 
\end{prop}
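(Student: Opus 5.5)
The plan is to exploit the equivariance of $\tau_E$ under the full stabilizer, not just under the Rees cocharacter. The key input is \cref{thm:Bett-DR-Dol-formal-isom}(3): any algebraic subgroup $H \subset \Rm{GL}_r$ fixing $[\bar{E}^{\Rm{ss}}]$ acts compatibly on $\widehat{R}_{\DR, \bar{E}^{\Rm{ss}}}$ and $\widehat{R}_{\Dol, \Sf{SM}^{\Rm{ps}}(\bar{E}^{\Rm{ss}})}$, and $\tau_E$ intertwines the two actions. Combined with the uniqueness in \cref{lem:Gm-equiv-formal-arc-algebraization}, this should transport any relation between two Rees arcs on the De Rham side to the same relation between their algebraized Dolbeault arcs.

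\textbf{Independence of the splitting.} Fix the framing $\phi$ and two splittings adapted to $S_\bullet \bar{E}$, giving cocharacters $\gamma, \gamma'$. Any two adapted splittings differ by an element $u$ of the unipotent radical of the parabolic $P \subset \Rm{GL}_r$ stabilizing the flag $\phi(S_\bullet \bar{E}_x)$, so $\gamma' = u\gamma u^{-1}$. The associated arcs are $\alpha(t) = \gamma(t)\cdot(\bar{E},\phi)$ and $\alpha'(t) = \gamma'(t)\cdot(\bar{E},\phi)$.

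Set $u_t \coloneq \gamma'(t) u^{-1}\gamma'(t)^{-1}$. Since $u$ is block unipotent and the weights of $\gamma'$ are $t^{-i}$ on $G_i$, the family $u_t$ extends algebraically over $t = 0$ with $u_0 = 1$ and $u_1 = u^{-1}$. This gives $\alpha'(t) = \gamma'(t)u\gamma(t)\cdot(\bar{E},\phi)\ldots$; I would instead derive the cleaner relation $\alpha' = u\cdot(\text{arc of }\gamma\text{ at } u^{-1}\text{-moved point})$. Concretely, $\alpha'(t) = u\gamma(t)u^{-1}\cdot(\bar{E},\phi)$. Moreover $u^{-1}\cdot(\bar{E},\phi) \simeq (\bar{E}, u^{-1}\phi)$ is isomorphic to $(\bar{E},\phi)$ via the automorphism of $\bar{E}$ … but $u$ need not come from an automorphism of $\bar{E}$.

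The robust approach is therefore to compare arcs in $R_{\DR}$ directly. Take the family $\beta(s,t) \coloneq \gamma(t)\,g_s\cdot(\bar{E},\phi)$ with $g_s \in U_P$ interpolating between $1$ and $u^{-1}$; $\beta$ is a $\GG_m$-equivariant family in $t$ over $s$, and each member has limit $\Rm{gr}$ at $t = 0$. Both $\alpha$ and $\alpha'$ agree after conjugating by $u \in P$. Since $u$ fixes $[\bar{E}^{\Rm{ss}}]$ up to the unipotent part, which becomes trivial on $\Rm{gr}$, we get $u \cdot \bar{E}^{\Rm{ss}} = \bar{E}^{\Rm{ss}}$ with the same framing $\Rm{gr}\phi$. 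Hence $u$ lies in the stabilizer $H$, and $\alpha' = u\cdot\alpha'' $ where $\alpha''$ is $\gamma$-equivariant. By equivariance of $\tau_E$ under $H$ together with uniqueness of algebraization, $\alpha'^{\Dol} = u\cdot\alpha''^{\Dol}$. Evaluating at $t = 1$, where $u_1$ cancels, gives the same framed bundle.

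\textbf{Change of framing.} For $g \in \Rm{GL}_r$, replacing $\phi$ by $g\phi$ conjugates the cocharacter to $g\gamma g^{-1}$ and replaces every De Rham arc by $g\cdot\alpha$. The element $g$ carries $\widehat{R}_{\DR,\bar{E}^{\Rm{ss}}}$ to $\widehat{R}_{\DR,g\bar{E}^{\Rm{ss}}}$. The construction of $\tau$ in \cref{thm:Bett-DR-Dol-formal-isom} is natural under change of framing: the same argument as in part (3), applied to the isomorphism $\bm{\Lambda}_V \simeq \bm{\Lambda}_{gV}$ rather than to an automorphism, gives $\tau_{gE}\circ g = g\circ\tau_E$. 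Uniqueness of algebraization then yields $\alpha^{\Dol}_{g} = g\cdot\alpha^{\Dol}$, so the Dolbeault output at $t = 1$ changes only by the change-of-framing action. Forgetting the framing, the two results are canonically isomorphic via the tautological isomorphism of universal bundles under the $\Rm{GL}_r$-action.

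\textbf{Main obstacle.} The hardest step is the bookkeeping showing that the two Rees arcs for different splittings differ by the action of an element of the stabilizer $H$ of $\bar{E}^{\Rm{ss}}$. It must be an honest element of $H$, not a $t$-dependent family, because \cref{thm:Bett-DR-Dol-formal-isom}(3) only provides equivariance for group elements fixing the base point. If the $t$-dependent family $u_t$ genuinely appears, I would first try to check that $u_t \to 1$ at $t = 0$ forces it to define an equivariant automorphism of the arc. Failing that, I would fall back to the more direct observation that both arcs, restricted to $t = 1$, give framed bundles isomorphic through $\Rm{id}_{\bar{E}}$.
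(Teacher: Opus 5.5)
Your change-of-framing paragraph is fine and agrees with the paper. The splitting-independence argument, however, has a genuine gap: the claim that $u$ lies in the stabilizer $H$ of the limit point is false in general.

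The limit of $\gamma(t)\cdot(\bar E,\phi)$ is the framed point $(\bar E^{\Rm{ss}},\phi\circ s)$. Here the framing of $\bar E^{\Rm{ss}}_x=\bigoplus_j \Rm{gr}^S_j\bar E_x$ is transported through the splitting $s$; there is no splitting-independent ``$\Rm{gr}\,\phi$''. The other limit is $(\bar E^{\Rm{ss}},\phi\circ s')=u\cdot(\bar E^{\Rm{ss}},\phi\circ s)$. The element $u$ fixes this point only if $s^{-1}\circ s'$ is the fiber at $x$ of an automorphism of $\bar E^{\Rm{ss}}$. Since $s^{-1}\circ s'$ is a block-unipotent endomorphism of $\bigoplus_j\Rm{gr}^S_j\bar E_x$, this requires its off-diagonal blocks to come from flat maps $\Rm{gr}^S_j\bar E\to\Rm{gr}^S_i\bar E$. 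That $u$ induces the identity on the graded pieces of the flag is irrelevant. For example, let $\bar E$ be a non-split extension of $\bar L_2$ by $\bar L_1$, with $\bar L_1\not\simeq\bar L_2$ of rank one. Then $\Rm{Aut}(\bar E^{\Rm{ss}})$ is a torus, and $u\notin H$ whenever $s\neq s'$.

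So the two formal arcs live on completions at two \emph{different} points of $R_\DR(r)$, and the two $\tau$'s have different sources. \cref{thm:Bett-DR-Dol-formal-isom}(3) therefore says nothing here. Your fallback fails for the same reason. The de Rham arcs agree at $t=1$ trivially, but the Dolbeault arcs only exist after algebraizing formal data at $t=0$, so they must be compared on the whole formal neighbourhood.

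The missing idea is to keep the $t$-dependent family rather than try to eliminate it. Set $g(t)=\gamma'(t)\gamma(t)^{-1}=u\,\gamma(t)u^{-1}\gamma(t)^{-1}$; this is your $u_t u$.
\begin{itemize}
\item Because $u$ is block-unipotent for the flag, $g$ is a regular map $\AAA^1_t\to\Rm{GL}_r$ with $g(0)=u$ and $g(1)=\Rm{id}$.
\item It satisfies $\alpha'(t)=g(t)\cdot\alpha(t)$ and the twisted equivariance $g(t't)\gamma(t')=\gamma'(t')g(t)$.
\item Write $\tau_s,\tau_{s'}$ for the formal isomorphisms at $(\bar E^{\Rm{ss}},\phi\circ s)$ and $(\bar E^{\Rm{ss}},\phi\circ s')$. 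What you need from them is naturality under $\Rm{GL}_r(A)$-valued changes of framing that \emph{move} the base point: $\tau_{(V,h_0\psi)}(h\cdot\xi)=h\cdot\tau_{(V,\psi)}(\xi)$ for $h\in\Rm{GL}_r(A)$ with reduction $h_0$.
\item This naturality comes from the diagrams in the proof of \cref{thm:Bett-DR-Dol-formal-isom}(3), run with the isomorphism $\bm{\Lambda}_{(V,\psi)}\simeq\bm{\Lambda}_{(V,h_0\psi)}$. It is the same as your change-of-framing paragraph, but with $A$-valued $h$.
\item Applying it to $h=g \bmod t^{n+1}$ for all $n$ gives $\tau_{s'}\circ\widehat\alpha_{s'}=g\cdot(\tau_s\circ\widehat\alpha_s)$.
\item By the twisted equivariance, $g\cdot\alpha^{\Dol}_s$ is a $\gamma'$-equivariant algebraic arc with this completion. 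Uniqueness in \cref{lem:Gm-equiv-formal-arc-algebraization} then forces $\alpha^{\Dol}_{s'}=g\cdot\alpha^{\Dol}_s$.
\item Evaluating at $t=1$, where $g(1)=\Rm{id}$, gives the claim.
\end{itemize}
This is exactly the paper's argument; your $g$ is its $g_\upsilon$.
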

\begin{proof}
  Fix a framing $\phi: \bar{E}_x \to \CC^r$. Let $s, s': \bigoplus_{j} \Rm{gr}^S_j \bar{E}_x \xrightarrow{\simeq} \bar{E}_x$ be two different framings, and write $\bar{E}_x = s(\bigoplus_{j} \Rm{gr}^S_j \bar{E}_x) \eqcolon \bigoplus_j G_j$. Put $\upsilon \coloneq s' \circ s^{-1}: \bar{E}_x \to \bar{E}_x$. With respect to a basis adapted to $\bar{E}_x = \bigoplus_j G_j$, write $\upsilon$ in block form $(\upsilon_{i,j})$ where $\upsilon_{i, j}: G_j \to G_i$. Notice that for $w \in G_j$, $s^{-1}(w) \in \Rm{gr}_j^S \bar{E}_x$, thus $s'(w) \in S_j\bar{E}_x$, and $s'(w)\ \Rm{mod}\ S_{j - 1}\bar{E}_x = w$ hence
  \[\upsilon_{i, j} = 0 \tab \text{for }i > j, \tab u_{i, i} = \Rm{id}_{G_i}\]
  \tab Recall that for the splitting $s$, the Rees cocharacter $\gamma_s: \GG_m \to \Rm{GL}(\bar{E}_x)$ is $\gamma_s(t)\vert_{G_j} = t^{-j}\cdot \Rm{id}_{G_j}$. Define $\varphi_{\upsilon}: \GG_m \to \Rm{GL}(\bar{E}_x)$ by $\varphi_{\upsilon}(t) \coloneq \gamma_s(t) \circ \upsilon \circ \gamma_s(t)^{-1}$. In a basis adapted to $\bar{E}_x = \bigoplus_j G_j$, we have the block form $(\varphi_{\upsilon}(t))_{i, j} = t^{j - i} \upsilon_{i, j}$. All exponents are nonnegative, $\varphi_{\upsilon}(0) = \Rm{id}$, $\varphi_{\upsilon}(t) - \Rm{id}$ is nilpotent, so by expanding $(\Rm{id} + (\varphi_{\upsilon}(t) - \Rm{id}))^{-1}$ we get a polynomial inverse $\varphi_{\upsilon}(t)^{-1}$. Define $g_{\upsilon}: \GG_m \to \Rm{GL}(\bar{E}_x)$ by $g_{\upsilon}(t) \coloneq \upsilon \circ \varphi_{\upsilon}(t)^{-1}$. Since $\gamma_{s'} = \upsilon \circ \gamma_s \circ \upsilon^{-1}$, we have 
  \[g_{\upsilon}(0) = \upsilon, \tab g_{\upsilon}(1) = \Rm{id}, \tab g_{\upsilon}(t' \cdot t) \gamma_s(t') = \gamma_{s'}(t') g_{\upsilon}(t),
  \tab \alpha^{\DR}_{\bar{E}, s'}(t) = g_{\upsilon}(t) \cdot \alpha_{\bar{E}, s}^{\DR}(t)\]
  \tab For $A_n = \CC[t]/(t^{n + 1})$, the change-of-framing of the $A_n$-point is $g_{\upsilon}(t)\ \Rm{mod}\ t^{n + 1}$. Functoriality of \eqref{eq:formal-DR-Dol-immersion} (follows from the same diagrams in the proof of part (3)) gives that 
  \[\tau_{(E^{\Rm{ss}}, \phi \circ s')} \circ \widehat{\alpha}_{\bar{E}, s'}^{\DR}(t) = g_{\upsilon}(t) \cdot (\tau_{(E^{\Rm{ss}}, \phi \circ s)} \circ \widehat{\alpha}_{\bar{E}, s}^{\DR}(t))\] 
  which are both $\gamma_{s'}$-equivariant. By uniqueness of algebraization,
  \[\alpha_{\bar{E}, s'}^{\Dol}(t) = g_{\upsilon}(t) \cdot\alpha_{\bar{E}, s}^{\Dol}(t)\]
  and since $g_{\upsilon}(1) = 1$ we get the first statement. The change of framing is a matrix $T$; it is trivial to check that in each step of taking the Rees arc, formal isomorphism $\tau_{E^{\Rm{ss}}}$, and algebraization, we are just conjugating by this fixed matrix. Hence $\alpha_{(\bar{E}, T \circ \phi)}^{\Dol}(t) = T \cdot \alpha_{(\bar{E}, \phi)}^{\Dol}(t)$. Forgetting the framing gives a canonical isomorphism (coming from that of framed bundles) of the underlying bundles at $t = 1$. 
\end{proof}
\subsection{Construction on morphisms}
Now consider $\bar{E}, \bar{F} \in \Sf{Conn}_{\nilp}(\bar{X}, D)$. We have $\GG_m$-equivariant arc of pairs $\alpha_{\bar{E}, \bar{F}}^{\DR} = (\alpha_{\bar E}^{\DR}, \alpha_{\bar F}^{\DR})$ with a diagram  
\[\begin{tikzcd}
	{\Rm{Spf}\ \CC[[t]]} && {R_{\DR, (\bar{E}^{\Rm{ss}}, \bar{F}^{\Rm{ss}})}^{\pair}} \\
	\\
	&& {R_{\Dol, \Sf{SM}^{\Rm{ps}}(\bar{E}^{\Rm{ss}}, \bar{F}^{\Rm{ss}})}^{\pair}}
	\arrow["{\widehat{\alpha}_{\bar{E}, \bar{F}}^{\DR}}", from=1-1, to=1-3]
	\arrow["{\widehat{\alpha}_{\bar{E}, \bar{F}}^{\Dol}}"', from=1-1, to=3-3]
	\arrow["{\tau_{\bar{E}^{\Rm{ss}}, \bar{F}^{\Rm{ss}}}}", from=1-3, to=3-3]
\end{tikzcd}\]
where we can then algebraize $\widehat{\alpha}_{\bar{E}, \bar{F}}^{\Dol}$ to get $\alpha_{\bar{E}, \bar{F}}^{\Dol}: \AAA^1_t \to R^{\pair}_{\Dol}(r, s)$. Let $K^{\univ}_{\Box}$ be the universal complex on $R^{\pair}_{\Dol}(r, s)$, and define 
\[P^{\Box}_{\bar{E}, \bar{F}} \coloneq \Rm{L}(\alpha^{\Box}_{\bar{E}, \bar{F}})^* K_{\Box}^{\univ}\]
then base change gives, for $\iota_t$ the restriction on $\AAA^1_t$,
\begin{align*}
  \Rm{L}\iota_1^* P^{\DR}_{\bar{E}, \bar{F}} &= \Rm{R}\Gamma(\bar{X}, C_{\DR}(\bar{E}, \bar{F}))\\
  \Rm{L}\iota_1^* P^{\Dol}_{\bar{E}, \bar{F}} &= \Rm{R}\Gamma(\bar{X}, C_{\Dol}(\Sf{SM}(\bar{E}), \Sf{SM}(\bar{F})))
\end{align*}
\tab For $A_n \coloneq \CC[t]/(t^{n + 1})$, pull back the quasi-isomorphism \eqref{eq:nth-level-DR-Dol-cohomology-comparison} along $\widehat{\alpha}^{\DR}_{\bar{E}, \bar{F}} \vert_{A_n}$, and compatibility with $A_{n + 1} \to A_n$ then gives a quasi-isomorphism
\[\widehat{\Phi}^{\alpha}_{\bar{E}, \bar{F}}: \widehat{P}^{\DR}_{\bar{E}, \bar{F}} \xrightarrow{\simeq} \widehat{P}^{\Dol}_{\bar{E}, \bar{F}}\]
which is equivariant, by \cref{lem:equivariant-pullback}, with respect to $\gamma^{\DR}, \gamma^{\Dol}$. By \cref{lem:Gm-equiv-formal-complex-algebraization}, we get an equivariant quasi-isomorphism 
\[\Phi^{\alpha}_{\bar{E}, \bar{F}}: P^{\DR}_{\bar{E}, \bar{F}} \xrightarrow{\simeq} P^{\Dol}_{\bar{E}, \bar{F}}\]
and restricting to $t = 1$ we get 
\[\Phi_{\bar{E}, \bar{F}}^{\Sf{SM}}: \Rm{R}\Gamma(\bar{X}, C_{\DR}(\bar{E}, \bar{F})) \xrightarrow{\simeq} \Rm{R}\Gamma(\bar{X}, C_{\Dol}(\Sf{SM}(\bar{E}), \Sf{SM}(\bar{F}))) \tag{5.1} \label{eq:quasi-isom-pulled-back-A1}\]
\tab On the other hand, for $f: \bar{E} \to \bar{F}$, functoriality of the socle filtration gives $f_t \coloneq \Rm{Rees}_S(f)$ (which is a morphism over $\AAA^1_t$), and an equivariant arc of triples 
\[\alpha_f^{\DR}: \AAA^1_t \to R^{\trip}_{\DR}(r, s), \tab \alpha_f^{\DR}(1) = (\bar{E}, \bar{F}, f), \tab \alpha_f^{\DR}(0) = (\bar{E}^{\Rm{ss}}, \bar{F}^{\Rm{ss}}, f^{\Rm{ss}})\]
\tab Then by \cref{lem:Rees-equivariance-formal-triples} and \cref{lem:Gm-equiv-formal-arc-algebraization}, we get $\alpha_f^{\Dol}: \AAA^1_t \to R^{\trip}_{\Dol}(r, s)$ by algebraizing $\Psi_{\bar{E}^{\Rm{ss}}, \bar{F}^{\Rm{ss}}, f^{\Rm{ss}}} \circ \widehat{\alpha}_f^{\DR}$. The pair of bundles in $\alpha_f^{\Dol}(t)$ is exactly that of $\alpha_{\bar{E}, \bar{F}}^{\Dol}(t)$. For each $t \in \AAA^1_t$, let $s^{\Dol}_f(t)$ be the morphism of $\alpha_f^{\Dol}(t)$. 

\begin{lemma}
  \label{lem:morphism-formal-transport-and-equiv-algebraization}
  $s_f^{\Dol}(1) = H^0(\Phi^{\Sf{SM}}_{\bar{E}, \bar{F}})(f)$.
\end{lemma}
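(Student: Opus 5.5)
The idea is to compare two $\GG_m$-equivariant global sections of $H^0(P^{\Dol}_{\bar E,\bar F})$ over $\AAA^1_t$ and show they agree. There are two natural candidates.

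\emph{First candidate.} The arc $\alpha_f^{\Dol}$ lies over $\alpha^{\Dol}_{\bar E,\bar F}$. By \cref{lem:relative-hom-scheme-lemma} applied to $R^{\trip}_{\Dol}=M^0(K^{\univ}_{\Dol})\to R^{\pair}_{\Dol}$, it corresponds to a section
\[s^{\Dol}_f\in \HH^0(\AAA^1_t,\Rm{L}(\alpha^{\Dol}_{\bar E,\bar F})^*K^{\univ}_{\Dol})=H^0(P^{\Dol}_{\bar E,\bar F}),\]
whose fiber at $t$ is the morphism of $\alpha_f^{\Dol}(t)$.

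\emph{Second candidate.} Likewise, $\alpha_f^{\DR}$ gives $f_t=\Rm{Rees}_S(f)\in H^0(P^{\DR}_{\bar E,\bar F})$, and we form $H^0(\Phi^\alpha_{\bar E,\bar F})(f_t)\in H^0(P^{\Dol}_{\bar E,\bar F})$.

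Since $P^{\Box}$ has no negative cohomology, $H^0$ of $P^{\Box}$ base changes correctly for sections restricted to $t=1$: the restriction map $H^0(P)\to H^0(\Rm{L}\iota_1^*P)$ is defined and compatible with $\Phi^\alpha$. Its value at $1$ is $H^0(\Phi^{\Sf{SM}}_{\bar E,\bar F})(f)$. So it suffices to prove $s^{\Dol}_f=H^0(\Phi^\alpha_{\bar E,\bar F})(f_t)$ as sections over $\AAA^1_t$.

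\emph{Reduction to the formal neighborhood.} Both sides are $\GG_m$-equivariant. The left side is equivariant because $\alpha_f^{\Dol}$ is, and the universal section is equivariant by \cref{lem:relative-hom-and-restriction} together with the equivariance of $K^{\univ}$. The right side is equivariant because $f_t$ and $\Phi^\alpha$ are. Replacing $P^{\Dol}$ by a graded free model (\cref{lem:Gm-equiv-complex-A1-graded-free}), equivariant sections of $H^0$ are degree-$0$ cocycles modulo degree-$0$ coboundaries. As in the proof of \cref{lem:Gm-equiv-formal-complex-algebraization}, the comparison with $\CC[[t]]$ is injective on these: degree-$0$ parts of $\CC[t]$ and $\CC[[t]]$ agree, and $H^0$ is an equivariant submodule, so completion is faithfully flat at $0$ and support is $\GG_m$-stable. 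Hence it is enough to check the equality after completing at $t=0$, and by passing to the limit, on each $A_n=\CC[t]/(t^{n+1})$.

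\emph{The $A_n$-level computation.} By construction, $\widehat{\alpha}^{\Dol}_f=\Psi_{\bar E^{\Rm{ss}},\bar F^{\Rm{ss}},f^{\Rm{ss}}}\circ\widehat{\alpha}^{\DR}_f$. By the definition of $\Psi$ in the proof of \cref{thm:formal-isom-triples-and-composition}, on the $A_n$-point $(\bar E_{A_n},\bar F_{A_n},f_t\bmod t^{n+1})$ it yields the pair $\tau_{E^{\Rm{ss}},F^{\Rm{ss}}}(\bar E_{A_n},\bar F_{A_n})$ together with the morphism $H^0(\Phi_{A_n})(f_t\bmod t^{n+1})$. Here $\Phi_{A_n}$ is the pullback of \eqref{eq:nth-level-DR-Dol-cohomology-comparison} along $\widehat\alpha^{\DR}_{\bar E,\bar F}|_{A_n}$, which is precisely $\widehat\Phi^\alpha_{\bar E,\bar F}\bmod t^{n+1}$. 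The universal-section compatibility of \cref{lem:relative-hom-and-restriction} identifies the morphism component of $\widehat\alpha^{\Dol}_f|_{A_n}$ with $s^{\Dol}_f\bmod t^{n+1}$. Finally, the algebraization in \cref{lem:Gm-equiv-formal-complex-algebraization} restricts back to $\widehat\Phi^\alpha$. Together these give the equality mod $t^{n+1}$ for all $n$.

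\emph{Main obstacle.} I expect the delicate point to be this bookkeeping: $H^0$ of the pulled-back complexes must commute with the truncations $A_{n+1}\to A_n$ and with the identifications made via $\tau$. The remark after \eqref{eq:nth-level-DR-Dol-cohomology-comparison} warns that $H^0$ does not commute with arbitrary base change. I would handle this by working throughout with the actual cocycle representatives on graded free models, rather than with cohomology groups. Sections of $M^0$ are exactly $\HH^0$ of the derived pullback, and that is computed at chain level.
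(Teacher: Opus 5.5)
Your proposal is correct and follows essentially the same route as the paper. The paper also transports $\Rm{Rees}_S f$ through $\Phi^{\alpha}_{\bar E,\bar F}$ to obtain a section $s_f^M$. It observes that the completion of $s_f^M$ agrees with $\widehat{s}_f^{\Dol}$, because $\Psi_{\bar E^{\Rm{ss}},\bar F^{\Rm{ss}},f^{\Rm{ss}}}$ is by construction $M^0$ of the universal comparison $\Phi^{\univ}$, whose pullback along the arc is $\widehat{\Phi}^{\alpha}_{\bar E,\bar F}$. It then concludes by uniqueness of equivariant algebraization and evaluation at $t=1$.

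The only difference is phrasing. You work with weight-zero classes in $H^0(P^{\Dol}_{\bar E,\bar F})$ and prove injectivity of completion on them by hand. The paper instead speaks of sections of $M^0_{\AAA^1_t}(P^{\Dol}_{\bar E,\bar F})$, which \cref{lem:relative-hom-scheme-lemma} identifies with your classes. That formulation also makes the passage from truncations mod $t^{n+1}$ to the completion automatic.
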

\begin{proof}
  Notice that over $\AAA^1_t$, $\Rm{Rees}_S\bar{E} = (\alpha_{\bar{E}}^{\DR})^*\bar{E}^{\univ}$ (same for $\bar{F}$) so by base change for logarithmic De Rham complex, $\Rm{Rees}_S f$ is an element of 
  \[\HH^0(\AAA^1_t, \Rm{L}(\alpha^{\DR}_{\bar{E}, \bar{F}})^* K_{\DR}^{\univ}) = \HH^0(\AAA^1_t, P^{\DR}_{\bar{E}, \bar{F}})\]
  hence by \cref{lem:relative-hom-scheme-lemma}, is equivalent to a section $s^{\DR}_f: \AAA^1_t \to M^0_{\AAA^1_t}(P^{\DR}_{\bar{E}, \bar{F}})$ with $s_f^{\DR}(1) = f$; in fact $s_f^{\DR}(t)$ is exactly the morphism in the triple $\alpha_f^{\DR}(t)$. Applying $M^0(\Phi^{\alpha}_{\bar{E}, \bar{F}})$ gives 
  \[s_f^{M}: \AAA^1_t \xrightarrow{M^0(\Phi^{\alpha}_{\bar{E}, \bar{F}}) \circ s_f^{\DR}} M_{\AAA^1_t}(P^{\Dol}_{\bar{E}, \bar{F}})\]
  \tab We have the following diagram, by definition of $\Psi_{\bar{E}^{\Rm{ss}}, \bar{F}^{\Rm{ss}}, f^{\Rm{ss}}}$,
  \[\begin{tikzcd}
	{\widehat{R}^{\trip}_{\DR, (\bar{E}^{\Rm{ss}}, \bar{F}^{\Rm{ss}}, f^{\Rm{ss}})}} && {\widehat{R}^{\trip}_{\Dol, \Sf{SM}^{\Rm{ps}}(\bar{E}^{\Rm{ss}}, \bar{F}^{\Rm{ss}}, f^{\Rm{ss}})}} \\
	\\
	{M^0_{\widehat{R}_{\DR}^{\pair}}(K^\univ_{\DR})_{f^{\Rm{ss}}}} && {M^0_{\widehat{R}_{\Dol}^{\pair}}(K^\univ_{\Dol})_{\Sf{SM}^{\Rm{ps}}(f^{\Rm{ss}})}}
	\arrow["{\Psi_{\bar{E}^{\Rm{ss}}, \bar{F}^{\Rm{ss}}, f^{\Rm{ss}}}}", from=1-1, to=1-3]
	\arrow["\simeq"', from=1-1, to=3-1]
	\arrow["\simeq", from=1-3, to=3-3]
	\arrow["{M^0(\Phi^{\univ}_{\bar{E}^{\Rm{ss}}, \bar{F}^{\Rm{ss}}})}", from=3-1, to=3-3]
  \end{tikzcd}\]
  and, by construction above, $M^0(\Phi^{\univ}_{\bar{E}^{\Rm{ss}}, \bar{F}^{\Rm{ss}}})$ pulls back to $M^0(\widehat{\Phi}^{\alpha}_{\bar{E}, \bar{F}})$, so $\widehat{s}_f^{\Dol} = \widehat{s}_f^{M}$. Uniqueness of equivariant algebraization then gives $s_f^{\Dol} = s_f^M$. In particular, 
  \[s_f^{\Dol}(1) = s_f^M(1) = H^0(\Phi^{\alpha}_{\bar{E}, \bar{F}}\vert_{t = 1})(s_f^{\DR}(1)) = H^0(\Phi^{\Sf{SM}}_{\bar{E}, \bar{F}})(f)\]
  as desired. 
\end{proof}
Now we will define the functor $\Sf{SM}$ on morphisms: $\Sf{SM}(f) \coloneq H^0(\Phi_{\bar{E}, \bar{F}}^{\Sf{SM}})(f)$.
\begin{theorem}
  \label{thm:functor-verification}
  $\Sf{SM}: \Sf{Conn}_{\Rm{nilp}}(\bar{X}, D) \to \Sf{Higgs}_{\nilp}^{0, \Rm{ss}}(\bar{X}, D)$ is a $\CC$-linear functor. 
\end{theorem}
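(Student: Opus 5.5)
To prove \cref{thm:functor-verification} I will verify four things: $\Sf{SM}$ is well defined on morphisms, $\CC$-linear, sends identities to identities, and respects composition.

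\textbf{Linearity and well-definedness.} Since $\Sf{SM}(f) = H^0(\Phi^{\Sf{SM}}_{\bar E, \bar F})(f)$ and $\Phi^{\Sf{SM}}_{\bar E,\bar F}$ is a quasi-isomorphism of complexes of $\CC$-vector spaces, $H^0$ of it is a $\CC$-linear isomorphism. This gives linearity, and in fact bijectivity on Hom spaces. The target $H^0$ is $\Hom(\Sf{SM}(\bar E),\Sf{SM}(\bar F))$ by base change (\cref{lem:base-change-perfectness-Dol-DR-complexes}). Independence of the framing and splitting choices will follow as in \cref{lem:independence-framing-Rees-cochar}, since the change-of-framing paths $g_\upsilon(t)$ act compatibly on the pulled-back complexes and are the identity at $t=1$.

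\textbf{Identities.} I will use \cref{lem:morphism-formal-transport-and-equiv-algebraization}, which gives $\Sf{SM}(f)=s^{\Dol}_f(1)$, with $s_f^{\Dol}$ the algebraization of $\Psi_{\bar E^{\Rm{ss}},\bar F^{\Rm{ss}},f^{\Rm{ss}}}\circ\widehat\alpha^{\DR}_f$. For $f=\Rm{id}_{\bar E}$, the Rees arc $\alpha^{\DR}_{\Rm{id}}$ is the graph of the identity. By \cref{thm:relative-DR-Dol-quasi-isom-with-coeff}, $\phi^0$ preserves identities after any artinian $\sh{T}(0)$-MTS base change. Applying this levelwise on the MTS-algebras $\bm\Lambda_{E,E,n}$ shows that $\Psi$ sends the universal identity to the universal identity. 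Pulling back and algebraizing, uniqueness in \cref{lem:Gm-equiv-formal-arc-algebraization} then gives $s^{\Dol}_{\Rm{id}}(t)=\Rm{id}$.

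\textbf{Composition.} Given $f:\bar E\to\bar F$ and $g:\bar F\to\bar G$, functoriality of the socle filtration gives Rees maps $\Rm{Rees}_S(g\circ f)=\Rm{Rees}_S(g)\circ\Rm{Rees}_S(f)$. So the pair $(\alpha^{\DR}_f,\alpha^{\DR}_g)$ defines a $\GG_m$-equivariant arc $\alpha^{\DR}_{f,g}:\AAA^1_t\to Z_{\DR}$ into the fiber product of triple spaces, and $\mu_{\DR}\circ\alpha^{\DR}_{f,g}=\alpha^{\DR}_{g\circ f}$. Completing at $0$ and applying \cref{thm:formal-isom-triples-and-composition}, diagram \eqref{eq:universal-composition-compatibility-diagram} gives
\[\mu_{\Dol}\circ(\Psi_f\times\Psi_g)\circ\widehat\alpha^{\DR}_{f,g}=\Psi_{g\circ f}\circ\widehat\alpha^{\DR}_{g\circ f}.\]
Here $\Psi_f$ abbreviates $\Psi_{\bar E^{\Rm{ss}},\bar F^{\Rm{ss}},f^{\Rm{ss}}}$, and similarly for $\Psi_g$ and $\Psi_{g\circ f}$, noting that $(g\circ f)^{\Rm{ss}}=g^{\Rm{ss}}\circ f^{\Rm{ss}}$. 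Both sides are $\GG_m$-equivariant, because $\mu$ is equivariant for the conjugation action and the $\Psi$ are equivariant by \cref{lem:Rees-equivariance-formal-triples}. By uniqueness of algebraization (\cref{lem:Gm-equiv-formal-arc-algebraization}, applied to $Z_{\Dol}$ and $R^{\trip}_{\Dol}$, which have invariant affine neighborhoods since they are affine over the pair spaces), I get $s^{\Dol}_{g\circ f}(t)=s^{\Dol}_g(t)\circ s^{\Dol}_f(t)$. Evaluating at $t=1$ and invoking \cref{lem:morphism-formal-transport-and-equiv-algebraization} yields $\Sf{SM}(g\circ f)=\Sf{SM}(g)\circ\Sf{SM}(f)$.

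\textbf{Main obstacle.} The delicate step is checking that the algebraization of $(\Psi_f\times\Psi_g)\circ\widehat\alpha^{\DR}_{f,g}$ is exactly the pair $(\alpha^{\Dol}_f,\alpha^{\Dol}_g)$. This requires that the $\bar F$-component produced from the pair $(\bar E,\bar F)$ and the one produced from $(\bar F,\bar G)$ agree. That agreement holds because both are the algebraization of $\tau_{F^{\Rm{ss}}}\circ\widehat\alpha^{\DR}_{\bar F}$, and $\tau$ on pairs and triples is the product of the $\tau$'s on factors (\cref{lem:MTS-structure-relative-Hom-scheme}, fiber products). I will verify this compatibility first, together with the matching of the $\GG_m$-actions on the fiber product, before running the uniqueness argument.
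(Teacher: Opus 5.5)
Your proposal is correct and matches the paper's proof. As in the paper, you handle composition by sending the pair of Rees arcs into $\widehat{Z}_{\DR,f,g}$, applying the compatibility diagram \eqref{eq:universal-composition-compatibility-diagram}, and using uniqueness of $\GG_m$-equivariant algebraization before evaluating at $t=1$; your check that the two $\bar F$-components agree, via $\tau_{E,F}=\tau_E\times\tau_F$, is a point the paper leaves implicit.

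The deviations are minor. You get $\CC$-linearity directly from $\Sf{SM}(f)=H^0(\Phi^{\Sf{SM}}_{\bar E,\bar F})(f)$ instead of from the universal addition and scalar diagrams; this is a valid shortcut. For identities, you should apply $\phi^0_{\scr{E},\scr{E}}(\Rm{id})=\Rm{id}$ over the diagonal algebra $\bm{\Lambda}_{E^{\Rm{ss}},n}$, not over $\bm{\Lambda}_{E,E,n}$. Over the latter the two factors deform independently, so there is no universal identity there.
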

\begin{proof}
  For this proof, we consider $\Sf{SM}(f) = \alpha_f^{\Dol}(1)$, which is harmless by the previous lemma. For composition $\bar{E} \xrightarrow{f} \bar{E} \xrightarrow{g} \bar{G}$, we have $\Rm{Rees}_S(g \circ f) = \Rm{Rees}_S(g) \circ \Rm{Rees}_S(f)$, and we have two different equivariant arcs $\alpha_{g \circ f}^{\DR}: \AAA^1 \to R^{\trip}_{\DR, g \circ f}$ and $(\alpha_f, \alpha_g): \AAA^1_t \to R^{\trip}_{\DR, f} \times R^{\trip}_{\DR, g}$. By \cref{thm:formal-isom-triples-and-composition}, we get a diagram, where $\mu_\Box$ are induced by compositions, 
  \[\begin{tikzcd}
	{\Rm{Spf}\ \CC[[t]]} &&&& \\
	{} && {\widehat{Z}_{\DR, f, g}} && {\widehat{Z}_{\Dol, \Sf{SM}^{\Rm{ps}}(f, g)}} \\
	{\Rm{Spf}\ \CC[[t]]} \\
	&& {\widehat{R}^{\trip}_{\DR, \bar E, \bar G, g \circ f}} && {\widehat{R}^{\trip}_{\Dol, \Sf{SM}^{\Rm{ps}}(\bar E, \bar G, g \circ f)}}
	\arrow["{(\widehat{\alpha}^{\DR}_f, \widehat{\alpha}^{\DR}_g)}", from=1-1, to=2-3]
	\arrow[no head, from=1-1, to=3-1]
	\arrow[shift left, no head, from=1-1, to=3-1]
	\arrow["{\Psi_{\bar{E}, \bar{F}, f} \times \Psi_{\bar{F}, \bar{G}, g}}", from=2-3, to=2-5]
	\arrow["{\mu_{\DR}}"', from=2-3, to=4-3]
	\arrow["{\mu_{\Dol}}", from=2-5, to=4-5]
	\arrow["{\widehat{\alpha}_{g \circ f}^{\DR}}", from=3-1, to=4-3]
	\arrow["{\Psi_{\bar{E}, \bar{G}, g \circ f}}", from=4-3, to=4-5]
  \end{tikzcd}\]
  so it follows that for the algebraized arcs, $\alpha^{\Dol}_{g \circ f}(1) = \alpha_g^{\Dol}(1) \circ \alpha_f^{\Dol}(1)$. Identity being preserved, as well as linearity, follows from the same argument using \cref{thm:formal-isom-triples-and-composition}.
\end{proof}

\subsection{Proof of equivalence}
\begin{prop}
  \label{lem:fully-faithfulness}
  $\Sf{SM}$ is fully faithful. 
\end{prop}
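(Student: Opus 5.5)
Fully faithfulness is the statement that for all $\bar{E}, \bar{F} \in \Sf{Conn}_{\nilp}(\bar{X}, D)$ the map
\[\Sf{SM}: \Hom_{\Sf{Conn}_{\nilp}(\bar{X}, D)}(\bar{E}, \bar{F}) \to \Hom_{\Sf{Higgs}_{\nilp}^{0, \Rm{ss}}(\bar{X}, D)}(\Sf{SM}(\bar{E}), \Sf{SM}(\bar{F}))\]
is bijective. By the definition $\Sf{SM}(f) \coloneq H^0(\Phi_{\bar{E}, \bar{F}}^{\Sf{SM}})(f)$, this map is literally $H^0$ of the quasi-isomorphism \eqref{eq:quasi-isom-pulled-back-A1}. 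The plan is therefore to identify source and target with $H^0$ of the two complexes, and then use that a quasi-isomorphism induces an isomorphism on $H^0$.

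\emph{Step 1: identify the source.} I claim
\[\HH^0(\bar{X}, C_{\DR}(\bar{E}, \bar{F})) = \ker\brac*{H^0(\shHom(\bar{E}, \bar{F})) \xrightarrow{\nabla} H^0(\shHom(\bar{E}, \bar{F}) \otimes \Omega^1_{\bar{X}}(\log D))}.\]
This holds because the complex is concentrated in degrees $0, 1$, so the hypercohomology spectral sequence gives $\HH^0$ as the kernel on global sections. The right-hand side is exactly the set of $\sh{O}$-linear maps $\bar{E} \to \bar{F}$ commuting with the logarithmic connections, i.e.\ $\Hom_{\Sf{Conn}_{\nilp}}(\bar{E}, \bar{F})$.

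\emph{Step 2: identify the target.} In the same way, $\HH^0(\bar{X}, C_{\Dol}(\Sf{SM}(\bar{E}), \Sf{SM}(\bar{F})))$ is the set of Higgs morphisms $\Sf{SM}(\bar{E}) \to \Sf{SM}(\bar{F})$. This is $\Hom$ in $\Sf{Higgs}_{\nilp}^{0, \Rm{ss}}(\bar{X}, D)$, since that category is full in logarithmic Higgs bundles. Here I also need that $\Rm{L}\iota_1^* P^{\Box}_{\bar{E}, \bar{F}}$ really computes these complexes, which holds by the base change in \cref{lem:base-change-perfectness-Dol-DR-complexes} applied along $\iota_1$. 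For this I use $\alpha^{\Dol}_{\bar{E}, \bar{F}}(1) = (\Sf{SM}(\bar{E}), \Sf{SM}(\bar{F}))$, which holds by construction: the pair component of the algebraized arc equals the individual arcs, by uniqueness in \cref{lem:Gm-equiv-formal-arc-algebraization}.

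\emph{Step 3: conclude.} Since $\Phi^{\Sf{SM}}_{\bar{E}, \bar{F}}$ is a quasi-isomorphism by \cref{lem:Gm-equiv-formal-complex-algebraization}, its $H^0$ is a $\CC$-linear bijection. Combined with Steps 1 and 2 and the definition of $\Sf{SM}$ on morphisms, this gives full faithfulness. \cref{lem:morphism-formal-transport-and-equiv-algebraization} ensures this agrees with the arc-theoretic description used in \cref{thm:functor-verification}, so no ambiguity arises.

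The main obstacle is the hidden input in Step 3: that the formal comparison $\widehat{\Phi}^{\alpha}_{\bar{E}, \bar{F}}$ is a genuine quasi-isomorphism of $\GG_m$-equivariant perfect complexes on $\Rm{Spf}\ \CC[[t]]$. This has two parts.

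\begin{enumerate}
\item \emph{Compatibility of the levelwise maps.} The levelwise quasi-isomorphisms \eqref{eq:nth-level-DR-Dol-cohomology-comparison}, pulled back along $A_n = \CC[t]/(t^{n+1})$, must be compatible in $n$. This follows from their compatibility with arbitrary artinian base change, and they then assemble to a map on the completion.
\item \emph{Passing to the limit.} A map of perfect complexes over $\CC[[t]]$ which is a quasi-isomorphism modulo every $t^{n+1}$ is a quasi-isomorphism. To see this, the cone $C$ is perfect with $C \otimes^{\Rm{L}} \CC = 0$, so $C = 0$ by Nakayama.
\end{enumerate}

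Once this is granted, \cref{lem:Gm-equiv-formal-complex-algebraization} transfers the quasi-isomorphism to $t = 1$.
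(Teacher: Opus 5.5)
Your proposal is correct and follows the paper's proof. That proof is exactly the observation that $\Sf{SM}$ on morphisms is by definition $H^0$ of the quasi-isomorphism \eqref{eq:quasi-isom-pulled-back-A1}, and that the $H^0$ of its source and target are the two Hom spaces. Your extra checks make explicit inputs the paper uses implicitly when constructing \eqref{eq:quasi-isom-pulled-back-A1}: the hypercohomology identification, base change along $\iota_1$ using $\alpha^{\Dol}_{\bar{E}, \bar{F}}(1) = (\Sf{SM}(\bar{E}), \Sf{SM}(\bar{F}))$, and the Nakayama argument that $\widehat{\Phi}^{\alpha}_{\bar{E}, \bar{F}}$ is a quasi-isomorphism.
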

\begin{proof}
  Taking $H^0$ of \eqref{eq:quasi-isom-pulled-back-A1} we get
  \[H^0(\Phi_{\bar{E}, \bar{F}}^{\Sf{SM}}): \Hom_{\Sf{Conn}_{\Rm{nilp}}(\bar{X}, D)}(\bar{E}, \bar{F}) \xrightarrow{\simeq} \Hom_{\Sf{Higgs}_{\nilp}^{0, \Rm{ss}}(\bar{X}, D)}(\Sf{SM}(\bar{E}), \Sf{SM}(\bar{F}))\]
  and $\Sf{SM}(f) = H^0(\Phi_{\bar{E}, \bar{F}}^{\Sf{SM}})(f)$ so we are done. 
\end{proof}

\begin{prop}
  \label{lem:exactness-and-socle-preservation}
  $\Sf{SM}$ is exact and preserves the socle filtration. Moreover, for each $\bar{E} \in \Sf{Conn}_{\Rm{nilp}}(\bar{X}, D)$, the filtration $F_{\bar{E}}$ on $\Sf{SM}(\bar{E})$ coming from the algebraized Rees arc $\alpha_{\bar{E}}^{\Dol}$ is the socle filtration of $\Sf{SM}(\bar{E})$.
\end{prop}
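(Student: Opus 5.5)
We prove the three assertions in the order: the filtration $F_{\bar E}$ is the socle filtration, then preservation of socle filtrations, then exactness.

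\textbf{Step 1: $\Sf{SM}$ sends semisimple objects to polystable ones and detects semisimplicity.} Let $\bar E$ be semisimple. Then $S_\bullet\bar E$ has a single step, $\gamma_{\bar E}$ is a central cocharacter acting trivially on $R_{\DR}(r)$, and the Rees arc $\alpha^{\DR}_{\bar E}$ is constant. So $\alpha^{\Dol}_{\bar E}$ is constant and $\Sf{SM}(\bar E)=\Sf{SM}^{\Rm{ps}}(\bar E)$. Thus $\Sf{SM}$ extends $\Sf{SM}^{\Rm{ps}}$ and sends simples to stable Higgs bundles.

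Conversely, \cref{lem:fully-faithfulness} gives $\Rm{End}(\bar E)\simeq\Rm{End}(\Sf{SM}(\bar E))$ as $\CC$-algebras, using \cref{thm:functor-verification}. An object of a finite length abelian category is simple iff it is indecomposable with the appropriate endomorphism ring, but this needs care. Instead we use the following observation. A monomorphism $g$ in $\Sf{Conn}_{\nilp}$ is a morphism with $\ker g=0$. Since $\Sf{SM}$ is fully faithful and additive, $\Sf{SM}(g)$ is a monomorphism in the essential image of $\Sf{SM}$, but not yet in the ambient category.

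\textbf{Step 2: $F_{\bar E}$ is the socle filtration.} Write $F_i$ for the pieces of $F_{\bar E}$ on $\Sf{SM}(\bar E)$. Since $\Rm{gr}^{F}\Sf{SM}(\bar E)=\Sf{SM}^{\Rm{ps}}(\bar E^{\Rm{ss}})$ is polystable, we get $F_i\subseteq S_i\Sf{SM}(\bar E)$ by induction using maximality of the socle. For the reverse inclusion we apply the construction to the triple $(S_1\bar E,\bar E,\iota)$, where $\iota$ is the inclusion. By \cref{lem:morphism-formal-transport-and-equiv-algebraization}, $\Sf{SM}(\iota)$ is the $t=1$ fiber of an equivariant family of morphisms $\alpha^{\Dol}_\iota$ whose $t=0$ fiber is $\Sf{SM}^{\Rm{ps}}(\Rm{gr}_S\iota)$, an injective map onto a graded piece. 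Injectivity at $t=0$ and equivariance give injectivity for all $t$. Hence $\Sf{SM}(S_i\bar E)\hookrightarrow \Sf{SM}(\bar E)$ is a subobject with image $F_i$, since the Rees filtration of a sub is compatible.

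Now suppose $S_1\Sf{SM}(\bar E)\supsetneq F_1$. Using an essential surjectivity argument would be circular, so we argue by symmetry instead. Running the same construction from the Dolbeault side, with the inverse formal isomorphisms $\tau^{-1}$ and $\Psi^{-1}$, produces a functor $\Sf{SM}'$ with $\Sf{SM}'\Sf{SM}\simeq\Rm{id}$ on objects. Indeed, the Rees arc of $\Sf{SM}(\bar E)$ for $F_{\bar E}$ algebraizes back to $\alpha^{\DR}_{\bar E}$ by uniqueness in \cref{lem:Gm-equiv-formal-arc-algebraization}. Functoriality and full faithfulness of $\Sf{SM}'$ then transport any simple subobject of $\Sf{SM}(\bar E)$ back to a simple subobject of $\bar E$. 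We conclude $S_1\Sf{SM}(\bar E)=\Sf{SM}(S_1\bar E)=F_1$. Quotients $\bar E/S_i\bar E$ are handled the same way, which gives $F_\bullet=S_\bullet$ inductively.

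\textbf{Step 3: exactness.} Given a short exact sequence $0\to \bar E'\to\bar E\to\bar E''\to0$, we pull back along the Rees arcs of the triples as in Step 2. This yields a complex of Higgs bundles $\Sf{SM}(\bar E')\to\Sf{SM}(\bar E)\to\Sf{SM}(\bar E'')$, with composite zero by \cref{thm:functor-verification}. To show it is exact, we compare ranks and degrees: $\Sf{SM}$ preserves rank, since $\Sf{SM}(\bar E)$ has the same rank $r$. The sequence is exact in the middle and at the ends because kernels and cokernels are detected after applying the quasi-inverse $\Sf{SM}'$ from Step 2.

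\textbf{Main obstacle.} I expect Step 2 to be the hardest. Its circularity must be avoided by constructing $\Sf{SM}'$ carefully: one has to check that the Dolbeault Rees arc attached to $F_{\bar E}$, rather than to the a priori unknown socle filtration, algebraizes back to $\alpha^{\DR}_{\bar E}$. This should follow because $\tau$ and $\Psi$ are isomorphisms with equivariant inverses (\cref{thm:Bett-DR-Dol-formal-isom}(3), \cref{lem:Rees-equivariance-formal-triples}).
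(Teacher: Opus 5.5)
There is a genuine gap, and it sits where you flag the ``main obstacle''. Both the reverse inclusion $S_1\Sf{SM}(\bar E)\subseteq F_1$ in Step 2 and the exactness in Step 3 are reduced to a functor $\Sf{SM}'$, built ``by symmetry'', with $\Sf{SM}'\Sf{SM}\simeq\Rm{id}$. But $\Sf{SM}'$ applied to $\Sf{SM}(\bar E)$ uses the socle filtration of $\Sf{SM}(\bar E)$. So $\Sf{SM}'\Sf{SM}(\bar E)\cong\bar E$ holds only once you know that the Rees arc of that socle filtration is $\alpha^{\Dol}_{\bar E}$, i.e.\ that $F_{\bar E}=S_\bullet\Sf{SM}(\bar E)$, which is the statement being proved. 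In the paper, the inverse functor is constructed only after, and by means of, this proposition. Even granting $\Sf{SM}'$, transporting a simple subobject of $\Sf{SM}(\bar E)$ back to a simple subobject of $\bar E$ needs $\Sf{SM}'$ to preserve monomorphisms, an exactness statement of the same kind. Step 3's ``kernels and cokernels are detected after applying $\Sf{SM}'$'' has the same problem, and the remark about ranks and degrees does not by itself give exactness.

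The missing idea is one you explicitly dismissed: essential surjectivity on simple objects is not circular. Your Step 1 already shows $\Sf{SM}=\Sf{SM}^{\Rm{ps}}$ on semisimple objects, and $\Sf{SM}^{\Rm{ps}}$ is an equivalence, so every stable object is $\Sf{SM}(A^{\DR})$ for an irreducible $A^{\DR}$. Combined with full faithfulness, this gives what you need.
\begin{itemize}
\item A simple $u^{\Dol}\colon A^{\Dol}\hookrightarrow\Sf{SM}(\bar E)$ lifts to a nonzero, hence injective, $u^{\DR}\colon A^{\DR}\to\bar E$. This lands in $S_1\bar E$, so $A^{\Dol}\subseteq\im\Sf{SM}(\iota_1)=F_1$.
\item If $i$ is a monomorphism and $\ker\Sf{SM}(i)\neq0$, a simple subobject of the kernel lifts to some $u^{\DR}\neq0$ with $\Sf{SM}(i\circ u^{\DR})=0$. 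Faithfulness gives $i\circ u^{\DR}=0$, a contradiction. The dual argument handles epimorphisms.
\item Length preservation, from $\Rm{gr}^F\Sf{SM}(\bar E)=\Sf{SM}^{\Rm{ps}}(\bar E^{\Rm{ss}})$, then gives exactness in the middle.
\end{itemize}
This is the paper's route, and the order matters: exactness should come first. Your inductive step ``quotients $\bar E/S_i\bar E$ are handled the same way'' does not work with the Rees construction alone. For $q\colon\bar E\to\bar E/S_1\bar E$ one has $q(S_j\bar E)=S_{j-1}(\bar E/S_1\bar E)$, so $\Rm{gr}_Sq=0$ and the $t=0$ fiber carries no information. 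With exactness, $\Sf{SM}(\bar E/S_i\bar E)\cong\Sf{SM}(\bar E)/F_i$ and the induction goes through.

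Your other ingredients are fine: $F_i\subseteq S_i\Sf{SM}(\bar E)$ from polystability of $\Rm{gr}^F$, and injectivity of $\Sf{SM}(S_i\bar E)\to\Sf{SM}(\bar E)$ via semicontinuity at $t=0$ plus $\GG_m$-equivariance (where the paper instead uses exactness).
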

\begin{proof}
  $\Sf{SM}$ is fully faithful, preserves length, and is essentially surjective on simple objects since the restriction to semisimple objects is $\Sf{SM}^{\Rm{ps}}$ which is an equivalence. \\
  \tab Consider $0 \to \bar{E} \xrightarrow{i} \bar{F}$. Suppose for the sake of contradiction that $\ker \Sf{SM}(i) \neq 0$. Since the categories are finite-length, there must be a nonzero simple subobject $0 \to A^{\Dol} \xrightarrow{u^{\Dol}} \ker \Sf{SM}(i)$. By essential surjectivity on simple objects, $A^{\Dol} = \Sf{SM}(A^{\DR})$, and fullness then lifts $A^{\Dol} \xrightarrow{u^{\Dol}} \Sf{SM}(\bar{E})$ to a nonzero morphism $A^{\DR} \xrightarrow{u^{\DR}} \bar{E}$. Next, 
  \[\Sf{SM}(i \circ u^{\DR}) = u^{\Dol} \circ \Sf{SM}(i) = 0\] 
  so by faithfulness, $i \circ u^{\DR} = 0$. But $\ker (i) = 0$ so $A^{\DR} = 0$, which is a contradiction. That $\Sf{SM}$ preserves $\bar{E} \to \bar{F} \to 0$ follows from a dual argument. Thus $\Sf{SM}$ is exact. \\
  \tab For $\bar{E} \in \Sf{Conn}_{\Rm{nilp}}(\bar{X}, D)$, $\Sf{SM}(\Rm{soc}(\bar{E}))$ is semisimple, hence lies in $\Rm{soc}(\Sf{SM}(\bar{E}))$. Conversely every simple subobject of $\Sf{SM}(\bar{E})$ lifts to a simple subobject of $\bar{E}$, hence $\Sf{SM}(\Rm{soc}(\bar{E})) = \Rm{soc}(\Sf{SM}(\bar{E}))$. By exactness, we can induct to get that $\Sf{SM}$ preserves the whole socle filtration. \\
  \tab For the last fact that $F_{\bar{E}} = S_\bullet$, consider the inclusion $f: S_i\bar{E} \hookrightarrow \bar{E}$. This induces $\Rm{Rees}_Sf: \Rm{Rees}_S(S_i\bar{E}) \hookrightarrow \Rm{Rees}_S(\bar{E})$. By the proof of \cref{lem:morphism-formal-transport-and-equiv-algebraization}, we get a morphism over $\AAA^1_t$ by algebraizing the Rees section $\Rm{Rees}_S f$, 
  \[s_f^{\Dol}: \Rm{Rees}_{F_{\bar{E}}}(\Sf{SM}(S_i\bar{E})) \to \Rm{Rees}_{F_{\bar{E}}}(\Sf{SM}(\bar{E}))\]
  which is $\GG_m$-equivariant and where $s_f^{\Dol}(1) = \Sf{SM}(f)$. Hence $s_f^{\Dol}(1)$ is injective by exactness. At $t = 0$, $s_f^{\Dol}(0) = \Sf{SM}^{\Rm{ps}}(f^{\Rm{ss}})$, and the socle filtration of $S_i\bar{E}$ is a truncation of the socle filtration of $\bar{E}$, so $s_f^{\Dol}(0)$ is also injective. In fact we have a diagram 
  \[\begin{tikzcd}
	{\Rm{gr}^F \Sf{SM}(S_i\bar{E})} && {\Rm{gr}^F \Sf{SM}(\bar{E})} \\
	\\
	{\Sf{SM}^{\Rm{ps}}(\Rm{gr}^S(S_i\bar{E}))} && {\Sf{SM}^{\Rm{ps}}(\Rm{gr}^S\bar{E})}
	\arrow["{\Rm{gr}^F \Sf{SM}(f) = s_f^{\Dol}(0)}", from=1-1, to=1-3]
	\arrow["\simeq"', from=1-1, to=3-1]
	\arrow["\simeq", from=1-3, to=3-3]
	\arrow["{\Sf{SM}^{\Rm{ps}}(\Rm{gr}^S f)}", from=3-1, to=3-3]
  \end{tikzcd}\]
  which implies that $\im(\Rm{gr}^F \Sf{SM}(f)) = \bigoplus_{j \leq i} \Rm{gr}^F_j \Sf{SM}(\bar{E})$. $\GG_m$-equivariance then implies that $s_f^{\Dol}$ is constant rank and injective over $\AAA^1_t$. Let $A \coloneq \im(\Sf{SM}(f))$ then $A \subseteq F_i \Sf{SM}(\bar{E})$ since $\im(\Rm{gr}^F \Sf{SM}(f)) = \bigoplus_{j \leq i} \Rm{gr}^F_j \Sf{SM}(\bar{E})$. Now, $A$ and $F_i\bar{E}$ have the same $\Rm{gr}^F$, and same length hence they have the same ranks. Thus $A = F_i\bar{E}$. On the other hand, $\Sf{SM}(f)$ the inclusion of $\Sf{SM}(S_i \bar{E})$ into $\Sf{SM}(\bar{E})$, so $A = \Sf{SM}(S_i \bar{E})$. Finally $F_i \bar{E} = \Sf{SM}(S_i \bar{E}) = S_i \Sf{SM}(\bar{E})$, as desired.  
\end{proof}

We can define an analogous functor, starting from $\Sf{Higgs}_{\nilp}^{0, \Rm{ss}}(\bar{X}, D)$ instead, 
\[\Sf{Inv}: \Sf{Higgs}_{\nilp}^{0, \Rm{ss}}(\bar{X}, D) \to \Sf{Conn}_{\Rm{nilp}}(\bar{X}, D)\]
and the fact that the algebraized equivariant arcs used for $\Sf{SM}$ are associated to the socle filtration in $\Sf{Higgs}_{\nilp}^{0, \Rm{ss}}(\bar{X}, D)$ means that we can invert every construction of $\Sf{SM}$ to get $\Sf{Inv}$, i.e., 
\begin{prop}
  There are natural isomorphisms $\Sf{SM} \circ \Sf{Inv} \simeq \Rm{id}$ and $\Sf{Inv} \circ \Sf{SM} \simeq \Rm{id}$. 
\end{prop}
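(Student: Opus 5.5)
We construct $\Sf{Inv}$ by running every step of the construction of $\Sf{SM}$ with the roles of De Rham and Dolbeault exchanged, and then compare the two composites on objects and on morphisms. Concretely, for $\bar{H} \in \Sf{Higgs}_{\nilp}^{0, \Rm{ss}}(\bar{X}, D)$ with a framing and a splitting adapted to $S_\bullet \bar{H}$, we form the Rees arc $\alpha^{\Dol}_{\bar{H}}$ and complete it at $0$. We compose with $\tau^{-1}_{E}$, where $E$ is the semisimple local system underlying $\Sf{SM}^{\Rm{ps},-1}(\bar{H}^{\Rm{ss}})$; this map is equivariant by \cref{thm:Bett-DR-Dol-formal-isom}(3). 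We then algebraize with \cref{lem:Gm-equiv-formal-arc-algebraization} and evaluate at $t = 1$. On morphisms we apply $H^0$ of the inverse of the algebraized quasi-isomorphism \eqref{eq:quasi-isom-pulled-back-A1}, or equivalently we use $\Psi^{-1}$ on triples. All the earlier arguments (independence of splitting, functoriality, full faithfulness, exactness, and \cref{lem:exactness-and-socle-preservation}) go through verbatim by symmetry. The only input needed is that the Rees arc on the Higgs side and the pushed-forward arc remain in the semistable nilpotent locus, and this was already checked.

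For $\Sf{Inv} \circ \Sf{SM} \simeq \Rm{id}$ on objects, fix $(\bar{E}, \phi)$ and a splitting $s$. By \cref{lem:exactness-and-socle-preservation}, the filtration $F_{\bar{E}}$ determined by $\alpha^{\Dol}_{\bar{E}}$ is the socle filtration of $\Sf{SM}(\bar{E})$. The equivariant arc $\alpha^{\Dol}_{\bar{E}}$ is therefore a Rees arc of $\Sf{SM}(\bar{E})$ for the socle filtration. Its cocharacter is $\gamma_{\bar{E}}$, and this determines a splitting of the fibre at $x$ adapted to $S_\bullet \Sf{SM}(\bar{E})$, since $\gamma_{\bar{E}}$ acts on the framed fibre with weights matching the filtration. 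By \cref{lem:independence-framing-Rees-cochar}, applied on the Higgs side, we may compute $\Sf{Inv}(\Sf{SM}(\bar{E}))$ using exactly this arc. Completing gives $\tau_E \circ \widehat{\alpha}^{\DR}_{\bar{E}}$. Applying $\tau_E^{-1}$ returns $\widehat{\alpha}^{\DR}_{\bar{E}}$, and uniqueness in \cref{lem:Gm-equiv-formal-arc-algebraization} identifies the algebraization with $\alpha^{\DR}_{\bar{E}}$. Evaluating at $1$ gives a framed isomorphism $\eta_{\bar{E}}: \Sf{Inv}\Sf{SM}(\bar{E}) \simeq \bar{E}$, which is canonical after forgetting framings.

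The main obstacle is naturality of $\eta$. Here we argue with triples. For $f: \bar{E} \to \bar{F}$, the algebraized Dolbeault arc of triples $\alpha^{\Dol}_f$ is, by the same socle identification, the Rees arc of $\Sf{SM}(f)$ with respect to the socle filtrations. Indeed, it agrees with $\Rm{Rees}_S(\Sf{SM}(f))$ because both are equivariant arcs over the same pair of arcs of bundles with value $\Sf{SM}(f)$ at $t = 1$, and an equivariant section over $\AAA^1_t$ is determined by its value over $\GG_m$. Applying $\Psi^{-1}_{\bar{E}^{\Rm{ss}}, \bar{F}^{\Rm{ss}}, f^{\Rm{ss}}}$, which is equivariant by \cref{lem:Rees-equivariance-formal-triples}, recovers $\widehat{\alpha}^{\DR}_f$. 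Uniqueness of algebraization then shows that $\Sf{Inv}\Sf{SM}(f)$ corresponds to $f$ under $\eta_{\bar{E}}, \eta_{\bar{F}}$, which is precisely naturality; the framing-independence ambiguity is handled by the $g_\upsilon(t)$ argument. Alternatively, naturality follows from \cref{lem:morphism-formal-transport-and-equiv-algebraization}, since $H^0$ of the inverse quasi-isomorphism composed with $H^0(\Phi^{\Sf{SM}})$ is the identity. The isomorphism $\Sf{SM}\circ\Sf{Inv} \simeq \Rm{id}$ follows by the symmetric argument.
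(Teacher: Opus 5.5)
Your proposal is correct and follows the same route as the paper. The paper's justification is a single sentence: since \cref{lem:exactness-and-socle-preservation} identifies the algebraized arcs as socle Rees arcs, $\Sf{Inv}$ is obtained by inverting every step, so running the inverse construction on $\alpha^{\Dol}_{\bar{E}}$ and $\alpha^{\Dol}_f$ recovers $\alpha^{\DR}_{\bar{E}}$ and $\alpha^{\DR}_f$ by uniqueness of equivariant algebraization. You make explicit what the paper leaves implicit: the adapted splitting determined by $\gamma_{\bar{E}}$, the identification of $\alpha^{\Dol}_f$ with $\Rm{Rees}_S(\Sf{SM}(f))$, and naturality, either via $\Psi^{-1}$ on triples or via \cref{lem:morphism-formal-transport-and-equiv-algebraization}.
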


\begin{theorem}
  $\Sf{SM}$ is a $\CC$-linear exact equivalence of categories. 
\end{theorem}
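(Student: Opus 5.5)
The final theorem is assembled from results already established in the paper. \cref{thm:functor-verification} shows that $\Sf{SM}$ is a $\CC$-linear functor. \cref{lem:fully-faithfulness} shows it is fully faithful. \cref{lem:exactness-and-socle-preservation} shows it is exact. What remains is essential surjectivity, which I obtain from the preceding proposition: the natural isomorphisms $\Sf{SM}\circ\Sf{Inv}\simeq\Rm{id}$ and $\Sf{Inv}\circ\Sf{SM}\simeq\Rm{id}$ exhibit $\Sf{Inv}$ as a quasi-inverse. A fully faithful, $\CC$-linear, exact functor with a quasi-inverse is a $\CC$-linear exact equivalence. Moreover, on semisimple objects the socle filtration is trivial, so the Rees arc is constant and $\Sf{SM}$ restricts to $\Sf{SM}^{\Rm{ps}}$; this gives the compatibility claimed in the introduction.

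If one prefers not to lean on the construction of $\Sf{Inv}$, essential surjectivity can be proved directly by induction on length. Fix $\bar{G}\in\Sf{Higgs}_{\nilp}^{0,\Rm{ss}}(\bar{X},D)$. If $\bar{G}$ is simple, it lies in the essential image because $\Sf{SM}^{\Rm{ps}}$ is an equivalence (\cref{thm:mochizuki-correspondence}). In general, write $\bar{G}$ as an extension $0\to\bar{G}'\to\bar{G}\to\bar{G}''\to 0$ with $\bar{G}',\bar{G}''$ of smaller length, so that $\bar{G}'\simeq\Sf{SM}(\bar{E}')$ and $\bar{G}''\simeq\Sf{SM}(\bar{E}'')$. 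The extension class lives in $H^1$ of $\Rm{R}\Gamma(\bar{X},C_{\Dol}(\bar{G}'',\bar{G}'))$. The quasi-isomorphism $\Phi^{\Sf{SM}}_{\bar{E}'',\bar{E}'}$ of \eqref{eq:quasi-isom-pulled-back-A1} identifies this with $\op{Ext}^1(\bar{E}'',\bar{E}')$ in $\Sf{Conn}_{\nilp}(\bar{X},D)$. Pick the corresponding extension $\bar{E}$.

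To conclude, one must check that $\Sf{SM}(\bar{E})\simeq\bar{G}$. By exactness, $\Sf{SM}(\bar{E})$ is some extension of $\Sf{SM}(\bar{E}'')$ by $\Sf{SM}(\bar{E}')$. The main obstacle is showing that its class is $H^1(\Phi^{\Sf{SM}})$ applied to the class of $\bar{E}$. The paper only established compatibility of $\Phi$ on $H^0$, so I would avoid this step and instead use the symmetric construction. Construct $\Sf{Inv}$ by running the same arguments from the Dolbeault side, using $\tau^{-1}$ and $\Psi^{-1}$, which are equivariant by \cref{thm:Bett-DR-Dol-formal-isom}(3) and \cref{lem:Rees-equivariance-formal-triples}. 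Then observe the following:
\begin{itemize}
\item By \cref{lem:exactness-and-socle-preservation}, the algebraized arc $\alpha^{\Dol}_{\bar{E}}$ is exactly the Rees arc of the socle filtration of $\Sf{SM}(\bar{E})$.
\item Applying $\Sf{Inv}$ to this arc therefore recovers, by uniqueness of equivariant algebraization (\cref{lem:Gm-equiv-formal-arc-algebraization}), the original arc $\alpha^{\DR}_{\bar{E}}$.
\end{itemize}
The independence results of \cref{lem:independence-framing-Rees-cochar} then upgrade these pointwise identifications to natural isomorphisms. This gives the quasi-inverse and hence the theorem.
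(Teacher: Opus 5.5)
Your argument is the paper's own: $\CC$-linearity from \cref{thm:functor-verification}, full faithfulness from \cref{lem:fully-faithfulness}, exactness from \cref{lem:exactness-and-socle-preservation}, and essential surjectivity from the quasi-inverse $\Sf{Inv}$ of the preceding proposition, whose justification you sketch the same way the paper does. You were right to set aside the induction-on-length route, since it would need compatibility of $\Phi^{\Sf{SM}}$ with extension classes in $H^1$, and the paper only establishes compatibility on $H^0$.
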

\begin{proof}
  By the previous proposition, $\Sf{SM}$ is essentially surjective. It is also fully faithful and exact by \cref{lem:fully-faithfulness}, \cref{lem:exactness-and-socle-preservation}. 
\end{proof}

\printbibliography

@book{deligne2006equations,
  title={{\'E}quations diff{\'e}rentielles {\`a} points singuliers r{\'e}guliers},
  author={Deligne, Pierre},
  year={2006},
  publisher={Springer}
}

@misc{bakker2024linear,
      title={The linear Shafarevich conjecture for quasiprojective varieties and algebraicity of Shafarevich morphisms}, 
      author={Benjamin Bakker and Yohan Brunebarbe and Jacob Tsimerman},
      year={2024},
      eprint={2408.16441},
      archivePrefix={arXiv},
      primaryClass={math.AG}
}

@misc{simpson1997mixed,
      title={Mixed twistor structures}, 
      author={Carlos Simpson},
      year={1997},
      eprint={alg-geom/9705006},
      archivePrefix={arXiv},
      primaryClass={alg-geom}
}

@article{corlette1988flat,
  title={Flat $ G $-bundles with canonical metrics},
  author={Corlette, Kevin},
  journal={Journal of differential geometry},
  volume={28},
  number={3},
  pages={361--382},
  year={1988},
  publisher={Lehigh University}
}

@article{simpson1990harmonic,
  title={Harmonic bundles on noncompact curves},
  author={Simpson, Carlos T},
  journal={Journal of the American Mathematical Society},
  volume={3},
  number={3},
  pages={713--770},
  year={1990}
}

@article{simpson1992higgs,
  title={Higgs bundles and local systems},
  author={Simpson, Carlos T},
  journal={Publications Math{\'e}matiques de l'IH{\'E}S},
  volume={75},
  pages={5--95},
  year={1992}
}

@book{mochizuki2006kobayashi,
     author = {Mochizuki, Takuro},
     title = {Kobayashi-Hitchin correspondence for tame harmonic bundles and an application},
     series = {Ast\'erisque},
     year = {2006},
     publisher = {Soci\'et\'e math\'ematique de France},
     number = {309},
     mrnumber = {2310103},
     zbl = {1119.14001},
     language = {en}
}

@article{mochizuki2009kobayashi,
  title={Kobayashi--Hitchin correspondence for tame harmonic bundles II},
  author={Mochizuki, Takuro},
  journal={Geometry \& Topology},
  volume={13},
  number={1},
  pages={359--455},
  year={2009},
  publisher={Mathematical Sciences Publishers}
}

@article{sabbah2005polarizable,
  title={Polarizable twistor $\sh{D}$-modules},
  author={Sabbah, Claude},
  journal={Ast{\'e}risque},
  volume={300},
  year={2005}
}

@book{mochizuki2015mixed,
  title={Mixed Twistor-Modules},
  author={Mochizuki, Takuro},
  year={2015},
  publisher={Springer}
}

@misc{mochizuki2003asymptotic,
      title={Asymptotic behaviour of tame harmonic bundles and an application to pure twistor $D$-modules}, 
      author={Takuro Mochizuki},
      year={2004},
      eprint={math/0312230},
      archivePrefix={arXiv},
      primaryClass={math.DG}
}

@article{simpson1994moduli,
  title={Moduli of representations of the fundamental group of a smooth projective variety I},
  author={Simpson, Carlos T},
  journal={Publications Math{\'e}matiques de l'IH{\'E}S},
  volume={79},
  pages={47--129},
  year={1994}
}

@article{nitsure1993moduli,
  title={Moduli of semistable logarithmic connections},
  author={Nitsure, Nitin},
  journal={Journal of the American Mathematical Society},
  volume={6},
  number={3},
  pages={597--609},
  year={1993}
}

@article{simpson2022twistor,
  title={The twistor geometry of parabolic structures in rank two},
  author={Simpson, Carlos},
  journal={Proceedings-Mathematical Sciences},
  volume={132},
  number={2},
  pages={54},
  year={2022},
  publisher={Springer}
}

@misc{nitsure2005constructionhilbertquotschemes,
      title={Construction of Hilbert and Quot Schemes}, 
      author={Nitin Nitsure},
      year={2005},
      eprint={math/0504590},
      archivePrefix={arXiv},
      primaryClass={math.AG} 
}

@misc{stacks-project,
  author       = {The {Stacks project authors}},
  title        = {The Stacks project},
  howpublished = {\url{https://stacks.math.columbia.edu}},
  year         = {2026},
}

@book{etingof2015tensor,
  title={Tensor categories},
  author={Etingof, Pavel and Gelaki, Shlomo and Nikshych, Dmitri and Ostrik, Victor},
  volume={205},
  year={2015},
  publisher={American Mathematical Society Providence, RI}
}

@book{huybrechts2010geometry,
  title={The geometry of moduli spaces of sheaves},
  author={Huybrechts, Daniel and Lehn, Manfred},
  volume={2},
  year={2010},
  publisher={Cambridge University Press}
}

@article{schlessinger1968functors,
  title={Functors of Artin rings},
  author={Schlessinger, Michael},
  journal={Transactions of the American Mathematical Society},
  volume={130},
  number={2},
  pages={208--222},
  year={1968}
}

@book{bruns1998cohen,
  title={Cohen-macaulay rings},
  author={Bruns, Winfried and Herzog, H J{\"u}rgen},
  number={39},
  year={1998},
  publisher={Cambridge university press}
}

@inproceedings{cao2019albanese,
  title={Albanese maps of projective manifolds with nef anticanonical bundles},
  author={Cao, Junyan},
  booktitle={Annales Scientifiques de l'{\'E}cole Normale Sup{\'e}rieure},
  volume={52},
  number={5},
  pages={1137--1154},
  year={2019}
}
\end{document}